\theoremstyle{definition}
\newtheorem{Def}{Definition}[section]
\newtheorem{ese}[Def]{Examples}
\newtheorem{as}[Def]{Assumption}
\theoremstyle{remark}
\newtheorem{obs}[Def]{Remark}
\theoremstyle{plain}
\newtheorem{prop}[Def]{Proposition}
\newtheorem{lema}[Def]{Lemma}
\newtheorem{cor}[Def]{Corollary}
\newtheorem{teo}[Def]{Theorem}
\newcommand{\bo}{\mathbf}
\newcommand{\A}{{\mathcal A}}
\newcommand{\B}{{\mathcal B}}
\newcommand{\C}{{\mathcal C}}
\newcommand{\D}{{\mathcal D}}
\newcommand{\E}{{\mathcal E}}
\newcommand{\G}{{\mathcal G}}
\newcommand{\I}{{\mathcal I}}
\newcommand{\K}{{\mathcal K}}
\renewcommand{\L}{{\mathcal L}}
\newcommand{\M}{{\mathcal M}}
\renewcommand{\P}{{\mathcal P}}
\newcommand{\Q}{{\mathcal Q}}
\renewcommand{\S}{{\mathcal S}}
\newcommand{\T}{{\mathcal T}}
\newcommand{\V}{{\mathcal V}}
\newcommand{\mt}{\mathscr}
\newcommand{\tx}{\textnormal}
\newcommand{\colim}{\operatornamewithlimits{colim}}
\newcommand{\changeoperator}[1]{%
	\csletcs{#1@saved}{#1@}%
	\csdef{#1@}{\changed@operator{#1}}%
}
\newcommand{\changed@operator}[1]{%
	\mathop{%
		\mathchoice{\textstyle\csuse{#1@saved}}
		{\csuse{#1@saved}}
		{\csuse{#1@saved}}
		{\csuse{#1@saved}}%
	}%
}
\def\@tocline#1#2#3#4#5#6#7{\relax
	\ifnum #1>\c@tocdepth 
	\else
	\par \addpenalty\@secpenalty\addvspace{#2}%
	\begingroup \hyphenpenalty\@M
	\@ifempty{#4}{%
		\@tempdima\csname r@tocindent\number#1\endcsname\relax
	}{%
		\@tempdima#4\relax
	}%
	\parindent\z@ \leftskip#3\relax \advance\leftskip\@tempdima\relax
	\rightskip\@pnumwidth plus4em \parfillskip-\@pnumwidth
	#5\leavevmode\hskip-\@tempdima
	\ifcase #1
	\or\or \hskip 1em \or \hskip 2em \else \hskip 3em \fi%
	#6\nobreak\relax
	\hfill\hbox to\@pnumwidth{\@tocpagenum{#7}}\par
	\nobreak
	\endgroup
	\fi}
\title{Virtual concepts in the theory of accessible categories}
\author{Stephen Lack and Giacomo Tendas}
\address{School of Mathematical and Physical Sciences, Macquarie University NSW 2109, 
	Australia}
\email{steve.lack@mq.edu.au}
\address{School of Mathematical and Physical Sciences, Macquarie University NSW 2109, 
	Australia}
\email{giacomo.tendas@mq.edu.au}
\date{\today}
\thanks{The first-named author acknowledges with gratitude the support of an Australian Research Council Discovery Project DP190102432. The second-named author gratefully acknowledges the support of an International Macquarie University Research Excellence Scholarship 2018115/20191524.}
\begin{document}
	
\begin{abstract}
	We provide a new characterization of enriched accessible categories by introducing the two new notions of {\em virtual reflectivity} and {\em virtual orthogonality} as a generalization of the usual reflectivity and orthogonality conditions for locally presentable categories. The word {\em virtual} refers to the fact that the reflectivity and orthogonality conditions are given in the free completion of the $\V$-category involved under small limits, instead of the $\V$-category itself. In this way we hope to provide a clearer understanding of the theory as well as a useful way of recognizing accessible $\V$-categories. In the last section we prove that the 2-category of accessible $\V$-categories, accessible $\V$-functors, and $\V$-natural transformations has all flexible limits.
	
\end{abstract}	
	
\maketitle
	
\tableofcontents

\section{Introduction}

The importance and usefulness of enriched accessible categories have been recognized recently by many authors in different areas of category theory: for instance in the 2-categorical context \cite{Bou2021:articolo}, in the additive \cite{Pre11:libro} and simplicial \cite{BLV:articolo} ones, and in the world of $\infty$-cosmoi \cite{BL21accessible}.
The theory of enriched locally presentable categories was developed in 1982 by Kelly \cite{Kel82:libro}, but the theory of enriched accessible categories, first introduced in the late 1990s \cite{BQ96:articolo}, is much less developed. 

The reason for this delay with regards to enriched accessibility might be found in the following two points:\begin{enumerate}\setlength\itemsep{0.25em}
	\item There is not a unique way of defining what an accessible $\V$-category is; in fact two different notions have already been considered in the literature. Those that we simply call {\em accessible} arise as free cocompletions of small $\V$-categories under $\alpha$-flat colimits for some $\alpha$ \cite{BQR98}; the others, that we call {\em conically accessible}, arise instead as free cocompletions of small $\V$-categories under (conical) $\alpha$-filtered colimits for some $\alpha$. 
	\item The theory of accessible categories is much harder to address than that of the locally presentable ones, even in the ordinary setting where many of the proofs are very $\bo{Set}$-based and thus not suitable for an enriched generalization. This results in a scarcity of characterization theorems, most importantly in the recognition of accessible $\V$-categories. 
\end{enumerate}
While we addressed (1) in \cite{LT21:articolo}, where we gave sufficient conditions on $\V$ for the two notions on the one hand to coincide, or on the other hand to coincide up to Cauchy completeness; we aim to address (2) in the present paper. Here we focus mostly on the accessible $\V$-categories of \cite{BQR98}, nonetheless we compare these with (and give results on) the conically accessible ones as well.

The characterization theorem we prove seems to be new even in the ordinary setting, although some of the machinery we use was already considered, for $\V=\bo{Set}$, by Guitart and Lair \cite{GL81:articolo}. The idea is to generalize the notion of orthogonality and reflectivity (central in the characterization of locally presentable categories) to those of {\em virtual orthogonality} and {\em virtual reflectivity} as described below. Here the word ``virtual'' refers to the fact that the objects and morphisms we are considering lie not in a given category $\A$, but rather in its free completion $\P^\dagger\A$ under limits.

There are virtual notions of: left adjoint, reflective subcategory, cocomplete, and orthogonality class. 
To introduce the notion of virtual left adjoint, recall that a functor $F\colon\A\to\K$ has a left adjoint if $\K(X,F-)$ is representable for any $X\in\K$; we say instead that $F$ has a {\em virtual left adjoint} if $\K(X,F-)$ is a small functor for any $X\in\K$, that is if $\K(X,F-)\in\P^\dagger\A$ for all $X\in\K$. If $F$ is fully faithful we say that $\A$ is {\em virtually reflective} in $\K$. Clearly every reflective subcategory is virtually reflective, but the converse is not true. When $\V=\bo{Set}$ and $\A$ and $\K$ are both accessible, then virtual reflectivity is equivalent to the inclusion functor satisfying the solution-set condition (Corollary~\ref{virt-sset}).

Regarding virtual colimits, we know that, given a weight $M\colon\C^{op}\to\V$ and a $\V$-functor $H\colon\C\to\A$, the colimit $M*H$ exists in $\A$ if the $\V$-functor $[\C^{op},\V](M,\A(H,-))$ is representable. Relaxing that condition, we say that the {\em virtual colimit} of $H$ weighted by $M$ exists in $\A$ if $[\C^{op},\V](M,\A(H,-))$ is a small $\V$-functor. Since every representable $\V$-functor is small, any cocomplete $\V$-category has virtual colimits; less trivial is the fact that also every accessible $\V$-category has them (Proposition~\ref{accvirtcoco}). When $\V=\bo{Set}$ a category $\A$ has virtual colimits if and only if it is pre-cocomplete in the sense of Freyd \cite{Fre72:articolo} (Proposition~\ref{petty-lucis-small}).

An object $A$ of a $\V$-category $\K$ is said to be orthogonal with respect to a morphism $f\colon X\to Y$ in $\K$ if the map $\K(f,A)$ is an isomorphism in $\V$. Then the notion of {\em virtual orthogonality} arises exactly in the same way with the only difference being that the morphism $f$ now can be chosen to be of the form $f\colon ZX\to Y$ with $X\in\K$ and $Y\in\P^\dagger\K$, where $Z\colon\K\hookrightarrow\P^\dagger\K$ is the inclusion. Thus we will say that $A\in\K$ is orthogonal with respect to a morphism $f\colon ZX\to Y$ in $\P^\dagger\K$ if the map $\P^\dagger\K(f,A)$ is an isomorphism in $\V$. We call {\em virtual orthogonality class} a full subcategory of $\K$ which arises as a collection of objects which are virtually orthogonal with respect to a small set of morphisms as above.

Then our main theorem goes as follows:

\begin{teo}[\ref{accessible-embedded}]
	For an accessible $\V$-category $\K$ and a fully faithful inclusion $\A\hookrightarrow\K$, the following are equivalent:\begin{enumerate}\setlength\itemsep{0.25em}
		\item $\A$ is accessible and accessibly embedded;
		\item $\A$ is accessibly embedded and virtually reflective;
		\item $\A$ is a virtual orthogonality class.
	\end{enumerate}
\end{teo}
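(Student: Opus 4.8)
\medskip
\noindent\emph{Proof plan.} I would prove the cycle $(1)\Rightarrow(2)\Rightarrow(3)\Rightarrow(1)$, taking as a template the classical description of the accessibly-embedded localizations of a locally presentable category as its small orthogonality classes, but carrying out every construction inside the free small-limit completion $\P^\dagger$. Write $J\colon\A\hookrightarrow\K$ for the inclusion; the basic bookkeeping device is the identification $\P^\dagger\K(Y,Z-)\cong Y$ for $Y\in\P^\dagger\K$, under which a morphism $f\colon ZX\to Y$ of $\P^\dagger\K$ becomes a $\V$-natural transformation $\phi\colon Y\Rightarrow\K(X,-)$, and ``$A$ is virtually orthogonal to $f$'' becomes ``$\phi_A$ is invertible''. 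For $(1)\Rightarrow(2)$ only virtual reflectivity needs proof (accessible embeddedness being part of $(1)$): fixing $X\in\K$ and choosing a regular cardinal $\alpha$, depending on $X$, for which $X$ is $\alpha$-presentable, $\A$ and $\K$ are $\alpha$-accessible, and $J$ is $\alpha$-accessible, the functor $\K(X,J-)\colon\A\to\V$ preserves $\alpha$-flat colimits; since every object of $\A$ is a canonical $\alpha$-flat colimit of $\alpha$-presentables, $\K(X,J-)$ is the left Kan extension of its restriction along $\A_\alpha\hookrightarrow\A$, hence a small colimit of representables, hence an object of $\P^\dagger\A$.

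For $(2)\Rightarrow(3)$, let $L\colon\K\to\P^\dagger\A$ be the virtual left adjoint, put $Y_X:=\mathrm{Lan}_J(\K(X,J-))$ (a small functor), and let $f_X\colon ZX\to Y_X$ correspond to the counit component $\phi_X\colon Y_X\Rightarrow\K(X,-)$ of $\mathrm{Lan}_J\dashv J^{*}$. Choosing $\gamma$ with $\K$ $\gamma$-accessible and $\A$ closed under $\gamma$-flat colimits in $\K$, set $\Phi=\{f_X : X\in\K_\gamma\}$, a small set. That $\A\subseteq\Phi^\perp$ is immediate, since for $A\in\A$ fully faithfulness of $J$ collapses the coend defining $Y_X(A)$ onto $\K(X,A)$. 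The reverse inclusion is the subtle point: given $A$ virtually orthogonal to every $f_X$ with $X$ $\gamma$-presentable, one writes $A$ as a $\gamma$-flat colimit of such $X$ in $\K$ and uses the resulting isomorphisms $Y_X(A)\cong\K(X,A)$ together with closure of $\A$ under $\gamma$-flat colimits (recalling that accessible $\V$-categories are Cauchy complete) to identify $A$ with an object of $\A$.

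For $(3)\Rightarrow(1)$, write $\A$ as the virtual orthogonality class $\Phi^\perp$ of a small set $\Phi$, so that $\A=\{A\in\K : (\phi_i)_A\text{ invertible for all }i\in I\}$ for a small family of $\V$-natural transformations $\phi_i\colon Y_i\Rightarrow\K(X_i,-)$ with each $Y_i$ small. Choose $\gamma$ with $\K$ $\gamma$-accessible, each $X_i$ $\gamma$-presentable, and each $Y_i$ a small colimit of representables $\K(c,-)$ on $\gamma$-presentable objects $c$; then each $Y_i$ preserves $\gamma$-flat colimits (colimits commute with colimits), as does each $\K(X_i,-)$, so every full subcategory on which some $\phi_i$ is invertible --- and hence $\A$ --- is closed under $\gamma$-flat colimits in $\K$, which is accessible embeddedness. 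The main obstacle is the remaining assertion, that $\A$ is itself accessible. My plan for this is to transport the situation along the canonical fully faithful embedding of $\K$ into $[\K_\gamma^{op},\V]$ (whose essential image is the closure of the representables under $\gamma$-flat colimits), where $\A$ is carved out of $\K$ by orthogonality to the restricted data, thereby exhibiting $\A$ as the $\V$-category of $\gamma$-flat functors satisfying a small family of cone-type conditions --- equivalently, the models of a small sketch in $\K$ --- and then to invoke the (enriched) accessibility of such model categories. An alternative is to construct an honest virtual reflector for $\A$ by a transfinite virtual small-object argument inside $\P^\dagger\K$, verifying at each successor and limit stage that smallness is preserved, and to deduce accessibility from that together with the closure already established.
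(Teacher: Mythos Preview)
Your arguments for $(1)\Rightarrow(2)$ and for $(3)\Rightarrow(1)$ (via Option~A: embed $\K$ in a presheaf category, recognise $\A$ as the models of a sketch, invoke accessibility of model categories) are correct and coincide with what the paper does. The gap is in $(2)\Rightarrow(3)$.

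You correctly identify the reverse inclusion $\Phi^\perp\subseteq\A$ as the ``subtle point'', but your sketch does not resolve it. Under hypothesis~$(2)$ alone you know only that $\A$ is closed under $\gamma$-flat colimits and that each $\K(X,J-)$ is small; you do \emph{not} know that $\A$ is accessible, and in particular you have no control over where the representing objects of the small functor $\K(X,J-)$ live inside $\A$. Concretely, the orthogonality condition $Y_X(A)\cong\K(X,A)$ unpacks to
\[
\K(X,A)\;\cong\;\textstyle\int^{B\in\A}\K(X,JB)\otimes\K(JB,A),
\]
and to extract from this an object of $\A$ isomorphic to $A$ one needs to rewrite the right-hand side as $\K(X,J(\text{something}))$. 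The paper does this by choosing $\alpha$ so that $J$ preserves $\alpha$-presentables, whence $\K(X,J-)\cong\tx{Lan}_H\K(X,JH-)$ with $H\colon\A_\alpha\hookrightarrow\A$, and then showing the resulting weight $\K(JH-,A)$ is $\alpha$-flat so that $A\cong J(\K(JH-,A)*H)$. Every step here uses that $\A$ is $\alpha$-accessible, which is condition~$(1)$, not~$(2)$. Your proposed move --- ``write $A$ as a $\gamma$-flat colimit of $\gamma$-presentables in $\K$ and use closure of $\A$ under $\gamma$-flat colimits'' --- does not work, because the diagram lands in $\K_\gamma$, not in $\A$, so closure of $\A$ under $\gamma$-flat colimits says nothing.

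The paper's logic is different: it proves $(2)\Rightarrow(1)$ directly (this is the substantive step, Proposition~\ref{vr-acc}/Corollary~\ref{vr-acc2}), via a transfinite construction that exhibits $\A$ as an intersection of two locally presentable full subcategories of a presheaf category; and then $(1)\Rightarrow(3)$ follows by the unit-of-adjunction orthogonality class exactly as you set it up (Proposition~\ref{acc-virort}). In effect, your cycle tries to absorb the hard implication $(2)\Rightarrow(1)$ into the orthogonality argument, but that content cannot be bypassed: it is precisely what guarantees that the virtual reflections $LZX$ are ``small enough'' (supported on a common small $\D\subseteq\A$) for the density calculation to go through.
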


Even though our proofs still rely on some inevitable $\bo{Set}$-based conditions, such as the raising of the accessibility index, we believe they provide a more formal and cleaner approach to the theory with respect to some of the concepts studied in the past for ordinary accessible categories. We study the relationship between these and our notions in Section~\ref{cre+p}.

In the context of conically accessible $\V$-categories a similar characterization can be given. This shows that, in contrast to accessibility, conical accessibility of a subcategory can be recognized at the level of the underlying ordinary category.

\begin{teo}[\ref{con-accessible-embedded}]
	For a conically accessible $\V$-category $\K$ and a fully faithful inclusion $\A\hookrightarrow\K$, the following are equivalent:\begin{enumerate}\setlength\itemsep{0.25em}
		\item $\A$ is conically accessible and conically accessibly embedded;
		\item $\A_0$ is accessible and accessibly embedded in $\K_0$;
		\item $\A_0$ is accessibly embedded and virtually reflective in $\K_0$;
		\item $\A_0$ is a virtual orthogonality class in $\K_0$.
	\end{enumerate}
\end{teo}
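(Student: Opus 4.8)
The plan is to deduce the ordinary-categorical equivalences $(2)\Leftrightarrow(3)\Leftrightarrow(4)$ directly from Theorem~\ref{accessible-embedded} applied in the base $\V=\bo{Set}$, and then to establish $(1)\Leftrightarrow(2)$ by passing between a conically accessible $\V$-category and its underlying ordinary category. The two facts that make this passage work, and which I would record first (or import from the preliminaries), are: if $\K$ is conically $\alpha$-accessible then $\K_0$ is $\alpha$-accessible, with the conically $\alpha$-presentable objects of $\K$ being exactly those whose underlying object is $\alpha$-presentable in $\K_0$; and conical $\alpha$-filtered colimits in $\K$ are computed in $\K_0$, while conversely (for $\alpha$ at least the rank) every $\alpha$-filtered colimit in $\K_0$ underlies a conical colimit in $\K$. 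Both rely only on the standing hypothesis that the underlying-set $\V$-functor $\V\to\bo{Set}$ is conservative and preserves conical filtered colimits. Given the first fact, $\K_0$ is accessible, so Theorem~\ref{accessible-embedded} for $\bo{Set}$ applied to $\A_0\hookrightarrow\K_0$ yields $(2)\Leftrightarrow(3)\Leftrightarrow(4)$ at once.

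For $(1)\Rightarrow(2)$: if $\A$ is conically $\alpha$-accessible then $\A_0$ is $\alpha$-accessible by the fact above, and if $\A\hookrightarrow\K$ preserves conical $\alpha$-filtered colimits then, taking underlying categories, $\A_0\hookrightarrow\K_0$ preserves $\alpha$-filtered colimits. Enlarging $\alpha$ so that a single regular cardinal witnesses both properties (standard index-raising, the ``inevitable $\bo{Set}$-based'' ingredient flagged in the introduction), we obtain $(2)$.

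The substantial direction is $(2)\Rightarrow(1)$. Choose a regular cardinal $\beta$ sharply above each of: the conical accessibility rank of $\K$, the accessibility ranks of $\K_0$ and $\A_0$, and the rank at which $\A_0$ is accessibly embedded in $\K_0$; such $\beta$ are cofinal. Then $\K$ is conically $\beta$-accessible, $\A_0$ is $\beta$-accessible, and $\A_0\hookrightarrow\K_0$ preserves $\beta$-filtered colimits. Since conical $\beta$-filtered colimits in $\K$ are computed in $\K_0$ and $\A_0$ is closed under $\beta$-filtered colimits there, $\A$ is closed in $\K$ under conical $\beta$-filtered colimits and the inclusion preserves them; in particular $\A$ has conical $\beta$-filtered colimits, computed as in $\A_0$, so $\A$ is already conically $\beta$-accessibly embedded provided we show it is conically $\beta$-accessible. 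By the identification of presentable objects recalled above, an object of $\A$ is conically $\beta$-presentable if and only if it is $\beta$-presentable in $\A_0$; let $\G\subseteq\A$ be a small full subcategory given by a representative set of these. Since $\A_0$ is $\beta$-accessible, every object of $\A$ is a $\beta$-filtered colimit in $\A_0$ --- hence a conical $\beta$-filtered colimit in $\A$ --- of objects of $\G$.

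It remains to check that $\G$ is $\V$-dense in $\A$, which then gives that $\A$ is conically $\beta$-accessible. For $A,B\in\A$, write $A\cong\colim_iG_i$ as a conical $\beta$-filtered colimit with $G_i\in\G$. Computing pointwise and using that each $G_i$ is conically $\beta$-presentable, $\A(-,A)|_\G\cong\colim_i\G(-,G_i)$ as a conical colimit in $[\G^{op},\V]$, whence
\begin{align*}
[\G^{op},\V]\big(\A(-,A)|_\G,\A(-,B)|_\G\big)
&\cong\lim_i[\G^{op},\V]\big(\G(-,G_i),\A(-,B)|_\G\big)\\
&\cong\lim_i\A(G_i,B)\;\cong\;\A(A,B),
\end{align*}
using in turn that $[\G^{op},\V](-,\A(-,B)|_\G)$ turns colimits into limits, the enriched Yoneda lemma, and that $\A(-,B)$ turns the colimit $A\cong\colim_iG_i$ into a limit. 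A check that this isomorphism is the canonical comparison shows the restricted Yoneda $\V$-functor $\A\to[\G^{op},\V]$ is fully faithful, that is, $\G$ is dense, completing $(2)\Rightarrow(1)$. I expect the main obstacle to be exactly this direction: one must be careful that the purely $\bo{Set}$-level hypotheses in $(2)$ reassemble into the enriched data required for conical accessibility --- above all the $\V$-density of $\G$ rather than just its ordinary density --- and that the relevant accessibility ranks can be simultaneously aligned by passing to a sufficiently sharp $\beta$.
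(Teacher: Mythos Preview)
Your overall strategy coincides with the paper's: the equivalences $(2)\Leftrightarrow(3)\Leftrightarrow(4)$ are exactly Theorem~\ref{accessible-embedded} at $\V=\bo{Set}$, and $(1)\Leftrightarrow(2)$ is handled by passing between the enriched and underlying levels. The paper packages the substantial direction $(2)\Rightarrow(1)$ as a citation of Corollary~\ref{conically-acc} (which in turn rests on Proposition~\ref{conical-flat}); you instead unfold that argument by hand.

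There is, however, a real gap in your $(2)\Rightarrow(1)$. You assert that ``an object of $\A$ is conically $\beta$-presentable if and only if it is $\beta$-presentable in $\A_0$'' by appeal to ``the identification of presentable objects recalled above'', but that identification is Proposition~\ref{conicallyaccproperties}(4), valid only for $\V$-categories \emph{already known} to be conically accessible. Invoking it for $\A$ at this point is circular. What you actually need is the single inclusion $(\A_0)_\beta\subseteq\A_\beta^c$, and this must be routed through $\K$: enlarge $\beta$ once more (via Corollary~\ref{preserving-presentables}) so that the inclusion $\A_0\hookrightarrow\K_0$ also preserves $\beta$-presentable objects. Then $X\in(\A_0)_\beta$ gives $X\in(\K_0)_\beta=(\K_\beta^c)_0$, so $\K(X,-)$ preserves conical $\beta$-filtered colimits, and hence so does $\A(X,-)\cong\K(X,J-)$ since $J$ does. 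With this correction your argument is complete.

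Two smaller points. First, the density verification is superfluous: conical $\beta$-accessibility only requires a small $\G\subseteq\A_\beta^c$ generating $\A$ under conical $\beta$-filtered colimits, and you already have that once the gap above is filled. Second, conservativity of $\V_0(I,-)$ is neither a standing hypothesis of the paper nor needed for the two facts you record; all that is used is that $I$ is $\alpha_0$-presentable, so that $\V_0(I,-)$ preserves the relevant filtered colimits.
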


We begin the paper by recalling in Section~\ref{back} some results on ordinary accessible categories and setting the notation for enriched categories. In Section~\ref{sectionacc}, following the ideas of \cite{ABLR02:articolo}, we introduce the notion of $\Phi$-accessible $\V$-categories for a sound class of weights $\Phi$ and prove some basic results which generalize those of the accessible $\V$-categories of \cite{BQR98}. Then we recall some basic facts about conically accessible $\V$-categories. Finally we compare the two notions, for a general base $\V$, by giving sufficient conditions for a conically accessible $\V$-category to be accessible, and by showing that if $\A$ is $\alpha$-accessible then it is also conically $\beta$-accessible for any $\beta$ sharply greater than $\alpha$.

In Section~\ref{mainresults-acc} we prove the main results of this paper. We begin by giving a new proof of the fact that a $\V$-category is accessible if and only if it is sketchable (which was first shown in \cite{BQR98}) and then we prove the characterization theorems above. In the last subsection we compare the virtual concepts with those of cone-reflectivity and cone-injectivity, obtaining some of the results in \cite{AR94:libro} as a consequence of our theorem. 

To conclude the paper, in Section~\ref{limits} we prove that the 2-category of accessible $\V$-categories, accessible $\V$-functors, and $\V$-natural transformations is closed in $\V\tx{-}\bo{CAT}$ under all small flexible limits, and it therefore has all pseudo and bilimits as well. The same holds if we replace accessibility by conical accessibility.

\section{Background notions}\label{back}

\subsection{Results on ordinary accessible categories}

In this section we recall the only result on ordinary accessible categories that will be used throughout this paper; that is about raising the index of accessibility:

\begin{Def}
	Given two regular cardinals $\alpha$ and $\beta$, we say that $\alpha$ is {\em sharply less} than $\beta$, and write $\alpha\lhd\beta$, if $\alpha<\beta$ and for every $\alpha$-filtered category $\C$ and any $\beta$-small $\D\subseteq\C$ there exists a $\beta$-small and $\alpha$-filtered $\E$ with $\D\subseteq\E\subseteq\C$.
\end{Def}

This is one of many equivalent set-theoretic definitions of the sharply less than relation. Equivalently, one could consider in the definition above the case when $\C$ is just an $\alpha$-directed poset (this notion was considered in \cite[Theorem~2.11(iv)]{AR94:libro}). Another set-theoretic characterization is as follows: $\alpha\lhd\beta$ if and only if $\alpha<\beta$ and for every set $X$ of cardinality less than $\beta$, the partially ordered set $\P_\alpha(X)$, of all subsets of $X$ with cardinality less than $\alpha$, has a final subset of cardinality less than $\beta$. This is how the sharply less relation was originally defined in \cite[2.3.1]{MP89:libro}.

\begin{obs}
	For any small set of regular cardinals $\{\alpha_i\}_{i\in I}$ there are arbitrarily large regular cardinals $\beta$ for which $\beta\rhd\alpha_i$ holds for all $i\in I$ \cite[Example~2.13(6)]{AR94:libro}. 
\end{obs}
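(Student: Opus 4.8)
The plan is to reduce the claim to a closure property of cardinal exponentiation, using the last characterization of $\lhd$ recalled above: $\alpha\lhd\beta$ holds precisely when $\alpha<\beta$ and, for every set $X$ with $|X|<\beta$, the poset $\P_\alpha(X)$ admits a cofinal (``final'') subset of cardinality $<\beta$. Since the assertion is that suitable $\beta$ exist \emph{arbitrarily large}, I would fix an arbitrary cardinal $\lambda$ and aim to produce a single infinite cardinal $\mu$ with $\mu>\lambda$, $\mu\ge\alpha_i$, and $\mu^{<\alpha_i}=\mu$ for every $i\in I$; the required regular cardinal is then $\beta:=\mu^{+}$. Indeed $\mu^{+}$ is regular and exceeds $\lambda$ and each $\alpha_i$, and whenever $|X|<\mu^{+}$, that is $|X|\le\mu$, one has $|\P_{\alpha_i}(X)|=|X|^{<\alpha_i}\le\mu^{<\alpha_i}=\mu<\mu^{+}$ (for infinite $X$; the finite cases are trivial), so $\P_{\alpha_i}(X)$ is already a cofinal subset of the right size and the characterization yields $\alpha_i\lhd\beta$.

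To build such a $\mu$, I would first choose a regular cardinal $\theta$ with $\theta>\lambda$ and $\theta>\alpha_i$ for all $i\in I$ --- possible precisely because $I$ is small, so that $\{\lambda\}\cup\{\alpha_i\mid i\in I\}$ is a set --- and then define a continuous strictly increasing chain of infinite cardinals $(\mu_\xi)_{\xi\le\theta}$ by $\mu_0=\lambda^{+}$, $\mu_{\xi+1}=\bigl(\sup_{i\in I}\mu_\xi^{<\alpha_i}\bigr)^{+}$, and $\mu_\eta=\sup_{\xi<\eta}\mu_\xi$ at limit stages $\eta$, finally setting $\mu:=\mu_\theta$. By construction $\gamma^{<\alpha_i}<\mu$ for every $\gamma<\mu$ and every $i$, while $\operatorname{cf}(\mu)=\theta$, so in particular $\mu\ge\theta$ and hence $\mu>\lambda$ and $\mu>\alpha_i$ for all $i$. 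The decisive point is then the computation of $\mu^{<\alpha_i}$: for $i\in I$ and $\nu<\alpha_i$ we have $\nu<\theta=\operatorname{cf}(\mu)$ and $\gamma^{\nu}\le\gamma^{<\alpha_i}<\mu$ for all $\gamma<\mu$, so the standard fact that $\kappa^{\nu}=\kappa$ whenever $\nu<\operatorname{cf}(\kappa)$ and $\gamma^{\nu}<\kappa$ for all $\gamma<\kappa$ gives $\mu^{\nu}=\mu$; taking the supremum over $\nu<\alpha_i$ yields $\mu^{<\alpha_i}=\mu$, as wanted.

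I expect the only genuine subtlety to be this last cofinality bookkeeping. The tempting shortcut of iterating the chain only $\omega$ times produces a $\mu$ with $\operatorname{cf}(\mu)=\omega$, and then K\"onig's theorem forces $\mu^{<\alpha_i}\ge\mu^{\aleph_0}>\mu$ as soon as some $\alpha_i>\omega$, which destroys the closure property; iterating $\theta$ times for a regular $\theta$ above all the $\alpha_i$ is exactly what keeps $\operatorname{cf}(\mu)$ large enough. Everything else is routine cardinal arithmetic, and the argument is in any case a reformulation of \cite[Example~2.13(6)]{AR94:libro}.
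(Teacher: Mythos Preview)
Your argument is correct, but note that the paper does not actually prove this remark: it is stated as an observation with a bare citation to \cite[Example~2.13(6)]{AR94:libro}, so there is no ``paper's own proof'' to compare against.

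That said, your construction is considerably more elaborate than what is needed. A much shorter route, and the one usually intended by the cited reference, is this: given $\lambda$, pick any infinite cardinal $\mu$ with $\mu\ge\lambda$ and $\mu\ge\alpha_i$ for all $i\in I$ (possible since $I$ is small), and set $\beta:=(2^{\mu})^{+}$. For $|X|<\beta$, i.e.\ $|X|\le 2^{\mu}$, one has
\[
|\P_{\alpha_i}(X)|\le |X|^{<\alpha_i}\le (2^{\mu})^{<\alpha_i}=\sup_{\nu<\alpha_i}2^{\mu\cdot\nu}=2^{\mu}<\beta,
\]
so the cofinality characterization of $\lhd$ gives $\alpha_i\lhd\beta$ immediately. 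This bypasses the transfinite chain and the cofinality bookkeeping entirely. Your iterated construction does buy something --- it produces a $\mu$ with $\mu^{<\alpha_i}=\mu$ without invoking the power-set operation, which is occasionally useful in finer cardinal-arithmetic contexts --- but for the bare statement here it is overkill, and the point you flag as ``the only genuine subtlety'' simply does not arise in the shorter argument.
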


Before stating the next theorem let us fix some notation. For a regular cardinal $\alpha$ and a category $\C$  we denote by $\alpha\tx{-Ind}(\C)$ the free cocompletion of $\C$ under $\alpha$-filtered colimits; this can be described as the full subcategory of $[\C^{op},\bo{Set}]$ spanned by the $\alpha$-flat functors. For given regular cardinals $\alpha<\beta$ we denote by $\C_{\nicefrac{\beta}{\alpha}}$ the free cocompletion of $\C$ under $\beta$-small $\alpha$-filtered colimits; this is a one-step completion by \cite[Corollary~4.13]{chen2021sift}. Let $J\colon\C\hookrightarrow \C_{\nicefrac{\beta}{\alpha}}$ be the inclusion; then since $\C_{\nicefrac{\beta}{\alpha}}^{op}$ is the free completion of $\C^{op}$ under $\beta$-small $\alpha$-cofiltered limits, it follows that 
$$ (-\circ J)\colon \nicefrac{\beta}{\alpha}\tx{-Cont}(\C_{\nicefrac{\beta}{\alpha}}^{op},\bo{Set})\to[\C^{op},\bo{Set}] $$
is an equivalence of categories with inverse $\tx{Ran}_{J^{op}}$, where $\nicefrac{\beta}{\alpha}\tx{-Cont}[\C_{\nicefrac{\beta}{\alpha}}^{op},\bo{Set}]$ denotes the full subcategory of $[\C_{\nicefrac{\beta}{\alpha}}^{op},\bo{Set}]$ spanned by those functors that preserve $\beta$-small $\alpha$-filtered limits. Consider now a representable $\C_{\nicefrac{\beta}{\alpha}}(-,C)$, since $C$ is a $J$-absolute $\beta$-small $\alpha$-filtered colimit of elements from $\C$, it follows that $\C_{\nicefrac{\beta}{\alpha}}(J-,C)\cong \colim_i\C(-,C_i)$ is a $\beta$-small $\alpha$-filtered colimit of representables, and in particular $\alpha$-flat. Moreover, since pre-composition by $J$ preserves $\beta$-filtered colimits (being cocontinuous) and these are also $\alpha$-filtered, it follows that we have an induced functor
$$ (-\circ J) \colon\beta\tx{-Flat}(\C_{\nicefrac{\beta}{\alpha}}^{op},\bo{Set}) \hookrightarrow\alpha\tx{-Flat}(\C^{op},\bo{Set}) $$
which is fully faithful since every $\beta$-flat functor is $\nicefrac{\beta}{\alpha}$-continuous (it preserves all existing $\beta$-small limits).

We are now ready to state and prove the following theorem, which can be seen as an expansion of \cite[Theorem~2.11]{AR94:libro}.

\begin{teo}\label{raising}
	Let $\alpha<\beta$ be two regular cardinals and $J\colon\C\hookrightarrow\C_{\beta/\alpha}$ be as above; the following are equivalent:
	\begin{enumerate}\setlength\itemsep{0.25em}
		\item $\alpha\lhd\beta$;
		\item if $\C$ is an $\alpha$-filtered category then $\C_{\nicefrac{\beta}{\alpha}}$ is $\beta$-filtered;
		\item if $M\colon\C^{op}\to\bo{Set}$ is an $\alpha$-flat functor then $\tx{Ran}_{J^{op}}M\colon \C_{\nicefrac{\beta}{\alpha}}^{op}\to\bo{Set}$ is $\beta$-flat;
		\item each $\alpha$-flat $M\colon\C^{op}\to\bo{Set}$ is the restriction of some $\beta$-flat $N\colon\C_{\nicefrac{\beta}{\alpha}}^{op}\to\bo{Set}$;
		\item $\alpha\tx{-Ind}(\C)\simeq\beta\tx{-Ind}(\C_{\nicefrac{\beta}{\alpha}})$ for any small $\C$;
		\item every $\alpha$-accessible category is $\beta$-accessible.
	\end{enumerate}
\end{teo}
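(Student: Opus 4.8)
The plan is to run the cyclic chain $(1)\Rightarrow(2)\Rightarrow(3)\Rightarrow(4)\Rightarrow(5)\Rightarrow(6)$ and to close the loop with the equivalence $(6)\Leftrightarrow(1)$, which is one of the classical reformulations of $\alpha\lhd\beta$ and is exactly the part of the statement that ``expands'' \cite[Theorem~2.11]{AR94:libro}; for that equivalence I would simply cite \cite[Theorem~2.11]{AR94:libro}, recalling only the gist of $(6)\Rightarrow(1)$: if $\alpha\lhd\beta$ fails one uses the Makkai--Par\'e characterization recalled above to fix a set $X$ with $|X|<\beta$ such that $\P_\alpha(X)$ has no final subset of cardinality $<\beta$, and then checks that $\alpha\tx{-Ind}(\P_\alpha(X))$ is $\alpha$-accessible but not $\beta$-accessible, its terminal object (the $\alpha$-directed colimit of all the representables) failing to arise as a $\beta$-filtered colimit of $\beta$-presentable objects. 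All the genuinely new content is the insertion of $(2)$--$(5)$ into the chain, and the only interesting steps are $(1)\Rightarrow(2)$ and $(2)\Rightarrow(3)$.

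For $(1)\Rightarrow(2)$, recall that, since $\C_{\beta/\alpha}$ is the closure of $\C$ in $[\C^{op},\bo{Set}]$ under $\beta$-small $\alpha$-filtered colimits computed pointwise, each object of $\C_{\beta/\alpha}$ is a colimit $\colim_{k\in K}Jc_k$ of a $\beta$-small $\alpha$-filtered diagram $K\to\C$, and each object of $\C$ is $\alpha$-presentable in $\C_{\beta/\alpha}$. Given a $\beta$-small diagram $d\colon\I\to\C_{\beta/\alpha}$, I would form the subcategory $\D\subseteq\C$ generated by: the $\beta$-many objects of $\C$ occurring in chosen presentations of the $d(i)$, the $\beta$-many morphisms of $\C$ realizing the diagrams $K_i\to\C$, and, for each $\varphi\colon i\to i'$ in $\I$ and each generator $c$ of $d(i)$, a morphism of $\C$ factoring the composite $c\to d(i)\xrightarrow{d(\varphi)}d(i')$ through a generator of $d(i')$ (available by $\alpha$-presentability). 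This $\D$ is $\beta$-small, so by $(1)$ there is a $\beta$-small $\alpha$-filtered $\E$ with $\D\subseteq\E\subseteq\C$, and then $\colim_{e\in\E}Je\in\C_{\beta/\alpha}$ receives a cocone under $d$: commutativity of the evident legs $d(i)\to\colim_{e\in\E}Je$ with the maps $d(\varphi)$ is automatic, because the colimit injections of $\colim_{e\in\E}Je$ already reconcile everything taking place inside $\E$. Hence $\C_{\beta/\alpha}$ is $\beta$-filtered.

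The core of the proof is $(2)\Rightarrow(3)$. Let $M\colon\C^{op}\to\bo{Set}$ be $\alpha$-flat. Since $\tx{Ran}_{J^{op}}M$ lies in $\nicefrac{\beta}{\alpha}\tx{-Cont}(\C_{\nicefrac{\beta}{\alpha}}^{op},\bo{Set})$ by the equivalence recalled before the theorem, it sends each colimit $\colim_{k\in K}Jc_k$ (with $K$ a $\beta$-small $\alpha$-filtered category) to the limit $\lim_{k\in K}M(c_k)$; and, using the Yoneda identification $\tx{Ran}_{J^{op}}M(C)\cong[\C^{op},\bo{Set}](\C_{\beta/\alpha}(J-,C),M)$, the category of elements $\tx{el}(\tx{Ran}_{J^{op}}M)$ is the comma category of the inclusion $\C_{\beta/\alpha}\hookrightarrow[\C^{op},\bo{Set}]$ over $M$. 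I would then identify this comma category with $\big(\tx{el}(M)\big)_{\beta/\alpha}$, the free $\beta$-small $\alpha$-filtered cocompletion of the category of elements $\tx{el}(M)$ of $M$: an object of either is a $\beta$-small $\alpha$-filtered diagram in $\tx{el}(M)$, and the correspondence extends to morphisms because forming comma categories over a fixed object is compatible with freely adjoining $\beta$-small $\alpha$-filtered colimits, the relevant inclusion preserving them (here one uses that $\C_{\beta/\alpha}$ is a one-step completion, \cite[Corollary~4.13]{chen2021sift}). Since $M$ is $\alpha$-flat, $\tx{el}(M)$ is $\alpha$-filtered, so by $(2)$ the category $\big(\tx{el}(M)\big)_{\beta/\alpha}$ is $\beta$-filtered; hence $\tx{el}(\tx{Ran}_{J^{op}}M)$ is $\beta$-filtered and $\tx{Ran}_{J^{op}}M$ is $\beta$-flat. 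The remaining steps are formal: $(3)\Rightarrow(4)$ takes $N=\tx{Ran}_{J^{op}}M$, which restricts back to $M$ since $J$ is fully faithful; $(4)\Rightarrow(5)$ records that $(4)$ says precisely that the fully faithful functor $(-\circ J)\colon\beta\tx{-Flat}(\C_{\beta/\alpha}^{op},\bo{Set})\hookrightarrow\alpha\tx{-Flat}(\C^{op},\bo{Set})$ from the paragraph before the theorem is essentially surjective, hence an equivalence $\beta\tx{-Ind}(\C_{\beta/\alpha})\simeq\alpha\tx{-Ind}(\C)$; and $(5)\Rightarrow(6)$ follows because an $\alpha$-accessible $\A$ is $\alpha\tx{-Ind}(\C)$ for some small $\C$, hence by $(5)$ it is $\beta\tx{-Ind}(\C_{\beta/\alpha})$ with $\C_{\beta/\alpha}$ again small, i.e. $\beta$-accessible.

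The step I expect to be the main obstacle is exactly the identification $\tx{el}(\tx{Ran}_{J^{op}}M)\simeq\big(\tx{el}(M)\big)_{\beta/\alpha}$ underpinning $(2)\Rightarrow(3)$: the bijection on objects is easy, but upgrading it to an equivalence of categories --- equivalently, verifying that comma categories over a fixed object are compatible with the one-step cocompletion $(-)_{\beta/\alpha}$, so that $\beta$-filteredness can genuinely be transported --- is where the concrete description of $\C_{\beta/\alpha}$ must be used most carefully. By contrast, the $\beta$-small bookkeeping in $(1)\Rightarrow(2)$ is routine (one need only be scrupulous that $\D$ stays genuinely $\beta$-small while capturing all the factorization data), and everything else, including the set-theoretic content $(6)\Leftrightarrow(1)$, is either formal or already available in \cite{AR94:libro}.
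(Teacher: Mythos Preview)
Your proposal is correct and follows essentially the same route as the paper: the same cyclic chain $(1)\Rightarrow(2)\Rightarrow(3)\Rightarrow(4)\Rightarrow(5)\Rightarrow(6)$ with $(6)\Rightarrow(1)$ deferred to \cite{AR94:libro}, and the same key identification $\tx{El}(\tx{Ran}_{J^{op}}M)\simeq\tx{El}(M)_{\beta/\alpha}$ underpinning $(2)\Rightarrow(3)$ (the paper also just asserts this without detail). The only cosmetic difference is in $(1)\Rightarrow(2)$: the paper verifies $\beta$-filteredness in two steps --- a cocone on a $\beta$-small family of objects, then a cocone on a $\beta$-small family of parallel arrows (the latter by writing each arrow as a $\beta$-small $\alpha$-filtered colimit in $\C^{\mathbbm 2}$) --- whereas you handle a general $\beta$-small diagram in one pass, at the cost of tracking the factorization morphisms; both arguments are straightforward applications of the definition of $\alpha\lhd\beta$.
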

\begin{proof}
	$(1)\Rightarrow (2)$. Let $\C$ be an $\alpha$-filtered category and $\{X_i\}_{i\in I}$ be a $\beta$-small family of objects in $\C_{\nicefrac{\beta}{\alpha}}$. For each $i\in I$ fix a $\beta$-small diagram $H_i\colon\D_i\to\C$ whose colimit in $\C_{\nicefrac{\beta}{\alpha}}$ is $X_i$. By $(1)$ we can consider a $\beta$-small and $\alpha$-filtered $\E\subseteq\C$ which contains the images of all the $H_i$'s. Let $X$ be the colimit of the inclusion of $\E$ in $\C_{\nicefrac{\beta}{\alpha}}$; then by construction we have an induced arrow $X_i\to X$ for any $i\in I$, as desired.
	
	Consider now a $\beta$-small family of parallel arrows $\{f_i\colon X\to Y\}_{i\in I}$; then there are $\beta$-small categories $\D_i$ and diagrams $H_i\colon\D_i\to\C^\mathbbm{2}$ whose colimits in $(\C_{\nicefrac{\beta}{\alpha}})^\mathbbm 2$ are the $f_i$'s. Again, by $(1)$ we can find a $\beta$-small and $\alpha$-filtered $\E$ in $\C$ which contains the images of all the $H_i$'s. It follows then that the colimit of $\E$ in $\C_{\nicefrac{\beta}{\alpha}}$ comes with a cocone for the family $\{f_i\}_{i\in I}$. 
	
	$(2)\Rightarrow (3)$. Consider an $\alpha$-flat $M\colon\C^{op}\to\bo{Set}$ and its right Kan extension $N\colon\C_{\nicefrac{\beta}{\alpha}}^{op}\to\bo{Set}$ along $J^{op}$. Using the fact that $\C_{\nicefrac{\beta}{\alpha}}$ is the free cocompletion of $\C$ under $\beta$-small $\alpha$-filtered colimits, one can show that $\tx{El}(N)$ is the free cocompletion of $\tx{El}(M)$ under the same kind of colimits, so that $\tx{El}(N)=\tx{El}(M)_{\nicefrac{\beta}{\alpha}}$. Now, since $M$ is $\alpha$-flat, then $\tx{El}(M)$ is $\alpha$-filtered, and thus $\tx{El}(N)$ is $\beta$-filtered by $(2)$.
	
	$(3)\Rightarrow (4)$. This is trivial assuming $(3)$ since $M$ is always the restriction of its right Kan extension along $J^{op}$.
	
	$(4)\Rightarrow (5)$. Thanks to $(4)$ and the comments above the Theorem, $(-\circ J)$ induces an equivalence between the $\alpha$-flat functors out of $\C^{op}$ and the $\beta$-flat functors out of $\C_{\nicefrac{\beta}{\alpha}}^{op}$. Thus $(5)$ follows at once.
	
	$(5)\Rightarrow (6)$ is trivial, while $(6)\Rightarrow(1)$ can be shown as in the proof of \cite[Theorem~2.11]{AR94:libro}.

\end{proof}

A direct consequence of the equivalence between $(1)$ and $(6)$ is that the sharply less than relation is transitive.

\begin{obs}
	Point $(5)$ of the Theorem can be restated as follows. Given the monads $P=\beta\tx{-Ind}(-)$, that freely adds $\beta$-filtered colimits, and $T=(-)_{\nicefrac{\beta}{\alpha}}$, that freely adds $\beta$-small $\alpha$-filtered colimits, the composite $PT$ is still a monad and coincides with $\alpha\tx{-Ind}(-)$. This results in a distributive law from $T$ to $P$.
\end{obs}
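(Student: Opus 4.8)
The plan is to regard all three constructions as lax-idempotent (Kock--Z\"oberlein) pseudomonads on the $2$-category of locally small categories: $T=(-)_{\nicefrac{\beta}{\alpha}}$ freely adjoins $\beta$-small $\alpha$-filtered colimits, $P=\beta\tx{-Ind}(-)$ freely adjoins $\beta$-filtered colimits, and $S=\alpha\tx{-Ind}(-)$ freely adjoins $\alpha$-filtered colimits. For each of these, the pseudoalgebras are exactly the categories admitting the colimits in question, the unit is the corresponding dense embedding, and the multiplication is the canonical comparison; this is the standard description of free-cocompletion pseudomonads. Since the assertion merely repackages point $(5)$ of Theorem~\ref{raising}, we work throughout under $\alpha\lhd\beta$, which by the equivalence $(1)\Leftrightarrow(5)$ is precisely what makes $(5)$ available.

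The first step is to produce the distributive law $\lambda\colon TP\Rightarrow PT$, from which the monad structure on the composite $PT$ follows by Beck's theorem in its bicategorical, lax-idempotent form. To define a component $\lambda_\C$ it is enough to know that the target $PT(\C)$ is a $T$-algebra, for then the universal property of the free $T$-algebra $TP(\C)$ yields a unique-up-to-coherent-isomorphism extension of $P(\eta^T_\C)\colon P(\C)\to PT(\C)$. That $PT(\C)=\beta\tx{-Ind}(\C_{\nicefrac{\beta}{\alpha}})\simeq\alpha\tx{-Ind}(\C)$ is a $T$-algebra is immediate from point $(5)$, since $\alpha\tx{-Ind}(\C)$ has all $\alpha$-filtered colimits and in particular all $\beta$-small $\alpha$-filtered ones. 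Lax-idempotency then guarantees that these components are essentially unique and that the coherence axioms of a pseudo-distributive law hold automatically; equivalently, the law is the same datum as a lifting of $P$ to the $2$-category of $T$-algebras, which on the free algebras is exactly the assignment just described. This is the asserted distributive law from $T$ to $P$.

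It remains to identify the resulting monad $PT$ with $S=\alpha\tx{-Ind}(-)$. At the level of underlying functors the two already agree naturally by point $(5)$. To upgrade this to an equivalence of pseudomonads I would compare pseudoalgebras: unwinding $\lambda$, a $PT$-algebra is a category carrying both $\beta$-filtered and $\beta$-small $\alpha$-filtered colimits compatibly, and since $\alpha\lhd\beta$ makes every $\alpha$-filtered colimit a $\beta$-filtered colimit of $\beta$-small $\alpha$-filtered ones---while, conversely, every colimit of either of these two kinds is $\alpha$-filtered---these are precisely the $S$-algebras. A lax-idempotent pseudomonad is property-like, hence determined up to equivalence by which categories bear an algebra structure, so the agreement of $PT$- and $S$-algebras together with the functor-level equivalence of point $(5)$ identifying the free algebras promotes to an equivalence $PT\simeq S$ of pseudomonads. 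Equivalently, one transports the pseudomonad structure of $S$ along point $(5)$ and checks, via Beck's theorem, that the unit decomposes as $\eta^{PT}=P(\eta^T)\cdot\eta^P$ (and correspondingly for the multiplication), so that the transported structure is exactly the one induced by $\lambda$.

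The part I expect to demand the most care is not the construction of the components---which point $(5)$ hands us on free algebras---but the verification that this data assembles into a genuine pseudo-distributive law and an equivalence of pseudomonads: that is, the coherence bookkeeping of Beck's theorem together with the identification of $PT$-algebras. The essential mathematical input behind both is the decomposition of $\alpha$-filtered colimits into $\beta$-filtered colimits of $\beta$-small $\alpha$-filtered ones, i.e.\ exactly the content of $\alpha\lhd\beta$. (Were one instead to exhibit the lifting of $P$ on \emph{all} $T$-algebras by hand, the obstacle would migrate to an interchange-of-colimits argument showing that $\beta\tx{-Ind}(\A)$ inherits the $\beta$-small $\alpha$-filtered colimits of an arbitrary $\A$, generalizing the classical case $\beta=\omega$; lax-idempotency is precisely what lets us sidestep this and keep the remaining $2$-categorical coherence routine.)
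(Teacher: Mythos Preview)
The paper offers this as a Remark with no proof; it is presented simply as a reformulation of point~$(5)$ of Theorem~\ref{raising}, not as a claim requiring separate justification. Your proposal therefore goes well beyond what the paper does, supplying an actual argument where the paper gives none.

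That said, your outline is correct and is the natural way to make the remark precise. Treating the three free-cocompletion constructions as lax-idempotent pseudomonads, the distributive law $\lambda\colon TP\Rightarrow PT$ follows once $PT(\C)$ is a $T$-algebra---which is immediate from $(5)$---with the coherence conditions automatic in the KZ setting (this is the content of Marmolejo's work on distributive laws between KZ doctrines). Your two routes to identifying $PT$ with $S=\alpha\tx{-Ind}$ are both viable; the second, transporting the monad structure along $(5)$ and checking the units match via $\eta^{PT}=P(\eta^T)\cdot\eta^P$, is the more direct one and avoids the algebra comparison entirely.

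The one place deserving the caution you flag is the first route: the claim that a category with $\beta$-filtered colimits and $\beta$-small $\alpha$-filtered colimits has all $\alpha$-filtered colimits is not a formal consequence of $(5)$, which speaks only of free algebras. It needs the decomposition of an arbitrary $\alpha$-filtered diagram as a $\beta$-filtered colimit of $\beta$-small $\alpha$-filtered subdiagrams---essentially a direct use of the definition of $\alpha\lhd\beta$ (point~$(1)$) rather than of $(5)$. This is routine, but it is genuine extra content beyond what $(5)$ literally says, so your instinct to single it out is right.
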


Moreover:

\begin{cor}{\cite[Proposition~2.3.11]{MP89:libro}}\label{raising2}
	For an $\alpha$-accessible category $\A$ and regular cardinals $\alpha\lhd\beta$, there is an equivalence $\A_\beta\simeq(\A_\alpha)_{\nicefrac{\beta}{\alpha}}$ and hence every $\beta$-presentable object of $\A$ is a $\beta$-small $\alpha$-filtered colimit of $\alpha$-presentable objects.
\end{cor}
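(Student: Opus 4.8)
The plan is to derive everything from Theorem~\ref{raising} together with the standard description of the presentable objects of a free cocompletion under $\beta$-filtered colimits. Write $\C:=\A_\alpha$ for the essentially small full subcategory of $\alpha$-presentable objects, so that $\A\simeq\alpha\tx{-Ind}(\C)$. Since $\alpha\lhd\beta$, the implication $(1)\Rightarrow(6)$ of Theorem~\ref{raising} shows that $\A$ is $\beta$-accessible, hence $\A\simeq\beta\tx{-Ind}(\A_\beta)$ with $\A_\beta$ essentially small; and $(1)\Rightarrow(5)$ gives $\alpha\tx{-Ind}(\C)\simeq\beta\tx{-Ind}(\C_{\beta/\alpha})$, hence $\A\simeq\beta\tx{-Ind}(\C_{\beta/\alpha})$. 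So it will be enough to produce an equivalence $\C_{\beta/\alpha}\simeq\A_\beta$: the last clause of the statement then follows at once, because $\C_{\beta/\alpha}$ is the one-step completion of $\C$ under $\beta$-small $\alpha$-filtered colimits by \cite[Corollary~4.13]{chen2021sift}, so each of its objects --- equivalently each $\beta$-presentable object of $\A$ --- is such a colimit of objects of $\C=\A_\alpha$.

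First I would check that $\C_{\beta/\alpha}$ is Cauchy complete. Realized as the closure of the representables inside $[\C^{op},\bo{Set}]$ under $\beta$-small $\alpha$-filtered colimits, it is in particular closed there under $\beta$-small $\alpha$-filtered colimits. Now the splitting of an idempotent $e\colon X\to X$ is the colimit of the diagram $X\xrightarrow{e}X$ indexed by the one-object category whose only non-identity arrow $e$ satisfies $e\circ e=e$; this indexing category is finite and filtered --- the sole non-trivial cocone condition, that for the parallel pair $1_X,e$, being witnessed by $e$ itself --- hence $\beta$-small and $\alpha$-filtered. Since $[\C^{op},\bo{Set}]$ is Cauchy complete and $\C_{\beta/\alpha}$ is closed in it under $\beta$-small $\alpha$-filtered colimits, every idempotent of $\C_{\beta/\alpha}$ splits inside $\C_{\beta/\alpha}$; that is, $\C_{\beta/\alpha}$ is Cauchy complete. (Equivalently, one may invoke the general fact that the free cocompletion of a Cauchy complete category under any class of colimits containing the absolute ones is again Cauchy complete.)

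Next I would recall that for a small category $\D$ the objects of $\D$ are $\beta$-presentable in $\beta\tx{-Ind}(\D)$, while conversely every $\beta$-presentable object of $\beta\tx{-Ind}(\D)$, being a $\beta$-filtered colimit $\colim_i D_i$ of objects of $\D$, has its identity factoring through some coprojection $D_i\to\colim_i D_i$ and is therefore a retract of an object of $\D$; consequently $(\beta\tx{-Ind}(\D))_\beta$ is the Cauchy completion of $\D$, and coincides with $\D$ when $\D$ is already Cauchy complete. Applying this with $\D=\C_{\beta/\alpha}$, which is Cauchy complete by the previous paragraph, and with $\D=\A_\beta$, which is Cauchy complete since a retract of a $\beta$-presentable object is $\beta$-presentable, and using the two presentations $\beta\tx{-Ind}(\C_{\beta/\alpha})\simeq\A\simeq\beta\tx{-Ind}(\A_\beta)$, I would conclude $\C_{\beta/\alpha}\simeq\bigl(\beta\tx{-Ind}(\C_{\beta/\alpha})\bigr)_\beta\simeq\A_\beta$, i.e.\ $(\A_\alpha)_{\beta/\alpha}\simeq\A_\beta$ as required.

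I expect the only delicate point to be the Cauchy-completeness bookkeeping of the second paragraph: it is exactly what guarantees that $\C_{\beta/\alpha}$ genuinely is the category of $\beta$-presentable objects of $\A$, and not merely a dense full subcategory of it. Everything else is a formal consequence of Theorem~\ref{raising} and the universal properties of the completions in play.
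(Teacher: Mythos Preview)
Your proof is correct and follows essentially the same approach as the paper's: both use condition~(5) of Theorem~\ref{raising} to identify $\A$ with $\beta\tx{-Ind}(\C_{\beta/\alpha})$ and then invoke the fact that $\beta\tx{-Ind}(\D)_\beta\simeq\D$ for Cauchy complete $\D$, together with \cite[Corollary~4.13]{chen2021sift} for the last clause. The only difference is that you explicitly verify the Cauchy completeness of $\C_{\beta/\alpha}$ (via the observation that the free idempotent monoid is finite and $\alpha$-filtered for every $\alpha$), whereas the paper leaves this implicit.
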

\begin{proof}
	The fact that $\A_\beta\simeq(\A_\alpha)_{\nicefrac{\beta}{\alpha}}$ is a direct consequence of condition $(5)$ from the Theorem above and of the fact that $\alpha\tx{-Ind}(\C)_\alpha\simeq\C$ for any $\alpha$ and any Cauchy complete $\C$. The last assertion is a consequence of \cite[Corollary~4.13]{chen2021sift}.
\end{proof}

A direct consequence of these results is the following:

\begin{cor}\cite[Theorem~2.19]{AR94:libro}\label{preserving-presentables}
	Given an accessible functor $F\colon\A\to\B$ between ordinary accessible categories, there exists $\alpha$ such that $F$ preserves the $\beta$-presentable objects for each $\beta\rhd\alpha$.
\end{cor}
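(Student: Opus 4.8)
The plan is to deduce this from the index-raising results just established.

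\textbf{Uniformizing the rank.} Since $F$ is accessible there are regular cardinals $\lambda_0,\lambda_1,\lambda_2$ with $F$ preserving $\lambda_0$-filtered colimits, $\A$ being $\lambda_1$-accessible and $\B$ being $\lambda_2$-accessible. Using the Remark on arbitrarily large sharp cardinals, choose a regular $\gamma$ with $\gamma\rhd\lambda_0$, $\gamma\rhd\lambda_1$ and $\gamma\rhd\lambda_2$; then $\A$ and $\B$ are both $\gamma$-accessible by Theorem~\ref{raising}$(6)$, and $F$ still preserves $\gamma$-filtered colimits because a $\gamma$-filtered category is in particular $\lambda_0$-filtered. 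So from now on we may assume $F$ is $\gamma$-accessible with $\A$ and $\B$ both $\gamma$-accessible.

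\textbf{Bounding $F$ on the $\gamma$-presentables.} The full subcategory $\A_\gamma$ of $\gamma$-presentable objects is essentially small, and in the accessible category $\B$ every object is presentable; hence for each isomorphism class of object $A$ of $\A_\gamma$ there is a regular cardinal $\lambda_A$ with $FA$ being $\lambda_A$-presentable. Applying the Remark again to the small set of regular cardinals consisting of $\gamma$ together with all the $\lambda_A$, pick a regular $\alpha$ with $\alpha\rhd\gamma$ and $\alpha>\lambda_A$ for every such $A$. Then $F$ sends $\gamma$-presentable objects to $\alpha$-presentable objects, since a $\lambda_A$-presentable object with $\lambda_A<\alpha$ is automatically $\alpha$-presentable. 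I claim this $\alpha$ works.

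\textbf{Concluding for $\beta\rhd\alpha$.} Fix $\beta\rhd\alpha$; by transitivity of the sharply-less-than relation (recorded after Theorem~\ref{raising}) we also have $\beta\rhd\gamma$. Let $A\in\A$ be $\beta$-presentable. Since $\A$ is $\gamma$-accessible and $\gamma\lhd\beta$, Corollary~\ref{raising2} exhibits $A$ as a $\beta$-small $\gamma$-filtered colimit $A\cong\colim_i A_i$ with each $A_i$ in $\A_\gamma$. As $F$ preserves $\gamma$-filtered colimits, $FA\cong\colim_i FA_i$ is a $\beta$-small colimit whose vertices $FA_i$ are $\alpha$-presentable by the previous step, hence $\beta$-presentable because $\alpha<\beta$. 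A $\beta$-small colimit of $\beta$-presentable objects is $\beta$-presentable (its hom-functor is a $\beta$-small limit of functors that preserve $\beta$-filtered colimits, and in $\bo{Set}$ such colimits commute with such limits), so $FA$ is $\beta$-presentable, as required.

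I do not expect a genuinely hard step here: the argument is a matter of assembling the Remark on large sharp cardinals, Theorem~\ref{raising}, and Corollary~\ref{raising2}. The only points requiring a little attention are keeping $\alpha$ sharply above $\gamma$ while still dominating every $\lambda_A$, and invoking the standard closure of $\beta$-presentable objects under $\beta$-small colimits.
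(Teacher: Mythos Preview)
Your argument is correct and follows essentially the same route as the paper's proof: pick a common rank $\gamma$ (the paper's $\alpha_0$) at which $\A$, $\B$, and $F$ are all accessible, then choose $\alpha\rhd\gamma$ with $F(\A_\gamma)\subseteq\B_\alpha$, and for $\beta\rhd\alpha$ use transitivity together with Corollary~\ref{raising2} to express any $\beta$-presentable object as a $\beta$-small $\gamma$-filtered colimit of $\gamma$-presentables, then apply $F$. The only cosmetic difference is that you spell out more explicitly how to obtain $\gamma$ and $\alpha$ via the Remark on arbitrarily large sharp cardinals, whereas the paper compresses this into a single sentence.
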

\begin{proof}
	Let $\alpha_0$ be such that $\A$, $\B$, and $F$ are $\alpha_0$-accessible and let $\alpha\rhd\alpha_0$ be such that $F(\A_{\alpha_0})\subseteq \B_\alpha$. Consider now $\beta\rhd\alpha$; by transitivity of the sharply less than relation and Corollary~\ref{raising2} each object of $\A_\beta$ is a $\beta$-small $\alpha_0$-filtered colimit of objects from $\A_{\alpha_0}$; since $F$ preserves $\alpha_0$-filtered colimits it follows that each object of $F(\A_\beta)$ is a $\beta$-small ($\alpha_0$-filtered) colimit of objects from $\B_\alpha$, and thus is still $\beta$-presentable in $\B$.
\end{proof}

\vspace{6pt}

\subsection{Background on enriched categories}

We now fix a base of enrichment $\V=(\V_0,\otimes,I)$ which is locally presentable and symmetric monoidal closed; in particular $\V$ will be locally $\alpha_0$-presentable as a closed category (\cite{Kel82:articolo}) for some regular cardinal $\alpha_0$. From now on every cardinal taken into consideration will be regular and greater than $\alpha_0$.
	
We follow the notations of \cite{Kel82:libro}, with the only change that ``indexed'' colimits are here called ``weighted'', as is now standard. In particular, given a $\V$-category $\A$ we denote by $\A_0$ its underlying ordinary category; similarly if $F\colon \A\to\B$ is a $\V$-functor we denote by $F_0\colon \A_0\to\B_0$ the corresponding ordinary functor underlying $F$. For any ordinary category $\K$ we denote by $\K_\V$ the free $\V$-category over $\K$. Our $\V$-categories are allowed to have a large set of objects, unless specified otherwise.
	
We call {\em weight} a $\V$-functor $ M \colon \C^{op}\to\V$ with small domain. Given such a weight $ M $ and a $\V$-functor $H\colon \C\to\A$, we denote by $ M *H$ (if it exists) the colimit of $H$ weighted by $ M $; dually weighted limits are denoted by $\{ N ,K\}$ for $ N \colon \C\to\V$ and $K\colon \C\to\A$. Conical limits and colimits are special cases of weighted ones; they coincide with those weighted by $\Delta I\colon \B_\V^{op}\to\V$ for some ordinary category $\B$. The conical colimit of a $\V$-functor $T_{\V}\colon \B_\V\to\A$, if it exists, will also be the ordinary colimit of the transpose $T\colon \B\to\A_0$ in $\A_0$ (but the converse is not generally true). 
	
\begin{Def}[\cite{Kel82:articolo}]
	We say that a weight $ M \colon \C^{op}\to\V$ is {\em $\alpha$-small} if $\C$ has less than $\alpha$ objects, $\C(C,D)\in\V_{\alpha}$ for any $C,D\in\C$, and $ M (C)\in\V_\alpha$ for any $C\in\C$. An $\alpha$-small (weighted) limit is one taken along an $\alpha$-small weight. We say that a $\V$-category $\C$ is $\alpha$-complete if it has all $\alpha$-small limits; we say that a $\V$-functor is $\alpha$-continuous if it preserves all $\alpha$-small limits.
\end{Def}
	
Both conical $\alpha$-small limits and powers by $\alpha$-presentable objects are examples of $\alpha$-small limits and together are enough to generate all $\alpha$-small weighted limits \cite[Section~4]{Kel82:articolo}.

\begin{Def}
	We say that a weight $ M \colon\C^{op}\to\V$ is $\alpha$-flat if its left Kan extension $\tx{Lan}_Y M \colon[\C,\V]\to\V$ along the Yoneda embedding is $\alpha$-continuous. By {\em $\alpha$-flat colimits} we mean those weighted by an $\alpha$-flat weight.
\end{Def}

Since $\tx{Lan}_Y M\cong M*-$ it follows that $M$ is $\alpha$-flat if and only if $M$-weighted colimits commute in $\V$ with $\alpha$-small colimits. Note that every conical $\alpha$-filtered colimit is $\alpha$-flat and that the $\alpha$-flat $\V$-functors are closed in $[\C^{op},\V]$ under $\alpha$-flat colimits.

\begin{prop}[\cite{Kel82:articolo}]\label{flat-char}
	Let $ M \colon\C^{op}\to\V$ be a weight; the following are equivalent:\begin{enumerate}\setlength\itemsep{0.25em}
		\item $ M $ is $\alpha$-flat;
		\item $ M $ is an $\alpha$-flat colimit of representables.
	\end{enumerate}
	If $\C$ is $\alpha$-complete they are further equivalent to:\begin{enumerate}\setlength\itemsep{0.25em}
		\item[(3)] $ M $ is $\alpha$-continuous;
		\item[(4)] $ M $ is a conical $\alpha$-filtered colimit of representables. 
	\end{enumerate}
	In that case the following isomorphism holds
	$$  M \cong\tx{colim} (\tx{El}( M _I)_{\V} \stackrel{\pi_\V}{\longrightarrow} \C \stackrel{Y}{\longrightarrow} [\C^{op},\V])$$
	where $ M _I=\V_0(I, M _0-)$ and $\tx{El}( M _I)$ is $\alpha$-filtered.
\end{prop}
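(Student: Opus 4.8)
The plan is to dispatch the purely formal equivalences first and then concentrate on the one genuinely combinatorial implication. For $(2)\Rightarrow(1)$: each representable $\C(-,c)$ is $\alpha$-flat, since by co-Yoneda $\tx{Lan}_Y\C(-,c)\cong\tx{ev}_c\colon[\C,\V]\to\V$, and an evaluation functor preserves all limits, in particular all $\alpha$-small ones; combining this with the already-recorded closure of the $\alpha$-flat weights in $[\C^{op},\V]$ under $\alpha$-flat colimits gives that every $\alpha$-flat colimit of representables is $\alpha$-flat. For $(1)\Rightarrow(2)$: by density $M\cong M*Y$, where $Y$ is the Yoneda embedding $\C\to[\C^{op},\V]$, which displays $M$ as the colimit of the representable-valued diagram $Y$ weighted by $M$ itself — an $\alpha$-flat colimit exactly when $M$ is $\alpha$-flat.

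Assume now that $\C$ is $\alpha$-complete. For $(1)\Rightarrow(3)$, co-Yoneda again gives $M\cong(\tx{Lan}_Y M)\circ Y$ with $Y\colon\C^{op}\to[\C,\V]$ the Yoneda embedding; since $Y$ preserves all limits and $\tx{Lan}_Y M$ is $\alpha$-continuous by the very definition of $\alpha$-flatness, $M$ is $\alpha$-continuous. For $(4)\Rightarrow(1)$ it suffices to recall that every conical $\alpha$-filtered colimit is $\alpha$-flat, so a conical $\alpha$-filtered colimit of representables is an $\alpha$-flat colimit of representables, i.e.\ $(4)\Rightarrow(2)$, whence $(4)\Rightarrow(1)$. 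Everything therefore reduces to $(3)\Rightarrow(4)$ together with the closing displayed isomorphism: assuming $M$ is $\alpha$-continuous, I must show that $\tx{El}(M_I)$ is $\alpha$-filtered and that the canonical comparison $\colim\big(\tx{El}(M_I)_\V\stackrel{\pi_\V}{\longrightarrow}\C\stackrel{Y}{\longrightarrow}[\C^{op},\V]\big)\to M$ is invertible.

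For the $\alpha$-filteredness I would transport to $M_I=\V_0(I,M_0-)$ the classical argument that an $\alpha$-continuous $\bo{Set}$-valued functor on an $\alpha$-complete category has $\alpha$-filtered category of elements: given an $\alpha$-small family of objects, resp.\ of parallel morphisms, of $\tx{El}(M_I)$, form the appropriate $\alpha$-small limit of the underlying objects in $\C$ (available by $\alpha$-completeness), and use that $\alpha$-continuity of $M$ passes to the ordinary functor $M_I$ — because $\V_0(I,-)$ preserves limits — to produce over that limit the element witnessing the required cocone, resp.\ coequalizing morphism. For the comparison map I would evaluate at each $c'\in\C$ and test against the $\alpha$-presentable objects $v$ of $\V$, which form a strong generator: as the indexing category is now $\alpha$-filtered and $v$ is $\alpha$-presentable, $\V_0(v,-)$ sends the colimit in $\V$ to the corresponding colimit of sets, and the resulting map of sets is a bijection by the usual co-Yoneda bookkeeping — surjective because the identity morphism at $(c',x)$ hits any given element, injective because two representatives with the same image in $M(c')$ are already identified along a defining morphism of $\tx{El}(M_I)$ inside the $\alpha$-filtered colimit. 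The displayed isomorphism is then precisely this comparison, and its source is a conical $\alpha$-filtered colimit of representables, so $(4)$ holds.

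The step I expect to be the real obstacle is $(3)\Rightarrow(4)$: unlike the others it is not a formal manipulation of Kan extensions and density, and both of its halves — the $\alpha$-filteredness of $\tx{El}(M_I)$ and, on top of it, the invertibility of the canonical comparison — rely on the $\bo{Set}$-level interplay between $\alpha$-continuity of $M$, $\alpha$-completeness of $\C$, and the compatibility of $I$ and the $\alpha$-presentable objects of $\V$ with $\alpha$-filtered colimits. This is exactly the point at which one leaves the calculus of weighted limits and colimits and falls back on the classical flat-equals-left-exact reasoning, here applied to the underlying ordinary functor $M_I$ (so one could also simply invoke \cite{Kel82:articolo} for this part).
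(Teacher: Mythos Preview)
The paper does not supply its own proof of this proposition: it is quoted from \cite{Kel82:articolo} without argument. Your treatment of the formal implications $(1)\Leftrightarrow(2)$, $(1)\Rightarrow(3)$, and $(4)\Rightarrow(2)$ is correct and is essentially how Kelly handles these as well; you have also correctly isolated $(3)\Rightarrow(4)$ as the only substantive step.

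There is, however, a genuine gap in your sketch of $(3)\Rightarrow(4)$. Your surjectivity and injectivity arguments for the comparison $\colim_{(c,x)\in\tx{El}(M_I)}\C(c',c)\to M(c')$ are only valid for $v=I$: an element of $\V_0(I,M(c'))=M_I(c')$ literally \emph{is} an object of $\tx{El}(M_I)$, so that ``the identity at $(c',x)$ hits any given element'' makes sense. For a general $\alpha$-presentable $v$ a map $y\colon v\to M(c')$ does not give an object of $\tx{El}(M_I)$, and the argument as written does not apply; since $I$ alone is not a strong generator of $\V$ in general (take $\V=\bo{Cat}$), checking only $v=I$ is insufficient. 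The repair --- and Kelly's actual argument --- uses that the domain of $M$ has tensors by $\alpha$-presentable objects: $\alpha$-continuity of $M$ yields $\V_0(v,M(c'))\cong M_I(v\cdot c')$ and $\V_0(v,\C(c',c))\cong\C_0(v\cdot c',c)$, reducing the general case to the $v=I$ case with $c'$ replaced by $v\cdot c'$, where your argument does go through. A related slip: to build a \emph{cocone} in $\tx{El}(M_I)$ over an $\alpha$-small family (the filteredness step) one must form $\alpha$-small \emph{colimits} of the underlying objects, not limits, since the projection $\pi$ lands in $\C$ and cocones in $\tx{El}(M_I)$ sit over cocones in $\C$. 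Both points indicate that the operative hypothesis is really that $\C^{op}$ be $\alpha$-complete (equivalently $\C$ be $\alpha$-cocomplete); the paper's own application of this proposition in the proof of Proposition~\ref{cocompleteacc} --- where $M'$ has $\alpha$-cocomplete $\D$ as the base of its domain $\D^{op}$ --- confirms this reading.
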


\section{Accessible $\V$-categories}\label{sectionacc}

\subsection{$\Phi$-accessible $\V$-categories}\label{con-flat}

In this section we introduce the main notion of accessibility that we consider in the present paper, this can be seen as a generalization of that of \cite{BQR98} to the ``sound'' context of \cite{ABLR02:articolo}. Most of this section will depend on Assumption~\ref{soundclass} below.

\begin{Def}
	Let $\Phi$ be a class of weights. We say that a weight $M\colon\C^{op}\to\V$ is {\em $\Phi$-flat} if its left Kan extension $\tx{Lan}_YM\colon[\C,\V]\to\V$ along the Yoneda embedding is $\Phi$-continuous. We call {$\Phi$-flat colimits} those weighted by a $\Phi$-flat weight and denote by $\Phi\tx{-Flat}(\C^{op},\V)$ the full subcategory of $[\C^{op},\V]$ spanned by the $\Phi$-flat weights.
\end{Def}

\begin{Def}
	A class of weights $\Phi$ is called {\em weakly sound} if every $\Phi$-continuous $\V$-functor $ M \colon\C^{op}\to\V$ (from a $\Phi$-cocomplete $\C$) is $\Phi$-flat.\\
	A class $\Phi$ is called {\em sound} if, for any $ M\colon\C^{op}\to\V$, whenever $ M *-$ preserves $\Phi$-limits of representables, then $ M $ is $\Phi$-flat.
\end{Def}

Of course if $\Phi$ is sound then it is weakly sound, but the converse doesn't always hold: see \cite[Remark~2.6]{ABLR02:articolo}. However, as we are going to see below, the converse does hold when the class of weights $\Phi$ is {\em pre-saturated}, meaning that for any small $\C$ the free cocompletion $\Phi\C$ of $\C$ under $\Phi$-colimits is a one-step closure in $[\C^{op},\V]$. In other words $\Phi$ is pre-saturated if, for any $\V$-category $\C$, every object of $\Phi\C$ is a $\Phi$-colimit of objects from $\C$.

We first need to prove a lemma which is a generalization of \cite[Lemma~2.7]{LT21:articolo}.

\begin{lema}\label{flat-restriction}
	Let $\Phi$ be a class of weights, $J\colon \B\to\C$ be a $\V$-functor, and $  M  \colon \B^{op}\to\V$ a weight; then:\begin{enumerate}\setlength\itemsep{0.25em}
		\item if $  M  $ is $\Phi$-flat then $\tx{Lan}_{J^{op}}  M $ is;
		\item if $J$ is fully faithful and $\tx{Lan}_{J^{op}} M  $ is $\Phi$-flat then $  M  $ is $\Phi$-flat as well.
	\end{enumerate}
\end{lema}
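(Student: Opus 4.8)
The plan is to derive both parts from a single coend identity, together with the fact that precomposition functors preserve limits.

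\emph{The key identity.} First I would record that for every $\V$-functor $H\colon\C\to\V$ there is a natural isomorphism
$$ (\tx{Lan}_{J^{op}}M)*H \;\cong\; M*(H\circ J), $$
obtained by a Fubini-plus-co-Yoneda computation: inside the coend defining $(\tx{Lan}_{J^{op}}M)*H$ expand $(\tx{Lan}_{J^{op}}M)c\cong\int^{b}\C(Jb,c)\otimes Mb$, interchange the order of integration, and contract $\int^{c}\C(Jb,c)\otimes Hc\cong H(Jb)$. Recalling that $\tx{Lan}_{Y_\C}(\tx{Lan}_{J^{op}}M)\cong(\tx{Lan}_{J^{op}}M)*-$ and $\tx{Lan}_{Y_\B}M\cong M*-$, this identity says precisely that
$$ \tx{Lan}_{Y_\C}(\tx{Lan}_{J^{op}}M)\;\cong\;(\tx{Lan}_{Y_\B}M)\circ(-\circ J)\colon[\C,\V]\longrightarrow\V, $$
where $(-\circ J)\colon[\C,\V]\to[\B,\V]$ denotes restriction along $J$.

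\emph{Part (1).} The functor $(-\circ J)$ preserves all weighted limits (these are computed pointwise in functor $\V$-categories), and in particular it is $\Phi$-continuous; a composite of $\Phi$-continuous $\V$-functors is again $\Phi$-continuous. Hence, if $M$ is $\Phi$-flat, so that $\tx{Lan}_{Y_\B}M$ is $\Phi$-continuous, then the composite displayed above is $\Phi$-continuous, i.e. $\tx{Lan}_{J^{op}}M$ is $\Phi$-flat.

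\emph{Part (2).} Write $N:=\tx{Lan}_{J^{op}}M$. Since $J$ is fully faithful, the unit $G\to(\tx{Lan}_J G)\circ J$ of the adjunction $\tx{Lan}_J\dashv(-\circ J)$ is invertible for every $G\colon\B\to\V$; combined with the key identity this gives $(N*-)\circ\tx{Lan}_J\cong M*-$. Now take a weight $W\colon\E^{op}\to\V$ in $\Phi$ and a diagram $G\colon\E^{op}\to[\B,\V]$. Applying the limit-preserving functor $(-\circ J)$ to $\{W,\tx{Lan}_J\circ G\}$ and using the unit isomorphism yields $\{W,G\}\cong\{W,\tx{Lan}_J\circ G\}\circ J$ in $[\B,\V]$, whence
$$ M*\{W,G\}\;\cong\;N*\{W,\tx{Lan}_J\circ G\}\;\cong\;\{W,(N*-)\circ\tx{Lan}_J\circ G\}\;\cong\;\{W,(M*-)\circ G\}, $$
the three isomorphisms using the key identity, the $\Phi$-flatness of $N$, and again the key identity. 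Thus $M*-\cong\tx{Lan}_{Y_\B}M$ preserves every $\Phi$-limit, so $M$ is $\Phi$-flat.

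\emph{Main obstacle.} Nothing here is deep; the only point requiring genuine care is to check that the chain of isomorphisms in Part (2) is realised by the canonical comparison map $M*\{W,G\}\to\{W,(M*-)\circ G\}$, so that one truly obtains preservation of the $\Phi$-limit rather than a non-canonical isomorphism. This is a routine naturality verification, following the pattern of \cite[Lemma~2.7]{LT21:articolo}.
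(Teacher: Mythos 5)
Your proof is correct and is essentially the argument the paper defers to (that of \cite[Lemma~2.7]{LT21:articolo}, with $\alpha$ replaced by $\Phi$): the identity $\tx{Lan}_{Y_\C}(\tx{Lan}_{J^{op}}M)\cong(\tx{Lan}_{Y_\B}M)\circ(-\circ J)$, the continuity of restriction along $J$, and, for part (2), the invertibility of the unit of $\tx{Lan}_J\dashv(-\circ J)$ when $J$ is fully faithful. Your closing remark, that one must check the chain of isomorphisms in (2) is the canonical comparison map, correctly identifies the only point requiring care.
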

\begin{proof}
    The proof is exactly the same as that of \cite[Lemma~2.7]{LT21:articolo}, just replace $\alpha$-flatness and $\alpha$-continuity by $\Phi$-flatness and $\Phi$-continuity everywhere.
\end{proof}

\begin{prop}\label{weaklyiffstrongsound}
	Let $\Phi$ be a pre-saturated class of weights. Then $\Phi$ is weakly sound if and only if it is sound.
\end{prop}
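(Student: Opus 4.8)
The implication ``sound $\Rightarrow$ weakly sound'' is essentially immediate, so the content is the converse; I would record the easy half in a line. If $\C$ is $\Phi$-cocomplete and $M\colon\C^{op}\to\V$ is $\Phi$-continuous, then the Yoneda embedding $Y\colon\C^{op}\to[\C,\V]$ preserves the $\Phi$-limits that exist in $\C^{op}$, and $M*-=\tx{Lan}_YM$ restricts to $M$ along $Y$; hence $M*-$ sends each $\Phi$-limit of representables to the corresponding $\Phi$-limit in $\V$, and soundness forces $M$ to be $\Phi$-flat.

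For the main direction, assume $\Phi$ pre-saturated and weakly sound, and let $M\colon\C^{op}\to\V$ be a weight such that $M*-$ preserves $\Phi$-limits of representables; the goal is that $M$ is $\Phi$-flat. The plan is to pass to the free $\Phi$-cocompletion $\Phi\C$, with fully faithful inclusion $J\colon\C\hookrightarrow\Phi\C$. Here $\Phi\C$ is small and $\Phi$-cocomplete, and — crucially — by pre-saturation every object of $\Phi\C$ is a single $\Phi$-colimit of representables. By Lemma~\ref{flat-restriction}(2) it suffices to prove $\tx{Lan}_{J^{op}}M\colon(\Phi\C)^{op}\to\V$ is $\Phi$-flat; and since $\Phi\C$ is $\Phi$-cocomplete, weak soundness reduces this to proving that $\tx{Lan}_{J^{op}}M$ is $\Phi$-continuous. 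Now $(\Phi\C)^{op}$ is the free completion of $\C^{op}$ under $\Phi$-limits, so (using that $\V$ is $\Phi$-complete) $M$ extends to a $\Phi$-continuous $\overline{M}\colon(\Phi\C)^{op}\to\V$ along $J^{op}$, and the universal property of the left Kan extension gives a canonical $\theta\colon\tx{Lan}_{J^{op}}M\Rightarrow\overline{M}$ which is invertible on $\C^{op}$ because $J^{op}$ is fully faithful. So I would reduce everything to showing that $\theta$ is a natural isomorphism.

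This is where pre-saturation does its work: every object of $(\Phi\C)^{op}$ is a $\Phi$-limit $\{\phi,W\}$ of representables, with $\phi\colon\E^{op}\to\V$ in $\Phi$ and $W\colon\E^{op}\to\C^{op}$. Combining the pointwise formula for this limit with the coend formula for the left Kan extension, one computes $\tx{Lan}_{J^{op}}M(\{\phi,W\})$ as the coend $\int^{c}[\E^{op},\V](\phi,\C(W-,c))\otimes Mc$, which is exactly the value of $M*-$ on the corresponding $\Phi$-limit of representables inside $[\C,\V]$; and the hypothesis on $M$ says precisely that the canonical comparison from this coend to $[\E^{op},\V](\phi,M(W-))=\{\phi,M\circ W\}=\overline{M}(\{\phi,W\})$ is invertible. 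By naturality of $\theta$ at the limit projections of $\{\phi,W\}$, this comparison is precisely the component $\theta_{\{\phi,W\}}$; hence $\theta$ is invertible at every object and so a natural isomorphism, whence $\tx{Lan}_{J^{op}}M\cong\overline{M}$ is $\Phi$-continuous, completing the reduction set up above.

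The main obstacle I anticipate is the bookkeeping in the last step: carefully matching up the three descriptions — $M*-$ applied to a $\Phi$-limit of representables in $[\C,\V]$, $\tx{Lan}_{J^{op}}M$ applied to the corresponding object of $(\Phi\C)^{op}$, and $\overline{M}$ applied there — together with their canonical comparison maps, so that the hypothesis on $M$ genuinely exhibits $\theta_{\{\phi,W\}}$ as invertible. The second, more conceptual point to get right is that pre-saturation is truly needed: it is what lets $\Phi$-continuity of $\tx{Lan}_{J^{op}}M$ be tested object by object on single-step $\Phi$-limits of representables, which is exactly the level at which the hypothesis ``$M*-$ preserves $\Phi$-limits of representables'' supplies information; without it one would have to control iterated $\Phi$-limits, which lie beyond the reach of the hypothesis.
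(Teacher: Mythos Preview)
Your argument is correct and follows the same overall strategy as the paper: pass to $M':=\tx{Lan}_{J^{op}}M$ on the free $\Phi$-completion $(\Phi\C)^{op}$, use Lemma~\ref{flat-restriction} and weak soundness to reduce the problem to showing $M'$ is $\Phi$-continuous, and then exploit pre-saturation together with the hypothesis on $M*-$ to check this. The computation you sketch for $(\tx{Lan}_{J^{op}}M)(\{\phi,W\})$ is exactly the chain of isomorphisms the paper writes out.

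The only real difference is in how the final step is packaged. The paper first shows that $M'$ preserves all $\Phi$-limits of diagrams landing in $\C^{op}$, observes that pre-saturation makes these the $J$-absolute limits of a codensity presentation of $J$, and then invokes \cite[Proposition~2.2]{Day77:articolo} to conclude $M'\cong\tx{Ran}_{J^{op}}M$, which is $\Phi$-continuous by the universal property of the free completion. You instead set up the comparison $\theta\colon M'\Rightarrow\overline{M}$ directly and argue it is invertible objectwise. This is a legitimate alternative---in effect you are unwinding Day's result in this special case---but the bookkeeping you flag is genuine: one must check that the canonical comparison $M*\{\phi,\C(W-,?)\}\to\{\phi,MW\}$ produced by the hypothesis really is $\theta_{\{\phi,W\}}$, not merely some isomorphism between the same two objects. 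This follows from the universal properties involved, but citing Day's proposition sidesteps the verification entirely.
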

\begin{proof}
	One direction is clear. Suppose then that $\Phi$ is pre-saturated and weakly sound, and let $ M :\C^{op}\to\V$ be such that $ M *-:[\C,\V]\to\V$ preserves $\Phi$-limits of representables; we need to prove that $ M $ is $\Phi$-flat.
	
	Let $\Phi^\dagger(\C^{op})$ be the free completion of $\C^{op}$ under $\Phi$-limits and $J:\C^{op}\hookrightarrow \Phi^\dagger(\C^{op})$ be the inclusion, note that equivalently $\Phi^\dagger(\C^{op})=\Phi(\C)^{op}$ is the opposite of the free cocompletion of $\C$ under $\Phi$-colimits. Consider $ M ':=\tx{Lan}_JM$; by Lemma \ref{flat-restriction} it follows that $ M $ is $\Phi$-flat if and only if $ M '$ is $\Phi$-flat. Moreover, since $\Phi$ is weakly sound, $ M '$ is $\Phi$-flat if and only if it is $\Phi$-continuous.
	
	Thus it will suffice to prove that $ M '$ is $\Phi$-continuous. Note first that $M'$ preserves $\Phi$-limits of diagrams landing in $\C^{op}$: take $N :\D\to\V$ in $\Phi$ and $H:\D\to \C^{op}$ then
	\begin{align}
			 M '\{ N ,JH\}&\cong (\tx{Lan}_JM)\{ N ,JH\}\nonumber \\
			&\cong  M -*\ \Phi^\dagger(\C^{op})(J-,\{ N ,JH\})\nonumber \\
			&\cong  M -*\{ N \square,\C(-,H\square)\}\nonumber \\
			&\cong \{ N \square, M -*\C(-,H\square)\}\\
			&\cong \{ N , M \circ H\}\nonumber \\
			&\cong \{ N , M '\circ JH\}\nonumber
	\end{align}
	where $(1)$ follows from the fact that $ M *-$ preserves $\Phi$-limits of representables.\\
	Now, since $\Phi$ is pre-saturated, every object of $\Phi^\dagger(\C^{op})$ is a $J$-absolute $\Phi$-limit of a diagram landing in $\C^{op}$; therefore $ M '$ preserves all the $J$-absolute limits of a chosen codensity presentation of $J$. By \cite[Proposition~2.2]{Day77:articolo} we then have $ M '\cong \tx{Ran}_JM$ (so that left and right Kan extensions coincide). But $\Phi^\dagger(\C^{op})$ is the free completion of $\C^{op}$ under $\Phi$-limits, therefore the functor $\tx{Ran}_JM$ is $\Phi$-continuous by the universal property of such completion. This means that $ M '\cong \tx{Ran}_JM$ is $\Phi$-continuous, and hence $\Phi$-flat.
\end{proof}

Recall from \cite{KS05:articolo} that a class of weights $\Phi$ is called {\em essentially small} if for every small $\V$-category $\C$ the free cocompletion $\Phi\C$ is still small. 

\begin{as}\label{soundclass}
    From now on we fix an essentially small and weakly sound class $\Phi$.
\end{as}

\begin{ese}\label{examples} The following are examples of essentially small weakly sound classes. In some examples the class $\Phi$ is described as a class of indexing categories, these should be understood as the corresponding classes of conical weights.
	\begin{enumerate}\setlength\itemsep{0.25em}
		\item $\Phi=\emptyset$. Then any weight is $\Phi$-flat, and thus $\Phi$ is sound since every presheaf is a weighted colimit of representables.
		\item $\V$ locally $\alpha$-presentable as a closed category, $\Phi$ the class of $\alpha$-small weights. Then $\Phi$-flat weights are the usual $\alpha$-flat $\V$-functors. This is weakly sound by Proposition~\ref{flat-char}, and hence sound being pre-saturated.
		\item $\V$ symmetric monoidal closed finitary quasivariety, $\Phi$ the class of weights for finite products and finitely presentable projective powers. This is weakly sound by \cite[Theorem~5.8]{LR11NotionsOL}.
		\item $\V$ cartesian closed, $\Phi$ the class for finite products. Then a weight $M$ is $\Phi$-flat if and only if $\tx{Lan}_\Delta M \cong M \!\times\! M $, where $\Delta\colon\C\to\C\times\C$ is the diagonal and $M \!\times\! M\colon\C\times\C\to\V$ is defined by $(M \!\times\! M)(A,B)=MA\times MB$. This is weakly sound thanks to \cite[Lemma~2.3]{KL93FfKe:articolo}, and hence, being pre-saturated, is also sound.
		\item $\V=\bo{Set}$, $\Phi=\{\emptyset\}$. Then $\Phi$-flat colimits are generated by connected colimits. Soundness is discussed in \cite{ABLR02:articolo}. 
		\item $\V=\bo{Set}$, $\Phi$ the class of finite connected categories. Then $\Phi$-flat colimits are generated by coproducts and filtered colimits. Soundness is discussed in \cite{ABLR02:articolo}. 
		\item $\V=\bo{Set}$, $\Phi$ the class of finite non empty categories. Then a functor is $\Phi$-flat if and only if its category of elements is empty or filtered. It follows easily from this that the class is sound.
		\item $\V=\bo{Set}$, $\Phi$ the class of finite discrete non empty categories. Then a functor is $\Phi$-flat if and only if its category of elements is empty or sifted. Soundness follows as above by replacing filtered with sifted.
		\item $\V=\bo{Cat}$, $\Phi$ the class for finite connected 2-limits, meaning the class generated by finite connected conical limits and powers by finite connected categories. Then $\Phi$-flat colimits contain filtered colimits and coproducts. Soundness can be proven as in (2) and (3).
		\item $\V=([\C^{op},\bo{Set}],\otimes,I)$ with the representables closed under the monoidal structure, $\Phi$ the class for powers by representables. Then conical colimits are $\Phi$-flat and every $\Phi$-continuous $\V$-functor can be written as a conical colimit of representables (argue as in the case of $\alpha$-continuous $\V$-functors). It follows that $\Phi$ is weakly sound.
	\end{enumerate}
\end{ese}

\begin{Def}
	Let $\A$ be a $\V$-category with $\Phi$-flat colimits. A $\V$-functor $F\colon\A\to\B$ is called {\em $\Phi$-accessible} if it preserves $\Phi$-flat colimits; an object $A$ of $\A$ is called {\em $\Phi$-presentable} if $\A(A,-)\colon\A\to\V$ is $\Phi$-accessible. We denote by $\A_\Phi$ the full subcategory of $\A$ spanned by the $\Phi$-presentable objects. 
\end{Def}

\begin{Def}	
	We say that $\A$ is {\em $\Phi$-accessible} if it has $\Phi$-flat colimits and there exists a small $\C\subseteq\A_\Phi$ such that every object of $\A$ can be written as a $\Phi$-flat colimit of objects from $\C$.
\end{Def}

When $\Phi=\Phi_\alpha$, for some $\alpha$, we simply say that $\A$ is {\em $\alpha$-accessible} instead of $\Phi_\alpha$-accessible, and if this is so for some $\alpha$, we say that $\A$ is {\em accessible}. This agrees with the definition in \cite{BQR98}.

The first results we can prove about $\Phi$-accessibility are a standard generalization of the ordinary ones.

\begin{prop}\label{A_alpha}
	Let $\A$ be a $\Phi$-accessible $\V$-category and $H\colon\A_{\Phi}\hookrightarrow\A$ be the inclusion; then:\begin{enumerate}\setlength\itemsep{0.25em}
		\item $\A_{\Phi}$ is Cauchy complete and closed in $\A$ under all existing $\Phi$-colimits;
		\item $\A_{\Phi}$ is (essentially) small;
		\item every $A\in\A$ can be expressed as the $\Phi$-flat colimit:
		$$ A\cong \A(H-,A)*H. $$
	\end{enumerate}
\end{prop}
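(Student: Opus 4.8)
The plan is to mimic the classical proof for locally presentable and accessible $\V$-categories, adapting each step to $\Phi$-flat colimits and using the weak soundness of $\Phi$ where commutation of $\Phi$-flat colimits with $\Phi$-limits is needed.

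\medskip

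\textbf{Part (3).} I would start here, since the colimit formula is the engine for the rest. Fix $A\in\A$. By definition of $\Phi$-accessibility there is a small $\C\subseteq\A_\Phi$ and a $\Phi$-flat weight $M\colon\C^{op}\to\V$ with $A\cong M*H_\C$, where $H_\C\colon\C\hookrightarrow\A$. The claim is that this can be replaced by the canonical presentation along $H\colon\A_\Phi\hookrightarrow\A$. First I would show that $\A_\Phi$ is closed in $\A$ under $\Phi$-colimits (this is part of (1), so I'd actually prove (1) before finishing (3)): for a $\Phi$-flat weight $N$ and $G\colon\D\to\A_\Phi$, one has $\A(N*HG,-)\cong\{N,\A(HG,-)\}\cong\{N,\V(\D(\ldots),\ldots)\}$-type computations showing $\A(N*HG,-)$ preserves $\Phi$-flat colimits, because each $\A(HGd,-)$ does and $\Phi$-limits commute with $\Phi$-flat colimits in $\V$ (this is exactly where weak soundness, via Proposition~\ref{flat-char}-style commutation, enters). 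Granting that $\C\subseteq\A_\Phi$ forces $A$ to be a $\Phi$-colimit of $\Phi$-presentables, hence the density-type argument: the inclusion $H$ is dense when restricted appropriately, and the standard computation $\A(H-,A)*H \cong (\A(H-,M*H_\C))*H \cong (M*_{\C}\A(H-,H_\C))*H \cong M*H_\C \cong A$ goes through once one knows $H_\C$ factors through a dense-enough subcategory. Concretely I'd prove directly that $H\colon\A_\Phi\hookrightarrow\A$ is $\Phi$-flat dense: the canonical map $\A(H-,A)*H\to A$ is invertible because testing against $\A(-,B)$ for $B\in\A$ reduces, using that every $B$ is a $\Phi$-flat colimit of objects of $\C\subseteq\A_\Phi$, to a colimit of Yoneda isomorphisms.

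\medskip

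\textbf{Part (1).} Closure under $\Phi$-colimits is the computation just sketched: if $A = N*G$ with $N\in\Phi$ and $G\colon\D\to\A_\Phi$, then $\A(A,-)\cong\{N,\A(G-,-)\}$ is a $\Phi$-limit of $\Phi$-accessible functors $\A(Gd,-)\colon\A\to\V$; since $\Phi$-flat colimits commute in $\V$ with $\Phi$-limits (weak soundness, via the characterization that $\Phi$-flat weights are precisely those whose colimits commute with $\Phi$-limits of representables — strictly, the commutation I need is that $\Phi$-limits in $\V$ commute with $\Phi$-flat colimits, which holds since each $\Phi$-flat weight's colimit functor is $\Phi$-continuous on $\V$ by definition), $\{N,\A(G-,-)\}$ is again $\Phi$-accessible, i.e. $A\in\A_\Phi$. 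For Cauchy completeness: $\A$ has all $\Phi$-flat colimits, and idempotents are absolute colimits hence $\Phi$-flat colimits; so $\A$ is Cauchy complete, and Cauchy completeness descends to $\A_\Phi$ since a retract of a $\Phi$-presentable object is $\Phi$-presentable (the splitting is an absolute, a fortiori $\Phi$-flat, colimit and $\A_\Phi$ is closed under these).

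\medskip

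\textbf{Part (2).} Once (3) is in hand, essential smallness is the usual cardinality count. Every $A\in\A$ is a $\Phi$-flat colimit of the chosen small $\C\subseteq\A_\Phi$; since $\Phi$ is essentially small (Assumption~\ref{soundclass}), the free $\Phi$-cocompletion $\Phi\C$ is small, and the comparison $\Phi\C\to\A$ is essentially surjective, so $\A_0$ has (up to iso) only a small set of objects that arise; but more directly, $\A_\Phi$ embeds fully faithfully into $\A$, and by (3) every object of $\A_\Phi$ is (being $\Phi$-presentable) a retract of an object in the image of $\Phi\C$ — here one uses that a $\Phi$-presentable object, written as a $\Phi$-flat colimit of objects of $\C$, is a split subobject of a $\Phi$-colimit of finitely-many-at-a-time pieces of $\C$ in the familiar way, or simply that $\A_\Phi$ is the Cauchy completion of (the closure under $\Phi$-colimits of) $\C$ inside $\A$, which is small. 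Hence $\A_\Phi$ is essentially small.

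\medskip

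\textbf{Main obstacle.} The delicate point is the commutation argument underpinning closure of $\A_\Phi$ under $\Phi$-colimits and the density computation in (3): one must be careful that ``$\Phi$-flat colimits commute with $\Phi$-limits in $\V$'' is used in the correct direction and that weak soundness (rather than full soundness) suffices — it does, because $\Phi$-flatness of a weight is \emph{defined} as $\Phi$-continuity of its colimit functor on $\V$, so the commutation is immediate from the definition without invoking soundness at all; soundness would only be needed to \emph{recognize} flat weights, which here is not required. The only genuinely substantive step is therefore establishing that the canonical cocone $\A(H-,A)*H\to A$ is invertible, i.e. the $\Phi$-flat density of $\A_\Phi$ in $\A$, and this follows formally from the definition of $\Phi$-accessibility together with closure of $\A_\Phi$ under $\Phi$-colimits by the Fubini-type manipulation indicated above.
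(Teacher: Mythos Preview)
Your arguments for (1) and (3) are essentially the paper's own. For (1) the paper writes $\A(A,-)$ as a $\Phi$-limit of $\Phi$-flat-colimit-preserving functors and uses that $\Phi$-flat colimits commute with $\Phi$-limits in $\V$; you do the same and correctly note that this commutation is immediate from the \emph{definition} of $\Phi$-flatness, with no soundness needed. For (3) the paper writes $A\cong M*HF$ with $F\colon\C\to\A_\Phi$, computes $\A(H-,A)\cong M\square*\A_\Phi(-,F\square)$ (a $\Phi$-flat colimit of representables, hence $\Phi$-flat), and then Fubini gives $\A(H-,A)*H\cong M*HF\cong A$; your sketch is this same manipulation, only phrased as a density statement.

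Part (2), however, has a real gap. Your ``familiar way'' argument --- that a $\Phi$-presentable $A$, written as a $\Phi$-flat colimit of objects of $\C$, is a retract of a component or of a $\Phi$-colimit of a few components --- relies on the combinatorics of $\alpha$-filtered diagrams: a map from an $\alpha$-presentable into an $\alpha$-filtered colimit factors through a stage. For a general $\Phi$-flat weight there is no such factorization lemma; the ``category of elements'' need not be filtered in any useful sense, and you cannot extract a retract. Your fallback, that ``$\A_\Phi$ is the Cauchy completion of the $\Phi$-closure of $\C$'', is the statement to be proved, not an argument. (Your first sentence even briefly conflates $\Phi$-flat colimits with $\Phi$-colimits, concluding that $\Phi\C\to\A$ is essentially surjective; it is not.)

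The paper avoids this difficulty by a different, genuinely enriched argument. It shows that for $A\in\A_\Phi$ the presheaf $ZA=\A(H-,A)\in[\C^{op},\V]$ is \emph{small projective} (an absolute weight), using the Kelly--Schmitt criterion \cite[Proposition~6.14]{KS05:articolo}: $F$ is small projective iff $[\C^{op},\V](F,-)$ preserves the single colimit $F*Y$. A short chain of isomorphisms, using only fully-faithfulness of $Z$ and $\Phi$-presentability of $A$, verifies this for $F=ZA$. Hence $ZA\in\Q(\C)$, so $A\in\Q(\C)$ inside $\A$, and therefore $\A_\Phi=\Q(\C)$ is essentially small by Johnson's theorem on small Cauchy completions. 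This bypasses entirely the missing ``retract of a component'' step.
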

\begin{proof}
	$(1)$. If $A\in\A$ is a $\Phi$-colimit of $\alpha$-presentable objects in $\A$, then $\A(A,-)$ is a $\Phi$-limit of $\V$-functors preserving $\Phi$-flat colimits. Since these colimits commute in $\V$ with $\Phi$-limits, $\A(A,-)$ still preserves $\Phi$-flat colimits and hence $A\in\A_\Phi$. The same argument applies to Cauchy colimits.
	
	$(2)$. Let $H\colon\C\hookrightarrow\A$ be a small full subcategory of $\Phi$-presentable objects witnessing the fact that $\A$ is $\Phi$-accessible. We shall show that $\A_\Phi$ is the Cauchy completion $\Q(\C)$ of $\C$, and so is essentially small by \cite{Joh1989:articolo}. Since $\C$ is contained in $\A_\Phi$ and $\A_\Phi$ is Cauchy complete, $\Q(\C)$ is contained in $\A_\Phi$. For the converse, suppose that $A\in\A_\Phi$ and let $Z\colon\A\to[\C^{op},\V]$ be the induced fully faithful functor with $ZX=\A(H-,X)$. Note now that an element of $[\C^{op},\V]$ is small projective (an absolute weight) if and only if it lies in $\Q(\C)$; moreover, by \cite[Proposition~6.14]{KS05:articolo}, a $\V$-functor $F\in[\C^{op},\V]$ is small projective if and only if $[\C^{op},\V](F,-)$ preserves the colimit $F*Y$. Then
	\begin{equation*}
		\begin{split}
			[\C^{op},\V](ZA,ZA*Y)&\cong [\C^{op},\V](ZA,Z(ZA*H))\\
			&\cong \A(A,ZA*H)\\
			&\cong ZA*\A(A,H-)\\
			&\cong ZA*[\C^{op},\V](ZA,Y-)
		\end{split}
	\end{equation*}
	where we used that $ZH\cong Y$. It follows that $ZA$ is small projective and hence $\A_\Phi=\Q(\C)$ is the Cauchy completion of $\C$, and hence is essentially small by \cite{Joh1989:articolo}.
	
	$(3)$. Given $A\in\A$, by hypothesis we can write $A\cong M *HF$ for some $\Phi$-flat $M\colon\C^{op}\to\V$ and $F\colon\C\to\A_\Phi$. Then $\A(H-,A)\cong M \square*\A_{\Phi}(-,F\square)$ is a $\Phi$-flat colimit of representables, and hence $\Phi$-flat; moreover 
	\begin{equation*}
		\begin{split}
			\A(H-,A)*H&\cong  M \square*(\A_{\Phi}(-,F\square)*H-)\\
			&\cong M*F\\
			&\cong A
		\end{split}
	\end{equation*}
	as desired.
\end{proof}

\begin{prop}\label{acc=flat}
	The following are equivalent for a $\V$-category $\A$: \begin{enumerate}\setlength\itemsep{0.25em}
		\item $\A$ is $\Phi$-accessible;
		\item $\A$ is the free cocompletion of a small $\V$-category $\C$ under $\Phi$-flat colimits;
		\item $\A\simeq\Phi\tx{-}\tx{Flat}(\C^{op},\V)$ for some small $\C$.
	\end{enumerate}
	In both $(2)$ and $(3)$ the $\V$-category $\C$ can be chosen to be $\A_\Phi$; moreover, if $\C$ is Cauchy complete, then $\Phi\tx{-}\tx{Flat}(\C^{op},\V)_\Phi\simeq \C$.
\end{prop}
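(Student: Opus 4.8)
The plan is to prove the cycle $(1)\Rightarrow(2)\Rightarrow(3)\Rightarrow(1)$ together with the two supplementary claims, relying mainly on Proposition~\ref{A_alpha}.

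For $(1)\Rightarrow(2)$, suppose $\A$ is $\Phi$-accessible. By Proposition~\ref{A_alpha}(2) the $\V$-category $\C:=\A_\Phi$ is essentially small, and by Proposition~\ref{A_alpha}(3) every $A\in\A$ is the $\Phi$-flat colimit $\A(H-,A)*H$ of objects of $\C$; so it remains to check the universal property, i.e.\ that $\A$ is the free $\Phi$-flat-colimit completion of $\C$. For this I would observe that $\Phi$-flat colimits of $\Phi$-presentable objects are again $\Phi$-presentable (Proposition~\ref{A_alpha}(1)), that the inclusion $H\colon\C\hookrightarrow\A$ preserves the $\Phi$-flat colimits that exist in $\C$ (same part), and that by the presentation in (3) the objects of $\C$ form a strong generator in the appropriate sense; the standard argument — extend a $\V$-functor $G\colon\C\to\D$ into a $\Phi$-flat cocomplete $\D$ by left Kan extension along $H$ and use that $\Phi$-flat weights are $\Phi$-flat colimits of representables (Proposition~\ref{flat-char}, in the $\Phi$ form) — then shows $\A$ has the required universal property. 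For $(2)\Rightarrow(3)$, the free cocompletion of $\C$ under $\Phi$-flat colimits can be computed inside $[\C^{op},\V]$: it is the closure of the representables under $\Phi$-flat colimits, and since the $\Phi$-flat weights are already closed under $\Phi$-flat colimits in $[\C^{op},\V]$ and contain the representables (as noted after the definition of $\Phi$-flat colimit), this closure is exactly $\Phi\tx{-Flat}(\C^{op},\V)$. The implication $(3)\Rightarrow(1)$ is then immediate: $\Phi\tx{-Flat}(\C^{op},\V)$ has $\Phi$-flat colimits (again because the $\Phi$-flat weights are closed under them in $[\C^{op},\V]$), the representables are $\Phi$-presentable there, and every $\Phi$-flat weight is a $\Phi$-flat colimit of representables, so the Yoneda image of $\C$ witnesses $\Phi$-accessibility.

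For the supplementary claims: that $\C$ may be taken to be $\A_\Phi$ in $(2)$ and $(3)$ is exactly what the proof of $(1)\Rightarrow(2)$ produced. For the last sentence, suppose $\C$ is Cauchy complete and $\A\simeq\Phi\tx{-Flat}(\C^{op},\V)$; I must identify $\A_\Phi$ with $\C$ (i.e.\ with the representables). One containment is clear since representables are $\Phi$-presentable. For the converse, if $F\in\Phi\tx{-Flat}(\C^{op},\V)$ is $\Phi$-presentable, then writing $F$ as a $\Phi$-flat colimit $M*Y$ of representables and using that $\Phi\tx{-Flat}(\C^{op},\V)(F,-)$ preserves this $\Phi$-flat colimit, one gets that $F$ is a retract of a representable exactly as in the proof of Proposition~\ref{A_alpha}(2); since $\C$ is Cauchy complete, $F$ is itself representable. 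Hence $\Phi\tx{-Flat}(\C^{op},\V)_\Phi\simeq\C$.

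The main obstacle I expect is the universal-property verification in $(1)\Rightarrow(2)$: one must be careful that the extension $\tx{Lan}_HG$ along $H\colon\A_\Phi\hookrightarrow\A$ genuinely preserves \emph{all} $\Phi$-flat colimits and not merely those of objects of $\A_\Phi$, and that it is the unique such extension. This is where the essential smallness of $\A_\Phi$ and the density presentation $A\cong\A(H-,A)*H$ do the real work, together with the fact (needed throughout) that $\Phi$-flat weights are precisely the $\Phi$-flat colimits of representables. Everything else is a routine transcription of the ordinary $\alpha$-accessible case into the sound-class setting, the calculations being formally identical to those already carried out in Proposition~\ref{A_alpha}.
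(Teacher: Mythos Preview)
Your argument is essentially correct and close in spirit to the paper's, with two points worth flagging.

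First, for $(1)\Leftrightarrow(2)$ the paper simply invokes \cite[Proposition~4.3]{KS05:articolo}, which gives precisely the criterion you are verifying by hand: a $\V$-category with $\Psi$-colimits is the free $\Psi$-cocompletion of a small subcategory $\C$ as soon as the objects of $\C$ are ``$\Psi$-presentable'' and every object is a $\Psi$-colimit of objects from $\C$. Your density-plus-Kan-extension sketch is exactly how one proves that result, and the ``main obstacle'' you flag is not a real obstruction: the key step is that $\A(HC,-)$ preserves $\Phi$-flat colimits for each $C\in\A_\Phi$, whence $\A(H-,M*K)\cong M*\A(H-,K)$ for any $\Phi$-flat $M$, so $\tx{Lan}_HG(M*K)\cong M*\tx{Lan}_HG(K)$ and the Kan extension is automatically $\Phi$-flat-cocontinuous. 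Uniqueness follows from the density presentation $A\cong\A(H-,A)*H$ you already have. So your direct route works; the paper just outsources it.

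Second, in the final clause your phrase ``retract of a representable'' is too narrow for a general base $\V$. The computation in Proposition~\ref{A_alpha}(2) --- which is indeed the right reference --- shows that a $\Phi$-presentable object becomes \emph{small projective} (an absolute weight) in $[\C^{op},\V]$, hence lies in the Cauchy completion $\Q(\C)$. For $\V=\bo{Set}$ this is the same as being a retract of a representable, but for other bases (e.g.\ $\V=\bo{Ab}$) Cauchy completion involves more than splitting idempotents. Your conclusion ``since $\C$ is Cauchy complete, $F$ is representable'' remains valid once ``retract of a representable'' is replaced by ``absolute colimit of representables'', which is what the cited argument actually yields; the paper accordingly phrases the last step as $\Phi\tx{-Flat}(\C^{op},\V)_\Phi\simeq\Q(\C)=\C$.
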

\begin{proof}
	The equivalence $(1)\Leftrightarrow (2)$ and the fact that $\C$ can be chosen to be $\A_\Phi$ are a direct consequence of \cite[Proposition~4.3]{KS05:articolo}.
	
	$(2)\Rightarrow (3)$. Let $H\colon\C\to\A$ be the inclusion. By \cite[Proposition~4.3]{KS05:articolo} $\C$ is made of $\Phi$-presentable objects in $\A$ and is small; therefore the induced $\V$-functor $J:=\A(H,1)\colon \A\to[\C^{op},\V]$ is fully faithful and preserves $\Phi$-flat colimits. 
	Given any $A\in\A$ we can write it as a $\Phi$-flat colimit $A\cong M *HK$ of objects from $\C$; thus $JA\cong  M  *JHK\cong  M *YK$ is a $\Phi$-flat colimit of representables, and hence a $\Phi$-flat $\V$-functor. Vice versa, given a $\Phi$-flat $ M \colon\C^{op}\to\V$ we know that $J( M *H)\cong  M *Y\cong  M $. As a consequence $\A\simeq\Phi\tx{-}\tx{Flat}(\C^{op},\V)$ as claimed and $\C$ can be chosen to be $\A_\Phi$ (since that was true for the second point).
	
	$(3)\Rightarrow(1)$. Let $\A=\Phi\tx{-}\tx{Flat}(\C^{op},\V)$, $J\colon\A\to[\C^{op},\V]$ be the inclusion, and $H\colon\C\hookrightarrow\A$ be the full subcategory of $\A$ spanned by the representables, so that $JH=Y$ is the Yoneda embedding. Then $$\A(HC,-)\cong[\C^{op},\V](YC,J-)\cong\tx{ev}_C\circ J$$ for any $C\in\C$; thus $\A(HC,-)$ preserves $\Phi$-flat colimits since $J$ does and $\tx{ev}_C$ is cocontinuous. It follows that the representable functors in $\A$ are $\Phi$-presentable objects. Moreover we can write every $ M \in\A$ as a $\Phi$-flat colimit of representables as $ M \cong M *Y$. This shows that $\A$ is $\Phi$-accessible. 
	
	Regarding the last statement, as a consequence of the proof of Proposition~\ref{A_alpha}, we know that $\Phi\tx{-}\tx{Flat}(\C^{op},\V)_\Phi\simeq\Q(\C)$. Thus, if $\C$ is Cauchy complete then $\C=\Q(\C)$, and thus $\Phi\tx{-}\tx{Flat}(\C^{op},\V)_\Phi\simeq \C$.  
\end{proof}

\begin{obs}\label{phi-acc-funct}
	It follows, from the universal property of free cocompletions, that a $\V$-functor $F\colon\A\to\B$ out of a $\Phi$-accessible $\V$-category $\A$ is $\Phi$-accessible if and only if it is the left Kan extension of its restriction to $\A_\Phi$.
\end{obs}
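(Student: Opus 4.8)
The plan is to deduce both implications from the pointwise formula for the left Kan extension along $H\colon\A_\Phi\hookrightarrow\A$, namely $\tx{Lan}_H(FH)(A)\cong\A(H-,A)*(FH)$, combined with Proposition~\ref{A_alpha}. Recall from that Proposition that $\A_\Phi$ is small (so $H$ is a legitimate domain for weights) and that each $A\in\A$ is the $\Phi$-flat colimit $A\cong\A(H-,A)*H$; recall also from Proposition~\ref{acc=flat} that $\A$ is the free cocompletion of $\A_\Phi$ under $\Phi$-flat colimits, which is what makes ``the universal property of free cocompletions'' available here.

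For the ``only if'' direction, suppose $F$ preserves $\Phi$-flat colimits. Applying $F$ to the presentation $A\cong\A(H-,A)*H$ gives $FA\cong\A(H-,A)*(FH)$, naturally in $A$. The right-hand side is precisely the pointwise value of $\tx{Lan}_H(FH)$, and a routine check identifies this isomorphism with the canonical comparison $\tx{Lan}_H(FH)\Rightarrow F$ (it is the map induced by applying $F$ to the universal cocone, which becomes a colimit cocone since $F$ preserves that colimit); hence $F\cong\tx{Lan}_H(FH)$.

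For the ``if'' direction, suppose $F\cong\tx{Lan}_H(FH)$; in particular each pointwise colimit $\A(H-,A)*(FH)\cong FA$ exists in $\B$. Let $M*K$ be an arbitrary $\Phi$-flat colimit in $\A$, with $M\colon\D^{op}\to\V$ $\Phi$-flat and $K\colon\D\to\A$. I would compute $\B(F(M*K),B)$ by first replacing $F(M*K)$ with $\A(H-,M*K)*(FH)$; then, since every $HC$ is $\Phi$-presentable, $\A(HC,-)$ preserves the colimit $M*K$, so $\A(H-,M*K)\cong M*\A(H-,K-)$ as objects of $[\A_\Phi^{op},\V]$ (colimits there being computed pointwise). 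Using the Fubini isomorphism for weighted colimits and again the pointwise formula $\A(H-,K\square)*(FH)\cong F(K\square)$, this chain of isomorphisms yields $\B(F(M*K),B)\cong[\D^{op},\V](M,\B(FK-,B))$, which is the defining property of $M*(FK)$. Hence $M*(FK)$ exists and the canonical comparison $M*(FK)\to F(M*K)$ is an isomorphism, so $F$ preserves the colimit.

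The main obstacle I anticipate is the bookkeeping in the ``if'' direction: since $\B$ is not assumed to have $\Phi$-flat colimits, one must check that every weighted colimit appearing in the Fubini manipulation either lives in $\A$ or is one of the pointwise values of $\tx{Lan}_H(FH)=F$ already known to exist. Once existence is accounted for, the associativity and Fubini isomorphisms are formal, and naturality in $B$ together with the Yoneda lemma upgrade the abstract isomorphism to the canonical comparison map.
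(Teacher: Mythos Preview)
Your proposal is correct and is precisely what the paper's one-line justification ``it follows from the universal property of free cocompletions'' unpacks to: since $\A$ is the free $\Phi$-flat cocompletion of $\A_\Phi$ (Proposition~\ref{acc=flat}), a functor out of $\A$ is $\Phi$-flat-cocontinuous iff it is the left Kan extension of its restriction, and your computation via the density presentation $A\cong\A(H-,A)*H$ is the standard way to witness this. You have also correctly identified and handled the only real subtlety --- that $\B$ is not assumed to have $\Phi$-flat colimits --- by computing at the level of $\B(F(M*K),B)$, so that every colimit in $\B$ you actually need is one of the pointwise values $\A(H-,A)*(FH)\cong FA$ guaranteed by the hypothesis $F\cong\tx{Lan}_H(FH)$.
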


We will show in Section~\ref{accvscon} that, in the context of $\alpha$-accessible categories, the index of accessibility can again be raised with the sharply less than relation, as in the ordinary context. Moreover we will see that the underlying ordinary category of an accessible $\V$-category is again accessible.

\vspace{6pt}

\subsection{Conically accessible $\V$-categories}

In this section we consider a different notion of enriched accessibility which involves only (conical) $\alpha$-filtered colimits. This goes as follows:

\begin{Def}
	Let $\alpha$ be a regular cardinal and $\A$ be a $\V$-category with $\alpha$-filtered colimits. A $\V$-functor $F\colon\A\to\B$ is called {\em conically $\alpha$-accessible} if it preserves $\alpha$-filtered colimits; an object $A$ of $\A$ is called {\em conically $\alpha$-presentable} if $\A(A,-)\colon\A\to\V$ is conically $\alpha$-accessible. We denote by $\A_\alpha^c$ the full subcategory of $\A$ spanned by the $\alpha$-presentable objects. 
\end{Def}	

\begin{Def}	
	We say that $\A$ is {\em conically $\alpha$-accessible} if it has $\alpha$-filtered colimits and there exists $\C\subseteq\A_\alpha^c$ small such that every object of $\A$ can be written as an $\alpha$-filtered colimit of objects from $\C$. 
\end{Def}

The next results are then immediate consequences of the definitions:

\begin{prop}\label{conicallyaccproperties}
	Let $\A$ be a conically $\alpha$-accessible $\V$-category and $H\colon\A_{\alpha}^c\hookrightarrow\A$ be the inclusion; then:\begin{enumerate}\setlength\itemsep{0.25em}
		\item $\A_{\alpha}^c$ is closed in $\A$ under existing $\alpha$-small colimits;
		\item $\A_{\alpha}^c$ is (essentially) small;
		\item every $A\in\A$ can be expressed as the $\alpha$-filtered colimit:
		$$ A\cong \colim\left((\A_{\alpha}^c)_0/A\stackrel{\pi}{\longrightarrow}\A_{\alpha}\stackrel{}{\longrightarrow}\A \right); $$
		\item $\A_0$ is $\alpha$-accessible and $(\A_{\alpha}^c)_0=(\A_0)_{\alpha}$.
	\end{enumerate}
\end{prop}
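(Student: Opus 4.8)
The plan is to derive all four statements from the single datum making $\A$ conically $\alpha$-accessible --- a small full subcategory $\C\subseteq\A_\alpha^c$ such that every $A\in\A$ is a conical $\alpha$-filtered colimit of objects of $\C$ --- passing freely between $\A$ and $\A_0$ via the two standard facts that a conical colimit in $\A$ is computed in $\A_0$, and that $\V_0(I,-)$ preserves $\alpha$-filtered colimits (since $I\in\V_{\alpha_0}$ and $\alpha>\alpha_0$). For $(1)$ I would argue exactly as in Proposition~\ref{A_alpha}$(1)$: if $A$ is an $\alpha$-small colimit $W*T$ of conically $\alpha$-presentable objects, then $\A(A,-)\cong\{W,\A(T-,-)\}$ is an $\alpha$-small weighted limit of $\V$-functors each preserving conical $\alpha$-filtered colimits; since conical $\alpha$-filtered colimits are $\alpha$-flat and hence commute in $\V$ with $\alpha$-small limits, $\A(A,-)$ preserves them too, so $A\in\A_\alpha^c$.

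For $(4)$ I would first note that every conically $\alpha$-presentable object $A$ lies in $(\A_0)_\alpha$: the underlying ordinary functor of $\A(A,-)$ preserves $\alpha$-filtered colimits (these being computed in $\A_0$), and composing it with $\V_0(I,-)$ gives $\A_0(A,-)$, which therefore preserves $\alpha$-filtered colimits as well. Thus $(\A_\alpha^c)_0\subseteq(\A_0)_\alpha$; in particular the objects of $\C$ are $\alpha$-presentable in $\A_0$, and since $\A_0$ has $\alpha$-filtered colimits and each of its objects is an $\alpha$-filtered colimit of objects of $\C$, the category $\A_0$ is $\alpha$-accessible. For the reverse inclusion, take $A\in(\A_0)_\alpha$ and write $A\cong\colim_i C_i$ as an $\alpha$-filtered colimit with the $C_i$ in $\C$; since $A$ is $\alpha$-presentable in $\A_0$, the identity $1_A$ factors through some colimit leg, exhibiting $A$ as a retract of some $C_i$ --- in $\A_0$, hence also in $\A$, since the two have the same morphisms --- and a retract of a conically $\alpha$-presentable object is conically $\alpha$-presentable, as $\A(A,-)$ is then a retract of $\A(C_i,-)$ in $[\A,\V]$ and a retract of a functor preserving conical $\alpha$-filtered colimits preserves them. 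Hence $(\A_\alpha^c)_0=(\A_0)_\alpha$, and $(2)$ follows immediately: $\A_0$ is $\alpha$-accessible, so $(\A_0)_\alpha=(\A_\alpha^c)_0$ is essentially small, and therefore so is $\A_\alpha^c$.

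For $(3)$, the $\alpha$-filtered diagram $\I\to\C\hookrightarrow\A$ witnessing $A\cong\colim_i C_i$ factors through a functor $K\colon\I\to(\A_\alpha^c)_0/A$ sending $i$ to $(C_i,C_i\to A)$, and I would show that $(\A_\alpha^c)_0/A$ is $\alpha$-filtered and that $K$ is final; together these force the canonical diagram $(\A_\alpha^c)_0/A\to\A$ to have colimit $A$, as claimed. Both verifications run on the same mechanism: since every object of $\A_\alpha^c$ is $\alpha$-presentable in $\A_0$ by $(4)$, any morphism from such an object into $A$ factors through some $C_i$, and two such factorizations become equal after composing with a further leg of the diagram; combining this with the $\alpha$-filteredness of $\I$ gives the upper bounds and coequalizing maps witnessing $\alpha$-filteredness of $(\A_\alpha^c)_0/A$, and the non-emptiness and connectedness of the comma categories $(C,c)\downarrow K$ witnessing finality of $K$.

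Almost all of this is routine bookkeeping or a direct transcription of Proposition~\ref{A_alpha}; the step that requires genuine care is the reverse inclusion $(\A_0)_\alpha\subseteq(\A_\alpha^c)_0$ in $(4)$, together with the parallel finality check in $(3)$, where one must climb back from the underlying ordinary category into $\A$ using the standard arguments --- factoring a map through a stage of an $\alpha$-filtered colimit, and splitting an idempotent --- familiar from the ordinary theory of accessible categories.
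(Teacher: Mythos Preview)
Your proposal is correct and follows essentially the same approach as the paper: the same commutation argument for $(1)$, the same retract-of-a-stage argument for smallness and for the equality $(\A_\alpha^c)_0=(\A_0)_\alpha$, and the same finality argument for $(3)$. The only differences are cosmetic: you prove $(4)$ before $(2)$ and derive $(2)$ from it (the paper proves $(2)$ directly first, using the same retract argument applied to $A\in\A_\alpha^c$ rather than to $A\in(\A_0)_\alpha$), and in $(3)$ you spell out the verification of $\alpha$-filteredness and finality by hand, whereas the paper packages the former as a general fact about final functors out of $\alpha$-filtered domains, citing \cite[Remark~2.8]{LT21:articolo}.
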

\begin{proof}
	$(1)$. Let $A=\colim A_i$ be an $\alpha$-small colimit of conically $\alpha$-presentable objects; then $\A(A,-)\cong \lim \A(A_i,-)$ is an $\alpha$-small limit of functors which preserve $\alpha$-filtered colimits. Since $\alpha$-small limits commute with $\alpha$-filtered colimits, $\A(A,-)$ preserves $\alpha$-filtered colimits as well and $A\in\A_\alpha^c$.
	
	$(2)$. Let $\C\subseteq\A_\alpha^c$ be small and generate $\A$ under $\alpha$-filtered colimits, and let $A\in\A_\alpha^c$. As a consequence $A\cong\colim C_i$ is an $\alpha$-filtered colimit of elements from $\C$. Since by hypothesis $\A(A,-)$ preserves $\alpha$-filtered colimits, it follows that the identity map $\tx{Id}_A$ factors through some $C_i$. Therefore $A$ is a split subobject of some object of $\C$ and hence $(\A_\alpha^c)_0$ is the Cauchy completion of $\C_0$. Since $\C$ is small, $\A_\alpha^c$ is as well.
	
	$(3)$. Let $\C$ be as in $(2)$ above and $A$ be any object of $A$. Then $A\cong\colim HF$ where $F\colon\D\to\A_\alpha^c$ is a diagram with $\alpha$-filtered domain. Then the colimit cocone of $A$ induces a final functor $K\colon\D\to (\A_{\alpha})_0/A$ such that $\pi\circ K=H$. It follows then that $(\A_{\alpha})_0/A$ is $\alpha$-filtered (this is a general fact about final functors with $\alpha$-filtered domain, see \cite[Remark~2.8]{LT21:articolo} for instance) and the colimit of its projection on $\A$ is $A$.
	
	$(4)$. Given any object $A\in\A$, since the unit of $\V$ is $\alpha$-presentable, if $\A(A,-)$ preserves $\alpha$-filtered colimits then so does $\A_0(A,-)=\V_0(I,\A(A,-)_0)$; therefore $(\A_\alpha^c)_0\subseteq (\A_0)_\alpha$. As a consequence $(\A_\alpha^c)_0$ is a small full subcategory of $\A_0$ made of $\alpha$-presentable objects, and generates $\A$ under $\alpha$-filtered colimits. This implies that $\A_0$ is $\alpha$-accessible as an ordinary category. Finally, given $A\in(\A_0)_\alpha$, arguing as in $(2)$ we can write $A$ as a split subobject of some $B\in(\A_\alpha)_0$; since $(\A_\alpha)_0$ is closed under split subobjects in $\A_0$ it follows that $(\A_{\alpha}^c)_0=(\A_0)_{\alpha}$.
	
\end{proof}

\begin{prop}
	The following are equivalent for a $\V$-category $\A$: \begin{enumerate}\setlength\itemsep{0.25em}
		\item $\A$ is conically $\alpha$-accessible;
		\item $\A$ is the free cocompletion of a small category under $\alpha$-filtered colimits. 
	\end{enumerate}
\end{prop}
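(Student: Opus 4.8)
The plan is to identify, for a small $\V$-category $\C$, the free cocompletion of $\C$ under (conical) $\alpha$-filtered colimits with the full subcategory $\F\subseteq[\C^{op},\V]$ of those presheaves that arise as conical $\alpha$-filtered colimits of representables, and then to show that a $\V$-category is conically $\alpha$-accessible exactly when it is equivalent to some such $\F$. The preliminary point I would establish first is that $\F$ is closed in $[\C^{op},\V]$ under conical $\alpha$-filtered colimits: since conical colimits in a presheaf $\V$-category are computed pointwise, an $\alpha$-filtered colimit of objects of $\F$ can be rewritten, via the Grothendieck construction on the indexing data exactly as in the unenriched case, as a single conical $\alpha$-filtered colimit of representables. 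In particular $\F$ is a \emph{one-step} closure of the representables, and it carries the universal property of the free cocompletion of $\C$ under conical $\alpha$-filtered colimits --- namely precomposition with the inclusion $H\colon\C\hookrightarrow\F$ induces an equivalence between the conically $\alpha$-accessible $\V$-functors $\F\to\D$ and all $\V$-functors $\C\to\D$, for $\D$ with conical $\alpha$-filtered colimits --- by the same reasoning as in \cite[Proposition~4.3]{KS05:articolo}, with left Kan extension along $H$ as pseudo-inverse. I also record that each $\tx{ev}_C\colon[\C^{op},\V]\to\V$ is cocontinuous.

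For $(2)\Rightarrow(1)$ I would argue as follows: assume $\A\simeq\F$ for some small $\C$, write $J\colon\A\hookrightarrow[\C^{op},\V]$ for the inclusion and $H\colon\C\hookrightarrow\A$ for the corestriction of the Yoneda embedding, so $JH=Y$. By the preliminary point $\A$ has conical $\alpha$-filtered colimits, computed as in $[\C^{op},\V]$, so $J$ preserves them; since $\A(HC,-)\cong[\C^{op},\V](YC,J-)\cong\tx{ev}_C\circ J$, each object $HC$ is conically $\alpha$-presentable in $\A$. As every object of $\A$ is by construction a conical $\alpha$-filtered colimit of objects of the small category $H\C$, the $\V$-category $\A$ is conically $\alpha$-accessible.

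For $(1)\Rightarrow(2)$: let $\A$ be conically $\alpha$-accessible. By Proposition~\ref{conicallyaccproperties}, $\A_\alpha^c$ is (essentially) small and every $A\in\A$ is a conical $\alpha$-filtered colimit of objects of $\A_\alpha^c$; I would fix a small $\C\simeq\A_\alpha^c$ with fully faithful inclusion $H\colon\C\hookrightarrow\A$ and set $J:=\A(H,1)\colon\A\to[\C^{op},\V]$. Since the objects of $\C$ are conically $\alpha$-presentable and colimits of presheaves are pointwise, $J$ preserves conical $\alpha$-filtered colimits. Writing $A\cong\colim_i HC_i$ with $\alpha$-filtered index gives $\A(H-,A)\cong\colim_i YC_i$ in $[\C^{op},\V]$, whence $\A(H-,A)*H\cong\colim_i(YC_i*H)\cong\colim_i HC_i\cong A$; thus $H$ is dense and $J$ is fully faithful. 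The same computation shows $J$ factors through $\F$, and any $M\cong\colim_i YC_i$ in $\F$ equals $J(\colim_i HC_i)$, so $J\colon\A\to\F$ is essentially surjective. Hence $\A\simeq\F$ is the free cocompletion of the small $\V$-category $\C\simeq\A_\alpha^c$ under conical $\alpha$-filtered colimits.

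The delicate step, and the one I would treat most carefully, is the preliminary point: unlike the $\Phi$-flat situation of Proposition~\ref{acc=flat}, the class of conical $\alpha$-filtered weights is not essentially small --- already for $\V=\bo{Set}$ one has $\F\simeq\alpha\tx{-Ind}(\C)$, which is large --- so \cite[Proposition~4.3]{KS05:articolo} does not apply off the shelf, and one must verify directly that the closure of the representables under conical $\alpha$-filtered colimits is attained in a single step and supports the requisite universal property.
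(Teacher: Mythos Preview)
Your argument is correct, but it does considerably more work than the paper's proof, which consists of a single sentence: the result is a direct consequence of \cite[Proposition~4.3]{KS05:articolo}. That proposition characterizes, for an \emph{arbitrary} class $\Phi$ of weights, when a $\V$-category is the free $\Phi$-cocompletion of a small full subcategory; it requires no smallness hypothesis on $\Phi$. Applied with $\Phi$ the class of conical $\alpha$-filtered weights, it gives exactly the equivalence of $(1)$ and $(2)$.

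Your concern in the final paragraph is therefore misplaced: it is not true that \cite[Proposition~4.3]{KS05:articolo} ``does not apply off the shelf'' here. The essential-smallness hypothesis appearing in Assumption~\ref{soundclass} is needed elsewhere in the paper (for instance to ensure $\Phi\tx{-Flat}(\C^{op},\V)$ is locally small, or that $\A_\Phi$ is small), but Kelly--Schmitt's Proposition~4.3 itself does not use it; their argument proceeds via density and left Kan extension and is indifferent to whether the closure under $\Phi$-colimits is reached in one step. What you have written is essentially an unpacking of their proof specialized to the case at hand, together with the additional (true, but unnecessary) verification that the closure is one-step. So your route is more explicit and self-contained, while the paper's is a one-line citation; both are valid, but you should drop the claim that the cited result needs supplementary work.
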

\begin{proof}
	This is a direct consequence of \cite[Proposition~4.3]{KS05:articolo}.
\end{proof}

\begin{obs}
	In the same spirit of Remark~\ref{phi-acc-funct}, it follows that a $\V$-functor $F\colon\A\to\B$ out of a conically $\alpha$-accessible $\V$-category $\A$ is conically $\alpha$-accessible if and only if it is the left Kan extension of its restriction to $\A^c_\alpha$.
\end{obs}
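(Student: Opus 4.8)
The plan is to reproduce the reasoning behind Remark~\ref{phi-acc-funct}, replacing the class of $\Phi$-flat colimits by the class of conical $\alpha$-filtered colimits. Write $H\colon\A_\alpha^c\hookrightarrow\A$ for the inclusion. By Proposition~\ref{conicallyaccproperties}(2) the $\V$-category $\A_\alpha^c$ is (essentially) small, and by Proposition~\ref{conicallyaccproperties}(2),(3) --- equivalently, by the preceding Proposition --- the functor $H$ exhibits $\A$ as the free cocompletion of $\A_\alpha^c$ under conical $\alpha$-filtered colimits. I would then appeal to the universal property of this free cocompletion: for any $\V$-category $\B$ with conical $\alpha$-filtered colimits, left Kan extension along $H$ and restriction along $H$ constitute an equivalence between $[\A_\alpha^c,\B]$ and the full sub-$\V$-category of $[\A,\B]$ on the conically $\alpha$-accessible $\V$-functors. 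Both implications are then formal: if $F\cong\tx{Lan}_H(F\circ H)$ then $F$ lies in the essential image of $\tx{Lan}_H(-)$ and so is conically $\alpha$-accessible; conversely, a conically $\alpha$-accessible $F$ and the functor $\tx{Lan}_H(F\circ H)$ have the same restriction along the fully faithful $H$, so the equivalence yields $F\cong\tx{Lan}_H(F\circ H)$.

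Since $\B$ need not in general admit all conical $\alpha$-filtered colimits, I would prefer to verify the two directions directly through the pointwise formula for $\tx{Lan}_H$, which moreover keeps the proof self-contained. Fix $A\in\A$ and let $P\colon(\A_\alpha^c)_0/A\to\A_\alpha^c$ be the projection, so that by Proposition~\ref{conicallyaccproperties}(3) the object $A$ is the conical $\alpha$-filtered colimit $\colim HP$; since the objects of $\A_\alpha^c$ are conically $\alpha$-presentable, applying $\A(H-,-)$ shows that $\A(H-,A)\cong\colim YP$ is a conical $\alpha$-filtered colimit of representables in $[(\A_\alpha^c)^{op},\V]$. By co-Yoneda and the fact that colimits commute with colimits one obtains, naturally in $A$,
\begin{align*}
\tx{Lan}_H(F\circ H)(A)\ \cong\ \A(H-,A)*FH\ &\cong\ (\colim YP)*FH\\
&\cong\ \colim(YP*FH)\ \cong\ \colim FHP .
\end{align*}
When $F$ is conically $\alpha$-accessible the final colimit is $F(\colim HP)\cong FA$, giving $F\cong\tx{Lan}_H(F\circ H)$. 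Conversely, if $F\cong\tx{Lan}_H(F\circ H)$ and $A\cong\colim_j A_j$ is a conical $\alpha$-filtered colimit in $\A$, then the objects of $\A_\alpha^c$ being conically $\alpha$-presentable gives $\A(H-,A)\cong\colim_j\A(H-,A_j)$, and hence
\[
FA\ \cong\ \A(H-,A)*FH\ \cong\ \big(\colim\nolimits_j\A(H-,A_j)\big)*FH\ \cong\ \colim\nolimits_j\big(\A(H-,A_j)*FH\big)\ \cong\ \colim\nolimits_j FA_j ,
\]
so that $F$ preserves conical $\alpha$-filtered colimits.

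I do not anticipate a genuine obstacle, as the argument is essentially routine; the only input particular to this section is the identification of $\A$ with the free cocompletion of the \emph{small} $\V$-category $\A_\alpha^c$, which is exactly the content of Proposition~\ref{conicallyaccproperties}(2) and~(3). The one subtlety worth stating carefully is not to assume that $\B$ is cocomplete --- this is handled by the pointwise computation above, which uses only that the canonical diagrams exhibit objects as conical $\alpha$-filtered colimits of objects of $\A_\alpha^c$ and that those objects are conically $\alpha$-presentable.
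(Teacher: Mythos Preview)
Your first paragraph is exactly the paper's approach: the remark in the paper is not given a separate proof but is stated as an immediate consequence of the preceding proposition identifying $\A$ with the free cocompletion of $\A_\alpha^c$ under conical $\alpha$-filtered colimits, whence the universal property of that cocompletion (\cite[Proposition~4.3]{KS05:articolo}) gives the claim. Your second paragraph goes beyond what the paper does, supplying a direct pointwise verification via $\tx{Lan}_H(FH)(A)\cong\A(H-,A)*FH$ that avoids assuming $\B$ has conical $\alpha$-filtered colimits; this is a genuine refinement, since the Kelly--Schmitt universal property as usually stated requires the target to admit the relevant colimits, whereas your argument shows the equivalence holds for arbitrary $\B$.
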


As we can raise the index of accessibility of an ordinary accessible category, we can do the same with conical accessibility:

\begin{cor}
	Given any $\alpha$-accessible $\V$-category $\A$ and a regular cardinal $\beta\rhd\alpha$, then $\A$ is conically $\beta$-accessible.
\end{cor}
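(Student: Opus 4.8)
The plan is to present $\A$ through its $\Phi_\alpha$-flat completion and then ``raise the index of accessibility'' one level, imitating Theorem~\ref{raising}. Two preliminary observations are immediate: since $\A$ has $\Phi_\alpha$-flat colimits, every conical $\alpha$-filtered colimit is $\alpha$-flat, and every $\beta$-filtered category is $\alpha$-filtered (as $\beta>\alpha$), the $\V$-category $\A$ has conical $\beta$-filtered colimits; and for the same reason, if $A\in\A_\alpha$ then $\A(A,-)$ preserves conical $\beta$-filtered colimits, so $\A_\alpha\subseteq\A_\beta^c$. By Proposition~\ref{acc=flat} we may assume $\A\simeq\alpha\tx{-Flat}(\C^{op},\V)$ with $\C:=\A_\alpha$ small and the representables corresponding to the objects of $\C$.

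As candidate generators I would take the full subcategory $\D\subseteq\A$ on the objects of the form $N*HK$, where $H\colon\C\hookrightarrow\A$ is the inclusion, $K\colon\E\to\C$ is any $\V$-functor, and $N\colon\E^{op}\to\V$ is a weight that is at once $\beta$-small and $\alpha$-flat (such colimits exist in $\A$, being $\alpha$-flat). Since the $\beta$-small weights form an essentially small class and $\C$ is small, $\D$ is essentially small. Moreover $\D\subseteq\A_\beta^c$: for such an object $\A(N*HK,-)\cong\{N,\A(HK-,-)\}$ is a $\beta$-small weighted limit of the $\V$-functors $\A(HKe,-)$, each preserving conical $\beta$-filtered colimits by the first paragraph, and $\beta$-small weighted limits commute in $\V$ with conical $\beta$-filtered colimits since $\V$ is locally $\alpha_0$-presentable with $\beta>\alpha_0$.

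It then remains to exhibit every $A\in\A$ as a conical $\beta$-filtered colimit of objects of $\D$. Writing $A\cong M*H$ with $M:=\A(H-,A)$ the associated $\alpha$-flat weight (Proposition~\ref{A_alpha}(3)), and using the identity $(\tx{Lan}_{K^{op}}N)*H\cong N*HK$, it is enough to show that every $\alpha$-flat weight $M\colon\C^{op}\to\V$ is, in $[\C^{op},\V]$, a conical $\beta$-filtered colimit of weights of the form $\tx{Lan}_{K^{op}}N$ with $N$ $\beta$-small and $\alpha$-flat — equivalently, that $\alpha\tx{-Flat}(\C^{op},\V)\simeq\beta\tx{-Ind}(\C_{\beta/\alpha})$, where $\C_{\beta/\alpha}$ is the free cocompletion of $\C$ under $\beta$-small $\alpha$-flat colimits. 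This is the enriched counterpart of the equivalence in Theorem~\ref{raising}(5). I would prove it by reducing to the ordinary situation: once an $\alpha$-flat colimit is expressed as a conical $\alpha$-filtered colimit of representables over an indexing category $\I$, it can be reorganized as $\colim_{S\in\mathcal{P}}\colim_S(-)$, with $\mathcal{P}$ the poset of $\beta$-small $\alpha$-filtered subcategories of $\I$; this poset is $\beta$-filtered exactly because $\alpha\lhd\beta$ (Theorem~\ref{raising}$(1)\Leftrightarrow(2)$), while each inner colimit is a $\beta$-small $\alpha$-flat colimit and hence an object of $\D$. Lemma~\ref{flat-restriction} and Proposition~\ref{flat-char} should then let one pass from this back to the weight $M$ itself and close the argument.

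The hard part is precisely this last reduction. Unlike in the $\bo{Set}$-based theory, an $\alpha$-flat weight on $\C=\A_\alpha$ is only an $\alpha$-\emph{flat} colimit of representables and not a conical $\alpha$-filtered one (the latter holds only over $\alpha$-complete domains, cf.\ Proposition~\ref{flat-char}), and moreover $\beta$ may well be smaller than the cardinality of $\C$, so one cannot simply use $\beta$-small weights on $\C$. Making the $\bo{Set}$-level reorganization above applicable — controlling the size of the indexing data so that the pieces are genuinely $\beta$-small, and transporting the outcome back to the enriched weight — is where the set-theoretic strength of $\alpha\lhd\beta$ is actually consumed, and is the step I expect to be the most demanding.
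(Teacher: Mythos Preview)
The statement, as placed in the paper (in the section on conically accessible $\V$-categories, immediately after their basic properties), is meant to carry the hypothesis \emph{conically} $\alpha$-accessible rather than $\alpha$-accessible; the paper's own proof invokes Proposition~\ref{conicallyaccproperties}(4), which requires conical $\alpha$-accessibility. You have read it as the flat notion, which turns it into a different and genuinely harder claim --- that claim is established only later, as Theorem~\ref{flat-to-conical}(2), by an entirely different method.

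For the intended (conical) statement, the paper's argument is a one-paragraph reduction to the underlying ordinary category and does not touch flat weights at all. By Proposition~\ref{conicallyaccproperties}(4) the category $\A_0$ is ordinary $\alpha$-accessible with $(\A_\alpha^c)_0=(\A_0)_\alpha$. Theorem~\ref{raising} and Corollary~\ref{raising2} then give that $\A_0$ is $\beta$-accessible and that $(\A_0)_\beta$ is the closure of $(\A_0)_\alpha$ under $\beta$-small $\alpha$-filtered colimits; since these colimits exist in $\A$ (being conical $\alpha$-filtered) and $\A_\beta^c$ is closed under existing $\beta$-small colimits, one obtains $(\A_0)_\beta\subseteq(\A_\beta^c)_0$. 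Every object of $\A$ is therefore a $\beta$-filtered colimit of objects from $\A_\beta^c$ simply because this is already true in $\A_0$, and the corollary follows.

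Your strategy --- decomposing an arbitrary $\alpha$-flat weight on $\C=\A_\alpha$ as a conical $\beta$-filtered colimit of $\beta$-small $\alpha$-flat weights --- runs into exactly the obstacle you flag: Proposition~\ref{flat-char} supplies the conical $\alpha$-filtered presentation of an $\alpha$-flat weight only over an $\alpha$-\emph{complete} domain, and $\A_\alpha$ need not be one, so the ``$\bo{Set}$-level reorganization'' has no indexing category $\I$ to act on. The paper never attempts this direct weight manipulation: here it sidesteps it via $\A_0$, and in Theorem~\ref{flat-to-conical} (for the flat hypothesis you assumed) it instead exhibits $\A$ as an intersection of two copies of $[\C,\V]$ inside $[\D,\V]$ and invokes Lemma~\ref{intersection-acc}.
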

\begin{proof}
	By Proposition~\ref{conicallyaccproperties} $\A_0$ is $\alpha$-accessible and $(\A_{\alpha}^c)_0=(\A_0)_{\alpha}$; moreover by Theorem~\ref{raising} $\A_0$ is $\beta$-accessible and $(\A_0)_{\beta}$ is given by the closure of $(\A_0)_{\alpha}$ in $\A$ under $\beta$-small $\alpha$-filtered colimits. It follows that $(\A_0)_{\beta}\subseteq (\A_{\beta}^c)_0$ (since the latter is closed under existing $\beta$-small colimits). As a consequence every element of $\A$ is a $\beta$-filtered colimit of objects from $\A_{\beta}^c$ (since that is true in $\A_0$). This is enough to imply that $\A$ is conically $\beta$-accessible.
\end{proof}

\begin{cor}\label{preserving-con-presentables}
	For any accessible $\V$-functor $F\colon\A\to\B$ between conically accessible $\V$-categories, there exists an $\alpha$ such that $F$ preserves the conically $\beta$-presentable objects for each $\beta\rhd\alpha$.
\end{cor}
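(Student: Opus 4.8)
The plan is to reduce the statement to its ordinary-categorical counterpart, Corollary~\ref{preserving-presentables}, by passing to underlying categories, mirroring the structure of the proof of the preceding corollary.

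First I would fix a regular cardinal $\alpha_0$ for which $\A$, $\B$, and $F$ are all conically $\alpha_0$-accessible. By Proposition~\ref{conicallyaccproperties}(4) the ordinary categories $\A_0$ and $\B_0$ are then $\alpha_0$-accessible. Next I would check that $F_0\colon\A_0\to\B_0$ is an accessible functor: since $\A$ is conically $\alpha_0$-accessible it has all conical $\alpha_0$-filtered colimits, and these are computed as in $\A_0$, so every $\alpha_0$-filtered colimit of $\A_0$ underlies a conical $\alpha_0$-filtered colimit of $\A$, which $F$ preserves; hence $F_0$ preserves $\alpha_0$-filtered colimits. Corollary~\ref{preserving-presentables} then supplies a regular cardinal $\alpha$, which we may take to satisfy $\alpha\rhd\alpha_0$, such that $F_0$ preserves $\beta$-presentable objects for every $\beta\rhd\alpha$.

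Now I would fix any $\beta\rhd\alpha$. By transitivity of the sharply-less-than relation we also have $\beta\rhd\alpha_0$, so the preceding corollary applies to show that $\A$ and $\B$ are conically $\beta$-accessible. Invoking Proposition~\ref{conicallyaccproperties}(4) at the index $\beta$ gives $(\A_\beta^c)_0=(\A_0)_\beta$ and $(\B_\beta^c)_0=(\B_0)_\beta$. Thus if $A\in\A_\beta^c$ then $A$ is $\beta$-presentable in $\A_0$, whence $(FA)_0=F_0A$ is $\beta$-presentable in $\B_0$, i.e.\ $(FA)_0\in(\B_\beta^c)_0$, and therefore $FA\in\B_\beta^c$. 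As $A$ was arbitrary, $F$ preserves the conically $\beta$-presentable objects.

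I do not expect a serious obstacle, since the substantial content — the raising-of-index results — has already been isolated in Theorem~\ref{raising} and the preceding corollary, and the argument above is essentially bookkeeping with the two cited corollaries. The one point that deserves a line of justification is that $F_0$ is accessible, which rests on conical $\alpha_0$-filtered colimits in $\A$ exhausting all $\alpha_0$-filtered colimits of $\A_0$; this holds because $\A$ possesses every conical $\alpha_0$-filtered colimit and such colimits are computed as in $\A_0$.
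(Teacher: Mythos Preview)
Your proposal is correct and follows exactly the approach of the paper, which simply cites Proposition~\ref{conicallyaccproperties}(4) and Corollary~\ref{preserving-presentables} as a direct consequence; you have spelled out the details that the paper leaves implicit, including the verification that $F_0$ is accessible and the use of $(\A_\beta^c)_0=(\A_0)_\beta$ on both sides to transfer the ordinary result back to the enriched one.
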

\begin{proof}
	Direct consequence of Proposition~\ref{conicallyaccproperties}(4) and Corollary~\ref{preserving-presentables}.
\end{proof}

\vspace{6pt}

\subsection{Accessible vs.\ conically accessible}\label{accvscon}

The aim of this section is to compare the two notions of accessibility just introduced. In general, for a $\V$-category $\A$ with $\alpha$-flat colimits, we only have the inclusion $\A_\alpha\subseteq \A_\alpha^c$ (since every $\alpha$-filtered colimit is $\alpha$-flat). This inclusion is not an equality in general and moreover conically accessibility does not imply accessibility (since  some $\alpha$-flat colimits may not be $\alpha$-filtered, see \cite{LT21:articolo}). However for many significant base of enrichment the two notions do coincide, or differ only by Cauchy completeness \cite[Section~3-4]{LT21:articolo}. In the remainder of this section we give conditions on when a conically accessible $\V$-category is accessible, and prove that every $\alpha$-accessible $\V$-category is conically $\alpha^+$-accessible.

\begin{Def}
	We say that a $\V$-category $\A$ is {\em accessible} if it is $\alpha$-accessible for some $\alpha$; we say that it is {\em conically accessible} if it is conically $\alpha$-accessible for some $\alpha$. We say that a $\V$-functor $F\colon\A\to\K$ (not necessarily between accessible $\V$-categories) is {\em accessible} if $\A$ has and $F$ preserves $\alpha$-flat colimits for some $\alpha$; we say that it is {\em conically accessible} if $\A$ has and $F$ preserves $\alpha$-filtered colimits for some $\alpha$. If $F$ is fully faithful we say that $\A$ is respectively {\em accessibly embedded} and {\em conically accessibly embedded}.
\end{Def}

For this section we won't be considering $\Phi$-accessible $\V$-categories for a general weakly sound class $\Phi$, these will come into play again in Section~\ref{sketch}.

In the first part of this section we give conditions for conical accessibility to imply accessibility. As mentioned above, it's not true in general because some flat-weighted colimits might be missing in the $\V$-category in question; things change if the $\V$-category is complete or cocomplete:

\begin{prop}\label{cocompleteacc}
	Let $\A$ be a complete or $\alpha$-cocomplete $\V$-category; then $\A$ has $\alpha$-flat colimits if and only if it has $\alpha$-filtered colimits. A $\V$-functor from such an $\A$ preserves $\alpha$-flat colimits if and only if it preserves $\alpha$-filtered colimits. 
\end{prop}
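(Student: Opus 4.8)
The plan is to dispose of the trivial implications at once and, for the substantive direction, to reduce an $\alpha$-flat colimit in $\A$ to a single conical $\alpha$-filtered colimit by passing to the free completion of the domain of the weight under $\alpha$-small limits. For the trivial part: every conical $\alpha$-filtered colimit is $\alpha$-flat (as observed just after Proposition~\ref{flat-char}), so if $\A$ has $\alpha$-flat colimits then it has $\alpha$-filtered ones, and if a $\V$-functor preserves $\alpha$-flat colimits then it preserves $\alpha$-filtered ones; here nothing is required of $\A$.

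\emph{The main reduction.} Assume $\A$ has conical $\alpha$-filtered colimits, let $M\colon\C^{op}\to\V$ be $\alpha$-flat and $H\colon\C\to\A$ a $\V$-functor. Let $J\colon\C\hookrightarrow\C^\dagger$ be the free completion of $\C$ under $\alpha$-small weighted limits; this is a small, $\alpha$-complete $\V$-category, being the opposite of the free cocompletion of $\C^{op}$ under the essentially small class of $\alpha$-small weights. By Lemma~\ref{flat-restriction}(1) the weight $M^\dagger:=\tx{Lan}_{J^{op}}M\colon(\C^\dagger)^{op}\to\V$ is again $\alpha$-flat, and since $\C^\dagger$ is $\alpha$-complete, Proposition~\ref{flat-char} presents it as a conical $\alpha$-filtered colimit of representables, say $M^\dagger\cong\colim_{d\in\D}\C^\dagger(-,Gd)$ with $\D$ $\alpha$-filtered. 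Now we extend $H$ along $J$ using the hypothesis on $\A$: if $\A$ is complete it is in particular $\alpha$-complete, so the universal property of $\C^\dagger$ gives an $\alpha$-continuous $\bar H\colon\C^\dagger\to\A$ with $\bar HJ\cong H$; if $\A$ is $\alpha$-cocomplete then, together with the assumed $\alpha$-filtered colimits, it is cocomplete (every small colimit being generated by $\alpha$-small colimits and $\alpha$-filtered ones), so we may take $\bar H:=\tx{Lan}_JH$. Either way we have $\bar H\colon\C^\dagger\to\A$ with $\bar HJ\cong H$.

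\emph{Identifying the colimit.} The key computation is that for any $K\colon\C^\dagger\to\A$ for which $M^\dagger*K$ exists one has $M^\dagger*K\cong M*(KJ)$, since for every $A\in\A$
\[
\begin{aligned}
  [(\C^\dagger)^{op},\V](M^\dagger,\A(K-,A)) &= [(\C^\dagger)^{op},\V](\tx{Lan}_{J^{op}}M,\A(K-,A))\\
  &\cong [\C^{op},\V](M,\A(KJ-,A))
\end{aligned}
\]
by the adjunction $\tx{Lan}_{J^{op}}\dashv(-\circ J^{op})$, so a representing object for one side is one for the other. Moreover $M^\dagger*\bar H$ exists and equals $\colim_{d\in\D}\bar H(Gd)$: using $\C^\dagger(-,X)*\bar H\cong\bar HX$ together with the fact that the colimit presenting $M^\dagger$ is conical $\alpha$-filtered, the same representability check yields $\A(\colim_{d}\bar H(Gd),A)\cong[(\C^\dagger)^{op},\V](M^\dagger,\A(\bar H-,A))$, and $\colim_{d}\bar H(Gd)$ exists because $\A$ has conical $\alpha$-filtered colimits. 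Combining these, $M*H\cong M^\dagger*\bar H\cong\colim_{d\in\D}\bar H(Gd)$ exists, which proves $\A$ has $\alpha$-flat colimits. For the functor statement, suppose $F\colon\A\to\B$ preserves conical $\alpha$-filtered colimits; then $F(M*H)\cong F(\colim_{d}\bar H(Gd))\cong\colim_{d}F\bar H(Gd)$, and applying the key computation to $K=F\bar H$ (note $F\bar HJ\cong FH$) the right-hand side is $M^\dagger*(F\bar H)\cong M*(FH)$; since this isomorphism is induced by the image under $F$ of the colimit cocone for $M*H$, it exhibits $F$ as preserving $M*H$.

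\emph{Main obstacle.} The delicate point is producing the extension $\bar H\colon\C^\dagger\to\A$: this is exactly where completeness (yielding the $\alpha$-continuous extension) or $\alpha$-cocompleteness (yielding, via cocompleteness, the left Kan extension) of $\A$ is used, and without some such hypothesis the reduction breaks down, since in an arbitrary $\V$-category a single $\alpha$-flat colimit need not be built from $\alpha$-filtered ones. One must also check, more routinely, that $\C^\dagger$ is genuinely small and $\alpha$-complete, so that the conical $\alpha$-filtered presentation of $M^\dagger$ coming from Proposition~\ref{flat-char} is available, and that the various comparison cocones in the functor argument are compatible.
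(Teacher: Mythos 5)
Your overall strategy is the same as the paper's — extend the weight and the diagram to a completion of $\C$ on which Proposition~\ref{flat-char} yields a conical $\alpha$-filtered presentation, then transport the colimit back along $\tx{Lan}_{J^{op}}M*K\cong M*(KJ)$ — but you have completed $\C$ in the wrong direction, and this breaks the key step. You take $\C^\dagger$ to be the free completion of $\C$ under $\alpha$-small \emph{limits}, so that $\C^\dagger$ is $\alpha$-complete, and then invoke Proposition~\ref{flat-char} for $M^\dagger\colon(\C^\dagger)^{op}\to\V$. Although the proposition is literally worded ``if $\C$ is $\alpha$-complete'', the hypothesis it actually needs is that the \emph{domain of the weight} be $\alpha$-complete: for a weight $M\colon\C^{op}\to\V$, condition (3) asserts that $M$ preserves $\alpha$-small limits of $\C^{op}$, i.e.\ sends $\alpha$-small colimits of $\C$ to limits, and it is the $\alpha$-small \emph{colimits} of $\C$ that are used to show $\tx{El}(M_I)$ is $\alpha$-filtered. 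This is how the proposition is used everywhere else in the paper (e.g.\ in the proof of Theorem~\ref{flat-to-conical}, where one passes to the free completion of $\C=\A_\alpha^{op}$ under $\alpha$-small limits precisely so that the \emph{covariant} functors on it can be $\alpha$-continuous). With your $\C^\dagger$, the domain $(\C^\dagger)^{op}$ of $M^\dagger$ is $\alpha$-\emph{co}complete rather than $\alpha$-complete, so the ``flat $\Rightarrow$ $\alpha$-continuous $\Rightarrow$ conical $\alpha$-filtered colimit of representables'' chain does not apply; restricting your claimed presentation along $J^{op}$ would say that every $\alpha$-flat weight is a conical $\alpha$-filtered colimit of $\alpha$-small \emph{limits} of representables at the same cardinal, which is not something the enriched theory provides — for general $\V$ this is exactly the flat-versus-filtered discrepancy.

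The repair is to take $\D$ to be the free cocompletion of $\C$ under $\alpha$-small \emph{colimits}, so that $M'=\tx{Lan}_{J^{op}}M\colon\D^{op}\to\V$ has $\alpha$-complete domain $\D^{op}$ and Proposition~\ref{flat-char} applies. This also changes how $H$ is extended: one takes $H'=\tx{Lan}_JH$, which exists using only the $\alpha$-small colimits of $\A$ (each object of $\D$ being a $J$-absolute $\alpha$-small colimit of objects of $\C$), or $H'=\tx{Ran}_JH$ when $\A$ is complete; in particular your detour through ``$\alpha$-cocomplete plus conical $\alpha$-filtered colimits implies cocomplete'' — itself a claim that needs an argument in the enriched setting for the given $\alpha$ — becomes unnecessary. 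Your remaining computations (the adjunction identity for the extended weight, the identification of $M'*H'$ with a conical $\alpha$-filtered colimit, and the preservation argument for a $\V$-functor) are correct and coincide with the paper's once the completion is fixed.
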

\begin{proof}
	Assume first that $\A$ is $\alpha$-cocomplete and consider an $\alpha$-flat weight $M:\C^{op}\to\V$ together with a diagram $H:\C\to\A$. Let $J:\C\hookrightarrow\D$ be the inclusion of $\C$ into its free cocompletion under $\alpha$-small colimits. Since $\A$ has them we can consider $H':=\tx{Lan}_JH$, while on the weighted side we take $M':=\tx{Lan}_{J^{op}}M$. By Lemma \ref{flat-restriction} the weight $M'$ is still $\alpha$-flat and its domain is $\alpha$-complete. Therefore by Proposition~\ref{flat-char} we can write $M'\cong\tx{colim}YF$ as an $\alpha$-filtered colimit of representables; here $Y:\D\to[\D^{op},\V]$ is the Yoneda embedding and $F:\E_{\V}\to\D$ is a functor with $\alpha$-filtered domain. As a consequence we obtain (each side existing if the other does):
	\begin{equation*}
		\begin{split}
			M*H&\cong M*H'J\cong M'*H'\\
			&\cong (\tx{colim}YF)* H'\\
			&\cong \tx{colim} (YF*H')\\
			&\cong \tx{colim} (H'F)\\
		\end{split}
	\end{equation*}
	Thus the existence and preservation of the $\alpha$-flat colimit $M*H$ is equivalent to that of the $\alpha$-filtered colimit $\tx{colim} H'F$.
	
	The case when $\A$ is complete goes exactly as above with the only difference that we consider $H':=\tx{Ran}_JH$ (instead of $\tx{Lan}_JH$): this exists because $\A$ is assumed to be complete. The other arguments apply identically since the isomorphism $ M*H\cong M'*H'$ still holds.
\end{proof}

A direct consequence is the following:

\begin{cor}
	The following are equivalent for a complete or $\alpha$-cocomplete $\V$-category $\A$:\begin{enumerate}\setlength\itemsep{0.25em}
		\item $\A$ is $\alpha$-accessible;
		\item $\A$ is conically $\alpha$-accessible;
	\end{enumerate} 
	In this case $\A_\alpha=\A_\alpha^c$ and $\A$ is locally $\alpha$-presentable in the sense of \cite{Kel82:articolo}.
\end{cor}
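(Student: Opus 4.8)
The plan is to deduce everything from Proposition~\ref{cocompleteacc} together with the structure of $\Phi$-accessible $\V$-categories already established. That proposition says that, for $\A$ as in the statement, the classes of $\alpha$-flat and of $\alpha$-filtered colimits are interchangeable: $\A$ has $\alpha$-flat colimits if and only if it has $\alpha$-filtered ones, and a $\V$-functor out of $\A$ preserves the former precisely when it preserves the latter. Applying the second half to the representables $F=\A(A,-)$, for each $A\in\A$, gives at once that $A$ is $\alpha$-presentable if and only if it is conically $\alpha$-presentable; hence $\A_\alpha=\A_\alpha^c$ as full subcategories of $\A$ whenever the relevant colimits exist, which settles one of the two final claims. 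The implication $(2)\Rightarrow(1)$ is then immediate: a small $\C\subseteq\A_\alpha^c=\A_\alpha$ through which every object of $\A$ is an $\alpha$-filtered, hence $\alpha$-flat, colimit witnesses $\alpha$-accessibility.

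The content is in $(1)\Rightarrow(2)$ and local presentability, and I would first show that $\A$ is cocomplete and then quote Kelly's recognition theorem. Since $\A$ is $\alpha$-accessible, $\A_\alpha$ is an essentially small (Proposition~\ref{A_alpha}(2)) dense (Proposition~\ref{A_alpha}(3)) subcategory of $\A$ consisting of $\alpha$-presentable objects, so a cocomplete such $\A$ is locally $\alpha$-presentable in the sense of \cite{Kel82:articolo}; and a locally $\alpha$-presentable $\V$-category is in particular conically $\alpha$-accessible, which gives $(2)$. Starting instead from $(2)$ one first passes to $(1)$ by the previous paragraph, so ``in this case'' always entails local presentability.

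Establishing cocompleteness splits according to the hypothesis. If $\A$ is $\alpha$-cocomplete, it has all $\alpha$-small colimits and, being $\alpha$-accessible, all $\alpha$-flat colimits and hence all $\alpha$-filtered ones; since every presheaf on a small $\V$-category is an $\alpha$-filtered colimit of $\alpha$-small colimits of representables, a $\V$-category with both kinds of colimits is automatically cocomplete, so $\A$ is. If $\A$ is complete, I would instead use the fully faithful $Z=\A(H-,1)\colon\A\hookrightarrow[\A_\alpha^{op},\V]$ of Proposition~\ref{acc=flat}: it preserves $\alpha$-flat colimits and has essential image $\Phi\tx{-Flat}(\A_\alpha^{op},\V)$, and, $\A$ being complete and each $\A_\alpha$-representable preserving limits, $Z$ also preserves all limits, computed pointwise. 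Thus $\Phi\tx{-Flat}(\A_\alpha^{op},\V)$ is a full subcategory of the locally presentable $\V$-category $[\A_\alpha^{op},\V]$ closed under limits and under $\alpha$-flat colimits; such a subcategory is reflective (the enriched counterpart of \cite[Theorem~2.48]{AR94:libro}), and transporting the reflection along $\A\simeq\Phi\tx{-Flat}(\A_\alpha^{op},\V)$ shows $\A$ is cocomplete.

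The step I expect to be the real obstacle is precisely the complete case: turning completeness plus $\alpha$-accessibility into cocompleteness. It hinges on the reflectivity of a full subcategory of a presheaf $\V$-category closed under limits and $\alpha$-flat colimits — equivalently, on an adjoint-functor-theorem argument showing that the limit-preserving accessible functor $Z$ has a left adjoint. This is the enriched shadow of the classical ``complete $+$ accessible $=$ locally presentable'' theorem, and is the one point where something beyond routine bookkeeping with Proposition~\ref{cocompleteacc} is required; everything else follows formally from that proposition and from Propositions~\ref{A_alpha} and~\ref{acc=flat}.
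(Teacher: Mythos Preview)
Your proposal is correct and matches the paper's intended argument. The paper gives no explicit proof, calling the corollary ``a direct consequence'' of Proposition~\ref{cocompleteacc}; you have spelled out what that consequence involves. The equality $\A_\alpha=\A_\alpha^c$ and the implication $(2)\Rightarrow(1)$ really are immediate from that proposition, while $(1)\Rightarrow(2)$ and local $\alpha$-presentability require the extra step of establishing cocompleteness. In the $\alpha$-cocomplete case this is the standard fact that $\alpha$-small plus $\alpha$-filtered colimits generate all colimits; in the complete case you correctly isolate the adjoint-functor-theorem argument as the non-formal content. The paper presumably regards this last step as known from \cite{Kel82:articolo} (complete plus $\alpha$-accessible implies locally $\alpha$-presentable), which is why it is not singled out.

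One minor simplification available in the $\alpha$-cocomplete case: rather than passing through cocompleteness and Kelly's recognition theorem, you can argue $(1)\Rightarrow(2)$ directly from the \emph{proof} of Proposition~\ref{cocompleteacc}. There the $\alpha$-flat colimit $M*H$ is rewritten as an $\alpha$-filtered colimit $\mathrm{colim}(H'F)$ with $H'=\mathrm{Lan}_JH$; when $H$ lands in $\A_\alpha$ and $\A$ is $\alpha$-cocomplete, $H'$ lands in the closure of $\A_\alpha$ under $\alpha$-small colimits, which is again $\A_\alpha$ by Proposition~\ref{A_alpha}(1). This exhibits every object as an $\alpha$-filtered colimit of objects from $\A_\alpha=\A_\alpha^c$ without first proving full cocompleteness. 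This shortcut is not available in the complete case (since $H'=\mathrm{Ran}_JH$ need not land in $\A_\alpha$), confirming your assessment that the complete case is where the real work lies.
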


Let's now see how the accessibility of the underlying category relates to that of the $\V$-category.

\begin{prop}\label{conical-flat}
	Let $\A$ be a $\V$-category for which $\A_0$ is accessible; then: \begin{enumerate}\setlength\itemsep{0.25em}
		\item  $\A$ is conically accessible if and only if it has conical $\alpha$-filtered colimits for some $\alpha$ and every object is conically presentable;
		\item $\A$ is accessible if and only if it has $\alpha$-flat colimits for some $\alpha$ and every object is presentable.
	\end{enumerate}
\end{prop}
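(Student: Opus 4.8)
\emph{The plan.} The two ``only if'' directions are the easy ones: if $\A$ is (conically) $\alpha$-accessible then it has the required colimits by definition, and every object is (conically) presentable by the usual argument that a sufficiently small colimit of (conically) presentable objects is again such. Concretely, for part (2) I would use Proposition~\ref{A_alpha}(3) to write $A\cong\A(H-,A)*H$ with $H\colon\A_\alpha\hookrightarrow\A$; since $\A_\alpha$ is small the weight $\A(H-,A)$ is $\beta$-small for $\beta$ large enough, so $A$ is a $\beta$-small weighted colimit of objects of $\A_\alpha\subseteq\A_\beta$, and Proposition~\ref{A_alpha}(1) puts $A$ in $\A_\beta$. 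For part (1) I would instead use the canonical (small, for fixed $A$) $\alpha$-filtered colimit $A\cong\colim((\A_\alpha^c)_0/A\to\A)$ of Proposition~\ref{conicallyaccproperties}(3), which for $\beta$ large exhibits $A$ as a $\beta$-small conical colimit of objects of $\A_\alpha^c\subseteq\A_\beta^c$, and then conclude by Proposition~\ref{conicallyaccproperties}(1).

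\emph{The ``if'' direction of (1).} Assume $\A$ has conical $\alpha$-filtered colimits, $\A_0$ is accessible, and every object of $\A$ is conically presentable. First I would raise the index of $\A_0$: using the remark that for any small family of regular cardinals there are arbitrarily large regular cardinals sharply above all of them, together with the equivalence $(1)\Leftrightarrow(6)$ of Theorem~\ref{raising}, I may assume $\A_0$ is $\gamma$-accessible with $\gamma\geq\alpha$, so that $\A$ has conical $\gamma$-filtered colimits. The objects of $(\A_0)_\gamma$ form a small set, each conically presentable by hypothesis, so by the same remark I can pick a regular $\beta\rhd\gamma$ dominating all of their presentability indices; then $(\A_0)_\gamma\subseteq\A_\beta^c$ as full $\V$-subcategories of $\A$. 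Next I would apply Corollary~\ref{raising2}: since $\gamma\lhd\beta$, $\A_0$ is $\beta$-accessible and every object of $(\A_0)_\beta$ is a $\beta$-small $\gamma$-filtered colimit of objects of $(\A_0)_\gamma$. As $\gamma\geq\alpha$, such colimits are computed conically in $\A$, so every object of $(\A_0)_\beta$ is a $\beta$-small conical colimit of objects of $(\A_0)_\gamma\subseteq\A_\beta^c$, hence conically $\beta$-presentable by Proposition~\ref{conicallyaccproperties}(1); thus $(\A_0)_\beta\subseteq\A_\beta^c$, and it is essentially small. Finally, $\beta$-accessibility of $\A_0$ means each $A\in\A$ is the $\A_0$-colimit of a small $\beta$-filtered diagram valued in $(\A_0)_\beta$; this diagram has a conical colimit in $\A$ (again $\beta\geq\gamma\geq\alpha$) with underlying object $A$. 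So $(\A_0)_\beta$ generates $\A$ under conical $\beta$-filtered colimits and $\A$ is conically $\beta$-accessible.

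\emph{The ``if'' direction of (2).} This runs along exactly the same lines, replacing ``conically presentable/accessible'' by ``presentable/accessible'' and ``conical $\gamma$-filtered colimits'' by ``$\gamma$-flat colimits'' throughout. The additional facts are routine: $\A$ having $\alpha$-flat colimits gives it $\gamma$-flat colimits (hence in particular conical $\gamma$-filtered colimits) for any $\gamma\geq\alpha$, since a $\gamma$-flat weight is $\alpha$-flat; a $\beta$-small conical colimit is a $\beta$-small weighted colimit, so Proposition~\ref{A_alpha}(1) again yields $(\A_0)_\beta\subseteq\A_\beta$; and conical $\beta$-filtered colimits are $\beta$-flat, so the canonical diagrams exhibit every $A\in\A$ as a $\beta$-flat colimit of objects of $(\A_0)_\beta\subseteq\A_\beta$. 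Hence $\A$ is $\beta$-accessible.

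\emph{Main obstacle.} The delicate point is the bookkeeping of cardinals. The objects of $(\A_0)_\gamma$ are only assumed to be (conically) presentable, each at an index of its own, so one must produce a single regular $\beta$ that at once dominates all of those indices, is sharply above $\gamma$ (so that Corollary~\ref{raising2} applies), and is large enough to keep the relevant conical or flat colimits available in $\A$. Essential smallness of $(\A_0)_\gamma$ is exactly what makes these requirements compatible; everything else is a matter of transporting colimits between $\A$ and $\A_0$, which is harmless in the filtered direction because conical colimits in $\A$ restrict to ordinary colimits in $\A_0$.
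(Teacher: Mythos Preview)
Your argument is correct and follows essentially the same strategy as the paper's proof (with the roles of $\beta$ and $\gamma$ interchanged). One minor nitpick: in the ``if'' directions your appeals to Propositions~\ref{A_alpha}(1) and~\ref{conicallyaccproperties}(1) are formally circular, since those propositions assume the very accessibility you are establishing; but the content you need --- that a $\beta$-small (weighted, respectively conical) colimit of (conically) $\beta$-presentable objects is again (conically) $\beta$-presentable --- holds in any $\V$-category by the direct limit/colimit commutation in $\V$, which is exactly what the paper invokes at that step.
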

\begin{proof}
	$(1)$. If $\A$ is conically $\alpha$-accessible then it has conical $\alpha$-filtered colimits by definition; moreover every object is conically presentable being a small colimit of $\alpha$-presentable objects.\\
	Conversely, let $\beta\geq\alpha$ be such that $\A_0$ is $\beta$-accessible and let $\gamma\rhd\beta$ be such that $\A$ has conical $\gamma$-filtered colimits and
	$$ (\A_0)_\beta\subseteq (\A_\gamma^c)_0; $$
	this exists since $(\A_0)_\beta$ is small and each object of $\A$ is conically presentable. It follows that
	$$ (\A_\gamma^c)_0=(\A_0)_\gamma. $$
	Indeed, the inclusion $(\A_\gamma^c)_0\subseteq(\A_0)_\gamma$ is always true (since the unit $I$ of $\V$ is $\gamma$-presentable); on the other hand every $X\in(\A_0)_\gamma$ is a $\gamma$-small $\beta$-filtered colimit of objects from $(\A_0)_\beta$; since $\A$ has $\gamma$-filtered colimits, the colimit expressing $X$ is actually enriched; hence $X$ is a $\gamma$-small colimit of conically $\beta$-presentable objects in $\A$, and this makes $X$ a conically $\gamma$-presentable object of $\A$.\\
	Given the equality above, and the fact that $\A_0$ is $\gamma$-accessible, it follows that $\A_\gamma^c$ generates $\A$ under $\gamma$-filtered colimits; therefore $\A$ is conically accessible.
	
	$(2)$. The proof is essentially the same as above, one just has to replace conical presentability with actual presentability.  
\end{proof}

\begin{cor}\label{conically-acc}
	Let $\K$ be conically accessible, $J\colon\A\hookrightarrow\K$ be conically accessibly embedded, and $\A_0$ be accessible; then $\A$ is conically accessible. If moreover $\K$ is accessible and $\A$ is accessibly embedded, then it is accessible.
\end{cor}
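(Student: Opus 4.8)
The plan is to derive both assertions from Proposition~\ref{conical-flat}, which applies because $\A_0$ is accessible by hypothesis. For the first assertion one checks the two conditions in part~(1): that $\A$ has conical $\beta$-filtered colimits for some regular $\beta$, and that every object of $\A$ is conically presentable. The former holds by the very definition of conically accessibly embedded: since $J$ is a conically accessible $\V$-functor, $\A$ has and $J$ preserves $\beta$-filtered colimits for some $\beta$. For the second assertion one argues in exactly the same way with part~(2), the colimits in $\A$ now being $\beta$-flat colimits supplied by $\A$ being accessibly embedded in $\K$.

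Thus the real content is to show every $A\in\A$ is (conically) presentable, and I would first record that every object of a (conically) accessible $\V$-category is itself (conically) presentable. In the conical case, Proposition~\ref{conicallyaccproperties}(3) exhibits such an object as a small $\alpha$-filtered colimit $\colim_i C_i$ of conically $\alpha$-presentable objects, so $\K(JA,-)\cong\lim_i\K(C_i,-)$ is a small limit of functors preserving $\alpha$-filtered colimits, hence preserves $\mu$-filtered colimits for any regular $\mu\geq\alpha$ exceeding the size of the indexing diagram, by commutation of $\mu$-small limits with $\mu$-filtered colimits in $\V$. In the accessible case one argues from Proposition~\ref{A_alpha}(3): writing $JA\cong\K(H-,JA)*H$ with $H\colon\K_\alpha\hookrightarrow\K$ gives $\K(JA,-)\cong[\K_\alpha^{op},\V](\K(H-,JA),\K(H-,-))$, a limit over the essentially small $\K_\alpha$ of $\alpha$-flat-colimit-preserving functors, which is $\gamma$-accessible once $\gamma$ is chosen large enough in terms of the size of $\K_\alpha$ and the presentability ranks in $\V$ of the values of the weight $\K(H-,JA)$, using that a $\gamma$-flat weight has $\gamma$-continuous weighted-colimit functor.

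It then remains to transfer (conical) presentability across $J$. Fix, for the given $A$, a regular $\gamma\geq\beta$ with $JA$ (conically) $\gamma$-presentable in $\K$. Since $\gamma\geq\beta$, every $\gamma$-flat (resp.\ $\gamma$-filtered) weight is in particular $\beta$-flat (resp.\ $\beta$-filtered), so $\A$ has $\gamma$-flat (resp.\ conical $\gamma$-filtered) colimits and $J$ preserves them; for a $\gamma$-flat (resp.\ conical $\gamma$-filtered) weight $M$ and a diagram $D$ in $\A$ one computes
\[
\A(A,M*D)\cong\K(JA,J(M*D))\cong\K(JA,M*JD)\cong M*\K(JA,JD-)\cong M*\A(A,D-),
\]
using full faithfulness of $J$ at the two ends, preservation of the colimit by $J$ in the middle, and $\gamma$-presentability of $JA$ in the last-but-one step. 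Hence $A$ is (conically) $\gamma$-presentable in $\A$; as $A$ was arbitrary, Proposition~\ref{conical-flat} delivers the conclusion. I expect the subtlest point to be the accessible part of the second paragraph: unlike the conical case, where one literally has a small diagram, here one must simultaneously control the size of $\K_\alpha$ and the $\V$-presentability ranks of the objects $\K(HC,JA)$ before one can pin down a single $\gamma$ for which $\K(JA,-)$ preserves $\gamma$-flat colimits.
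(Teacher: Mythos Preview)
Your proposal is correct and follows the same approach as the paper: apply Proposition~\ref{conical-flat} and verify its hypotheses. The paper's own proof is a single sentence pointing to that proposition; you have simply filled in the details of why every object of $\A$ is (conically) presentable, by first observing this holds in $\K$ and then transferring across the fully faithful accessible embedding $J$.
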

\begin{proof}
	This is a direct consequence of the previous Proposition since each object of $\A$ will be conically presentable in $(1)$ and presentable in $(2)$.
\end{proof}

We now turn to proving that every accessible $\V$-category is also conically accessible; the next Lemma will be an important step.

\begin{lema}\label{intersection-acc}
	Let $\K$ be a locally $\alpha$-presentable $\V$-category, and $\A$ and $\B$ be two conically $\alpha$-accessible full subcategories of $\K$ for which the inclusions preserve the conically $\alpha$-presentable objects. Let $\C:=\A\cap\B$ and $\beta\rhd\alpha$:\begin{enumerate}\setlength\itemsep{0.25em}
		\item if $\A$ and $\B$ are closed under $\alpha$-filtered colimits in $\K$, then $\C$ is conically $\beta$-accessible and closed under $\beta$-filtered colimits in $\K$.
		\item if $\A$ and $\B$ are closed under $\alpha$-flat colimits in $\K$, then  $\C$ is $\beta$-accessible and closed under $\beta$-flat colimits in $\K$; moreover $\C_\beta=\C^c_\beta$.
	\end{enumerate} 
\end{lema}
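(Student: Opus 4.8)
\section*{Proof proposal}

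\textbf{Plan.} Parts (1) and (2) have essentially the same proof, so I will describe the argument for (1) and indicate the (small) changes for (2). First observe that, since $\alpha<\beta$, every $\beta$-filtered (resp.\ $\beta$-flat) colimit is in particular $\alpha$-filtered (resp.\ $\alpha$-flat); hence if $\A$ and $\B$ are closed in $\K$ under $\alpha$-filtered (resp.\ $\alpha$-flat) colimits, so is $\C=\A\cap\B$, and those colimits in $\C$ are computed as in $\K$. In particular $\C$ is closed in $\K$ under $\beta$-filtered (resp.\ $\beta$-flat) colimits, which already gives the closure statements; and since $\K$ is cocomplete, $\C$ has all $\beta$-flat colimits in case (2). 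Moreover $\K$, being locally $\alpha$-presentable, is locally $\beta$-presentable, so $\K_\beta=\K_\beta^c$ is essentially small; and because $\C$ is closed in $\K$ under conical $\beta$-filtered colimits, for $X\in\C\cap\K_\beta$ the functor $\C(X,-)=\K(X,-)|_\C$ preserves conical $\beta$-filtered colimits, i.e.\ $X\in\C_\beta^c$ --- and in case (2), since $\C$ is then also closed under $\beta$-flat colimits, even $X\in\C_\beta$ (using $\K_\beta=\K_\beta^c$ again). So $\C\cap\K_\beta$ is the natural candidate for a small generating family.

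The next step is to identify $\C\cap\K_\beta$ at the level of underlying categories. By Proposition~\ref{conicallyaccproperties}(4) the categories $\A_0$ and $\B_0$ are $\alpha$-accessible, closed in $\K_0$ under $\alpha$-filtered colimits, and $(\A_0)_\alpha,(\B_0)_\alpha\subseteq(\K_0)_\alpha$ because the inclusions preserve conically $\alpha$-presentable objects. I claim that, for $\beta\rhd\alpha$,
$$(\A_0)_\beta=\A_0\cap(\K_0)_\beta\qquad\text{and}\qquad(\B_0)_\beta=\B_0\cap(\K_0)_\beta.$$
The inclusion $\supseteq$ is immediate: if $X\in\A_0\cap(\K_0)_\beta$ then $\A_0(X,-)=\K_0(X,-)|_{\A_0}$ preserves $\beta$-filtered colimits, as these are computed in $\K_0$. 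Conversely, by Corollary~\ref{raising2} every object of $(\A_0)_\beta$ is a $\beta$-small $\alpha$-filtered colimit of objects of $(\A_0)_\alpha$, formed inside $\A_0$; since $\A_0$ is closed under $\alpha$-filtered colimits in $\K_0$ this colimit is computed in $\K_0$, and since $(\A_0)_\alpha\subseteq(\K_0)_\alpha\subseteq(\K_0)_\beta$ and $(\K_0)_\beta$ is closed under $\beta$-small colimits, it stays in $(\K_0)_\beta$. Intersecting the two identities, $\C_0\cap(\K_0)_\beta=(\A_0)_\beta\cap(\B_0)_\beta$, which is essentially small.

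The heart of the proof is the following \emph{interpolation property}: for every $X\in\C$ and every morphism $f\colon P\to X$ of $\K_0$ with $P\in(\K_0)_\beta$ there is $Z\in\C_0\cap(\K_0)_\beta$ and a factorization $P\to Z\to X$ of $f$. To prove it I would build, by transfinite recursion on $i<\alpha$, a chain $(C_i)_{i<\alpha}$ in $(\K_0)_\beta$ over $X$ with $C_0=P$: at successor stages factor $C_i\to X$ through an object of $(\A_0)_\beta$, resp.\ $(\B_0)_\beta$, alternately within each $\omega$-block --- possible because $X$ lies in $\A_0$ and in $\B_0$, which are $\beta$-accessible by Theorem~\ref{raising} and closed in $\K_0$ under $\beta$-filtered colimits, so $X$ is the canonical $\beta$-filtered colimit in $\K_0$ of objects of $(\A_0)_\beta$, resp.\ $(\B_0)_\beta$, and $C_i$ is $\beta$-presentable; at limit stages take the colimit of the previous $C_{i'}$'s, still $\beta$-presentable since $\alpha<\beta$. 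Put $Z:=\colim_{i<\alpha}C_i$. It is $\beta$-presentable as an $\alpha$-indexed (hence $\beta$-small) colimit of $\beta$-presentables; it lies in $\A_0$ because it is the colimit of the cofinal sub-chain of its $(\A_0)_\beta$-entries, which is $\alpha$-directed ($\alpha$ being regular) and along which $\A_0$ is closed; likewise $Z\in\B_0$; so $Z\in\C_0\cap(\K_0)_\beta$ and $f$ factors as $P=C_0\to Z\to X$. The delicate point, and the main obstacle, is exactly the bookkeeping of this transfinite construction that keeps the $\A_0$- and $\B_0$-entries both cofinal in $\alpha$; this is where regularity of $\alpha$ and $\alpha<\beta$ are used.

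Finally, the interpolation property implies that $(\C_0\cap(\K_0)_\beta)/X$ is $\beta$-filtered and that the canonical cocone on $X$ is colimiting: closing a diagram of $<\beta$ objects of $\C_0\cap(\K_0)_\beta$ over $X$ under coproducts and coequalizers in $\K_0$ (which stay $\beta$-presentable) and then factoring back through $\C_0\cap(\K_0)_\beta$ gives $\beta$-filteredness, while the factorization makes the inclusion $(\C_0\cap(\K_0)_\beta)/X\hookrightarrow(\K_0)_\beta/X$ final, so the colimit remains $X$ by local $\beta$-presentability of $\K_0$. This colimit is conical and $\beta$-filtered, hence computed in $\C$ (which is closed under such colimits in $\K$); so $\C$ is conically $\beta$-accessible with small subcategory of generators $\C\cap\K_\beta\subseteq\C_\beta^c$, proving (1). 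For (2), the same conical $\beta$-filtered colimit exhibits each $X\in\C$ as a $\beta$-flat colimit of objects of $\C\cap\K_\beta\subseteq\C_\beta$, so $\C$ is $\beta$-accessible; and $\C_\beta=\C_\beta^c$ since the inclusion $\C_\beta\subseteq\C_\beta^c$ is automatic and any $X\in\C_\beta^c$, being a $\beta$-filtered colimit of objects of $\C_\beta$, is a retract of one of them and hence itself lies in $\C_\beta$. Everything apart from the interpolation property is a routine combination of the index-raising results (Theorem~\ref{raising}, Corollary~\ref{raising2}) with standard facts about locally presentable categories.
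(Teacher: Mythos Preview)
Your proposal is correct and follows essentially the same strategy as the paper: the closure statements are immediate, the generating family is $\C\cap\K_\beta$ (equivalently $\A^c_\beta\cap\B^c_\beta$), and the heart of the argument is the same transfinite chain of length $\alpha$ alternating between $\A^c_\beta$- and $\B^c_\beta$-factorizations, with colimits at limit stages, whose cofinal $\A$- and $\B$-subchains witness that the colimit lies in $\C$. The only cosmetic differences are that the paper proves finality of $(\C'_0/X)\hookrightarrow((\A^c_\beta)_0/X)$ rather than into $(\K_0)_\beta/X$ (which lets it get connectedness for free from filteredness plus full faithfulness), and that for $\C_\beta=\C^c_\beta$ in (2) the paper argues directly via $\K^c_\beta=\K_\beta$ rather than via your retract argument; both variants work.
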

\begin{proof}
	$(1)$. Consider $\beta\rhd\alpha$; then $\A$ and $\B$ are conically $\beta$-accessible, the inclusions in $\K$ still preserve $\beta$-filtered colimits and conically $\beta$-presentable objects (since these are $\beta$-small $\alpha$-filtered colimits of the conically $\alpha$-presentable ones). Note first that $\C$ is closed in $\A,\B$, and $\K$ under $\beta$-filtered colimits since both $\A$ and $\B$ are so in $\K$. 
	
	Let $\C'\subseteq \C$ be the intersection $\C'=\A^c_\beta\cap\B^c_\beta$; then $\C'\subseteq\C^c_\beta$ and to prove $(1)$ it is enough to show that $\C'$ generates $\C$ under $\beta$-filtered colimits. For any $X\in\C$ consider the slice $\C'_0/X$ and the inclusion $J\colon\C'_0/X\to(\A^c_\beta)_0/X$; we wish to prove that $J$ is final. This will suffice since then $\C'_0/X$ will be $\beta$-filtered (because $(\A^c_\beta)_0/X$ is and the inclusion is fully faithful) and the colimit of $\pi_X\colon\C'_0/X\to \C$ will be $X$ (because the colimit of $(\A^c_\beta)_0/X\to \A$ is $X$ and $\C$ is closed in $\A$ under $\beta$-filtered colimits).
	
	So we are reduced to proving that $J\colon\C'_0/X\to(\A^c_\beta)_0/X$ is final; which is as saying that every map $A\to X$ with $A\in\A^c_\beta$ factors through some $C\in\C'$ (the fact that any two such factorizations are connected will follow from this plus the filteredness of $(\A^c_\beta)_0/X$ and fully faithfulness of the inclusion).
	Fix then a map $f\colon A\to X$ with $A\in\A^c_\beta$, we regard this as a morphism in $\K$ and construct a $\beta$-small chain $(d_{i,j}\colon K_i\to K_j)_{i<j<\alpha}$ of conically $\beta$-presentable objects in $\K$ together with a cocone $(c_i\colon K_i\to X)_{i<\alpha}$. Set $K_0=A$ and $c_0=f$, then we alternate elements of $\A^c_\beta$ and $\B^c_\beta$ as follows (taking colimits in $\K$ at the limit steps). Assume to have $K_i$ and $c_i$ for $i=\lambda +2n$ with $\lambda$ limit; then $c_i\colon K_i\to X$ factors through some $B\in\B^c_\beta$ since $K_i$ is conically $\beta$-presentable (remember that $\A^c_\beta,\B^c_\beta\subseteq \K^c_\beta$) and $X$, being in $\B$, is a $\beta$-filtered colimit of objects from $\B^c_\beta$. Let then $K_{i+1}=B$ with $d_{i,i+1}$ and $c_{i+1}$ given by the factorization. If $i=\lambda +2n+1$ we argue as above but inverting the roles of $\A$ and $\B$. Finally if $i=\lambda$ is limit, we take $K_i$ to be the colimit of the chain $(K_j)_{j<i}$ in $\K$ and consider the induced factorizations. Let $C:=\tx{colim}_{i<\alpha}K_i$ in $\K$; then by construction we have a factorization of $f$ through $C$. Moreover the sub-chains of $(K_i)_{i<\alpha}$ spanned by the objects in $\A^c_\beta$ and $\B^c_\beta$ are final; thus $C$ is both in $\A$ and in $\B$ and hence in $\C$. Finally, since the chain involved was $\beta$-small, $C$ is actually an object of $\A^c_\beta\cap\B^c_\beta=\C'$ as required. Note that, since in $\C'$ idempotents split, this also implies that $\C^c_\beta=\A^c_\beta\cap\B^c_\beta$.
	
	$(2)$. Since $\A$ and $\B$ are closed in $\K$ under $\beta$-flat colimits, also $\C$ is. Moreover, thanks to point $(1)$, to prove that $\C$ is $\beta$-accessible it's enough to show that $\C^c_\beta\subseteq\C_\beta$. Let $X\in\C^c_\beta$ and denote by $J$ the inclusion of $\C$ in $\K$; since $X$, seen as an object of $\A$, is conically $\beta$-presentable and the inclusion of $\A$ in $\K$ preserves conically $\beta$-presentable objects, it follows that $JX\in\K^c_\beta$. But $\K$ is locally $\beta$-presentable and thus $\K^c_\beta=\K_\beta$; as a consequence $\C(X,-)\cong\K(JX,J-)$ preserves $\beta$-flat colimits and hence $X\in\C_\beta$. This proves that $\C^c_\beta\subseteq\C_\beta$; since the other inclusion always holds, it follows that $\C^c_\beta=\C^\beta$.
\end{proof}

As promised, the next result says that we can raise the index of accessibility of an accessible $\V$-category and at the same time make the $\V$-category conically accessible. This can be seen as a sharpening of \cite[Theorem~7.10]{BQR98}; in fact, in \cite{BQR98} the choice of $\beta$ depends on the $\V$-category $\A$ taken into consideration, while in our result it depends only on $\alpha$ and the {\em sharply less} relation.

\begin{teo}\label{flat-to-conical}
	If $\A$ is an $\alpha$-accessible $\V$-category, then for any $\beta\rhd\alpha$ the following hold: \begin{enumerate}\setlength\itemsep{0.25em}
		\item $\A$ is $\beta$-accessible;
		\item $\A$ is conically $\beta$-accessible;
		\item $\A_0$ is an ordinary $\beta$-accessible category;
		\item $(\A_{\beta})_0=(\A_\beta^c)_0=(\A_0)_\beta$.
	\end{enumerate}
\end{teo}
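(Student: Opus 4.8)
The plan is to reduce to a statement about presheaf categories and then appeal to Proposition~\ref{conical-flat}. By Propositions~\ref{A_alpha} and~\ref{acc=flat} I may assume $\C:=\A_\alpha$ is small (and Cauchy complete) and $\A=\alpha\tx{-Flat}(\C^{op},\V)$, realized as a full subcategory of $\K:=[\C^{op},\V]$, which is cocomplete and locally presentable and in which $\A$ is closed under $\alpha$-flat colimits; since every $\beta$-flat weight is $\alpha$-flat for $\beta\geq\alpha$, $\A$ is in fact closed in $\K$ under $\beta$-flat colimits and conical $\beta$-filtered colimits for all $\beta\geq\alpha$, and these colimits of $\A$ are computed as in $\K$.

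I would then prove two auxiliary facts. First, every object $M$ of $\A$ is presentable and conically presentable: if $\gamma$ is the presentability rank of $M$ in $\K$ then $\K(M,-)$ preserves conical $\gamma$-filtered colimits, and since $\K$ is cocomplete Proposition~\ref{cocompleteacc} promotes this to preservation of all $\gamma$-flat colimits; restricting along $\A\hookrightarrow\K$ shows $M$ is $\gamma$-presentable and conically $\gamma$-presentable in $\A$. Second, $\A_0$ is accessible and accessibly embedded in $\K_0$: by soundness of the class of $\alpha$-small weights (Example~\ref{examples}(2)), $M$ is $\alpha$-flat exactly when the colimit functor $M*-\colon[\C,\V]\to\V$ preserves the $\alpha$-small limits of representables, of which there is only a small family up to equivalence; for each such limit the condition is that a canonical comparison map --- a component at $M$ of a natural transformation between two accessible ordinary functors of $M$, one cocontinuous and the other of the form $\K(Q,-)$ --- be invertible, so $\A_0$ is a small intersection of accessible, accessibly embedded subcategories of the locally presentable $\K_0$ and hence is itself such. (Concretely, $\A_0$ is the category of models of an ordinary sketch.)

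Given these, Proposition~\ref{conical-flat} already yields that $\A$ is accessible and conically accessible. To obtain statements (1)--(4) for an arbitrary $\beta\rhd\alpha$ I would repeat the index bookkeeping from the proofs of Proposition~\ref{conical-flat} and Corollary~\ref{conically-acc}: raise the index of $\A_0$ by Theorem~\ref{raising} and Corollary~\ref{raising2}; note that the $\beta$-small $\alpha$-filtered colimits presenting the $\beta$-presentable objects of $\A_0$ are enriched, so that $(\A_0)_\beta=(\A_\beta^c)_0$ and $\A_\beta^c$ generates $\A$ under $\beta$-filtered colimits, giving (2) and (3); and finally, since $\K$ is locally $\beta$-presentable and $\A$ is closed in it under $\beta$-flat colimits, argue as in Lemma~\ref{intersection-acc}(2), with $\K$ itself as the second subcategory, to conclude $\A_\beta=\A_\beta^c$, hence (1) and (4).

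The step I expect to be the main obstacle is making the accessibility index of $\A_0$ exactly $\alpha$, so that ``$\beta\rhd\alpha$'' suffices in the bookkeeping above. The difficulty is that $\C=\A_\alpha$ need not be $\alpha$-complete, so Proposition~\ref{flat-char}'s representation of an $\alpha$-flat weight as a conical $\alpha$-filtered colimit of representables is unavailable over $\C$. I would get around this by passing to the free $\alpha$-completion $\D$ of $\C$ --- where $\tx{Lan}_{J^{op}}M$ is still $\alpha$-flat by Lemma~\ref{flat-restriction} --- running the category-of-elements and ``sharply less'' argument there, and restricting back along $J\colon\C\hookrightarrow\D$; the delicate point, and the real work, is that the individual representables on $\D$ restrict to weights on $\C$ that are not flat, yet the $\beta$-small $\alpha$-filtered sub-colimits produced by the ``sharply less'' argument do restrict to $\alpha$-flat weights, i.e.\ to the objects of $\A_\beta=\A_\beta^c$ that generate $\A$ under $\beta$-flat (equivalently $\beta$-filtered) colimits.
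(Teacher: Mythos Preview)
Your approach differs from the paper's and, with one correction, can be made to work; but most of the effort in your outline is in the wrong place.

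The paper does not go through $\A_0$ or Proposition~\ref{conical-flat} at all. Writing $\C=\A_\alpha^{op}$ and taking $H\colon\C\hookrightarrow\D$ to be the free completion under $\alpha$-small limits, the key observation is that $F\colon\C\to\V$ is $\alpha$-flat exactly when $\tx{Lan}_HF\cong\tx{Ran}_HF$; this exhibits $\A$ as the intersection in $[\D,\V]$ of the images of the two fully faithful functors $\tx{Lan}_H$ and $\tx{Ran}_H$. Both embeddings preserve $\alpha$-flat colimits and $\alpha$-presentable objects, so a single application of Lemma~\ref{intersection-acc} delivers (1)--(4) for every $\beta\rhd\alpha$ at once. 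There is no separate index bookkeeping.

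In your outline the actual content lies entirely in the last paragraph: paragraphs~2--3 give accessibility only at \emph{some} index, and the ``bookkeeping'' in paragraph~3 tacitly assumes that $\A_0$ is already $\alpha$-accessible, which is precisely what paragraph~4 is meant to supply. Your idea there---extend $M$ along $J$, write the extension as a conical $\alpha$-filtered colimit of representables via Proposition~\ref{flat-char}(4), regroup using $\alpha\lhd\beta$ into a $\beta$-filtered colimit of $\beta$-small $\alpha$-filtered pieces, and restrict back---does go through, but your $\D$ is on the wrong side of the duality: for Proposition~\ref{flat-char}(4) to apply to a weight with domain $\D^{op}$ you need $\D$ to be $\alpha$-\emph{cocomplete}, so you must take the free $\alpha$-cocompletion of $\A_\alpha$ (dually, the opposite of the paper's $\D$), not its free $\alpha$-completion. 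With that fix the restriction $\D(J-,d)$ of a representable is an $\alpha$-small colimit of representables in $\K=[\C^{op},\V]$ and hence $\alpha$-presentable there; each $\beta$-small piece is then $\alpha$-flat (by Lemma~\ref{flat-restriction}) and $\beta$-presentable in $\K$, so lies in $\A\cap\K_\beta\subseteq\A_\beta=\A_\beta^c$, and (1)--(4) follow directly without ever invoking Lemma~\ref{intersection-acc}. What the paper's route buys is that it replaces this explicit regrouping-and-restriction by the $\tx{Lan}/\tx{Ran}$ pullback together with one call to Lemma~\ref{intersection-acc}; what your route buys is that it avoids the zig-zag construction hidden inside that lemma.
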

\begin{proof}
	Let $\C=\A_\alpha^{op}$; since $\A$ is $\alpha$-accessible we can write it as $\A\simeq\alpha\tx{-}\tx{Flat}(\C,\V)$, with inclusion $J\colon\A\hookrightarrow[\C,\V]$. Now consider $H\colon\C\hookrightarrow\D$ to be the free completion of $\C$ under $\alpha$-small limits; we show first that $\A$ can be identified with the intersection
	\begin{center}
		\begin{tikzpicture}[baseline=(current  bounding  box.south), scale=2]
			
			\node (a0) at (0,0.9) {$\A$};
			\node (b0) at (1.2,0.9) {$[\C,\V]$};
			\node (c0) at (0,0) {$[\C,\V]$};
			\node (d0) at (1.2,0) {$[\D,\V]$};
			\node (e0) at (0.2,0.7) {$\lrcorner$};
			
			\path[font=\scriptsize]
			
			(a0) edge [right hook->] node [above] {$J$} (b0)
			(a0) edge [right hook->] node [left] {$J$} (c0)
			(b0) edge [right hook->] node [right] {$\tx{Lan}_H$} (d0)
			(c0) edge [right hook->] node [below] {$\tx{Ran}_H$} (d0);
		\end{tikzpicture}	
	\end{center}
	where we are embedding $[\C,\V]$ in $[\D,\V]$ in two different ways. To prove that, it's enough to show that a $\V$-functor $F\colon\C\to\V$ is $\alpha$-flat if and only if $\tx{Lan}_HF\cong \tx{Ran}_HF$. If $F$ is $\alpha$-flat then $\tx{Lan}_HF$ is $\alpha$-flat as well by Lemma~\ref{flat-restriction} and therefore is $\alpha$-continuous; since $\D$ is the free completion of $\C$ under $\alpha$-small limits and $(\tx{Lan}_HF)J\cong F$ this means exactly that $\tx{Lan}_HF\cong \tx{Ran}_HF$. Vice versa, if $\tx{Lan}_HF\cong \tx{Ran}_HF$ then $\tx{Lan}_HF$ is $\alpha$-continuous and hence $\alpha$-flat; thus $F$ is $\alpha$-flat itself again by Lemma~\ref{flat-restriction}.
	
	To conclude the proof of $(1),(2),$ and $(3)$ it's now enough to show that, for this intersection, the hypotheses of Lemma~\ref{intersection-acc} are satisfied. The $\V$-categories $[\C,\V]$ and $[\D,\V]$ are locally $\alpha$-presentable (for any $\alpha$) and hence conically $\alpha$-accessible; moreover their $\alpha$-presentable and conically $\alpha$-presentable objects coincide. Now $\tx{Lan}_H\colon [\C,\V]\hookrightarrow[\D,\V]$ is cocontinuous and sends representable functors to representables; therefore it preserves all $\alpha$-flat colimits and the $\alpha$-presentable objects (since these coincide with the $\alpha$-small colimits of representables). It remains to consider $\tx{Ran}_H\colon [\C,\V]\hookrightarrow[\D,\V]$; this identifies $[\C,\V]$ with the full subcategory of $[\D,\V]$ spanned by the $\alpha$-continuous functors. Since these are closed under $\alpha$-flat colimits it follows at once that $\tx{Ran}_H$ preserves $\alpha$-flat colimits and we are only left to prove that it preserves the $\alpha$-presentable objects. Under the identification just described, the $\alpha$-presentable objects of $[\C,\V]$ correspond to the representables in $\alpha\tx{-}\tx{Cont}(\D,\V)$ by the enriched Gabriel-Ulmer duality; therefore $\tx{Ran}_H$ sends the $\alpha$-presentable objects to the representables in $[\D,\V]$, and these are $\alpha$-presentable. In conclusion, we can apply Lemma~\ref{intersection-acc} to obtain $(1),(2),$ and $(3)$. Point $(4)$ is now a consequence of Proposition~\ref{conicallyaccproperties}.
\end{proof}

\begin{obs}
	The Theorem above implies in particular that if $\A$ is $\alpha$-accessible then it is also conically $\alpha^+$-accessible and (conically) $\beta$-accessible for arbitrarily large regular cardinals $\beta$. We don't know yet whether every $\alpha$-accessible $\V$-category is also conically $\alpha$-accessible or not.
\end{obs}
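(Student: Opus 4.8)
The plan is to obtain both assertions as direct specializations of Theorem~\ref{flat-to-conical}, the only genuine work being the choice of suitable cardinals $\beta$ satisfying $\beta\rhd\alpha$. Recall that the theorem asserts, for \emph{every} such $\beta$, that an $\alpha$-accessible $\A$ is simultaneously $\beta$-accessible (part~(1)) and conically $\beta$-accessible (part~(2)); thus the Remark reduces entirely to producing enough sharp successors of $\alpha$.

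For the statement that $\A$ is (conically) $\beta$-accessible for arbitrarily large regular $\beta$, I would apply the Remark recorded earlier (see \cite[Example~2.13(6)]{AR94:libro}) to the singleton family $\{\alpha\}$: this yields arbitrarily large regular cardinals $\beta$ with $\beta\rhd\alpha$, and for each of them parts~(1) and~(2) of Theorem~\ref{flat-to-conical} deliver $\beta$-accessibility and conical $\beta$-accessibility at once. For the sharper claim about $\alpha^+$, I would invoke the standard set-theoretic fact that $\alpha\lhd\alpha^+$ holds for every regular cardinal $\alpha$ (see \cite{AR94:libro,MP89:libro}); concretely, using the $\P_\alpha(X)$-characterization of the sharply less relation recalled above, the initial segments $\{[0,\gamma):\gamma<\alpha\}$ constitute a final subset of cardinality $\alpha<\alpha^+$ in $\P_\alpha(\alpha)$, since $\sup A<\alpha$ for every $A$ of cardinality less than $\alpha$ by regularity of $\alpha$. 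Taking $\beta=\alpha^+$ in Theorem~\ref{flat-to-conical}(2) then gives that $\A$ is conically $\alpha^+$-accessible.

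I do not expect any real obstacle: once Theorem~\ref{flat-to-conical} is available the Remark is a pure specialization, the only inputs being the two purely set-theoretic facts above. The one point worth emphasizing is that the theorem applies only under the \emph{strict} relation $\beta\rhd\alpha$, which is precisely why this argument cannot be pushed down to $\beta=\alpha$; accordingly the final sentence of the Remark — whether every $\alpha$-accessible $\V$-category is already conically $\alpha$-accessible — is genuinely open and calls for no proof.
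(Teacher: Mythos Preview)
Your proposal is correct and matches the paper's intent: the statement is recorded as a Remark with no proof in the paper, being treated as an immediate specialization of Theorem~\ref{flat-to-conical} once one knows that $\alpha\lhd\alpha^+$ and that arbitrarily large $\beta\rhd\alpha$ exist. Your explicit verification of $\alpha\lhd\alpha^+$ via the $\P_\alpha(X)$-characterization is a welcome addition, and your closing observation that the argument cannot reach $\beta=\alpha$ correctly explains why the final sentence remains open.
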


\begin{cor}\label{preserving-flat-presentables}
	Given an accessible $\V$-functor $F\colon \A\to\B$ between accessible $\V$-categories, there exists $\alpha$ such that $F$ preserves the $\beta$-presentable objects for each $\beta\rhd\alpha$.
\end{cor}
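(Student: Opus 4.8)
The plan is to reduce to the conical case rather than to develop an enriched analogue of Corollary~\ref{raising2}, exploiting the fact that once the index is raised far enough the $\alpha$-flat and conical notions of $\beta$-presentability coincide.

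First I would fix a regular cardinal $\alpha_0$ for which $\A$ and $\B$ are $\alpha_0$-accessible and $F$ preserves $\alpha_0$-flat colimits; since every $\alpha_0$-filtered colimit is $\alpha_0$-flat, $F$ then also preserves $\alpha_0$-filtered colimits. By Theorem~\ref{flat-to-conical}(2) both $\A$ and $\B$ are conically accessible, so $F$ is an accessible $\V$-functor between conically accessible $\V$-categories, and Corollary~\ref{preserving-con-presentables} supplies a regular cardinal $\alpha_1$ such that $F$ preserves the conically $\beta$-presentable objects for every $\beta\rhd\alpha_1$.

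Next I would invoke Theorem~\ref{flat-to-conical}(4): since $\A$ is $\alpha_0$-accessible, $(\A_\beta)_0=(\A_\beta^c)_0$ for every $\beta\rhd\alpha_0$, and because $\A_\beta$ and $\A_\beta^c$ are both full $\V$-subcategories of $\A$ on the same objects, they coincide; likewise $\B_\beta=\B_\beta^c$ for every $\beta\rhd\alpha_0$. Choosing, via the Remark guaranteeing regular cardinals sharply above any given finite family, a regular cardinal $\alpha$ with $\alpha\rhd\alpha_0$ and $\alpha\rhd\alpha_1$, transitivity of the sharply-less relation gives, for every $\beta\rhd\alpha$, both $\beta\rhd\alpha_0$ and $\beta\rhd\alpha_1$; hence $\A_\beta=\A_\beta^c$, $\B_\beta=\B_\beta^c$, and $F(\A_\beta^c)\subseteq\B_\beta^c$, so that $F(\A_\beta)\subseteq\B_\beta$, which is exactly the claim.

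I expect the only genuine subtlety to be the identification $\A_\beta=\A_\beta^c$ (and $\B_\beta=\B_\beta^c$) for $\beta$ large enough, which is precisely what Theorem~\ref{flat-to-conical}(4) delivers; everything else is bookkeeping with the sharply-less relation, and it is this identification that allows the already-established conical statement (Corollary~\ref{preserving-con-presentables}) to be transported to the $\alpha$-flat setting with no further colimit analysis.
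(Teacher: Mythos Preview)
Your argument is correct and essentially coincides with the paper's: both use Theorem~\ref{flat-to-conical}(4) to identify $\A_\beta$ with something already under control and then invoke the known preservation result. The only cosmetic difference is that the paper appeals directly to Corollary~\ref{preserving-presentables} via the underlying ordinary categories (using $(\A_\beta)_0=(\A_0)_\beta$), whereas you pass through Corollary~\ref{preserving-con-presentables} and the identification $\A_\beta=\A_\beta^c$; since Corollary~\ref{preserving-con-presentables} is itself just Corollary~\ref{preserving-presentables} combined with Proposition~\ref{conicallyaccproperties}(4), the two routes are the same up to a one-line unwinding.
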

\begin{proof}
	Direct consequence of Corollary~\ref{flat-to-conical} above and Corollary~\ref{preserving-presentables}.
\end{proof}

\begin{prop}\label{accessible-functors}
	A $\V$-functor $F\colon \A\to\B$ between accessible $\V$-categories is accessible if and only if it is conically accessible. 
\end{prop}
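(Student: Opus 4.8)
The plan is to derive the equivalence quickly from Theorem~\ref{flat-to-conical}, which lets us raise the index of accessibility of $\A$ while simultaneously making it conically accessible and forcing the $\Phi_\beta$-presentable and conically $\beta$-presentable objects to coincide. First I would record two elementary monotonicity observations: for regular cardinals $\alpha\leq\beta$ every $\beta$-small weight is $\alpha$-small, so every $\beta$-flat weight is $\alpha$-flat and every $\beta$-filtered category is $\alpha$-filtered; consequently a $\V$-functor preserving $\alpha$-flat colimits preserves $\beta$-flat colimits, and one preserving $\alpha$-filtered colimits preserves $\beta$-filtered colimits. The other ingredients are Remark~\ref{phi-acc-funct}, which characterizes $\beta$-accessible functors out of a $\beta$-accessible $\V$-category as the left Kan extensions of their restriction to $\A_\beta$, and its conical counterpart (the unlabelled remark following the characterization of conically accessible $\V$-categories), which characterizes conically $\beta$-accessible functors out of a conically $\beta$-accessible $\V$-category as the left Kan extensions of their restriction to $\A_\beta^c$.

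Now suppose $F$ is accessible, say $\A$ has and $F$ preserves $\alpha$-flat colimits, and suppose $\A$ is $\alpha'$-accessible. Choose $\beta$ with $\beta\rhd\alpha$ and $\beta\rhd\alpha'$, which is possible by the remark following the definition of the sharply less relation. By Theorem~\ref{flat-to-conical} the $\V$-category $\A$ is both $\beta$-accessible and conically $\beta$-accessible, and by part (4) of that theorem $\A_\beta$ and $\A_\beta^c$ are full subcategories of $\A$ on the same objects, hence are equal as $\V$-categories. By the monotonicity observations $F$ preserves $\beta$-flat colimits, so $F$ is $\beta$-accessible; by Remark~\ref{phi-acc-funct} this says $F$ is the left Kan extension of $F|_{\A_\beta}$ along $\A_\beta\hookrightarrow\A$. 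Since $\A_\beta=\A_\beta^c$, the conical counterpart of that remark now tells us $F$ is conically $\beta$-accessible, hence conically accessible. The converse is entirely symmetric: if $F$ preserves $\alpha$-filtered colimits and $\A$ is $\alpha'$-accessible, the same choice of $\beta$ makes $\A$ conically $\beta$-accessible and gives $\A_\beta=\A_\beta^c$; since $F$ then preserves $\beta$-filtered colimits it is conically $\beta$-accessible, so it is the left Kan extension of its restriction to $\A_\beta^c=\A_\beta$, which by Remark~\ref{phi-acc-funct} makes $F$ $\beta$-accessible, hence accessible.

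The one point requiring care --- and the only real content beyond bookkeeping --- is that $\beta$ can be chosen to work on all fronts at once: large enough that $F$ still preserves the relevant colimits, and sharply above the accessibility index of $\A$ so that $\A$ becomes simultaneously $\beta$-accessible and conically $\beta$-accessible with matching presentable objects. This is exactly what Theorem~\ref{flat-to-conical} provides; once $\A_\beta=\A_\beta^c$ is in hand the two left-Kan-extension criteria are literally the same condition and the equivalence drops out. I would flag that only the accessibility of $\A$ (not of $\B$) is used in the argument.
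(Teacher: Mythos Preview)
Your argument is correct and for the nontrivial direction (conically accessible $\Rightarrow$ accessible) it is exactly the paper's proof: pick $\beta$ sharply above the relevant indices, invoke Theorem~\ref{flat-to-conical} to get $\A_\beta=\A_\beta^c$, and use the Kan-extension characterizations to transfer preservation. The only difference is that the paper dispatches the forward direction in one line---since every $\alpha$-filtered colimit is $\alpha$-flat, an $\alpha$-accessible $F$ is immediately conically $\alpha$-accessible---whereas you run the Kan-extension argument symmetrically; your route is valid but unnecessarily elaborate there. Your closing observation that only the accessibility of $\A$ is used is also correct.
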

\begin{proof}
	Every accessible functor is conically accessible. Conversely let $F\colon \A\to\B$ be conically $\alpha$-accessible for some $\alpha$ and consider $\beta\rhd\alpha$, by Theorem~\ref{flat-to-conical} it follows that $\A$ is conically $\beta$-accessible. Since $F$ is also conically $\beta$-accessible, it is the left Kan extension of its restriction to $\A_{\beta}^c=\A_{\beta}$, which is made of $\beta$-presentable objects; thus $F$ preserves $\beta$-flat colimits as well.
\end{proof}

\section{The main results}\label{mainresults-acc}

The aim of this section is to introduce and work with the virtual notions discussed in the introduction. We prove the main results in Section~\ref{virt-reflective}, \ref{vir-orthogonal}, and \ref{virtual-main}, and then compare these with those already known in the literature in Section~\ref{cre+p}. In the first subsection we establish once more the connection between accessible $\V$-categories and sketches.

\subsection{Accessibility and sketches}\label{sketch}

The relationship between accessible $\V$-categories and sketches already appeared in \cite{BQR98}; however their proof relies on some (non trivial) results on ordinary accessible categories; here we give a proof that is only based on Section~\ref{sectionacc} and on some standard results about locally presentable $\V$-categories.

First we need to recall the notion of sketch, which in a general enriched context was already considered in \cite{Kel82:libro}:

\begin{Def}
	A {\em sketch} is the data of a triple $\S=(\B,\mathbb{L},\mathbb{C})$ where: \begin{itemize}\setlength\itemsep{0.25em}
		\item $\B$ is a small $\V$-category;
		\item $\mathbb{L}$ is a set of cylinders in $\B$: $\V$-natural transformations $c\colon  N \to\B(B,H-)$, where $ N \colon \D\to\V$ is a weight, $H\colon \D\to\B$ is a $\V$-functor, and $B$ is an object of $\B$;
		\item $\mathbb{C}$ is a set of cocylinders in $\B$: $\V$-natural transformations $d\colon  M \to\B(K-,C)$, where $ M \colon \E^{op}\to\V$ is a weight, $K\colon \E\to\B$ is a $\V$-functor, and $C$ is an object of $\B$.
	\end{itemize}
	A sketch for which $\mathbb{C}$ is empty is called a {\em limit sketch}.
\end{Def}

\begin{Def}
	A model of a sketch $\S=(\B,\mathbb{L},\mathbb{C})$ is a $\V$-functor $F\colon \B\to\V$ which transforms each cylinder of $\mathbb{L}$ into a limit cylinder in $\V$, and each cocylinder of $\mathbb{C}$ into a colimit cocylinder in $\V$. We denote by $\tx{Mod}(\S)$ the full subcategory of $[\B,\V]$ spanned by the models of $\S$.
\end{Def}

The $\V$-categories of models of limit sketches characterize locally presentable $\V$-categories (see \cite[Section~10]{Kel82:articolo} or \cite[Corollary~7.4]{BQR98}).

\begin{prop}\label{phi-limitcolimit}
	Any $\Phi$-accessible $\V$-category $\A$ is equivalent to the $\V$-category of models of a sketch involving colimits and $\Phi$-limits. 
\end{prop}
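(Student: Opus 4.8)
The statement to prove is that any $\Phi$-accessible $\V$-category $\A$ is equivalent to $\tx{Mod}(\S)$ for some sketch $\S$ involving colimits and $\Phi$-limits. The plan is to exploit Proposition~\ref{acc=flat}, which tells us that $\A\simeq\Phi\tx{-}\tx{Flat}(\C^{op},\V)$ for $\C=\A_\Phi$, and to realize this full subcategory of $[\C^{op},\V]$ as the models of an explicitly constructed sketch on the (small) $\V$-category $\B:=(\Phi\C)^{op}$, i.e.\ the opposite of the free $\Phi$-cocompletion of $\C$. The idea is that a weight $M\colon\C^{op}\to\V$ is $\Phi$-flat precisely when its left Kan extension $M'=\tx{Lan}_{J^{op}}M\colon\B\to\V$ along $J^{op}\colon\C^{op}\hookrightarrow\B$ is $\Phi$-continuous (this is Lemma~\ref{flat-restriction}, using that $\Phi$ is pre-saturated — we may assume this since $\Phi$ is essentially small and weakly sound, hence sound by Proposition~\ref{weaklyiffstrongsound} once one checks pre-saturation; if not available in this generality, one argues directly that $\Phi\C$ is a one-step closure by essential smallness and the reflection argument), together with the requirement that $M'$ preserves the $J$-absolute colimits exhibiting objects of $\Phi\C$ as $\Phi$-colimits of objects of $\C$.

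\textbf{Key steps.} First, fix $\C=\A_\Phi^{op}$ and let $J\colon\C\hookrightarrow\Phi^\dagger(\C)$ be the free completion under $\Phi$-limits, equivalently $\B:=\Phi^\dagger(\C)$; by Assumption~\ref{soundclass} this is small. Second, describe $\Phi\tx{-}\tx{Flat}(\C^{op},\V)$ inside $[\B,\V]$: by Lemma~\ref{flat-restriction} and the pre-saturation argument in the proof of Proposition~\ref{weaklyiffstrongsound}, restriction along $J$ gives an equivalence between $\Phi$-continuous functors $\B\to\V$ and $\Phi$-flat functors $\C^{op}\to\V$. Third, encode ``$\Phi$-continuous'' as a limit part $\mathbb{L}$ of a sketch: for each $\Phi$-weight $N\colon\D\to\V$ and each diagram $H\colon\D\to\B$ one has the canonical limit cone $\B(\{N,H\}, H-)\cong\{N,\B(\{N,H\},H-)\}$ — more precisely take the cylinders $c\colon N\to\B(\{N,H\},H-)$ exhibiting $\{N,H\}$ as the weighted limit; a functor $F\colon\B\to\V$ sends all of these to limit cylinders iff $F$ is $\Phi$-continuous. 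Fourth, encode the absolute colimit presentations: each object $b\in\B=\Phi^\dagger(\C)$ is (the $\C$-side being $\Phi$-cocompletion) a $\Phi$-colimit $M_b * (J\text{-stuff})$ of objects of $\C$ which is $J$-absolute; dualizing, in $\B$ this is a $\Phi$-limit, but to capture flatness one needs that the Kan extension $M'$ preserves exactly these — however since these are already $\Phi$-limits they are handled by $\mathbb{L}$, so in fact one finds $\mathbb{C}$ can be taken so as to record the colimits in $\V$ that $M'$ must preserve because $M$ itself is a colimit of representables; concretely, using Proposition~\ref{flat-char}-style arguments, each $\Phi$-flat $M$ is a $\Phi$-flat colimit of representables, and one adds cocylinders forcing models to realize these colimit presentations. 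Fifth, verify $\tx{Mod}(\S)\simeq\Phi\tx{-}\tx{Flat}(\C^{op},\V)\simeq\A$ by matching the two conditions (sending $\mathbb{L}$-cylinders to limits $\Leftrightarrow$ $\Phi$-continuity of $M'$; sending $\mathbb{C}$-cocylinders to colimits $\Leftrightarrow$ $M$ lies in the image of restriction, i.e.\ the flatness/pre-saturation matchup).

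\textbf{Main obstacle.} The delicate point is the colimit part $\mathbb{C}$: one must arrange that the cocylinders are small in number (so that $\S$ is a genuine sketch on a small base) and that ``$F$ preserves these colimit cocylinders'' is \emph{exactly equivalent} to ``$F|_\C$ is $\Phi$-flat'', not merely $\Phi$-continuous on $\B$. When $\Phi$ is sound (which holds here by Proposition~\ref{weaklyiffstrongsound} for pre-saturated $\Phi$), $\Phi$-continuity on a $\Phi$-complete domain implies $\Phi$-flatness, so $\C^{op}$ being embedded in the $\Phi$-complete $\B^{op}$... but $\B$ itself need not have the property that $\Phi$-continuous functors out of it are $\Phi$-flat unless we invoke soundness at the level of $\B$. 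The clean resolution, following the proof of Proposition~\ref{weaklyiffstrongsound}, is: take $\mathbb{L}$ to force $\Phi$-continuity, take $\mathbb{C}$ to force preservation of the $J$-absolute colimit presentations (these are genuine colimits in $\B$ since $\Phi^\dagger(\C)=\Phi(\C^{op})^{op}$ carries the $\Phi$-colimits from the $\C^{op}$ side), and then by the argument of Proposition~\ref{weaklyiffstrongsound} a functor satisfying both is forced to be $\tx{Ran}_J$ of its restriction, hence $\Phi$-continuous as a Kan extension into a free $\Phi$-completion, hence $\Phi$-flat. Thus the bulk of the work is bookkeeping: writing down $\mathbb{L}$ and $\mathbb{C}$ precisely and checking smallness, with the conceptual content borrowed wholesale from Proposition~\ref{weaklyiffstrongsound} and Lemma~\ref{flat-restriction}.
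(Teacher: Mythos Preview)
Your overall strategy is the paper's, but you make one wrong choice that derails the execution: you take $\B=\Phi^\dagger(\C)$, the \emph{free} $\Phi$-completion of $\C$, whereas the paper takes $\D$ to be the closure of $\C$ under $\Phi$-limits \emph{inside} $[\C^{op},\V]$. These are not the same $\V$-category, and the difference is exactly what makes the colimit part of the sketch come for free in the paper and remain elusive for you.

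In the paper's $\D\subseteq[\C^{op},\V]$, every object $D$ is tautologically a weighted colimit of representables: $D\cong D*Y$ with $Y\colon\C\to\D$ the restricted Yoneda embedding. These furnish the cocylinders $\mathbb{C}$ immediately, and a functor $F\colon\D\to\V$ sends them to colimit cocylinders precisely when $F\cong\tx{Lan}_J(F|_\C)$, i.e.\ when $F$ lies in the essential image of $\tx{Lan}_J\colon[\C,\V]\hookrightarrow[\D,\V]$. The limit cylinders $\mathbb{L}$ then cut this down to the $\Phi$-continuous such $F$; by Lemma~\ref{flat-restriction} together with weak soundness (using that $\D$ is $\Phi$-complete), $\tx{Lan}_JM$ is $\Phi$-continuous iff $M$ is $\Phi$-flat. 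No pre-saturation or soundness beyond the standing Assumption~\ref{soundclass} is needed.

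By contrast, in your $\B=\Phi^\dagger(\C)$ the only canonical presentations of objects are as $\Phi$-\emph{limits} of representables, which, as you yourself notice, are already absorbed by $\mathbb{L}$. There is no natural source of cocylinders, and this is why your step~4 never settles on a definite $\mathbb{C}$: the sentence ``$\mathbb{C}$ can be taken so as to record the colimits in $\V$ that $M'$ must preserve because $M$ itself is a colimit of representables'' does not describe a family of cocylinders in $\B$. Worse, $\Phi$-continuous functors out of the free $\Phi$-completion correspond (via $\tx{Ran}_J$) to \emph{all} of $[\C,\V]$, not to $\Phi\tx{-Flat}(\C,\V)$, so with your $\B$ the limit part alone already gives too much and there is nothing left to carve out flatness with colimits. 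Your invocation of pre-saturation is a symptom of this: you need it to manufacture colimit data in $\B$, but it is neither assumed nor required once you work inside $[\C^{op},\V]$.
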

\begin{proof}
	Let $\A$ be $\Phi$-accessible; then by Proposition~\ref{acc=flat} we can write $\A\simeq\Phi\tx{-}\tx{Flat}(\C,\V)$ for some small $\C$; thus it's enough to prove that $\Phi\tx{-}\tx{Flat}(\C,\V)$ is the $\V$-category of models of a suitable sketch. Let $\D$ be the closure of $\C$ in $[\C^{op},\V]$ under $\Phi$-limits; then by left Kan extending along the inclusion of $\C$ in $\D$, the $\V$-category $\Phi\tx{-}\tx{Flat}(\C,\V)$ becomes equivalent to the full subcategory of $[\D,\V]$ spanned by those $\Phi$-continuous $\V$-functors $F\colon \D\to\V$ which preserve some specified weighted colimits (those that exhibit each object of $\D$, seen in $[\C^{op},\V]$, as a weighted colimit of representables). This is clearly the $\V$-category of models of a sketch on $\D$ involving $\Phi$-limits and colimits.
\end{proof}

The converse doesn't hold even when $\Phi$ is the class of $\alpha$-small limits, see for instance \cite[Remark~2.59]{AR94:libro}.

In the proof of the theorem below we'll make use of the enriched notion of orthogonality class, which can be found for example in \cite[Section~6.2]{Kel82:libro}. 

The equivalence between $(1)$ and $(3)$ below already appeared in \cite[Theorems~7.6/7.8]{BQR98}.

\begin{teo}\label{sketches-accessible}
	Let $\A$ be a $\V$-category; the following are equivalent:\begin{enumerate}
		\item $\A$ is accessible (that is, $\alpha$-accessible for some $\alpha$);
		\item $\A$ is $\Phi$-accessible for some small weakly sound class $\Phi$;
		\item $\A$ is sketchable.	
	\end{enumerate}
\end{teo}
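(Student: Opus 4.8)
The plan is to prove the cycle $(1)\Rightarrow(2)\Rightarrow(3)\Rightarrow(1)$. The first implication is essentially immediate: if $\A$ is $\alpha$-accessible, take $\Phi=\Phi_\alpha$, the class of $\alpha$-small weights. This is essentially small and weakly sound (by Proposition~\ref{flat-char}, as recorded in Example~\ref{examples}(2)), and $\Phi_\alpha$-flatness coincides with the usual $\alpha$-flatness, so $\A$ is $\Phi$-accessible in the required sense. The implication $(2)\Rightarrow(3)$ is exactly Proposition~\ref{phi-limitcolimit}: a $\Phi$-accessible $\V$-category is the category of models of a sketch involving colimits and $\Phi$-limits, and this is in particular a sketch, so $\A$ is sketchable.

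The substantive direction is $(3)\Rightarrow(1)$: every sketchable $\V$-category is accessible. Given a sketch $\S=(\B,\mathbb L,\mathbb C)$, the idea is to ``split'' it into its limit part and its colimit part. First I would form the limit sketch $\S_{\mathrm{lim}}=(\B,\mathbb L,\emptyset)$; by the enriched theory of limit sketches (\cite[Section~10]{Kel82:articolo}, or \cite[Corollary~7.4]{BQR98}), $\L:=\tx{Mod}(\S_{\mathrm{lim}})$ is a locally presentable $\V$-category, say locally $\alpha$-presentable, and it is a reflective subcategory of $[\B,\V]$ closed under $\alpha$-filtered colimits, with the reflection an accessible $\V$-functor. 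Now $\tx{Mod}(\S)$ is the full subcategory of $\L$ consisting of those models that additionally send each cocylinder of $\mathbb C$ to a colimit cocylinder. The key observation is that ``$F$ sends the cocylinder $d\colon M\to\B(K-,C)$ to a colimit cocylinder in $\V$'' is an orthogonality condition in $\L$: it says exactly that $F$ is orthogonal (in the enriched sense, \cite[Section~6.2]{Kel82:libro}) to a single morphism $f_d$ in $\L$, namely the comparison map $M*r\L(K\square,-)\;\to\;\L(C,-)\text{'s representing object}$, obtained by applying the reflector to the canonical map between the relevant representables in $[\B,\V]$. Thus $\tx{Mod}(\S)$ is the orthogonal subcategory $f^\perp$ in $\L$ for a small set $f=\{f_d\}_{d\in\mathbb C}$ of morphisms.

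It then remains to argue that a small-orthogonality subcategory of a locally presentable $\V$-category is accessible. This is where I would invoke the enriched reflection theorem: the orthogonal subcategory $f^\perp$ of a locally presentable $\V$-category $\L$ with respect to a small set of maps is again locally presentable (this is the enriched analogue of the classical orthogonal-reflection theorem, available for locally presentable $\V$ — see \cite[Section~6]{Kel82:libro}), hence in particular accessible. Alternatively, and more in the spirit of the present paper, one can exhibit $f^\perp$ as a virtual orthogonality class and apply Theorem~\ref{accessible-embedded} once it is available; but since this theorem comes later, the cleanest route here is the locally-presentable reflection theorem. Composing, $\A\simeq\tx{Mod}(\S)$ is accessible, closing the cycle.

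The main obstacle I anticipate is the bookkeeping in $(3)\Rightarrow(1)$: one must check carefully that each colimit-preservation condition imposed by a cocylinder translates, after reflecting into $\L$, into orthogonality with respect to a \emph{single} morphism (so that the whole of $\mathbb C$ gives a \emph{small} set of morphisms), and that the relevant maps live in $\L$ rather than merely in $[\B,\V]$. The subtlety is purely that the reflector $r\colon[\B,\V]\to\L$ need not preserve the colimits in question, so one works with the image under $r$ of the canonical comparison map in $[\B,\V]$ and uses fully-faithfulness of the inclusion $\L\hookrightarrow[\B,\V]$ to match the orthogonality conditions. Everything else — local presentability of $\tx{Mod}$ of a limit sketch, and preservation of accessibility under small-orthogonality — is standard enriched category theory that may be cited.
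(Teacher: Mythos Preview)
Your implications $(1)\Rightarrow(2)\Rightarrow(3)$ are correct and match the paper. The gap is in $(3)\Rightarrow(1)$, specifically in the claim that each colimit-preservation condition becomes an \emph{ordinary} orthogonality condition in $\L=\tx{Mod}(\B,\mathbb L)$.

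Observe first that if your argument worked it would prove too much: a small-orthogonality class in a locally presentable $\V$-category is again locally presentable, so you would conclude that $\tx{Mod}(\S)$ is locally presentable for \emph{every} sketch $\S$. This is false already for $\V=\bo{Set}$ (the category of fields is sketchable but not cocomplete). Concretely, for a cocylinder $d\colon M\to\B(K-,C)$ the condition on $F$ is that the comparison $M*FK\to FC$ be invertible; but $\L(-,F)$ sends colimits in its first variable to \emph{limits}, so for your map $f_d\colon M*r\B(K-,-)\to r\B(C,-)$ the induced $\L(f_d,F)$ is the comparison $FC\to\{M,FK\}$ into a weighted \emph{limit}, not the colimit comparison you need. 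There is simply no morphism in $\L$ (or in $[\B,\V]$) whose orthogonality class picks out the colimit condition; the difficulty is not the reflector failing to preserve colimits, as you suggest, but the variance of $\L(-,F)$.

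The paper's fix is to change the ambient category rather than reflect. One freely adjoins the specified colimits to $\B$, obtaining $J\colon\B\hookrightarrow\C$, and embeds $[\B,\V]$ fully faithfully into $[\C,\V]$ via $\tx{Lan}_J$. In $\C$ each cocylinder becomes an honest arrow $\bar c\colon M*JK\to JC$, and since $\tx{Lan}_JF$ preserves these freely-added colimits one checks that $F$ sends $d$ to a colimit cocylinder iff $\tx{Lan}_JF$ is orthogonal to $\C(\bar c,-)$ in $[\C,\V]$. Thus $\tx{Mod}(\B,\mathbb C)$ is the pullback of a genuine orthogonality class $\M^\perp\subseteq[\C,\V]$ along $\tx{Lan}_J$, and $\A$ is the intersection in $[\C,\V]$ of the two locally presentable, accessibly embedded subcategories $\M^\perp$ and $\tx{Lan}_J(\tx{Mod}(\B,\mathbb L))$; accessibility then follows from Lemma~\ref{intersection-acc}. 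Equivalently, the colimit condition is only a \emph{virtual} orthogonality condition in $[\B,\V]$ --- the relevant codomain lives in $\P^\dagger[\B,\V]$, not in $[\B,\V]$ itself --- which is precisely the phenomenon motivating the later sections of the paper.
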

\begin{proof}
	The implication $(1)\Rightarrow(2)$ is trivial and $(2)\Rightarrow(3)$ is given by Proposition~\ref{phi-limitcolimit} above.
	
	$(3)\Rightarrow(1)$. Let $\A=\tx{Mod}(\S)$ be the $\V$-category of models of a sketch $\S=(\B,\mathbb{L},\mathbb{C})$; then $\A=\tx{Mod}(\B,\mathbb{L}) \cap \tx{Mod}(\B,\mathbb{C})$ can be seen as the intersection of the limit part and the colimit part. Let us focus on $\tx{Mod}(\B,\mathbb{C})$ as a full subcategory of $[\B,\V]$. Let $\C$ be the $\V$-category obtained from $\B$ freely adding the colimits of each diagram $( M ,H)$ appearing in $\mathbb{C}$, denote by $J\colon \B\hookrightarrow\C$ the inclusion. Then $\tx{Lan}_J\colon [\B,\V]\to[\C,\V]$ is fully faithful and, for any $F\colon \B\to\V$, its left Kan extension $\tx{Lan}_JF$ preserves the specified colimits. Consider now the cylinders $c\colon  M \to\B(H-,B)$ appearing in $\mathbb{C}$, by construction these correspond to maps $\bar{c}\colon  M *JH\to JB$ in $\C$. Denote by $\M$ the family of morphisms in $[\C,\V]$ given by $\C(\bar{c},-)$ for each $c\in\mathbb{C}$; then the square below is a pullback.
	\begin{center}
		\begin{tikzpicture}[baseline=(current  bounding  box.south), scale=2]
			
			\node (a0) at (0,0.9) {$\tx{Mod}(\B,\mathbb{C})$};
			\node (b0) at (1.2,0.9) {$[\B,\V]$};
			\node (c0) at (0,0) {$\M^\perp$};
			\node (d0) at (1.2,0) {$[\C,\V]$};
			\node (e0) at (0.25,0.65) {$\lrcorner$};
			
			\path[font=\scriptsize]
			
			(a0) edge [right hook->] node [above] {} (b0)
			(a0) edge [right hook->] node [left] {} (c0)
			(b0) edge [right hook->] node [right] {$\tx{Lan}_J$} (d0)
			(c0) edge [right hook->] node [below] {} (d0);
		\end{tikzpicture}	
	\end{center}
	Indeed, $F\colon \B\to\V$ is a model of $(\B,\mathbb{C})$ if and only if for each $c\in\mathbb{C}$ as above the induced map $\tilde{c}\colon  M *FH\to FB$ is an isomorphism; but $FB\cong (\tx{Lan}_JF)JB$, while $ M *FH\cong  M *(\tx{Lan}_JF)JH\cong \tx{Lan}_JF( M *JH)$, and $\tilde{c}$ corresponds under these isomorphisms to $\tx{Lan}_JF(\bar{c})$. It follows that $F$ is a model of the colimit sketch if and only if $\tx{Lan}_JF(\bar{c})$ is an isomorphism for each $c$, which is equivalent to $\tx{Lan}_JF$ being orthogonal to $\C(\bar{c},-)$ for each $c$.
	
	In conclusion we can express the $\V$-category $\A$ as the intersection in $[\C,\V]$ of the accessibly embedded subcategories $\M^\perp$ and $\tx{Mod}(\B,\mathbb{L})$. Now it's enough to observe that $\tx{Mod}(\B,\mathbb{L})$ and $\M^\perp$ are locally presentable; the first being the $\V$-category of models of a limit sketch, and the latter being an accessibly embedded and reflective subcategory of $[\C,\V]$ by \cite[Theorem~6.5]{Kel82:libro}. As a consequence $\A$ is accessible by Corollary~\ref{preserving-con-presentables} and Lemma~\ref{intersection-acc}.
\end{proof}

\vspace{6pt}

\subsection{Accessibility and the free completion}

Here we collect a few results about the free completion under small limits of an accessible $\V$-category; this will serve as an introduction to the virtual concepts we consider in Section~\ref{virt-reflective}.

Given a $\V$-category $\A$ we denote by $\P^\dagger\A$ its free completion under (small) limits; this can be seen as the full subcategory of $[\A,\V]^{op}$ spanned by the small limits of representables. More common is the free cocompletion under colimits, denoted by $\P\A$; this has been studied in \cite{DL07} and is related to the free completion under limits through the duality $\P^\dagger\A=\P(\A^{op})^{op}$. Note moreover that, when $\C$ is a small $\V$-category,  $\P^\dagger\C=[\C,\V]^{op}$. 

\begin{Def}
	Given a $\V$-category $\A$, we say that a small full subcategory $H\colon\G\hookrightarrow\A$ is {\em closed under virtual $\Phi$-colimits} in $\A$ if $\P^\dagger H\colon\P^\dagger\G\hookrightarrow\P^\dagger\A$ is $\Phi$-cocontinuous.
\end{Def}

Notice that, since $\G$ is small, $\P^\dagger \G=[\G,\V]^{op}$ is cocomplete and therefore $\P^\dagger H$ will be a genuine $\Phi$-cocontinuous $\V$-functor. The fact $\P^\dagger\A$ has enough colimits is not needed to prove the result below; however that will be a consequence of Proposition~\ref{accvirtcoco} where we prove that $\P^\dagger\A$ is cocomplete whenever $\A$ is accessible.

Recall that $H\colon\G\hookrightarrow\A$ is a strong generator if the $\V$-functor $\A(H,1)\colon\A\to[\G^{op},\V]$ is conservative. The following is an equivalent way of characterizing $\Phi$-accessible $\V$-categories. 

\begin{prop}\label{acc-strong}
	Let $\Phi$ be a small weakly sound class and $\A$ be a $\V$-category with $\Phi$-flat colimits. The following are equivalent for $\G\subseteq\A_\Phi$:
	\begin{enumerate}
		\item $\G$ exhibits $\A$ as a $\Phi$-accessible $\V$-category;
		\item $\G$ is a small strong generator of $\A$ that is closed under virtual $\Phi$-colimits.
	\end{enumerate}
\end{prop}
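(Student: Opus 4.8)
The plan is to prove the two implications separately, using the characterizations of $\Phi$-accessible $\V$-categories established in Proposition~\ref{acc=flat} and Proposition~\ref{A_alpha}.

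\textbf{$(1)\Rightarrow(2)$.} Suppose $\G\subseteq\A_\Phi$ exhibits $\A$ as $\Phi$-accessible. First I would note that we may as well take $\G=\A_\Phi$ (by Proposition~\ref{acc=flat} and the fact that $\A_\Phi$ is the Cauchy completion of any such $\G$, by Proposition~\ref{A_alpha}(2)); the case of a general $\G$ then follows since a Cauchy-dense subcategory of a strong generator is a strong generator, and Cauchy colimits are absolute hence preserved by $\P^\dagger H$. That $\G$ is a strong generator is essentially Proposition~\ref{A_alpha}(3): since every $A\in\A$ is a $\Phi$-flat (in particular, a weighted) colimit $\A(H-,A)*H$ of objects of $\G$, the $\V$-functor $\A(H,1)\colon\A\to[\G^{op},\V]$ is fully faithful on objects reconstructed this way, and in fact the argument in Proposition~\ref{acc=flat} shows $\A(H,1)$ is fully faithful, hence conservative. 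For closure under virtual $\Phi$-colimits I would argue as follows: $\P^\dagger H\colon\P^\dagger\G=[\G,\V]^{op}\to\P^\dagger\A$ has a description in terms of left Kan extension along $H^{op}$, and one checks that $\Phi$-colimits in $\P^\dagger\G$, computed as $\Phi$-limits in $[\G,\V]$, land on the nose in the representables-with-$\Phi$-flat-colimits picture; more concretely, $\A\simeq\Phi\tx{-Flat}(\G^{op},\V)$ sits inside $[\G^{op},\V]$ closed under $\Phi$-flat colimits, and the relevant computation reduces to the fact that $\Phi$-flat weights are closed in $[\G^{op},\V]$ under $\Phi$-flat colimits (stated after Proposition~\ref{flat-char}) together with weak soundness, which gives that the $\Phi$-limit structure on the presheaf side matches the $\Phi$-colimit structure we want $\P^\dagger H$ to preserve.

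\textbf{$(2)\Rightarrow(1)$.} Conversely, assume $\G\subseteq\A_\Phi$ is a small strong generator closed under virtual $\Phi$-colimits. The goal is to show every $A\in\A$ is a $\Phi$-flat colimit of objects of $\G$. The natural candidate is again $A\cong\A(H-,A)*H$, so the real content is: (a) this colimit exists in $\A$, and (b) the weight $\A(H-,A)$ is $\Phi$-flat. For (b), since $\G$ consists of $\Phi$-presentable objects, $\A(H-,A)=[\G^{op},\V](Y-,\A(H,1)A)$ and one wants $\A(H,1)A$ to be a $\Phi$-flat presheaf; this is where closure under virtual $\Phi$-colimits should be used — the condition that $\P^\dagger H$ is $\Phi$-cocontinuous translates, via $\P^\dagger\G=[\G,\V]^{op}$, into the statement that the image of $\A$ inside $[\G^{op},\V]$ (equivalently, the relevant subcategory of $\P^\dagger\A$) is closed under the dual $\Phi$-limits, forcing each $\A(H-,A)$ to be $\Phi$-continuous out of a $\Phi$-complete domain and hence, by weak soundness (Assumption~\ref{soundclass}), $\Phi$-flat. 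For (a), the strong generator hypothesis combined with (b) and a standard argument (the counit of the nerve–realization adjunction restricted to $\Phi$-flat colimits of representables is invertible because $\A(H,1)$ is conservative and preserves the comparison) yields $A\cong\A(H-,A)*H$.

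\textbf{Main obstacle.} The delicate point is the precise translation of ``$\P^\dagger H$ is $\Phi$-cocontinuous'' into a usable statement about the presheaf-theoretic image of $\A$ — one must be careful that $\P^\dagger\G=[\G,\V]^{op}$ is cocomplete (so $\Phi$-colimits there genuinely exist and are computed as $\Phi$-limits in $[\G,\V]$, as remarked after the definition), and then identify which diagrams in $\P^\dagger\A$ these map to, given that $\P^\dagger\A$ need not itself be $\Phi$-cocomplete a priori. I expect the cleanest route is to work entirely inside $[\G^{op},\V]$: realize $\A(H,1)\colon\A\to[\G^{op},\V]$ as the corestriction of a fully faithful functor, observe that virtual $\Phi$-cocontinuity of $\P^\dagger H$ says exactly that this corestriction's image is closed under $\Phi$-flat colimits of representables, and then invoke weak soundness to identify that closure with $\Phi\tx{-Flat}(\G^{op},\V)$, at which point Proposition~\ref{acc=flat}(3) finishes the job. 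The bookkeeping relating $\P^\dagger$, opposite categories, and the two dual notions of $\Phi$-(co)completeness is the part most likely to hide subtleties.
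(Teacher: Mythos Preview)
Your overall shape is right—both directions hinge on the equivalence between ``$\A(H-,A)$ is $\Phi$-flat for every $A$'' and ``$\P^\dagger H$ is $\Phi$-cocontinuous''—but your translation between these two conditions is where the argument breaks down. In $(2)\Rightarrow(1)$ you propose to deduce flatness via ``$\A(H-,A)$ is $\Phi$-continuous out of a $\Phi$-complete domain, hence $\Phi$-flat by weak soundness''; but $\G^{op}$ is \emph{not} assumed $\Phi$-complete, so weak soundness does not apply. Your alternative formulation in the Main Obstacle paragraph—that virtual $\Phi$-cocontinuity of $\P^\dagger H$ says the image of $W=\A(H,1)$ is closed under $\Phi$-flat colimits of representables—is also not what that condition says: it is a preservation statement about the functor $\P^\dagger H\colon[\G,\V]^{op}\to\P^\dagger\A$, not a closure statement about the essential image of $W$ in $[\G^{op},\V]$. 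The same confusion appears in your $(1)\Rightarrow(2)$, where closure of $\Phi$-flat weights under $\Phi$-flat colimits is invoked but does not by itself yield $\Phi$-cocontinuity of $\P^\dagger H$.

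The missing key is the identification
\[
\tx{Lan}_Y\,\A(H-,A)\;\cong\;\tx{ev}_A\circ(\P^\dagger H)^{op}
\]
as $\V$-functors $[\G,\V]\to\V$, valid because the right-hand side is cocontinuous and restricts along Yoneda to $\A(H-,A)$. Since $\Phi$-colimits in $\P^\dagger\A$ (when they exist) are computed pointwise, $\P^\dagger H$ is $\Phi$-cocontinuous iff each $\tx{ev}_A\circ(\P^\dagger H)^{op}$ is $\Phi$-continuous, which via the displayed isomorphism is \emph{by definition} the $\Phi$-flatness of $\A(H-,A)$—no weak soundness is needed at this step. With this in hand both directions are short: for $(1)\Rightarrow(2)$, Proposition~\ref{A_alpha}(3) gives flatness and the formula gives $\Phi$-cocontinuity of $\P^\dagger H$; for $(2)\Rightarrow(1)$, the formula gives flatness of $WA$, whence $WA*H$ exists (since $\A$ has $\Phi$-flat colimits) and $W(WA*H)\cong WA*WH\cong WA*Y\cong WA$, so conservativity of $W$ yields $A\cong WA*H$. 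Your worry about existence in step~(a) thus dissolves once~(b) is established correctly.
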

\begin{proof}
	Let $Y\colon \G^{op}\to[\G,\V]=\P^\dagger (\G)^{op}$ be the Yoneda embedding and $A\in\A$; then 
	$$\tx{Lan}_Y \A(H-,A)\cong\tx{ev}_A\circ (\P^\dagger H)^{op}$$ 
	for any $\A$ in $\A$, indeed this follows from the fact that the $\V$-functor on the right-hand-side is cocontinuous and restricts to $\A(H-,A)$.
	
	Assume now that $\A$ is $\Phi$-accessible and let $\G=\A_\Phi$; the $\V$-functor $\A(H-,A)$ is $\Phi$-flat by Proposition~\ref{A_alpha}; therefore $\tx{Lan}_Y \A(H-,A)$ preserves all $\Phi$-limits. By the isomorphism above it follows that $\tx{ev}^{op}_A\circ \P^\dagger H$ is $\Phi$-cocontinuous; thus, since $\Phi$-colimits in $\P^\dagger \A$ (when they exist) are computed pointwise, $\P^\dagger H$ is $\Phi$-cocontinuous too. It follows that $\G$ is a small strong generator (being dense) and is closed in $\A$ under virtual $\Phi$-colimits.
	
	Conversely, assume that there exists $H\colon\G\hookrightarrow\A$ as in $(2)$. Consider the $\V$-functor $W\colon\A\to[\G^{op},\V]$ defined by $W=\A(H,1)$; since $\G$ is a strong generator made of $\Phi$-presentable objects it follows that $W$ is conservative and preserves $\Phi$-flat colimits. Moreover, since $\P^\dagger H$ is $\Phi$-cocontinuous and $\tx{ev}_A$ is continuous for any $A\in\A$, it follows that $\tx{Lan}_Y \A(H-,A)$ is $\Phi$-continuous, and hence $WA=\A(H-,A)$ is $\Phi$-flat. As a consequence, given any $A\in\A$, we have that 
	$$WA\cong WA*Y\cong WA*WH\cong W(WA*H);$$
	therefore $A\cong WA*H$ (by conservativeness of $W$) is a $\Phi$-flat colimit of elements of $\G$, showing that $\A$ is $\Phi$-accessible
\end{proof}

Thanks to this, one obtains easily the standard characterization theorem of locally presentable $\V$-categories:

\begin{cor}
	Let $\Phi$ be a small weakly sound class. The following are equivalent for a cocomplete $\V$-category $\A$:\begin{enumerate}
		\item $\A$ is $\Phi$-accessible;
		\item $\A$ has a small strong generator made of $\Phi$-presentable objects. 
	\end{enumerate}
\end{cor}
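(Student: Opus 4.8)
The plan is to deduce everything from Proposition~\ref{acc-strong}. For $(1)\Rightarrow(2)$ there is nothing new to do: if $\A$ is $\Phi$-accessible then by Proposition~\ref{A_alpha} the full subcategory $\A_\Phi$ is small and every $A\in\A$ satisfies $A\cong\A(H-,A)*H$, so $\A_\Phi$ is dense and in particular a strong generator, and it consists of $\Phi$-presentable objects by definition. The content is therefore in $(2)\Rightarrow(1)$, where the point is to produce, from an arbitrary small strong generator of $\Phi$-presentables, one that is moreover closed under virtual $\Phi$-colimits, so that Proposition~\ref{acc-strong} applies directly.

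So, given a small full subcategory $\G\subseteq\A_\Phi$ which is a strong generator, I would first pass to the closure $\G'$ of $\G$ under $\Phi$-colimits taken inside $\A$; this closure exists because $\A$ is cocomplete, and since $\Phi$ is essentially small it is again (essentially) small, being the essential image of the $\Phi$-cocontinuous extension $\Phi\G\to\A$ of the inclusion $\G\hookrightarrow\A$. Because $\G\subseteq\G'$, the subcategory $\G'$ is still a strong generator. It is still contained in $\A_\Phi$: since $\Phi$-flat colimits commute in $\V$ with $\Phi$-limits, the subcategory $\A_\Phi$ is closed in $\A$ under $\Phi$-colimits (this is the argument of Proposition~\ref{A_alpha}(1)), so closing $\G\subseteq\A_\Phi$ under $\Phi$-colimits does not leave $\A_\Phi$.

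It remains to check that $\G'$ is closed under virtual $\Phi$-colimits, and this is where cocompleteness and weak soundness do the work. Writing $H'\colon\G'\hookrightarrow\A$ for the inclusion, the isomorphism $\tx{Lan}_Y\A(H'-,A)\cong\tx{ev}_A\circ(\P^\dagger H')^{op}$ established in the proof of Proposition~\ref{acc-strong} shows that $\P^\dagger H'$ is $\Phi$-cocontinuous as soon as each weight $\A(H'-,A)\colon(\G')^{op}\to\V$ is $\Phi$-flat. Now $\G'$, being closed under $\Phi$-colimits in the cocomplete $\A$, is itself $\Phi$-cocomplete, and the inclusion $H'$ preserves $\Phi$-colimits; hence $\A(H'-,A)=\A(-,A)\circ(H')^{op}$, being the composite of $(H')^{op}$ with a representable $\V$-functor, preserves $\Phi$-limits. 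Since $\Phi$ is weakly sound, a $\Phi$-continuous weight on $(\G')^{op}$ with $\G'$ $\Phi$-cocomplete is automatically $\Phi$-flat, so $\A(H'-,A)$ is $\Phi$-flat for every $A\in\A$. Therefore $\G'$ is closed under virtual $\Phi$-colimits, and Proposition~\ref{acc-strong} gives that $\G'$ exhibits $\A$ as $\Phi$-accessible.

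I expect the main obstacle to be precisely the upgrade from closure under \emph{actual} $\Phi$-colimits in $\A$ to closure under \emph{virtual} $\Phi$-colimits: the virtual notion lives in $\P^\dagger\A$ and is genuinely stronger, and what bridges the gap is weak soundness, which converts the cheap $\Phi$-continuity of the canonical weights $\A(H'-,A)$ into $\Phi$-flatness once $\G'$ is known to be $\Phi$-cocomplete — and it is cocompleteness of $\A$ that both makes the closure $\G'$ available and makes it $\Phi$-cocomplete.
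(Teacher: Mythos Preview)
Your proposal is correct and follows the same strategy as the paper: for $(2)\Rightarrow(1)$, replace the given strong generator $\G$ by its closure $\G'$ under $\Phi$-colimits and then invoke Proposition~\ref{acc-strong}. The only difference is that the paper dispatches the key step --- that $\G'$ is closed under virtual $\Phi$-colimits --- by citing the dual of \cite[Remark~6.6]{DL07}, whereas you supply a self-contained argument, using that $\G'$ is $\Phi$-cocomplete so that each $\A(H'-,A)$ is $\Phi$-continuous and hence, by weak soundness, $\Phi$-flat.
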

\begin{proof}
	The implication $(1)\Rightarrow(2)$ is trivial. For $(2)\Rightarrow(1)$ consider a small strong generator $\G\subseteq\A_\Phi$ and take its closure $\G'$ in $\A$ under $\Phi$-colimits. Then $\G'$ is still strongly generating and is also closed in $\A$ under virtual $\Phi$-colimits by the dual of \cite[Remark~6.6]{DL07}. It follows by the proposition above that $\A$ is $\Phi$-accessible.
\end{proof}

For a locally $\alpha$-presentable $\V$-category $\K$ we know that the $\alpha$-presentable objects are closed in $\K$ under $\alpha$-small colimits. This can't be said in a general $\alpha$-accessible category (since those colimits may not exist); however, by taking $\G=\A_\Phi$ in Proposition~\ref{acc-strong} above, we obtain:

\begin{prop}
	Let $\Phi$ be a small weakly sound class of weights. For any $\Phi$-accessible $\V$-category $\A$ the full subcategory $\A_{\Phi}$ is closed in $\A$ under virtual $\Phi$-colimits. The same holds for a conical $\alpha$-accessible $\A$ with $\A_{\alpha}^c$ in place of $\A_{\Phi}$.
\end{prop}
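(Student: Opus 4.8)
This is an immediate consequence of Proposition~\ref{acc-strong}, obtained by the choice $\G=\A_\Phi$. Explicitly: when $\A$ is $\Phi$-accessible, $\A_\Phi$ is small by Proposition~\ref{A_alpha}$(2)$, and by Proposition~\ref{A_alpha}$(3)$ every $A\in\A$ is the $\Phi$-flat colimit $\A(H-,A)*H$ of objects of $\A_\Phi$; thus $\G=\A_\Phi$ satisfies condition $(1)$ of Proposition~\ref{acc-strong}. The implication $(1)\Rightarrow(2)$ of that proposition then delivers, among other things, that $\A_\Phi$ is closed in $\A$ under virtual $\Phi$-colimits; indeed, the relevant half of the proof of Proposition~\ref{acc-strong} consists precisely of deriving this closure from the isomorphism $\tx{Lan}_Y\A(H-,A)\cong\tx{ev}_A\circ(\P^\dagger H)^{op}$ together with the $\Phi$-flatness of $\A(H-,A)$.

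For the conical statement I would run the same argument with $\G:=\A_\alpha^c$ and with conical $\alpha$-filtered colimits in place of $\Phi$-flat ones, using Proposition~\ref{conicallyaccproperties} where Proposition~\ref{A_alpha} was used. Write $H\colon\G\hookrightarrow\A$ for the inclusion and $Y\colon\G^{op}\to[\G,\V]=\P^\dagger(\G)^{op}$ for the Yoneda embedding. As in the proof of Proposition~\ref{acc-strong}, for every $A\in\A$ the composite $\tx{ev}_A\circ(\P^\dagger H)^{op}\colon[\G,\V]\to\V$ is cocontinuous and restricts along $Y$ to $\A(H-,A)$, hence equals $\tx{Lan}_Y\A(H-,A)$. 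Now, by Proposition~\ref{conicallyaccproperties}$(3)$ together with the conical $\alpha$-presentability of the objects of $\G$, the weight $\A(H-,A)\colon\G^{op}\to\V$ is a conical $\alpha$-filtered colimit of representables, computed pointwise in $[\G^{op},\V]$; in particular it is $\alpha$-flat, so that $\tx{Lan}_Y\A(H-,A)\cong\colim_i\tx{ev}_{C_i}$ is a conical $\alpha$-filtered colimit of evaluation $\V$-functors, and exactly as in Proposition~\ref{acc-strong} this makes $\tx{ev}^{op}_A\circ\P^\dagger H$ preserve (virtual) conical $\alpha$-filtered colimits. Since this holds for every $A\in\A$, and conical $\alpha$-filtered colimits in $\P^\dagger\A$ are, when they exist, detected by the $\tx{ev}_A$'s (while $\P^\dagger\G$ is cocomplete), it follows that $\P^\dagger H$ preserves conical $\alpha$-filtered colimits, i.e.\ $\A_\alpha^c$ is closed in $\A$ under virtual conical $\alpha$-filtered colimits.

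The only slightly delicate point in either case is the last step — inferring that $\P^\dagger H$ preserves the relevant colimits from the fact that $\tx{ev}^{op}_A\circ\P^\dagger H$ does so for every $A$ — which rests on those colimits being computed pointwise inside $[\A,\V]^{op}$; this is handled exactly as in the proof of Proposition~\ref{acc-strong}, so I expect no genuine obstacle. In short, the first assertion is a specialization of Proposition~\ref{acc-strong} and the second is a transcription of its proof with Proposition~\ref{conicallyaccproperties}$(3)$ supplying the input that $\A(H-,A)$ is a conical $\alpha$-filtered colimit of representables.
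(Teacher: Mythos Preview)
Your treatment of the first assertion is correct and matches the paper's: take $\G=\A_\Phi$ in Proposition~\ref{acc-strong} and read off the implication $(1)\Rightarrow(2)$.

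For the conical case you have the right input but the wrong output. You correctly argue, via Proposition~\ref{conicallyaccproperties}(3) and conical $\alpha$-presentability of the objects of $\A_\alpha^c$, that $\A(H-,A)$ is a conical $\alpha$-filtered colimit of representables and hence $\alpha$-flat; this is exactly the observation the paper singles out. But from this you conclude that $\P^\dagger H$ preserves \emph{conical $\alpha$-filtered} colimits, which is both the wrong target and a non-sequitur. The statement asserts closure under virtual $\Phi$-colimits with $\Phi=\Phi_\alpha$ the class of $\alpha$-small weights: $\alpha$-flatness of $\A(H-,A)$ means $\tx{Lan}_Y\A(H-,A)\cong\tx{ev}_A\circ(\P^\dagger H)^{op}$ is $\alpha$-\emph{continuous}, so $\tx{ev}_A^{op}\circ\P^\dagger H$ preserves $\alpha$-\emph{small} colimits, and hence so does $\P^\dagger H$. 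Your sentence ``$\tx{Lan}_Y\A(H-,A)\cong\colim_i\tx{ev}_{C_i}$ \dots\ this makes $\tx{ev}^{op}_A\circ\P^\dagger H$ preserve (virtual) conical $\alpha$-filtered colimits'' does not follow: expressing a functor as a filtered colimit of evaluations says nothing about which colimits it preserves. Once you replace ``conical $\alpha$-filtered'' by ``$\alpha$-small'' in your conclusion, the argument is exactly the paper's.
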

\begin{proof}
	For the first part simply take $\G=\A_\Phi$ in the proposition above. For the latter apply the same proof by noticing that $\A(H-,A)$ will still be $\alpha$-flat even when $H$ is the inclusion of $\A^c_\alpha$ in $\A$.
\end{proof}

If we see $\P^\dagger\A$ as a full subcategory of $[\A,\V]^{op}$ then we have a nice way of describing its elements in the case where $\A$ is accessible:

\begin{prop}\label{acc=small}
	For any accessible $\V$-category $\A$ the free completion $\P^\dagger \A$ consists exactly of the accessible functors from $\A$ to $\V$. For any conically accessible $\V$-category $\A$ the free completion $\P^\dagger \A$ consists exactly of the conically accessible functors from $\A$ to $\V$.
\end{prop}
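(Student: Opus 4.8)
The plan is to unwind the two descriptions of $\P^\dagger\A$. Viewed as a class of objects inside $[\A,\V]$ (with morphisms reversed), $\P^\dagger\A=\P(\A^{op})^{op}$ consists precisely of the \emph{small functors} $\A\to\V$, that is, those $F\colon\A\to\V$ arising as a small colimit of representables $\A(A,-)$; equivalently (see \cite{DL07}) those $F$ for which there is a fully faithful $H\colon\S\hookrightarrow\A$ with $\S$ small and a $\V$-functor $G\colon\S\to\V$ such that $F\cong\tx{Lan}_HG$. So the statement to be proved becomes: for $\A$ accessible, a $\V$-functor $\A\to\V$ is small if and only if it is accessible (and likewise with conical accessibility, conical $\alpha$-filtered colimits, $\A^c_\alpha$, Proposition~\ref{conicallyaccproperties}, and the conical analogue of Remark~\ref{phi-acc-funct} in place of the corresponding accessible notions). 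I treat the accessible case; the conical case is verbatim after these replacements.

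\emph{Accessible functors are small.} Let $F\colon\A\to\V$ be accessible, say $\A$ has and $F$ preserves $\alpha_1$-flat colimits, and let $\A$ be $\alpha_2$-accessible. Choose a regular $\alpha\geq\alpha_1$ with $\alpha\rhd\alpha_2$; by Theorem~\ref{flat-to-conical} the $\V$-category $\A$ is then $\alpha$-accessible, and since every $\alpha$-flat weight is $\alpha_1$-flat, $F$ still preserves $\alpha$-flat colimits. By Remark~\ref{phi-acc-funct}, $F\cong\tx{Lan}_H(FH)$ for $H\colon\A_\alpha\hookrightarrow\A$, and $\A_\alpha$ is essentially small by Proposition~\ref{A_alpha}; hence $F$ is a left Kan extension of a functor out of a small $\V$-category along a fully faithful functor, so $F\in\P^\dagger\A$. (In the conical case one uses instead $\A^c_\alpha$, essentially small by Proposition~\ref{conicallyaccproperties}, and the conical analogue of Remark~\ref{phi-acc-funct}; raising the conical index is possible by Proposition~\ref{conical-flat}(1) together with the ordinary raising theorem.)

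\emph{Small functors are accessible.} We first record that every object of an accessible $\V$-category is presentable: given $A$ in an $\alpha$-accessible $\A$, Proposition~\ref{A_alpha}(3) writes $A\cong M*H$ with $M=\A(H-,A)\colon\A_\alpha^{op}\to\V$ an $\alpha$-flat weight and $H\colon\A_\alpha\hookrightarrow\A$; since $\A_\alpha$ is essentially small, $M$ is a $\beta$-small weight for some regular $\beta\rhd\alpha$, hence a $\beta$-presentable object of $[\A_\alpha^{op},\V]$, and transporting along the fully faithful $\A\simeq\alpha\tx{-}\tx{Flat}(\A_\alpha^{op},\V)$ of Proposition~\ref{acc=flat} (under which, as $\beta>\alpha$, $\beta$-flat colimits are computed as in $[\A_\alpha^{op},\V]$) shows $\A(A,-)$ preserves $\beta$-flat colimits, so $A$ is $\beta$-presentable. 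Now let $F\in\P^\dagger\A$ and write $F\cong W*T$ in $[\A,\V]$ for a small weight $W$ on a small $\V$-category $\S$ and $T\colon\S\to[\A,\V]$ with each $Ts$ a representable $\A(b_s,-)$, $b_s\in\A$. Each $b_s$ is $\gamma_s$-presentable for some regular $\gamma_s$, and since $\rhd$-bounds of small sets of regular cardinals exist we may pick a regular $\alpha$ with $\alpha\rhd\gamma_s$ for all $s$ and $\alpha$ above an accessibility index of $\A$. Then $\A$ has $\alpha$-flat colimits and each $\A(b_s,-)$ preserves them (every $\alpha$-flat weight is $\gamma_s$-flat); as colimits commute with colimits in $\V$, the colimit $F=W*T$ preserves $\alpha$-flat colimits, so $F$ is accessible. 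The conical case is identical, using that every object of a conically accessible $\V$-category is conically presentable (Proposition~\ref{conical-flat}(1)) and that colimits commute with conical filtered colimits.

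The only genuinely non-formal ingredient, and the step I expect to be the main obstacle, is the fact that every object of an accessible $\V$-category is presentable: in the unenriched setting this is classical, but here it must be extracted from the canonical flat presentation of an object together with the observation that a $\beta$-small weight is a $\beta$-presentable object of the presheaf $\V$-category. Everything else is bookkeeping — choosing a single regular cardinal $\alpha$ using the existence of $\rhd$-upper bounds, and exploiting that $\alpha$-flatness weakens as $\alpha$ grows while $\alpha$-flat colimits commute with $\alpha$-small limits in $\V$.
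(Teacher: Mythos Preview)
Your proof is correct and follows essentially the same approach as the paper. The paper's argument is terser: for the direction small $\Rightarrow$ accessible it writes $F\cong\tx{Lan}_H(FH)$ for some small full $H\colon\C\hookrightarrow\A$ and then simply asserts that one may choose $\alpha$ with $\C\subseteq\A_\alpha$, so that $F$ is also the left Kan extension of its restriction to $\A_\alpha$ and hence $\alpha$-accessible; your unpacking of ``every object is presentable'' together with the colimit-commutation argument is precisely what justifies that assertion. The accessible $\Rightarrow$ small direction is identical in both proofs.
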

\begin{proof}
	Let $F\colon\A\to\V$ be an object of $\P^\dagger \A$; then $F=\tx{Lan}_HFH$ for some small $H\colon \C\hookrightarrow\A$. Since $\C$ is small, we can now consider $\alpha$ for which $\C\subseteq\A_{\alpha}$; it follows that $F$ is also the left Kan extension of its restriction to $\A_{\alpha}$, and hence it preserves $\alpha$-flat colimits. Conversely, every $F\colon\A\to\V$ which preserves $\alpha$-flat colimits (for an $\alpha$ for which $\A$ is $\alpha$-accessible) is the left Kan extension of its restriction to $\A_{\alpha}$; hence it is a small presheaf. The same argument applies to the conical case.
\end{proof}

\begin{cor}
	For any accessible $\V$-category $\A$ there is an adjunction:
	\begin{center}
		\begin{tikzpicture}[baseline=(current  bounding  box.south), scale=2]
			
			\node (a) at (0,0.8) {$\P^\dagger (\A_0)$};
			\node (b) at (1.4,0.8) {$(\P^\dagger \A)_0$};
			\node (e) at (0.7,0.9) {$\perp$};
			
			\path[font=\scriptsize]
			
			(a) edge [bend left, <-] node [above] {$L$} (b)
			(a) edge [->] node [below] {$R$} (b);
			
		\end{tikzpicture}	
	\end{center} 
	where $R$ is the unique continuous functor induced by the universal property of $\P^\dagger (\A_0)$ applied to the underlying functor of the inclusion $Z\colon \A\to\P^\dagger \A$, and $L$ is given pointwise by $LF=\V_0(I,F_0-)$. In fact it suffices that $\A$ be conically accessible.
\end{cor}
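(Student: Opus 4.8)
The plan is to check that $L$ and $R$ are well-defined, and then to identify $L$ as a left adjoint of $R$ by showing that $(\P^\dagger\A)_0(F,R-)$ is represented by $LF$; the comparison will reduce, by continuity, to the Yoneda lemma on the representables. I would carry everything out under the weaker hypothesis that $\A$ is conically accessible, since that already subsumes the accessible case.

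First I would dispose of well-definedness. Since $\P^\dagger\A$ is the free completion of $\A$ under small limits it is a complete $\V$-category, so $(\P^\dagger\A)_0$ is complete; applying the universal property of $\P^\dagger(\A_0)$ (free completion of $\A_0$ under small limits) to the underlying functor $Z_0\colon\A_0\to(\P^\dagger\A)_0$ of $Z$ produces the unique continuous $R$ with $RZ'_0\cong Z_0$, where $Z'_0\colon\A_0\hookrightarrow\P^\dagger(\A_0)$ is the canonical embedding. For $L$: by Proposition~\ref{acc=small} an object $F$ of $\P^\dagger\A$ is a conically accessible $\V$-functor $F\colon\A\to\V$, say conically $\alpha$-accessible for some $\alpha$ which we may take large enough that $\A_0$ is also $\alpha$-accessible (Proposition~\ref{conicallyaccproperties}(4)). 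Then $F_0\colon\A_0\to\V_0$ preserves $\alpha$-filtered colimits, and since the unit $I$ is $\alpha_0$-presentable so does $\V_0(I,-)\colon\V_0\to\bo{Set}$; hence $LF=\V_0(I,F_0-)$ preserves $\alpha$-filtered colimits, so it is the left Kan extension of its restriction to the small subcategory $(\A_0)_\alpha$ and therefore a small functor, i.e.\ an object of $\P^\dagger(\A_0)$. Functoriality of $L$ is immediate, being postcomposition with $\V_0(I,-)$.

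For the adjunction I would fix $F\in(\P^\dagger\A)_0$ and show that the two functors $(\P^\dagger\A)_0(F,R-)$ and $\P^\dagger(\A_0)(LF,-)$ from $\P^\dagger(\A_0)$ to $\bo{Set}$ are naturally isomorphic, and naturally in $F$. Both are continuous (hom-functors are continuous and $R$ is continuous), so since continuous functors out of $\P^\dagger(\A_0)$ are determined by their restrictions along $Z'_0$, it is enough to identify those restrictions. On a representable $Z'_0A$: using $RZ'_0\cong Z_0$, that $\P^\dagger\A$ is full in $[\A,\V]^{op}$, and the enriched Yoneda lemma, $(\P^\dagger\A)_0(F,RZ'_0A)\cong(\P^\dagger\A)_0(F,ZA)\cong[\A,\V](\A(A,-),F)\cong\V_0(I,FA)$; and by the ordinary Yoneda lemma $\P^\dagger(\A_0)(LF,Z'_0A)\cong(LF)(A)=\V_0(I,FA)$. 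These identifications are natural in $A$ and in $F$, so they furnish the required isomorphism on representables, which then extends uniquely to the natural isomorphism $(\P^\dagger\A)_0(F,RG)\cong\P^\dagger(\A_0)(LF,G)$ on all of $\P^\dagger(\A_0)$, still natural in $F$. By Yoneda this exhibits $L$ as left adjoint to $R$. Since only conical accessibility of $\A$ (and accessibility of $\A_0$) was used, the conically accessible case is included.

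The main obstacle is mostly bookkeeping: both free completions live inside opposite functor categories, $\P^\dagger\A\subseteq[\A,\V]^{op}$ and $\P^\dagger(\A_0)\subseteq[\A_0,\bo{Set}]^{op}$, so one must track where arrows reverse, and one must take a little care with size — that $LF$ really is a small functor, and that the universal property of $\P^\dagger(\A_0)$ is still available even though $\A_0$ may be large. Once the Yoneda computation on representables is written out correctly, the passage from representables to all of $\P^\dagger(\A_0)$ is formal.
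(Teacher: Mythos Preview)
Your proof is correct and follows essentially the same approach as the paper's. Both arguments reduce the existence of the left adjoint to the smallness of $\V_0(I,F_0-)$ via Proposition~\ref{acc=small}, and both identify $(\P^\dagger\A)_0(F,Z_0-)\cong\V_0(I,F_0-)$ through Yoneda; the paper simply packages your ``agree on representables and extend by continuity'' step as the observation that a left adjoint to $R$ exists if and only if each $(\P^\dagger\A)_0(F,Z_0-)$ is small, which makes the argument a little shorter but is the same content.
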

\begin{proof}
	Let $\bar{Z}\colon \A_0\hookrightarrow\P^\dagger (\A_0)$ be the inclusion of the free completion of $\A_0$, so that $R\bar{Z}\cong Z_0$. Then $R$ has a left adjoint $L$ if and only if for any $G$ in $(\P^\dagger \A)_0$ there exists some $LG\in \P^\dagger (\A_0)$ such that
	$\P^\dagger (\A_0)(LG,-)\cong (\P^\dagger \A)_0(G,R-).$
	By restricting the isomorphism to $\A_0$ through $\bar{Z}$, this says that $LG\cong (\P^\dagger \A)_0(G,Z_0-)$; in particular it follows that a left adjoint exists if and only if $(\P^\dagger \A)_0(G,Z_0-)$ is a small functor for any $G$ in $(\P^\dagger \A)_0$. \\
	To conclude it's then enough to notice that $(\P^\dagger \A)_0(G,Z_0-)\cong \V_0(I,G_0-)$ is small by Proposition~\ref{acc=small} since both $\A$ and $\A_0$ are accessible.
\end{proof}

\vspace{6pt}

\subsection{Virtual left adjoints and virtual colimits}\label{virt-reflective}

In this section we introduce the {\em virtual} concepts that will help us in finding new characterizations of accessible $\V$-categories. The word virtual here refers to something that ``lives'' in the free completion $\P^\dagger\A$ of a $\V$-category $\A$. The relation between these concepts and those used in \cite{AR94:libro} for ordinary accessible categories will be discussed in Section~\ref{cre+p}.

Recall that a $\V$-functor $F\colon\A\to\K$ has a left adjoint if and only if $\K(X,F-)$ is representable for each $X\in\K$; generalizing this we make the following definition:

\begin{Def}\label{virtualleft}
	We say that a $\V$-functor $F\colon \A\to\K$ has a {\em virtual left adjoint} if for each $X\in\K$ the $\V$-functor $\K(X,F-)$ is small. If $F$ is moreover fully faithful we say that $\A$ is {\em virtually reflective} in $\K$.
\end{Def}

From now on, given a $\V$-category $\A$ we denote by $Z_\A\colon\A\hookrightarrow\P^\dagger \A$ the inclusion into its free completion; we'll drop the subscript $(-)_\A$ whenever it is safe to do so.

\begin{obs}
	Virtual left adjoints have been considered in the literature for different purposes. For instance, in \cite[3.4]{tholen1984pro} a functor is a {\em right $\mt D$-pro adjoint} (where $\mt D$ is the class of all small categories) if and only if it has a virtual left adjoint (in our sense). While, in the context of KZ-doctrines, virtual left adjoints were considered as the dual of \cite[Definition~1.1]{bunge1999bicomma} for the KZ-doctrine given by freely adding all small colimits.
\end{obs}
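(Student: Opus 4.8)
The plan is to establish both halves of the remark by unwinding the cited definitions and matching them, via the duality $\P^\dagger\A=\P(\A^{op})^{op}$ recalled above, with Definition~\ref{virtualleft}; I do not expect this to require any genuinely new argument, only a careful comparison of conventions.

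For the comparison with \cite[3.4]{tholen1984pro} I would first identify Tholen's category of $\mt D$-pro objects of $\A$, for $\mt D$ the class of all small categories, with the free completion $\P^\dagger\A$. That category is by construction the closure of the representables inside $[\A,\V]^{op}$ under formal $\mt D$-limits, that is under all small limits, which is exactly the description of $\P^\dagger\A$; the matching of the hom-objects on the two sides is then forced by the universal property of $\P^\dagger\A$ as the free completion of $\A$ under small limits. Granting this, Tholen's condition that $F\colon\A\to\K$ be a right $\mt D$-pro adjoint --- that the assignment $X\mapsto\K(X,F-)$, always a $\V$-functor $\K^{op}\to[\A,\V]$, factor through the subcategory of $\mt D$-pro objects --- becomes precisely the requirement that $\K(X,F-)$ lie in $\P^\dagger\A$, i.e.\ be a small functor, for every $X\in\K$; and this is verbatim Definition~\ref{virtualleft}.

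For the comparison with \cite[Definition~1.1]{bunge1999bicomma} I would recall that the free small cocompletion $\P$ is a KZ-doctrine on $\V\tx{-}\bo{CAT}$ and that $\P^\dagger(-)=\P((-)^{op})^{op}$ is the co-KZ-doctrine obtained from it by the duality $(-)^{op}$. Bunge and Funk's Definition~1.1 singles out, relative to a KZ-doctrine, the $1$-cells along which a certain pointwise left extension --- presented there as a bicomma object --- exists; I would instantiate this for $\P$ and then transport the situation along $(-)^{op}$, which turns ``pointwise left extension into the free cocompletion'' into ``pointwise right extension into $\P^\dagger$'', hence into the condition that $\K(X,F-)\in\P^\dagger\A$ for all $X\in\K$ --- once more the existence of a virtual left adjoint. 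As in the first part, the only point to verify is that the hom-objects correspond after passing to the opposite categories.

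The hard part here is not mathematics but bookkeeping: Tholen's and Bunge--Funk's definitions live in slightly different ambient frameworks, so one must pin down carefully which completion, and which variance, each of them refers to before the identifications above can be applied. Since this is a cross-reference remark, not used later in the paper, I would keep the write-up to these two identifications together with the one-line comparisons of the defining conditions.
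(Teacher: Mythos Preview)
This statement is a \emph{Remark} in the paper and carries no proof there: the authors simply record two places in the literature where an equivalent notion appears, leaving the verification of the equivalences to the reader. So there is no ``paper's own proof'' to compare against.

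Your sketch is a sensible way to carry out that verification and is essentially correct in spirit. Two small cautions on the bookkeeping you flag yourself. First, Tholen works over $\bo{Set}$, and his $\mt D$-pro-objects are diagrams $D^{op}\to\A$ with $D\in\mt D$, with hom-sets given by the usual pro-formula; you should check directly that a functor $\K(X,F-)$ being representable by such a pro-object (for $\mt D$ the class of all small categories) is exactly the condition that it be a small limit of representables, rather than appealing to an abstract identification of his pro-category with $\P^\dagger\A$, since Tholen's hom-formula is not literally that of $\P^\dagger\A$ before one checks this. Second, for the Bunge--Funk comparison, their Definition~1.1 concerns \emph{admissible} maps for a KZ-doctrine, phrased via a bicomma-object condition; your plan to dualize along $(-)^{op}$ is right, but you should make explicit that the relevant KZ-doctrine is the free small-cocompletion $\P$ and that the bicomma condition, once dualized, reduces pointwise to the smallness of $\K(X,F-)$. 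Neither point is difficult, and since the paper itself offers no argument, any careful unwinding along the lines you propose would be an acceptable justification.
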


\begin{obs}
	When $\V=\bo{Set}$, Guitart and Lair consider in \cite[Section~5]{GL81:articolo} the notion of ``small locally free diagram'' ({\em petit diagramme localement libre}). Given a fully faithful functor $J\colon\A\to\K$, they say that an object $X\in\K$ has a small locally free diagram over $\A$ if there exists a diagram $H\colon\C\to\A$ for which:
	$$ \K(X,JA)\cong\colim\A(H-,A) $$
	naturally in $A\in\A$. Now, since $\colim\A(H-,A)\cong\P^\dagger\A(\lim ZH,ZA)\cong (\lim ZH)(A)$, this condition is simply saying that $\K(X,J-)$ is small. It follows that every element of $\K$ has a small locally free diagram over $\A$ if and only if $J$ has a virtual left adjoint. In Theorem~1 of the same paper they show that, given a sketch $\S$ on $\C$, every functor $F\colon\C\to\bo{Set}$ has a small locally free diagram over $\tx{Mod}(\S)$ (see also \cite[Theorem~2.3]{Ger94:PhD}); in our terminology this says that $\tx{Mod}(\S)$ is virtually reflective in $[\C,\bo{Set}]$. However they do not prove that every virtually reflective and accessibly embedded subcategory of $[\C,\bo{Set}]$ is accessible (or equivalently, sketchable); we do that in Proposition~\ref{vr-acc} for the more general enriched context.
\end{obs}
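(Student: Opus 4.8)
The statement to establish is that, for a fully faithful $J\colon\A\to\K$, every object of $\K$ has a small locally free diagram over $\A$ in the sense of Guitart--Lair if and only if $J$ has a virtual left adjoint; the heart of the matter is the displayed chain of isomorphisms
$$\colim\A(H-,A)\cong\P^\dagger\A(\lim ZH,ZA)\cong(\lim ZH)(A),$$
natural in $A\in\A$, where $H\colon\C\to\A$ is a small diagram and $\lim ZH$ is the conical limit of $ZH\colon\C\to\P^\dagger\A$. The plan is to prove this chain using the concrete description of $\P^\dagger\A$ as the closure of the representables under small limits inside $[\A,\V]^{op}$, and then to read off the equivalence together with its quantified form.

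First I would pin down $\lim ZH$. Since $\P^\dagger\A$ is closed under small limits in $[\A,\V]^{op}$, the limit $\lim ZH$ may be computed there; but a limit in $[\A,\V]^{op}$ is a colimit in $[\A,\V]$, and colimits in a functor $\V$-category are pointwise. As $ZC$ corresponds to the representable $\A(C,-)\in[\A,\V]$, this yields $(\lim ZH)(A)\cong\colim\A(H-,A)$, natural in $A$: the two outer terms of the chain. The middle isomorphism is the representability identity $\P^\dagger\A(F,ZA)\cong FA$, valid for every small $F$ and natural in $A$, which follows from the defining equivalence $\P^\dagger\A=\P(\A^{op})^{op}$ and the enriched Yoneda lemma. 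Composing these two gives the first isomorphism of the chain and hence the whole display, natural in $A$.

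With the chain in hand I would argue the equivalence objectwise. If $X$ has a small locally free diagram, witnessed by $H\colon\C\to\A$, then $\K(X,J-)$ is naturally isomorphic to the $\V$-functor underlying $\lim ZH$; since $\lim ZH$ is a small limit of representables it lies in $\P^\dagger\A$, so $\K(X,J-)$ is small. Conversely, if $\K(X,J-)$ is small then it is a small limit of representables, hence isomorphic to $\lim ZH$ for some diagram $H\colon\C\to\A$ with $\C$ small; unwinding the chain recovers $\K(X,JA)\cong\colim\A(H-,A)$ naturally in $A$, i.e.\ a small locally free diagram for $X$. Quantifying over all $X\in\K$ and invoking Definition~\ref{virtualleft} then gives the claimed equivalence with virtual left-adjointness.

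The step needing the most care is the converse direction, namely realizing an arbitrary small $\K(X,J-)$ as a conical limit $\lim ZH$ with small indexing $\C$. The right input is that a small functor is by definition a small colimit of representables in $[\A,\V]$, say $\colim_\I(Y\circ H_0)$ with $\I$ small and $H_0\colon\I\to\A$; passing to $[\A,\V]^{op}$ turns this into the conical limit $\lim ZH$ over $\C=\I^{op}$. One must resist using the full category of elements $\tx{El}(\K(X,J-))$, which is in general large; the smallness of the functor is exactly what guarantees a small presentation. The remaining bookkeeping is keeping the several variances straight, and (for $\V=\bo{Set}$) noting that the small limits involved may be taken conical, so that $H$ has the shape required by the Guitart--Lair definition.
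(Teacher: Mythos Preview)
Your proposal is correct and follows precisely the approach taken in the paper: the remark's justification is the single displayed chain of isomorphisms, which you have unpacked carefully (pointwise computation of limits in $\P^\dagger\A\subseteq[\A,\V]^{op}$ for the outer identification, and Yoneda for the middle one). Your attention to the converse direction --- using smallness to obtain a small indexing category rather than the possibly large category of elements --- is a useful elaboration beyond what the paper makes explicit.
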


Other ways to recognize when a $\V$-functor has a virtual left adjoint are given below. 

\begin{prop}
	Let $F\colon\A\to\K$ be a $\V$-functor. The following are equivalent: \begin{enumerate}\setlength\itemsep{0.25em}
		\item $F$ has a virtual left adjoint;
		\item $F$ has a relative left adjoint with respect to the inclusion $Z_\A\colon\A\hookrightarrow\P^\dagger \A$;
		\item the induced continuous $\V$-functor $\P^\dagger F\colon \P^\dagger \A\to\P^\dagger \K$ has a left adjoint.
	\end{enumerate}
	Moreover the left adjoint $L\colon\P^\dagger \K\to\P^\dagger \A$ in $(3)$ is given by precomposition with $F$.
\end{prop}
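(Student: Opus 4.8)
The plan is to establish the three equivalences by unwinding the universal properties of $\P^\dagger\A$ and $\P^\dagger\K$, reducing everything to the single statement that $\P^\dagger\A(\P^\dagger F(-),-)$ agrees with the presheaf $\K(X,F-)$ when evaluated suitably. First I would recall the key fact (from the discussion of $\P^\dagger$ as the free completion under small limits, and the duality $\P^\dagger\A=\P(\A^{op})^{op}$) that $\P^\dagger F\colon\P^\dagger\A\to\P^\dagger\K$ is the unique (up to isomorphism) small-limit-preserving extension of $Z_\K\circ F$ along $Z_\A$, and that its left adjoint, if it exists, is computed as $L=\tx{Lan}$-type formula; concretely, since $\P^\dagger\K=\P(\K^{op})^{op}$ and colimits in $\P(\K^{op})$ are computed pointwise, precomposition with $F^{op}$ gives a cocontinuous functor $\P(\K^{op})\to\P(\A^{op})$, whose opposite is a continuous functor $\P^\dagger\K\to\P^\dagger\A$; this is the candidate $L$, and one checks directly from the pointwise formula that $L\dashv\P^\dagger F$ by a Yoneda computation, \emph{provided} that $L$ actually lands in $\P^\dagger\A$ (i.e. sends small presheaves to small presheaves).

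Next I would prove $(1)\Leftrightarrow(3)$. The functor $L$ defined by precomposition with $F$ sends a representable $\K(X,-)^{op}\in\P^\dagger\K$ (i.e. the image $Z_\K X$) to $\K(X,F-)\in[\A,\V]^{op}$; thus $L$ restricted to representables is exactly $X\mapsto\K(X,F-)$. Since every object of $\P^\dagger\K$ is a small limit of representables and $L$ (being defined pointwise on the cocompletion side) preserves small limits, $L$ takes values in $\P^\dagger\A$ if and only if it does so on representables, which is precisely the condition that $\K(X,F-)$ is small for every $X\in\K$ — that is, condition $(1)$. Once $L$ is known to factor through $\P^\dagger\A$, the adjunction $L\dashv\P^\dagger F$ follows from the unrestricted adjunction on the $[\A,\V]^{op}$, $[\K,\V]^{op}$ level (precomposition is left adjoint to right Kan extension) together with the fact that $\P^\dagger F$ is the restriction of $\tx{Ran}_{F^{op}}{}^{op}$ and that $Z_\A$, $Z_\K$ are fully faithful. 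This also yields the final sentence of the statement, that the left adjoint is precomposition with $F$.

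For $(1)\Leftrightarrow(2)$, recall that a relative left adjoint to $F$ along $Z_\A$ is a functor $G\colon\K\to\P^\dagger\A$ together with isomorphisms $\P^\dagger\A(GX,Z_\A A)\cong\K(X,FA)$ natural in $A$ (and $X$); evaluating the left-hand side via the definition of $\P^\dagger\A\subseteq[\A,\V]^{op}$ gives $\P^\dagger\A(GX,Z_\A A)\cong (GX)(A)$, so such a $G$ exists exactly when the presheaf $A\mapsto\K(X,FA)$ is (representably) an object of $\P^\dagger\A$, i.e. is small — again condition $(1)$, with $GX=\K(X,F-)$. So all three conditions reduce to the single smallness requirement, and the three reformulations are just the statement read in $\A$, in $\P^\dagger\A$ relative to $Z_\A$, and in $\P^\dagger\A$ via the adjoint functor $L$. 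I expect the main obstacle to be the careful bookkeeping of op's and the verification that $\P^\dagger F$ really is the restriction of $\tx{Ran}_{F^{op}}{}^{op}$ and that precomposition with $F$ restricts correctly between the free-completion subcategories; once those naturality-and-factorization points are nailed down, each equivalence is a short Yoneda argument. I would cite \cite{DL07} for the needed facts about $\P$ (hence $\P^\dagger$) and its universal property, and Proposition~\ref{acc=small} is not needed here but foreshadows why smallness is the natural hypothesis.
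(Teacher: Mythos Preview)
Your proposal is correct and follows essentially the same approach as the paper: both reduce everything to the observation that any relative/genuine left adjoint must send $Z_\K X$ to $\K(X,F-)$, so existence is equivalent to smallness of these functors. The paper organizes the argument as $(3)\Rightarrow(2)\Rightarrow(3)$ plus $(2)\Leftrightarrow(1)$, using $\tx{Ran}_{Z_\K}$ and completeness of $\P^\dagger\A$ for $(2)\Rightarrow(3)$, whereas you go via $(1)\Leftrightarrow(3)$ by restricting the ambient adjunction $(-\circ F)\dashv\tx{Lan}_F$ on $[\K,\V]$, $[\A,\V]$ to the small-functor subcategories; this is a mild reorganization rather than a different idea. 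One small point: your identification of $\P^\dagger F$ as ``$\tx{Ran}_{F^{op}}{}^{op}$'' is not quite right --- it is the restriction of $(\tx{Lan}_F)^{op}$ (so that the op of $\tx{Lan}_F\dashv(-\circ F)$ gives $(-\circ F)^{op}\dashv(\tx{Lan}_F)^{op}$) --- but you anticipated such bookkeeping issues and they do not affect the argument.
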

\begin{proof}
	$(3)\Rightarrow (2)$. Let $L\colon\P^\dagger \K\to\P^\dagger \A$ be the left adjoint to $\P^\dagger F$ and let $X\in\P^\dagger\K$; then
	$$ LX(A)\cong \P^\dagger\A(LX,Z_\A A)\cong \P^\dagger\K(X,\P^\dagger FZ_\A A)\cong \P^\dagger\K(X,Z_\K FA)\cong X(FA). $$
	This proves that $L$, when it exists, is given by precomposition with $F$. Now the relative left adjoint to $F$ is given by the composite $LZ_\K\colon \K\to\P^\dagger\A$.
	
	$(2)\Rightarrow(3)$. Let $L'\colon\K\to\P^\dagger\A$ be a relative left adjoint to $F$ with respect to $Z_\A$. Then $L\colon =\tx{Ran}_{Z_\K}L'\colon\P^\dagger\K\to\P^\dagger\A$ exists (since $\P^\dagger\A$ is complete) and is a left adjoint to $\P^\dagger F$. 
	
	$(2)\Leftrightarrow (1)$. The argument given above proves that the relative left adjoint $L'\colon\K\to\P^\dagger\A$, when it exists, is given by $L'X=\K(X,F-)$. Thus the equivalence between $(1)$ and $(2)$ follows at once by definitions of $\P^\dagger\A$ and virtual left adjoints.
\end{proof}

Next we consider the notion of virtual colimit in a $\V$-category. Recall that, given a weight $M\colon\C^{op}\to\V$ and a $\V$-functor $H\colon\C\to\A$, we say that the colimit of $H$ weighted by $M$ exists in $\A$ if the $\V$-functor $[\C^{op},\V](M,\A(H,-))\colon\A\to\V$ is representable. Similarly: 

\begin{Def}
	Given a $\V$-category $\A$, a weight $M\colon\C^{op}\to\V$ with small domain, and $H\colon\C\to\A$, we say that the {\em virtual colimit} of $H$ weighted by $M$ exists in $\A$ if $[\C^{op},\V](M,\A(H,-))$ is a small $\V$-functor. We say that $\A$ is {\em virtually cocomplete} if it has all virtual colimits.
\end{Def}

\begin{prop}\label{virtualcol}
	Given $H$ and $M$ as above, the virtual colimit of $H$ weighted by $M$ exists in $\A$ if and only if the colimit $M*ZH$ exists in $\P^\dagger\A$. In this case $$[\C^{op},\V](M,\A(H,-))\cong M*ZH.$$
\end{prop}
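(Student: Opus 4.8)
The plan is to unwind both sides of the claimed isomorphism in terms of the universal property of $\P^\dagger\A$ and the fact that $\P^\dagger\A$ sits fully faithfully inside $[\A,\V]^{op}$. Recall that $\P^\dagger\A$ is the full subcategory of $[\A,\V]^{op}$ spanned by the small presheaves, with $Z=Z_\A\colon\A\hookrightarrow\P^\dagger\A$ sending $A$ to the representable $\A(A,-)$, and that for $F,G\in[\A,\V]^{op}$ the hom is $\P^\dagger\A(F,G)=[\A,\V](G,F)$ (note the variance swap). In particular $\P^\dagger\A(F,ZA)\cong F(A)$, the enriched Yoneda lemma read in $[\A,\V]^{op}$.

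First I would compute the weighted colimit $M*ZH$ in $[\A,\V]^{op}$, where it certainly exists since $[\A,\V]^{op}$ is cocomplete; the question of existence \emph{in $\P^\dagger\A$} is precisely the question of whether this colimit, computed in $[\A,\V]^{op}$, lands in the full subcategory of small presheaves. A colimit $M*ZH$ in $[\A,\V]^{op}$ is a limit $\{M,ZH\}$ computed in $[\A,\V]$; evaluating at $A\in\A$ (evaluation being continuous) and using $ZH(C)(A)=\A(HC,A)$, we get $(M*ZH)(A)\cong \{M,\A(H-,A)\}=[\C^{op},\V](M,\A(H,-))(A)$, naturally in $A$. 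Thus, as an object of $[\A,\V]^{op}$, the colimit $M*ZH$ \emph{is} the presheaf $[\C^{op},\V](M,\A(H,-))$. This already gives the displayed formula, modulo the existence issue.

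Next I would argue the equivalence of the two existence conditions. By the previous paragraph, $M*ZH$ computed in $[\A,\V]^{op}$ equals $[\C^{op},\V](M,\A(H,-))$; since $\P^\dagger\A$ is a full subcategory of $[\A,\V]^{op}$ closed under nothing in particular, a colimit exists in $\P^\dagger\A$ and agrees with the ambient one exactly when the ambient colimit happens to lie in $\P^\dagger\A$, i.e.\ exactly when $[\C^{op},\V](M,\A(H,-))$ is a small presheaf — which is the definition of the virtual colimit existing in $\A$. One subtlety to address carefully: a priori $\P^\dagger\A$ might admit a colimit for $M*ZH$ that does \emph{not} agree with the one in $[\A,\V]^{op}$; but since $Z$ is fully faithful and dense, and the inclusion $\P^\dagger\A\hookrightarrow[\A,\V]^{op}$ preserves all limits (being a free completion under limits, it is continuous), a standard argument shows any colimit in $\P^\dagger\A$ of a diagram of the form $M*ZH$ is preserved by the inclusion, hence agrees. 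So the only real content is the pointwise computation of $\{M,ZH\}$ together with this reflection-of-colimits remark.

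The main obstacle I anticipate is not conceptual but bookkeeping: getting the variances right throughout, since $\P^\dagger\A\subseteq[\A,\V]^{op}$ turns colimits into limits and vice versa, and one must be scrupulous about whether $[\C^{op},\V](M,\A(H,-))$ is being viewed covariantly in $\A$ (as a potential object of $[\A,\V]$) or contravariantly (as an object of $\P^\dagger\A$). The cleanest route is to phrase everything in $[\A,\V]^{op}$ from the start, compute $M*ZH$ there via the limit $\{M,ZH\}$ in $[\A,\V]$, invoke that limits in functor $\V$-categories are pointwise and evaluation is $\V$-continuous, and only at the end translate ``lies in $\P^\dagger\A$'' into ``is small''. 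No deep input beyond the definition of $\P^\dagger\A$ and the pointwise computation of weighted limits is needed.
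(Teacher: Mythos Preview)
Your approach is correct and close to the paper's, though organized differently. You compute $M*ZH$ in the ambient $[\A,\V]^{op}$ (as the pointwise limit $\{M,ZH\}$ in $[\A,\V]$), identify it with $[\C^{op},\V](M,\A(H,-))$, and then observe that the colimit exists in $\P^\dagger\A$ precisely when this presheaf is small. The paper instead assumes $X:=[\C^{op},\V](M,\A(H,-))$ is small and verifies directly in $\P^\dagger\A$ that $X$ has the universal property of $M*ZH$, using the Yoneda-style identification $\P^\dagger\A(X,ZA)\cong XA$ together with the codensity of the representables in $\P^\dagger\A$. The underlying computation is the same; your route via the ambient category is arguably tidier since the colimit always exists there and the only question is whether it is small.

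One correction is needed in how you resolve the subtlety (whether a colimit in $\P^\dagger\A$ must agree with the ambient one). You invoke that $Z$ is \emph{dense} and that the inclusion $\P^\dagger\A\hookrightarrow[\A,\V]^{op}$ is \emph{continuous}; neither is the operative fact. Continuity of the inclusion governs limits, not colimits, so it is a red herring here. What you actually need is that $Z$ is \emph{codense} in $\P^\dagger\A$ (every small presheaf is a limit of representables, by construction of $\P^\dagger\A$) and that homming into representables agrees in both categories: $\P^\dagger\A(F,ZA)\cong FA\cong [\A,\V]^{op}(F,ZA)$. From this, if $C=M*ZH$ in $\P^\dagger\A$ then $CA\cong[\C^{op},\V](M,\A(H,A))$ for all $A$, so $C$ coincides with the ambient colimit as a functor. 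This is exactly the argument the paper runs (phrased as ``the representables are codense in $\P^\dagger\A$''), and once you replace your density/continuity remark with it, your proof is complete.
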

\begin{proof}
	Consider the $\V$-functor $X:=[\C^{op},\V](M,\A(H,-))\colon\A\to\V$, if the virtual colimit of $H$ weighted by $M$ exists in $\A$ then $X$ is small  (by definition) and
	\begin{equation*}
		\begin{split}
			\P^\dagger\A(X,ZA)&\cong XA\\
			&\cong [\C^{op},\V](M,\A(H,A))\\
			&\cong [\C^{op},\V](M,\P^\dagger\A(ZH,ZA))\\
		\end{split}
	\end{equation*}
	for any $A\in\A$. Since the representables are codense in $\P^\dagger\A$ it follows that $X\cong M*ZH$ exists in $\P^\dagger\A$. Conversely, if $M*ZH$ exists in $\P^\dagger\A$ then the same chain of isomorphism above shows that it is isomorphic to $[\C^{op},\V](M,\A(H,-))$, which is then small. Therefore the virtual colimit of $H$ weighted by $M$ exists in $\A$.
\end{proof}

It follows that $\A$ is virtually cocomplete if and only if $\P^\dagger\A$ has all colimits of representables; this is equivalent to $\P^\dagger\A$ actually being cocomplete by \cite[Theorem~3.8]{DL07}. 

\begin{prop}
	Let $\A$ be a $\V$-category and $Y\colon\A\to\P\A$ be the inclusion. Then $\A$ is virtually cocomplete if and only if for any small $F\colon\A^{op}\to\V$ --- that is, for any $F\in\P\A$ --- the $\V$-functor $\P\A(F,Y-)\colon\A\to\V$ is also small.
\end{prop}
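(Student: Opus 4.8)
The plan is to reduce the claim to the definitions by a short computation inside $\P\A$. First I would recall the defining feature of the free cocompletion: every $F\in\P\A$ can be written as a small weighted colimit of representables, i.e.\ there are a small $\V$-category $\C$, a $\V$-functor $H\colon\C\to\A$, and a weight $M\colon\C^{op}\to\V$ with $F\cong M*(YH)$, the colimit being formed in $[\A^{op},\V]$, where it automatically lands in $\P\A$; conversely every such $M*(YH)$ is small, since $\P\A$ contains the representables and is closed under small colimits in $[\A^{op},\V]$. Thus the objects of $\P\A$ are precisely the $M*(YH)$ with $\C$, $M$, $H$ of this form.

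Next, using that each representable $\P\A(-,P)\colon(\P\A)^{op}\to\V$ sends colimits to limits, together with full faithfulness of $Y$ (so $\P\A(YHc,YA)\cong\A(Hc,A)$) and the identification of a $\V$-valued limit weighted by $M\colon\C^{op}\to\V$ with the internal hom $[\C^{op},\V](M,-)$, I would establish a natural isomorphism
\[ \P\A(M*(YH),Y-)\;\cong\;[\C^{op},\V](M,\A(H,-)) \]
of $\V$-functors $\A\to\V$. The right-hand side is exactly the $\V$-functor whose smallness, by the definition of the virtual colimit of $H$ weighted by $M$, witnesses the existence of that virtual colimit in $\A$.

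Combining the two points gives the equivalence at once. If $\A$ is virtually cocomplete, then given $F\in\P\A$ we write $F\cong M*(YH)$ as above and the displayed isomorphism exhibits $\P\A(F,Y-)$ as small. Conversely, if $\P\A(F,Y-)$ is small for every $F\in\P\A$, then for any small $\C$, any weight $M\colon\C^{op}\to\V$, and any $H\colon\C\to\A$, taking $F:=M*(YH)\in\P\A$ and applying the displayed isomorphism shows $[\C^{op},\V](M,\A(H,-))$ is small, i.e.\ the virtual colimit of $H$ weighted by $M$ exists; as $\C$, $M$, $H$ were arbitrary, $\A$ is virtually cocomplete. (Equivalently, in the language of Definition~\ref{virtualleft}, the proposition says $\A$ is virtually cocomplete precisely when $Y\colon\A\to\P\A$ has a virtual left adjoint, i.e.\ $\A$ is virtually reflective in $\P\A$.) I do not expect a real obstacle here; the only delicate point is the bookkeeping behind the displayed isomorphism --- making the variances line up so that the relevant weighted limit is genuinely $[\C^{op},\V](M,-)$ --- together with the incidental fact that $\P\A$ is locally small on the homs in play, which holds since $F$ being small reduces the end computing $\P\A(F,YA)$ to a small weighted limit.
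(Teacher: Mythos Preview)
Your proof is correct and essentially the same as the paper's: both hinge on the isomorphism $\P\A(F,Y-)\cong[\C^{op},\V](M,\A(H,-))$ where $F$ is the small presheaf determined by $(M,H)$, and then run the two directions just as you do. The only cosmetic difference is that the paper writes $F=\tx{Lan}_{H^{op}}M$ rather than $F=M*(YH)$, and for the converse it invokes the characterization of small presheaves as left Kan extensions from a small full subcategory rather than as small colimits of representables --- but these are equivalent descriptions of the same object.
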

\begin{proof}
	Given a small $\V$-category $\C$ and $\V$-functors $M\colon\C\to\V$ and $H\colon\C\to\A$, then
		$$ [\C^{op},\V](M,\A(H,-))\cong \P\A(F,Y-) $$
	where $F:=\tx{Lan}_{H^{op}}M$. To conclude it's then enough to recall that the virtual colimit of $H$ weighted by $M$ exists in $\A$ if and only if $ [\C^{op},\V](M,\A(H,-))$ is small, and then note that a $\V$-functor $F\colon\A^{op}\to\V$ is small if and only if it is the left Kan extension of its restriction to some small full subcategory of $\A^{op}$.
\end{proof}

Next we can prove the following (see also \cite[Remark~3.5]{DL07}):

\begin{prop}\label{accvirtcoco}
	A $\V$-category which is accessible or conically accessible is virtually cocomplete.
\end{prop}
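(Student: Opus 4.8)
The plan is to use Proposition~\ref{accvirtcoco}'s characterization established just above: $\A$ is virtually cocomplete if and only if $\P^\dagger\A$ is cocomplete, equivalently (by \cite[Theorem~3.8]{DL07}) if and only if $\P^\dagger\A$ has all colimits of representables. So it suffices to show that, when $\A$ is accessible or conically accessible, $\P^\dagger\A$ has all small colimits, or at least all colimits of the form $M*ZH$ for small weights $M$ and $H\colon\C\to\A$.

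First I would treat the accessible case. Suppose $\A$ is $\alpha$-accessible, so that $\A\simeq\Phi_\alpha\tx{-}\tx{Flat}(\C^{op},\V)$ for $\C=\A_\alpha$ by Proposition~\ref{acc=flat}, with fully faithful inclusion $J\colon\A\hookrightarrow[\C^{op},\V]$. By Proposition~\ref{acc=small}, $\P^\dagger\A$ consists of the accessible functors $\A\to\V$; concretely, an object of $\P^\dagger\A$ is a functor that is the left Kan extension of its restriction to $\A_\beta$ for some $\beta\rhd\alpha$. The key point is that $\P^\dagger\A=[\A,\V]^{op}$ restricted to small functors is closed under small colimits in $[\A,\V]^{op}$: given a small diagram in $\P^\dagger\A$, each of its objects factors through some $\A_{\beta_i}$, and since there are only a small set of them we may (using that $\beta\rhd\alpha$ for arbitrarily large $\beta$, and transitivity of the sharply-less relation, together with Theorem~\ref{flat-to-conical}) choose a single $\beta\rhd\alpha$ with $\A$ being $\beta$-accessible and every object of the diagram lying in the full subcategory of functors that are left Kan extended from $\A_\beta$. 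That subcategory is equivalent to a presheaf category $[\A_\beta^{op},\V]^{op}$ — or more precisely a reflective/coreflective piece of one — which is cocomplete, and the colimit computed there is preserved into $[\A,\V]^{op}$. Hence $\P^\dagger\A$ is cocomplete, so $\A$ is virtually cocomplete.

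For the conically accessible case the argument is parallel: by Proposition~\ref{acc=small}, $\P^\dagger\A$ consists of the conically accessible functors $\A\to\V$, each of which is left Kan extended from $\A^c_\beta$ for some $\beta$; Proposition~\ref{conicallyaccproperties} and the raising-index corollaries let us find a single $\beta$ handling a given small diagram, and the same closure-under-colimits reasoning applies with $\A^c_\beta$ in place of $\A_\beta$.

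The main obstacle I expect is the "uniform $\beta$" step: making precise that an arbitrary \emph{small} diagram in $\P^\dagger\A$ can be realized inside a single $\A_\beta$-indexed (or $\A^c_\beta$-indexed) coreflective subcategory, and that colimits there are genuinely computed as colimits in $[\A,\V]^{op}$ rather than merely reflections of them. This needs the fact that the inclusion $\P^\dagger\A\hookrightarrow[\A,\V]^{op}$ creates those colimits that land back in small functors, which follows because a colimit of functors each left Kan extended from $\A_\beta$ is again left Kan extended from $\A_\beta$ (left Kan extension along $\A_\beta\hookrightarrow\A$ is cocontinuous). Once that is in hand, cocompleteness of $\P^\dagger\A$ — and hence virtual cocompleteness of $\A$ — is immediate, and it is worth remarking that this simultaneously recovers \cite[Remark~3.5]{DL07}.
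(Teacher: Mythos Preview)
Your overall strategy is the paper's: identify $\P^\dagger\A$ with the (conically) accessible functors $\A\to\V$ via Proposition~\ref{acc=small}, then show this full subcategory is closed under small colimits in $[\A,\V]^{op}$. The paper streamlines this in two ways. First, it reduces immediately to the conically accessible case using Theorem~\ref{flat-to-conical}, so only one argument is needed rather than two parallel ones. Second, the closure step is a single line: the conically accessible functors are closed under small limits in $[\A,\V]$ because, for any small diagram of such functors, one may pick $\beta$ large enough that each functor preserves $\beta$-filtered colimits \emph{and} the indexing weight is $\beta$-small, and then use that $\beta$-filtered colimits commute in $\V$ with $\beta$-small limits.

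There is a genuine slip in your justification of exactly this step. A colimit in $[\A,\V]^{op}$ is a \emph{limit} in $[\A,\V]$, so what you must show is that a small limit of functors left Kan extended from $\A_\beta$ is again of that form. Your sentence ``left Kan extension along $\A_\beta\hookrightarrow\A$ is cocontinuous'' establishes closure under \emph{colimits} in $[\A,\V]$, which is the wrong direction. The correct reason is the commutation of $\beta$-filtered (or $\beta$-flat) colimits with $\beta$-small limits in $\V$, after enlarging $\beta$ so that it dominates both all the individual accessibility degrees and the size of the limit diagram. Once you replace the cocontinuity remark with this commutation argument, your proof goes through and coincides with the paper's.
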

\begin{proof}
	Since every accessible $\V$-category is conically accessible, it's enough to prove the proposition for a conically accessible $\V$-category. If $\A$ is conically accessible then $\P^\dagger\A$, seen as a full subcategory of $[\A,\V]^{op}$, consists of the conically accessible presheaves (Proposition~\ref{acc=small}). Since these are closed under small colimits in $[\A,\V]^{op}$, it follows that $\P^\dagger\A$ is cocomplete.
\end{proof}

Virtually cocomplete $\V$-categories behave well with respect to virtual left adjoints:

\begin{prop}\label{subvref}
	Let $\A$ be virtually cocomplete and $H\colon \G\hookrightarrow\A$ be a full subcategory; then the induced $\V$-functor $\A(H,1)\colon \A\to[\G^{op},\V]$ has a virtual left adjoint.
\end{prop}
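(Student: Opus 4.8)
The plan is to compute, for an arbitrary object $X\in[\G^{op},\V]$, the $\V$-functor $[\G^{op},\V](X,\A(H,-))\colon\A\to\V$ and show it is small, i.e.\ that it lies in $\P^\dagger\A$. The key observation is that every $X\in[\G^{op},\V]$ is a (small) colimit of representables, say $X\cong \colim_i \G(-,G_i)$, or more canonically $X\cong X*Y$ where $Y\colon\G^{op}\to[\G^{op},\V]$ is the Yoneda embedding; since $\G$ is small this colimit is small. Using that $[\G^{op},\V](\G(-,G),\A(H,-))\cong\A(HG,-)$ by Yoneda, and that $[\G^{op},\V](-,\A(H,-))$ sends colimits in its first variable to limits, we obtain
\begin{equation*}
    [\G^{op},\V](X,\A(H,-))\cong \{X, \A(H-,-)\}
\end{equation*}
where the right-hand side is the limit of the representables $\A(HG_i,-)$ weighted by $X$. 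This is a \emph{small} limit (the indexing data $\G$ being small and $X$ a weight with small domain) of representable $\V$-functors $\A\to\V$, hence by definition it lies in $\P^\dagger\A$.

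First I would make the Yoneda/colimit reduction precise: write $X\cong \tx{Lan}_{Y}X$ applied to the identity, or equivalently use the density of $Y\colon\G^{op}\hookrightarrow[\G^{op},\V]$ so that $X\cong X*Y$ as a colimit over $\tx{El}(X)$ or over $\G^{op}$ itself. Second, apply $[\G^{op},\V](-,\A(H,-))$ and use continuity in the first variable together with the Yoneda isomorphism $[\G^{op},\V](YG,\A(H,-))\cong\A(H-,-)(G)=\A(HG,-)$. This exhibits $[\G^{op},\V](X,\A(H,-))$ as a conical or weighted \emph{small} limit of representable $\V$-functors out of $\A$. Third, invoke the description of $\P^\dagger\A$ as the full subcategory of $[\A,\V]^{op}$ spanned by small limits of representables: a small limit of representables is by definition an object of $\P^\dagger\A$, hence $[\G^{op},\V](X,\A(H,-))$ is small, which is exactly the statement that $\A(H,1)$ has a virtual left adjoint.

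The role of virtual cocompleteness of $\A$ deserves a remark, since the statement explicitly hypothesizes it. In the argument above the smallness of $[\G^{op},\V](X,\A(H,-))$ as an object of $[\A,\V]$ is automatic from the general nonsense; what virtual cocompleteness buys us is, via Proposition~\ref{virtualcol} and the discussion following it, that $\P^\dagger\A$ is genuinely cocomplete, so the weighted colimit $X * ZH$ (with $Z\colon\A\hookrightarrow\P^\dagger\A$) actually exists in $\P^\dagger\A$ and computes the virtual left adjoint value at $X$. So an equivalent and perhaps cleaner route is: $\A$ virtually cocomplete $\Rightarrow$ $\P^\dagger\A$ cocomplete $\Rightarrow$ the colimit $X*ZH$ exists in $\P^\dagger\A$ for every $X\in[\G^{op},\V]=\P^\dagger\G$ $\Rightarrow$ by Proposition~\ref{virtualcol} the virtual colimit of $H$ weighted by $X$ exists in $\A$, i.e.\ $[\G^{op},\V](X,\A(H,-))$ is small, which is precisely virtual left-adjointness of $\A(H,1)$.

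I expect the only mild obstacle to be bookkeeping with the opposite categories and the direction of the Yoneda embeddings ($Y\colon\G^{op}\to[\G^{op},\V]$ versus $Z_\A\colon\A\to\P^\dagger\A$, and the identification $\P^\dagger\G=[\G,\V]^{op}$), together with making sure the limit exhibiting $[\G^{op},\V](X,\A(H,-))$ is indeed small — which it is because $\G$ is small and $X$ has small domain. No deep new idea is needed; the result is essentially the observation that $\P^\dagger$ of the dense inclusion $\G\hookrightarrow\A$ extends to a genuine $\V$-functor with a left adjoint precisely when $\P^\dagger\A$ is cocomplete, which holds under virtual cocompleteness.
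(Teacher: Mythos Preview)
Your second route---using that $\A$ virtually cocomplete means $\P^\dagger\A$ is cocomplete, forming $X*ZH$ there, and invoking Proposition~\ref{virtualcol}---is correct and is exactly the paper's argument. The paper checks the representable case $X=\G(-,G)$ via Yoneda to get $\A(G,-)$, then writes a general $X$ as a colimit of representables so that $\K(X,F-)$ becomes a limit in $[\A,\V]$ of representables, i.e.\ a colimit in $\P^\dagger\A\subseteq[\A,\V]^{op}$ of objects of $\P^\dagger\A$; since $\P^\dagger\A$ is cocomplete by hypothesis, this colimit lies in $\P^\dagger\A$.

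However, your first argument contains a genuine confusion. You claim that $[\G^{op},\V](X,\A(H,-))\cong\{X,\A(H-,-)\}$, being a small limit of representable $\V$-functors $\A\to\V$, lies in $\P^\dagger\A$ ``by definition''. This is not so: $\P^\dagger\A$ is by definition the full subcategory of $[\A,\V]^{op}$ spanned by small limits of representables \emph{computed in $[\A,\V]^{op}$}, which are small \emph{colimits} of representables in $[\A,\V]$. What you have produced is a small \emph{limit} in $[\A,\V]$ of representables, equivalently a small \emph{colimit} in $[\A,\V]^{op}$ of objects of $\P^\dagger\A$; and $\P^\dagger\A$ is in general only complete, not cocomplete, so such a colimit need not stay in $\P^\dagger\A$. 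It does precisely when $\P^\dagger\A$ is cocomplete, i.e.\ when $\A$ is virtually cocomplete. Thus virtual cocompleteness is not a cosmetic add-on; your sentence ``the smallness \ldots\ is automatic from the general nonsense'' is incorrect. Drop the first argument and keep only the second.
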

\begin{proof}
	Let $\K=[\G^{op},\V]$ and $F=\A(H,1)$; we need to prove that for each $X\in\K$ the functor $\K(X,F-)$ is small. If $X=\G(-,G)$ for some $G\in\G$ then 
	\begin{equation*}
		\begin{split}
			\K(X,F-)&\cong [\G^{op},\V](\G(\square,G),\A(H\square,-))\\
			&\cong \A(G,-);\\
		\end{split}
	\end{equation*}
	hence $\K(X,F-)\cong\A(G,-)$ is small. If $X$ is any presheaf on $\G$ then we can write it as a weighted colimit of representables; thus $\K(X,F-)$ will be a limit of small functors by the argument above. In other words $\K(X,F-)$ is a colimit of elements in $\P^\dagger\A$ and hence, since $\P^\dagger\A$ is cocomplete by hypothesis, is small. It follows that $F$ has a virtual left adjoint.
\end{proof}

The following characterizes accessible $\V$-functors between accessible $\V$-categories. 

\begin{prop}\label{virtual=accessible}
	Let $F\colon \A\to\K$ be a $\V$-functor between accessible $\V$-categories; the following are equivalent:
	\begin{enumerate}\setlength\itemsep{0.25em}
		\item $F$ has a virtual left adjoint;
		\item $F$ is accessible.
	\end{enumerate}
	Moreover if $\A,\K$, and $F$ are $\alpha$-accessible, the virtual left adjoint restricts to the $\alpha$-presentable objects: if $L\dashv \P^\dagger F$ then $L$ restricts to $L_\alpha\colon  \P^\dagger (\K_{\alpha})\to \P^\dagger (\A_{\alpha})$. 
	
	A corresponding statement holds in the case of conically accessible categories: $\A,\K$, and $F$ are assumed to be conically ($\alpha$-)accessible instead of ($\alpha$-)accessible and the full subcategories $\A_{\alpha}$ and $\K_{\alpha}$ are replaced by $\A^c_{\alpha}$ and $\K^c_{\alpha}$.
\end{prop}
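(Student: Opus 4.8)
The plan is to prove the equivalence $(1)\Leftrightarrow(2)$ in two steps, then read off the ``moreover'' clause from the description of the left adjoint $L$ in the preceding proposition, and finally note that the conical statement is proved by the same arguments. For $(2)\Rightarrow(1)$ I would show that $\K(X,F-)\colon\A\to\V$ is an \emph{accessible} functor for every $X\in\K$, and then conclude it is small by Proposition~\ref{acc=small} (for $\A$ accessible, the small presheaves on $\A$ are exactly the accessible functors $\A\to\V$). Accessibility of $\K(X,F-)=\K(X,-)\circ F$ follows because $F$ is conically accessible by Proposition~\ref{accessible-functors}, because $\A$ and $\K$ are conically accessible by Theorem~\ref{flat-to-conical}, and because every object $X$ of a conically accessible $\V$-category is conically presentable: by Proposition~\ref{conicallyaccproperties}(3) it is the conical $\alpha$-filtered colimit of the diagram $(\K^c_\alpha)_0/X\to\K$, which is small by Proposition~\ref{conicallyaccproperties}(2), so $\K(X,-)$ is the corresponding small conical limit of hom-functors out of conically $\alpha$-presentable objects, and a regular cardinal exceeding $\alpha$ and the size of that diagram witnesses that $\K(X,-)$ preserves sufficiently filtered colimits. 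Thus $\K(X,F-)$ is a composite of conically accessible functors, hence conically accessible, hence small, which is exactly the assertion that $F$ has a virtual left adjoint.

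For $(1)\Rightarrow(2)$, the substantive direction, fix $\alpha$ with both $\A$ and $\K$ $\alpha$-accessible. By hypothesis each $\K(K,F-)$ with $K\in\K_\alpha$ is small, hence accessible by Proposition~\ref{acc=small}; since $\K_\alpha$ is essentially small (Proposition~\ref{A_alpha}(2)) I can choose a single regular cardinal $\gamma>\alpha$ such that every $\K(K,F-)$, $K\in\K_\alpha$, is $\gamma$-accessible. I then claim $F$ preserves $\gamma$-flat colimits: given a $\gamma$-flat weight $M\colon\C^{op}\to\V$ and $G\colon\C\to\A$ with $M*G$ existing in $\A$, the $\V$-category $\K$ has $\gamma$-flat colimits (being $\alpha$-accessible), so $M*FG$ exists in $\K$ and there is a canonical comparison $\phi\colon M*FG\to F(M*G)$; applying $\K(K,-)$ for $K\in\K_\alpha$, the source becomes $M*\K(K,FG-)$ since $K$ is $\alpha$-presentable (hence $\gamma$-presentable, as $\gamma$-flat weights form a subclass of the $\alpha$-flat ones), the target becomes $M*\K(K,FG-)$ since $\K(K,F-)$ is $\gamma$-accessible and $M*G$ exists, and $\K(K,\phi)$ is the identity under these identifications; since $\K_\alpha$ is strongly generating in $\K$ (indeed dense, by Proposition~\ref{A_alpha}(3)) it follows that $\phi$ is invertible. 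Equivalently, one may package this as: $F$ is accessible iff the fully faithful $\K(H,1)\circ F\colon\A\to[\K_\alpha^{op},\V]$ is, and the latter holds iff each component $\K(K,F-)$ is accessible, because $\gamma$-flat colimits in $[\K_\alpha^{op},\V]$ are pointwise. Hence $F$ is $\gamma$-accessible, so accessible.

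For the ``moreover'' clause, assume $\A,\K,F$ are all $\alpha$-accessible. By the preceding proposition the left adjoint $L\colon\P^\dagger\K\to\P^\dagger\A$ of $\P^\dagger F$ is precomposition with $F$. Under the identification of Proposition~\ref{acc=small}, the subcategory $\P^\dagger(\K_\alpha)=[\K_\alpha,\V]^{op}$ sits inside $\P^\dagger\K$, via left Kan extension along $\K_\alpha\hookrightarrow\K$, as the full subcategory of $\alpha$-accessible functors $\K\to\V$; for such an $X$, the composite $X\circ F$ again preserves $\alpha$-flat colimits, hence is $\alpha$-accessible, hence lies in $\P^\dagger(\A_\alpha)\hookrightarrow\P^\dagger\A$. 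Thus $L$ restricts to the asserted $L_\alpha\colon\P^\dagger(\K_\alpha)\to\P^\dagger(\A_\alpha)$. The conical case is proved verbatim after replacing $\alpha$-flat by $\alpha$-filtered colimits, $\A_\alpha,\K_\alpha$ by $\A^c_\alpha,\K^c_\alpha$, Proposition~\ref{A_alpha} by Proposition~\ref{conicallyaccproperties}, and using the conical halves of Propositions~\ref{acc=small} and~\ref{accvirtcoco}.

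The main obstacle is the implication $(1)\Rightarrow(2)$, and within it two points need care: the \emph{uniform} choice of $\gamma$, which rests on the essential smallness of $\K_\alpha$ rather than on a cardinal depending on each $K$, and the verification that $\phi$ becomes invertible after applying each $\K(K,-)$ — routine but slightly fussy bookkeeping with the canonical weighted cocones. Neither is deep once Proposition~\ref{acc=small}, identifying small presheaves with accessible functors, is in hand.
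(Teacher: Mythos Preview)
Your proof is correct and follows essentially the same approach as the paper: both directions reduce to the characterization of $\P^\dagger\A$ as the accessible presheaves (Proposition~\ref{acc=small}), with $(1)\Rightarrow(2)$ handled by choosing a single $\gamma$ that works for all $K\in\K_\alpha$ and then using that $\K_\alpha$ is strongly generating, and the ``moreover'' clause read off from $L$ being precomposition with $F$. Your $(2)\Rightarrow(1)$ takes a small detour through conical accessibility where the paper argues more directly (for each $X$ just pick $\alpha$ with $X$ $\alpha$-presentable and $F$ $\alpha$-accessible, so $\K(X,F-)$ is $\alpha$-accessible), but this is a stylistic difference, not a substantive one.
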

\begin{proof}
	A virtual left adjoint to $F$ exists if and only if $\K(X,F-)$ is small for each $X\in\K$, and this, by Proposition~\ref{acc=small}, is the same as saying that $\K(X,F-)$ is an accessible $\V$-functor for each $X$ in $\K$. 
	
	Suppose that $F$ is accessible, take then $X\in\K$, and consider $\alpha$ such that $X$ is $\alpha$-presentable and $F$ preserves $\alpha$-flat colimits. Then $\K(X,F-)$ preserves $\alpha$-flat colimits as well and hence is small. Vice versa, assume that each $\K(X,F-)$ is accessible. Let $\alpha$ be such that $\K$ is $\alpha$-accessible and consider $\beta\geq\alpha$ such that for each $X\in\K_{\alpha}$ the $\V$-functor $\K(X,F-)$ preserves all $\beta$-flat colimits. Since $\K_{\alpha}$ is a strong generator made of ($\alpha$- and hence) $\beta$-presentable objects, it follows that $F$ preserves $\beta$-flat colimits as well.
	
	Regarding the assertion that if $\A,\K$, and $F$ are $\alpha$-accessible the virtual left adjoint restricts to the $\alpha$-presentable, it's enough to note that for any $X\in\K_{\alpha}$
	$$\K(X,J-)\cong \tx{Lan}_H\K(X,JH-),$$ 
	since $\K(X,J-)$ preserves $\alpha$-flat colimits, where $H\colon \A_{\alpha}\hookrightarrow\A$ is the inclusion. This means exactly that the left adjoint $L\colon \P^\dagger (\K)\to\P^\dagger (\A)$ restricts to $ L_{\alpha}\colon\P^\dagger (\K_{\alpha})\to \P^\dagger (\A_{\alpha})$ as desired.
	
	The same proof applies in the conically accessible case. 
\end{proof}

\begin{obs}
	Note that in the first part of the previous proposition it is enough to ask that each object of $\K$ is presentable, instead of $\K$ being accessible.
\end{obs}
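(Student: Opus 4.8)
I read the remark as concerning the first of the two implications established in the proof of Proposition~\ref{virtual=accessible}, namely ``$F$ accessible $\Rightarrow$ $F$ has a virtual left adjoint'', and asserting that its proof never uses accessibility of $\K$, only that each object of $\K$ is presentable (together with the standing assumption that $\A$ is accessible, which is what makes Proposition~\ref{acc=small} applicable to the domain).

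The plan is to revisit that half of the argument with this in mind. First I would note that the preliminary reformulation --- $F$ has a virtual left adjoint if and only if $\K(X,F-)$ is small for every $X\in\K$, if and only if $\K(X,F-)$ is an accessible $\V$-functor for every $X\in\K$ --- invokes Proposition~\ref{acc=small} only for $\A$, which remains accessible. Then, given $X\in\K$, I would choose a single regular cardinal $\alpha$ for which $F$ is $\alpha$-accessible and $X$ is an $\alpha$-presentable object of $\K$; this is possible because any $\alpha$-flat weight is $\gamma$-flat for $\gamma\le\alpha$, so that both ``$F$ preserves $\gamma$-flat colimits'' and ``$\K(X,-)$ preserves $\gamma$-flat colimits'' persist on enlarging $\gamma$, and it suffices to take $\alpha$ above the two indices witnessing accessibility of $F$ and presentability of $X$. (In particular $\K$ has the $\alpha$-flat colimits in question, since these are among the $\gamma$-flat colimits $\K$ already possesses.) Finally, $\K(X,F-)=\K(X,-)\circ F$ is then a composite of two $\V$-functors preserving $\alpha$-flat colimits, hence preserves $\alpha$-flat colimits itself and so is small; thus $F$ has a virtual left adjoint. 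Nowhere in this chain are the $\alpha$-presentable objects of $\K$ asked to form a small (strong) generator, i.e.\ nowhere is $\K$ assumed accessible.

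I do not expect a genuine obstacle: the work amounts to checking that ``$X$ presentable'' plays, in this implication, the role that ``$\K$ accessible'' played in the written proof --- and it does, since presentability of the \emph{single} object $X$ is all that is used. For completeness I would also point out the contrast, explaining why the remark is confined to this first implication: the reverse implication ``$F$ has a virtual left adjoint $\Rightarrow$ $F$ accessible'' does use accessibility of $\K$ essentially, since there the small strong generator $\K_\alpha$ of $\alpha$-presentable objects is needed both to fix one cardinal $\beta$ with $\K(X,F-)$ $\beta$-accessible for \emph{all} $X\in\K_\alpha$ and to recognise, via the jointly conservative family $\{\K(X,-)\}_{X\in\K_\alpha}$, that $F$ preserves $\beta$-flat colimits; ``every object of $\K$ presentable'' provides no such small generator.
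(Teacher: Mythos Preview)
Your reading and argument are correct and coincide with the paper's intent. The remark has no separate proof in the paper; it is simply the observation that in the proof of the implication ``$F$ accessible $\Rightarrow$ $F$ has a virtual left adjoint'' of Proposition~\ref{virtual=accessible}, the only use made of accessibility of $\K$ is that the object $X$ under consideration is presentable --- precisely what you verify, and your contrast with the converse implication (which genuinely needs the small strong generator $\K_\alpha$) is apt.
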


An immediate consequence is:

\begin{cor}\label{accvirtref}
	Let $\A$ be an accessible and accessibly embedded subcategory of an accessible $\V$-category $\K$; then $\A$ is virtually reflective in $\K$. In fact it suffices that $\A$ be conically accessible and conically accessibly embedded.
\end{cor}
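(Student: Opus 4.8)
The plan is to recognize this as an essentially immediate consequence of Proposition~\ref{virtual=accessible}. Let $J\colon\A\hookrightarrow\K$ be the inclusion $\V$-functor. By hypothesis $\A$ is accessibly embedded in $\K$, which by definition means precisely that $\A$ has, and $J$ preserves, $\alpha$-flat colimits for some regular cardinal $\alpha$; that is, $J$ is an accessible $\V$-functor. Combined with the hypothesis that both $\A$ and $\K$ are accessible, we are exactly in the situation covered by Proposition~\ref{virtual=accessible} with $F=J$.

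First I would apply the implication $(2)\Rightarrow(1)$ of that proposition to conclude that $J$ has a virtual left adjoint. Then, since $J$ is the inclusion of a full subcategory and hence fully faithful, the definition of virtual reflectivity (Definition~\ref{virtualleft}) gives at once that $\A$ is virtually reflective in $\K$.

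For the conical variant I would run the identical argument using the conically accessible version of Proposition~\ref{virtual=accessible}: if $\A$ is conically accessible and conically accessibly embedded then $J$ is a conically accessible $\V$-functor between conically accessible $\V$-categories, hence has a virtual left adjoint, and being fully faithful it therefore exhibits $\A$ as virtually reflective in $\K$.

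There is no real obstacle here, since the substantive work is already carried out in Proposition~\ref{virtual=accessible}; the only point to verify is that the hypotheses line up, i.e.\ that ``accessibly embedded'' unpacks to the assertion that the inclusion is an accessible $\V$-functor. I would also remark that the hypothesis on $\K$ can be weakened: by the observation following Proposition~\ref{virtual=accessible} it is enough that every object of $\K$ be presentable (respectively conically presentable).
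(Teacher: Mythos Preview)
Your proof is correct and follows exactly the same approach as the paper, which simply says the result ``follows directly from Proposition~\ref{virtual=accessible} applied to the inclusion of $\A$ in $\K$.'' Your unpacking of the hypotheses and the additional remark about weakening the assumption on $\K$ are accurate elaborations of what the paper leaves implicit.
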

\begin{proof}
	Follows directly from Proposition~\ref{virtual=accessible} applied to the inclusion of $\A$ in $\K$.
\end{proof}

The next step is to prove the opposite direction:

\begin{prop}\label{vr-acc}
	Let $\K$ be a locally presentable $\V$-category and $J\colon \A\hookrightarrow \K$ be a virtually reflective and accessibly embedded subcategory; then $\A$ is accessible.
\end{prop}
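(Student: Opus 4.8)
The plan is to use the fact that $\A$ is accessibly embedded to reduce the problem to something already proved, via the sketch-theoretic characterization from Theorem~\ref{sketches-accessible}. First I would fix a regular cardinal $\alpha$ such that $\K$ is locally $\alpha$-presentable and the inclusion $J\colon\A\hookrightarrow\K$ preserves $\alpha$-flat colimits; after raising the index using Corollary~\ref{preserving-flat-presentables} and Theorem~\ref{flat-to-conical}, I may also assume the virtual left adjoint $L\dashv\P^\dagger J$ restricts to $L_\alpha\colon\P^\dagger(\K_\alpha)\to\P^\dagger(\A_\alpha)$ (Proposition~\ref{virtual=accessible}). Writing $\C=\K_\alpha$, we have $\K\simeq[\C^{op},\V]$ since $\K$ is locally $\alpha$-presentable, so it suffices to show that $\A$, viewed as a full accessibly embedded subcategory of $[\C^{op},\V]$, is sketchable — equivalently, by Theorem~\ref{sketches-accessible}, that it is accessible.

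The key idea is that virtual reflectivity gives, for each representable $\K(C,-)$ (equivalently for each $C\in\C$), a small functor $LZ_\K(\K(C,-))=\K(\K(C,-),J-)=J_0(-)(C)\colon\A\to\V$; more importantly, for a general $X\in\K$ the small functor $\K(X,J-)$ is the ``virtual reflection''. The plan is to show $\A$ is closed in $\K$ under $\alpha$-flat colimits and that every object of $\A$ is presentable in $\A$, then invoke Proposition~\ref{conical-flat}(2) together with the fact that $\A_0$ is accessible. To get $\A_0$ accessible I would appeal to the ordinary theory: a virtually reflective accessibly embedded subcategory of a locally presentable category satisfies the solution-set condition for the inclusion (this is the content hinted at by Corollary~\ref{virt-sset}), and an accessibly embedded, closed-under-$\alpha$-filtered-colimits, solution-set-equipped full subcategory of an accessible category is accessible by the standard ordinary result. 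So the real work is: (a) closure of $\A$ under $\alpha$-flat (or at least $\alpha$-filtered) colimits in $\K$ — but this is part of the hypothesis ``accessibly embedded''; wait, accessibly embedded here means the inclusion is accessible, i.e.\ preserves some $\alpha$-flat colimits, which already gives what we need once we know $\A$ has them; (b) producing enough colimits in $\A$ and enough presentable objects.

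Concretely, I would argue as follows. Since $J$ has a virtual left adjoint, by Proposition~\ref{virtual=accessible} (in the form of the remark after it, needing only that each object of $\K$ is presentable) the composite $\P^\dagger J\colon\P^\dagger\A\to\P^\dagger\K$ has a left adjoint $L$, and $\P^\dagger\K\simeq[\K_\alpha,\V]^{op}$-ish is cocomplete since $\K$ is locally presentable; hence $\P^\dagger\A$ is a reflective subcategory of a cocomplete $\V$-category, so $\P^\dagger\A$ is cocomplete, i.e.\ $\A$ is virtually cocomplete. Now the crucial point: the reflector $L$ restricts, after raising $\alpha$, to $L_\alpha\colon\P^\dagger(\K_\alpha)=[\K_\alpha,\V]^{op}\to\P^\dagger(\A_\alpha)$, and $\P^\dagger(\A_\alpha)=[\A_\alpha,\V]^{op}$ is small-cocomplete; the idea is that $\A_\alpha:=\A\cap\K_\alpha$ (suitably interpreted via the virtual reflections of the $\alpha$-presentables of $\K$) is a small strong generator of $\A$ closed under virtual $\alpha$-flat colimits, whence $\A$ is $\alpha$-accessible by Proposition~\ref{acc-strong}. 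To make ``$\A_\alpha$ is a strong generator'' precise I would use that $\K_\alpha$ strongly generates $\K$ and $J$ is fully faithful, together with the fact that $L_\alpha$ applied to the representables $\K_\alpha(C,-)$ lands among $\Phi$-presentable-ish objects of $\A$.

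The main obstacle I expect is precisely identifying a \emph{small} strong generator of $\A$ consisting of $\alpha$-presentable objects and verifying it is closed under virtual $\alpha$-flat colimits. The virtual left adjoint gives ``virtual reflections'' $\K(X,J-)$ living in $\P^\dagger\A$, not genuine objects of $\A$, so one cannot naively take the reflections of $\K_\alpha$ and declare them a generator of $\A$; one must show these virtual reflections, when $X$ is $\alpha$-presentable in $\K$, are in fact (after a further raising of the index) small colimits of genuine $\alpha$-presentable objects of $\A$, or argue more indirectly. I anticipate the cleanest route is: show $\P^\dagger\A$ is cocomplete, use the restricted adjoint $L_\alpha$ to exhibit $\A$ as (equivalent to) the category of $\Phi_\alpha$-flat functors on some small $\V$-category built from $\A_\alpha$, and conclude via Proposition~\ref{acc=flat}; alternatively, pass to $\A_0$, use the ordinary solution-set argument to get $\A_0$ accessible and accessibly embedded in $\K_0$, then invoke Corollary~\ref{conically-acc} to upgrade to accessibility of $\A$ itself. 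Either way, the bookkeeping of which cardinal one is working at, and the transition between virtual (in $\P^\dagger$) and honest objects, is where the care is needed.
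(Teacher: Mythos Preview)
Your proposal has two genuine gaps, and the paper's argument follows a route different from any of those you sketch.

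First, several steps are circular. Invoking Proposition~\ref{virtual=accessible} or Corollary~\ref{preserving-flat-presentables} to make the virtual reflection restrict to $L_\alpha\colon\P^\dagger(\K_\alpha)\to\P^\dagger(\A_\alpha)$ requires $\A$ to be accessible already; that is exactly what you are trying to prove. You cannot assume at the outset that $L$ sends $\P^\dagger(\K_\alpha)$ into $\P^\dagger$ of some small subcategory of $\A$ --- obtaining such a restriction is the entire difficulty, and the paper solves it by a construction rather than by raising the index.

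Second, the route ``pass to $\A_0$, use the ordinary solution-set result, then upgrade via Corollary~\ref{conically-acc}'' has a real obstruction that the paper itself flags as open (see the comment following Theorem~\ref{con-accessible-char}): it is not known whether enriched virtual reflectivity of $\A$ in $\K$ implies ordinary virtual reflectivity of $\A_0$ in $\K_0$. Smallness of the $\V$-functor $\K(X,J-)$ means it is a small \emph{colimit} of representables in $[\A,\V]$, and $\V_0(I,-)$ does not preserve colimits, so there is no direct way to conclude that $\K_0(X,J_0-)$ is small or even petty. Hence you cannot feed $\A_0\hookrightarrow\K_0$ into the ordinary theorem.

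What the paper actually does is a direct intersection argument. One builds, by transfinite recursion of length $\alpha$, a small $\C\subseteq\K$ containing $\K_\alpha$ and closed under $\alpha$-small colimits, together with a small $\D\subseteq\A\cap\C$, such that the virtual reflection $L$ maps $\C$ into $\P^\dagger\D$. This last condition is exactly the substitute for the restriction you tried to assume: it is obtained by alternately enlarging $\C$ (close under $\alpha$-small colimits) and $\D$ (enlarge so that $L(\C)\subseteq\P^\dagger\D$, possible since $\C$ is small and each $\K(X,J-)$ is small), taking unions at limit stages. One then checks that $L(\C)\subseteq\P^\dagger\D$ forces $\A$ to be precisely the intersection of $\K$ and $[\D^{op},\V]$ inside $[\C^{op},\V]$ (this uses $\alpha$-flatness of $\K(HJ'-,K)$ via Lemma~\ref{flat-restriction}), and accessibility follows from Lemma~\ref{intersection-acc}. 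This transfinite construction is the key idea you are missing; you correctly identified the obstacle --- producing a small $\D\subseteq\A$ over which the virtual reflections are computed --- but not the mechanism for overcoming it.
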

\begin{proof}
	\textsc{Strategy.} We shall choose small full subcategories $H\colon\C\hookrightarrow\K$  and $H'\colon\D\to\A$ such that:\begin{enumerate}
		\item[(i)] $H$ is dense, so that $\K(H,1)\colon\K\hookrightarrow[\C^{op},\V]$ is fully faithful;
		\item[(ii)] $\D\subseteq \C$, with inclusion $J'\colon\D\hookrightarrow\C$, so that we have a fully faithful $\V$-functor $\tx{Lan}_{J'^{op}}\colon [\D^{op},\V]\hookrightarrow [\C^{op},\V]$;
		\item[(iii)] the intersection of these full subcategories is $\A$, as in the diagram below.
		\begin{center}
			\begin{tikzpicture}[baseline=(current  bounding  box.south), scale=2]
				
				\node (a0) at (0,0.8) {$\A$};
				\node (b0) at (1.2,0.8) {$[\D^{op},\V]$};
				\node (c0) at (0,0) {$\K$};
				\node (d0) at (1.2,0) {$[\C^{op},\V]$};
				
				\path[font=\scriptsize]
				
				(a0) edge [right hook->] node [above] {$\A(H',1)$} (b0)
				(a0) edge [right hook->] node [left] {$J$} (c0)
				(b0) edge [right hook->] node [right] {$\tx{Lan}_{J'^{op}}$} (d0)
				(c0) edge [right hook->] node [below] {$\K(H,1)$} (d0);
				
			\end{tikzpicture}	
		\end{center}
	\end{enumerate}   
	Now $[\D^{op},\V], [\C^{op},\V]$, and $\K$  are all accessible $\V$-categories (in fact locally presentable) while $\tx{Lan}_{J'^{op}}$ and $\K(H',1)$ are accessible embeddings; thus $\A$ is accessible by  Lemma~\ref{intersection-acc}. 
	
	\textsc{Main Step.} Let $\alpha$ be some regular cardinal such that $\K$ is an $\alpha$-accessible $\V$-category and the inclusion $J$ is an $\alpha$-accessible $\V$-functor. Let $\C$ be any full subcategory of $\K$ containing $\K_\alpha$ and closed under $\alpha$-small colimits, denote by $H\colon\C\hookrightarrow\K$ the inclusion. This satisfies (i). Commutativity of the square in (iii) says that the canonical maps
	$$ \K(HC,JH'-)*\A(H'-,A) \longrightarrow \K(HC,JA)$$ 
	are invertible for all $C\in\C$, $A\in\A$: the colimit on the left gives the left Kan extension $\tx{Lan}_{J'^{op}}\A(H'-,A) $ evaluated at $C$. Now $\K(HC,J-)$ is the value at $HC$ of the virtual reflection $L\colon \K\to\P^\dagger \A$, and invertibility of the above maps says that $\K(HC,J-)$ is the left Kan extension of its restriction $\K(HC,JH'-)$ to $\D$, or in other words that \begin{enumerate}
		\item[(iii')] $L$ maps $\C\subseteq\K$ into $\P^\dagger\D$. 
	\end{enumerate}
	In fact we'll see that this implies the intersection property of the square in (iii). Suppose then that $K\in\K$, and $\K(H-,K)$ is in the image of $\tx{Lan}_{J'^{op}} $, so that in fact 
	$$ \K(H-,K)\cong \tx{Lan}_{J'^{op}}\K(HJ'-,K). $$
	Since $H\colon\C\hookrightarrow\K$ is $\alpha$-cocontinuous, $\K(H-,K)$ is $\alpha$-continuous and hence $\alpha$-flat, and now by Lemma~\ref{flat-restriction} also $\K(HJ'-,K)$ is $\alpha$-flat. Thus we can form the colimit $\K(HJ'-,K)*H'\in\A$, and it will be preserved by $J$, giving
	\begin{equation*}
		\begin{split}
			J(\K(HJ'-,K)*H'))&\cong\K(HJ'-,K)*JH'\\
			&\cong \K(HJ'-,K)*HJ'\\
			&\cong \tx{Lan}_{J'^{op}}\K(HJ'-,K)*H\\
			&\cong \K(H-,K)*H\\
			&\cong K,\\
		\end{split}
	\end{equation*}
	so $K\in\A$ as required.
	
	\textsc{Choosing $\C$ and $\D$.} It remains to show that we can choose a small $\K_\alpha\subseteq\C\subseteq\K$ closed under $\alpha$-small colimits and $\D\subseteq\A \cap\C$ such that $L$ maps $\C$ into $\P^\dagger\D$. We do this by recursion on $0<i<\alpha$. Define $\C_1:=\K_{\alpha}$, and $\D_1\subseteq\A$ to be small and such that $L(\C_1)\subseteq\P^\dagger (\D_1)$ (this exists since $\C_1$ is small). Now, given $0<i<\alpha$, and the small $\V$-categories $\C_i$ and $\D_i$, we define $\C_{i+1}$ to be the closure of $J\D_i$ in $\K$ under $\alpha$-small colimits, and $\D_{i+1}\subseteq\A$ to be such that $\D_{i}\subseteq\D_{i+1}$ and $L(\C_{i+1})\subseteq\P^\dagger (\D_{i+1})$. Take unions at the limit steps and then define $\D:=\cup_{i<\alpha}\D_i$ and $\C:=\cup_{i<\alpha}\C_i$. Then $L$ maps $\C$ into $\P^\dagger\D$ by construction and, by regularity of $\alpha$, each $\alpha$-small diagram in $\C$ factors through some $\C_{i+1}$ which is closed in $\K$ under $\alpha$-small colimits by construction; hence $\C$ is closed under them as well. 
\end{proof}

The same holds if the ambient $\V$-category $\K$ is just accessible:

\begin{cor}\label{vr-acc2}
	Let $\K$ be an accessible $\V$-category and $J\colon \A\hookrightarrow \K$ be virtually reflective and accessibly embedded; then $\A$ is accessible.
\end{cor}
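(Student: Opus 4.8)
The plan is to derive this from Proposition~\ref{vr-acc} by embedding the ambient accessible $\V$-category $\K$ fully faithfully and accessibly into a locally presentable one, and transporting the hypotheses. Since $\A$ is accessibly embedded in $\K$, the inclusion $J\colon\A\hookrightarrow\K$ preserves $\alpha'$-flat colimits (with $\A$ having them) for some $\alpha'$, and $\K$ is $\alpha_0$-accessible for some $\alpha_0$. First I would choose a regular cardinal $\alpha$ large enough that $\alpha\geq\alpha'$ and $\alpha\rhd\alpha_0$ (such cardinals exist, being arbitrarily large): then by Theorem~\ref{flat-to-conical}(1) the $\V$-category $\K$ is $\alpha$-accessible, while, since for $\gamma\geq\alpha'$ the $\gamma$-flat weights form a subclass of the $\alpha'$-flat ones, $\A$ still has $\alpha$-flat colimits and $J$ still preserves them. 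Writing $H\colon\K_\alpha\hookrightarrow\K$ and $E:=\K(H,1)\colon\K\to[\K_\alpha^{op},\V]$, Proposition~\ref{acc=flat} gives that $E$ is fully faithful and $\alpha$-accessible, and $[\K_\alpha^{op},\V]$ is locally presentable.

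It then remains to check that the composite $EJ\colon\A\hookrightarrow[\K_\alpha^{op},\V]$ is accessibly embedded and virtually reflective, and to apply Proposition~\ref{vr-acc}. Accessible embeddedness is immediate: $EJ$ is fully faithful as a composite of fully faithful $\V$-functors, and it preserves $\alpha$-flat colimits since both $E$ and $J$ do. For virtual reflectivity I would argue that $EJ$ has a virtual left adjoint by functoriality of the free completion, $\P^\dagger(EJ)\cong\P^\dagger E\circ\P^\dagger J$: the functor $\P^\dagger J$ has a left adjoint because $J$ has a virtual left adjoint by hypothesis (via the equivalence between having a virtual left adjoint and $\P^\dagger(-)$ having a left adjoint), while $\P^\dagger E$ has a left adjoint because $E$, being an accessible $\V$-functor between accessible $\V$-categories, has a virtual left adjoint by Proposition~\ref{virtual=accessible} (equivalently, $\K$ is virtually reflective in $[\K_\alpha^{op},\V]$ by Corollary~\ref{accvirtref}). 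A composite of left adjoints is a left adjoint, so $\P^\dagger(EJ)$ has one, which --- since $EJ$ is fully faithful --- says exactly that $\A$ is virtually reflective in $[\K_\alpha^{op},\V]$. Proposition~\ref{vr-acc}, applied with ambient locally presentable $\V$-category $[\K_\alpha^{op},\V]$, then yields that $\A$ is accessible.

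If one prefers to avoid the pseudomonad formalism, virtual reflectivity of $\A$ in $[\K_\alpha^{op},\V]$ can be checked directly: for $X\in[\K_\alpha^{op},\V]$ the functor $G:=[\K_\alpha^{op},\V](X,E-)\colon\K\to\V$ lies in $\P^\dagger\K$ (as $E$ has a virtual left adjoint), hence is a small limit of representables $\K(k_d,-)$; restriction along $J$ is continuous, so $GJ\cong\lim_d\K(k_d,J-)$, and each $\K(k_d,J-)$ is small because $\A$ is virtually reflective in $\K$; since $\P^\dagger\A$ is closed under small limits in $[\A,\V]$, the functor $GJ\cong[\K_\alpha^{op},\V](X,EJ-)$ is small. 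I expect the real content of the argument to be precisely this stability of virtual reflectivity under composition with the canonical embedding into a presheaf $\V$-category; the remaining steps are index-of-accessibility bookkeeping, and the substantial work has already been done in Proposition~\ref{vr-acc} (and, through it, Lemma~\ref{intersection-acc}).
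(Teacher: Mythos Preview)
Your proposal is correct and follows essentially the same route as the paper: embed $\K$ accessibly into a presheaf $\V$-category (the paper just says ``a small $\C$'', you make the canonical choice $\C=\K_\alpha^{op}$), observe that this embedding has a virtual left adjoint by Corollary~\ref{accvirtref}, compose virtual left adjoints, and invoke Proposition~\ref{vr-acc}. The paper compresses all of this into three lines, but your added bookkeeping on the index of accessibility and the two explicit verifications that virtual reflectivity is stable under this composition are accurate elaborations of the same argument.
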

\begin{proof}
	Since $\K$ is accessible then we can find a small $\C$ and an accessible embedding $\K\hookrightarrow[\C,\V]$ which has a virtual left adjoint by Corollary~\ref{accvirtref}. Since virtual adjoints compose we can now apply Proposition~\ref{vr-acc} to conclude that $\A$ is accessible.
\end{proof}

It is not true in general that every $\alpha$-accessibly embedded and virtually reflective subcategory of a locally $\alpha$-presentable $\V$-category, is $\alpha$-accessible. However 
this holds with a further assumption:

\begin{cor}
	Let $\K$ be locally presentable and $J\colon \A\hookrightarrow \K$ be virtual reflective, $\alpha$-accessibly embedded, and such that the virtual reflection $L\colon \P^\dagger \K\to\P^\dagger \A$ restricts to $ L_{(\alpha)}\colon \P^\dagger (\K_{\alpha})\to \P^\dagger (\A_{(\alpha)})$, where $\A_{(\alpha)}:=\A\cap\K_{\alpha}$. Then $\A$ is $\alpha$-accessible.
\end{cor}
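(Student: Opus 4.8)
The plan is to run the argument of Proposition~\ref{vr-acc} but to fix the auxiliary small subcategories $\C$ and $\D$ at once, instead of building them by the recursion used there, and then to extract the precise index of accessibility by hand. Since the hypotheses already single out $\K_\alpha$ as a generator, I take $\K$ to be locally $\alpha$-presentable, so that $H\colon\C:=\K_\alpha\hookrightarrow\K$ is dense and $\K_\alpha$ is closed in $\K$ under $\alpha$-small colimits; this is condition~(i) in the proof of Proposition~\ref{vr-acc}. I then set $\D:=\A_{(\alpha)}=\A\cap\K_\alpha$, with inclusions $H'\colon\D\hookrightarrow\A$ and $J'\colon\D\hookrightarrow\C$ (the latter makes sense since $\A_{(\alpha)}\subseteq\K_\alpha$), which is condition~(ii). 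Finally, recalling that the virtual reflection $L\colon\P^\dagger\K\to\P^\dagger\A$ is given by precomposition with $J$, the assumption that $L$ restricts to $L_{(\alpha)}\colon\P^\dagger(\K_\alpha)\to\P^\dagger(\A_{(\alpha)})$ says exactly that $\K(HC,J-)$ lies in $\P^\dagger\D$ for every $C\in\C$, which is condition~(iii') of that proof --- obtained now with \emph{no} recursion.

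With (i), (ii) and (iii') in hand, the ``Main Step'' of the proof of Proposition~\ref{vr-acc} goes through verbatim. On the one hand (iii') is equivalent to commutativity of the square relating $\A$, $\K$, $[\D^{op},\V]$ and $[\C^{op},\V]$, which gives $\A\subseteq\K\cap[\D^{op},\V]$ inside $[\C^{op},\V]$; on the other hand, for $K\in\K$ with $\K(H-,K)\cong\tx{Lan}_{J'^{op}}\K(HJ'-,K)$, the computation there yields $K\cong J(\K(HJ'-,K)*H')$, hence $K\in\A$, so the intersection is contained in $\A$. Thus $\A$ is precisely this intersection. It then remains to strengthen ``$\A$ is accessible'' --- which would follow from Lemma~\ref{intersection-acc}, but only at some index $\beta\rhd\alpha$ --- to ``$\A$ is $\alpha$-accessible''.

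For this last step I would argue directly, using $\D=\A_{(\alpha)}$ itself as a small generating family of $\alpha$-presentable objects. First, $\A_{(\alpha)}\subseteq\A_\alpha$: if $D\in\A_{(\alpha)}$ then $\A(D,-)\cong\K(JD,J-)$ preserves $\alpha$-flat colimits, because $JD\in\K_\alpha$ and $J$ is $\alpha$-accessible. Second, every $A\in\A$ is an $\alpha$-flat colimit of objects of $\D$: commutativity of the square gives $\K(H-,JA)\cong\tx{Lan}_{J'^{op}}\K(HJ'-,JA)$; since $\K(H-,JA)\colon\K_\alpha^{op}\to\V$ is $\alpha$-flat --- it is the $\alpha$-flat presheaf corresponding to $JA$ under $\K\simeq\alpha\tx{-Flat}(\K_\alpha^{op},\V)$ --- Lemma~\ref{flat-restriction}(2) shows $\K(HJ'-,JA)\colon\D^{op}\to\V$ is an $\alpha$-flat weight, the colimit $\K(HJ'-,JA)*H'$ exists in $\A$ (which has $\alpha$-flat colimits, being $\alpha$-accessibly embedded) and is an $\alpha$-flat colimit of objects of $\D$, and the Main-Step computation gives $J(\K(HJ'-,JA)*H')\cong JA$, so $A\cong\K(HJ'-,JA)*H'$ by full faithfulness of $J$. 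As $\A_{(\alpha)}$ is small, being contained in $\K_\alpha$, this exhibits $\A$ as $\alpha$-accessible. The one point I would be careful about is checking that nothing in the ``Main Step'' of Proposition~\ref{vr-acc} secretly relied on the recursive construction of $\C$ and $\D$: it does not, only on (i), (ii) and (iii'), so the genuinely new content is the index-sensitive bookkeeping just described, which I expect to be routine.
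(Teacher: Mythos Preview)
Your proposal is correct and follows essentially the same approach as the paper: choose $\C=\K_\alpha$ and $\D=\A_{(\alpha)}$, observe that the restriction hypothesis on $L$ is precisely condition~(iii') so that no recursion is needed, and then argue that $\A_{(\alpha)}\subseteq\A_\alpha$ is dense with the density weights $\A(H'-,A)\cong\K(HJ'-,JA)$ being $\alpha$-flat via Lemma~\ref{flat-restriction}. Your version spells out more detail (e.g.\ that $\A$ has $\alpha$-flat colimits since it is $\alpha$-accessibly embedded, and that the Main Step uses only (i), (ii), (iii')), but the argument is the same.
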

\begin{proof}
	In the proof of Proposition~\ref{vr-acc} we can choose $\C=\K_\alpha$ and $\D=\A_{(\alpha)}\subseteq\A_\alpha$. It follows by the intersection property in (iii) that $\A_{(\alpha)}$ is dense; moreover the weights $\A(H'-,A)$ are all $\alpha$-flat, since $\tx{Lan}_{J'^{op}}\A(H'-,A)\cong \K(H-,JA)$ is $\alpha$-continuous. Therefore it follows that $\A$ is $\alpha$-accessible.
\end{proof}

\subsection{Virtual orthogonality}\label{vir-orthogonal}

Next we introduce the third and last virtual concept of this paper; this is a generalization of the more common notion of orthogonality.

\begin{Def}
	Let $\K$ be a $\V$-category, $Z\colon \K\hookrightarrow\P^\dagger\K$ be the inclusion, and $f\colon ZX\to P$ a morphism in $\P^\dagger\K$ with representable domain. We say that an object $A$ of $\K$ is {\em orthogonal with respect to $f$} if
	$$ \P^\dagger\K(f,ZA)\colon \P^\dagger\K(P,ZA)\longrightarrow\P^\dagger\K(ZX,ZA) $$
	is an isomorphism in $\V$; in other words if $ZA\in\P^\dagger\K$ is orthogonal with respect to $f$.  
\end{Def}

Let us unwind this definition. Given an object $P$ in $\P^\dagger\K$, we can write it as a limit of representables $P\cong \{ M ,ZH\}$. Thus to give $f\colon ZX\to P$ is the same as giving a cylinder $\bar{f}\colon  M \to \K(X,H-)$; moreover $\P^\dagger\K(ZX,ZA)\cong \K(X,A)$ and $\P^\dagger\K(P,ZA)\cong  M *\K(H-,A)$. As a consequence, an object $A$ of $\K$ is orthogonal with respect to $f\colon ZX\to P$ if and only if the map
$$  M *\K(H-,A)\to \K(X,A)$$
induced by $\bar{f}\colon  M \to \K(X,H-)$ is an isomorphism.

When $P=ZY$ is representable we recover the usual notion of orthogonality.

\begin{Def}\label{virtualorth}
	Let $\K$ be a $\V$-category and $\M$ be a small collection of morphisms in $\P^\dagger\K$ of the form $f\colon ZX\to P$. We denote by $\M^\perp$ the full subcategory of $\K$ spanned by the objects which are orthogonal with respect to each $f\in\M$. We call {\em virtual orthogonality class} any full subcategory of $\K$ which arises in this way.
\end{Def}

\begin{obs}
	In the ordinary context, an equivalent for of this notion was already considered by Guitart and Lair in \cite[Section~4]{GL81:articolo}. Given a cone $c\colon\Delta X\to H$ in a category $\K$, they say that an object $A\in\K$ ``satisfies'' the cone $c$ if $\K(c,A)$ induces an isomorphism
	$$ \K(X,A)\cong \colim\K(H-,A). $$
	It's easy to see that, considering $P:=\lim YH\in\P^\dagger\K$ and the induced map $\bar c\colon ZX\to P$, an object $A$ of $\K$ satisfies $H$ if and only if it is orthogonal with respect to $\bar c$. In \cite[Section~4]{GL81:articolo} they prove then that each sketchable category $\tx{Mod}(\S)$ is (in our terminology) a virtual orthogonality class in its ambient category $[\C,\bo{Set}]$, but not the vice versa.
\end{obs}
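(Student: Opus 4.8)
The plan is to obtain the equivalence directly from the unwinding of orthogonality displayed just before Definition~\ref{virtualorth}, specialized to conical weights. Since we are over $\V=\bo{Set}$, a cone $c\colon\Delta X\to H$ over a small diagram $H\colon\D\to\K$ is the same datum as a cylinder $\bar c\colon\Delta 1\to\K(X,H-)$ for the conical weight $\Delta 1\colon\D^{op}\to\bo{Set}$; the object $P:=\lim ZH$ of $\P^\dagger\K$ is then $\{\Delta 1,ZH\}$, and the morphism $\bar c\colon ZX\to P$ appearing in the Remark is the one induced by this cylinder. We are therefore in the situation of the paragraph preceding Definition~\ref{virtualorth} with $M=\Delta 1$.

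That paragraph already tells us that $\P^\dagger\K(ZX,ZA)\cong\K(X,A)$, that $\P^\dagger\K(P,ZA)\cong\Delta 1*\K(H-,A)=\colim\K(H-,A)$, and that $A$ is orthogonal with respect to $\bar c$ precisely when the map $\colim\K(H-,A)\to\K(X,A)$ induced by $\bar c$ is an isomorphism. So the only thing left to verify is that this induced map is exactly the comparison map that Guitart and Lair denote $\K(c,A)$. This I would check by tracing the standard correspondences: a morphism $ZX\to P$ in $\P^\dagger\K$ amounts to a natural transformation $P\Rightarrow\K(X,-)$ in $[\K,\bo{Set}]$ (as $\P^\dagger\K\subseteq[\K,\bo{Set}]^{op}$), which by the colimit description $P\cong\colim_d\K(Hd,-)$ and the Yoneda lemma applied hom-wise amounts to a cocone $\bigl(\K(Hd,-)\Rightarrow\K(X,-)\bigr)_d$, which in turn amounts to a cone $\Delta X\to H$; this chain carries $\bar c$ to $c$, and evaluating a cocone at $A$ produces exactly the comparison map $\colim\K(H-,A)\to\K(X,A)$.

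Putting the pieces together: $A$ is orthogonal with respect to $\bar c$ iff $\K(c,A)$ induces an isomorphism $\colim\K(H-,A)\cong\K(X,A)$, i.e.\ iff $A$ ``satisfies'' $H$ in the sense of \cite{GL81:articolo}, which is the claim. The final sentence of the Remark --- that every $\tx{Mod}(\S)$ is a virtual orthogonality class in $[\C,\bo{Set}]$ but not conversely --- is \cite[Section~4]{GL81:articolo} and needs nothing further here; its enriched one-direction strengthening is what the following subsection provides, while the failure of the converse in that classical setting is exactly what forces one to work inside $\P^\dagger\K$ rather than $\K$. I do not anticipate a real obstacle: the statement is a translation between two formalisms, and the only delicate point is keeping track of opposites when moving between $\P^\dagger\K$ and $[\K,\bo{Set}]$ so that the direction of the comparison map and the identification of $\bar c$ with $c$ are consistent.
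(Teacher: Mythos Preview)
Your proposal is correct and follows exactly the approach the paper intends: the Remark itself offers no proof beyond ``it's easy to see'', and the implicit argument is precisely to specialize the unwinding paragraph preceding Definition~\ref{virtualorth} to the conical weight $M=\Delta 1$, which is what you do. The only quibble is a harmless variance slip---in that paragraph the weight $M$ for the limit $\{M,ZH\}$ has the same domain as $H$, so $\Delta 1\colon\D\to\bo{Set}$ rather than $\D^{op}\to\bo{Set}$; for a constant functor this makes no difference.
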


The next results can be seen as the analogue of the relation between locally presentable categories and orthogonality classes.

\begin{prop}\label{acc-virort}
	Let $\K$ be accessible and $J\colon \A\hookrightarrow\K$ be accessible and accessibly embedded; then $\A$ is a virtual orthogonality class in $\K$.
\end{prop}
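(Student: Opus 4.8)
The plan is to present $\A$ as an ``intersection'' inside $\K$, in the spirit of the Main Step of Proposition~\ref{vr-acc}, and then to repackage the resulting invertibility conditions as virtual orthogonality conditions. First I would fix the accessibility index. Since $\A$ is accessible and accessibly embedded in the accessible $\K$, it is virtually reflective by Corollary~\ref{accvirtref}; let $L\colon\P^\dagger\K\to\P^\dagger\A$ denote the virtual reflection, so that $L\dashv\P^\dagger J$ and $LZX\cong\K(X,J-)$ for $X\in\K$. Using Corollary~\ref{preserving-flat-presentables} together with Theorem~\ref{flat-to-conical} to raise indices, I would choose a regular cardinal $\alpha$ for which $\A$, $\K$ and $J$ are all $\alpha$-accessible and, in addition, the inclusion sends $\A_\alpha$ into $\K_\alpha$, so that $\A_\alpha\subseteq\K_\alpha\subseteq\K$. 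By Proposition~\ref{virtual=accessible} the virtual left adjoint then restricts to the $\alpha$-presentables, so that $\K(X,J-)\in\P^\dagger(\A_\alpha)$ for every $X\in\K_\alpha$. Write $H\colon\K_\alpha\hookrightarrow\K$, $H'\colon\A_\alpha\hookrightarrow\A$ and $J'\colon\A_\alpha\hookrightarrow\K_\alpha$ for the inclusions, so that all composites agree with the inclusion $\iota:=HJ'=JH'\colon\A_\alpha\hookrightarrow\K$.

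The first substantial step is the intersection presentation
$$ \A=\bigl\{\,K\in\K \ :\ \text{the canonical map}\ \tx{Lan}_{J'^{op}}\K(\iota-,K)\longrightarrow\K(H-,K)\ \text{is invertible}\,\bigr\}. $$
If $K\in\A$, then by Proposition~\ref{A_alpha}(3) we may write $K\cong\A(H'-,K)*H'$ as an $\alpha$-flat colimit, which, $\A$ being closed under $\alpha$-flat colimits in $\K$, is equally computed in $\K$; applying $\K(X,-)$ for $X\in\K_\alpha$, which preserves $\alpha$-flat colimits, identifies the component at $X$ of the canonical map with the co-Yoneda isomorphism $\int^{D\in\A_\alpha}\K(X,D)\otimes\K(D,K)\xrightarrow{\sim}\K(X,K)$. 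Conversely, suppose the canonical map is invertible, so $\K(H-,K)\cong\tx{Lan}_{J'^{op}}\K(\iota-,K)$. Since $\K(H-,K)\colon\K_\alpha^{op}\to\V$ is $\alpha$-flat by Proposition~\ref{A_alpha}(3), the weight $\K(\iota-,K)$ is $\alpha$-flat by Lemma~\ref{flat-restriction}(2); hence $N:=\K(\iota-,K)*H'$ exists in $\A$ and is preserved by the inclusion into $\K$. Using $JH'=\iota$ and the standard identity $M*\iota\cong(\tx{Lan}_{J'^{op}}M)*H$, we get
$$ N\cong\K(\iota-,K)*\iota\cong\bigl(\tx{Lan}_{J'^{op}}\K(\iota-,K)\bigr)*H\cong\K(H-,K)*H\cong K, $$
the last isomorphism again by Proposition~\ref{A_alpha}(3); hence $K$ lies in $\A$.

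It remains to convert each of these conditions into a virtual orthogonality condition. For $X\in\K_\alpha$ put $P_X:=\tx{Lan}_{\iota}\bigl(\K(X,\iota-)\bigr)\colon\K\to\V$; this is a small colimit of representables, hence an object of $\P^\dagger\K$, and $P_X(K)\cong\int^{D\in\A_\alpha}\K(X,D)\otimes\K(D,K)$. The comparison map $P_X(K)\to\K(X,K)$ given by composition is $\V$-natural in $K$ and, read inside $\P^\dagger\K\subseteq[\K,\V]^{op}$, is a morphism $f_X\colon ZX\to P_X$ with representable domain. As $Z$ is codense we have $\P^\dagger\K(W,ZK)\cong W(K)$ for $W\in\P^\dagger\K$, so $\P^\dagger\K(f_X,ZK)$ is precisely this comparison map; therefore $K$ is orthogonal to $f_X$ if and only if the component at $X$ of the canonical map of the previous paragraph is invertible. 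Setting $\M:=\{\,f_X : X\in\K_\alpha\,\}$, which is a small set because $\K_\alpha$ is essentially small, the intersection presentation gives $\M^\perp=\A$, so $\A$ is a virtual orthogonality class in $\K$.

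The step I expect to be the main obstacle is the intersection presentation: it is the analogue of the Main Step of Proposition~\ref{vr-acc}, but here $\K$ need not be cocomplete, so one cannot enlarge $\K_\alpha$ to an $\alpha$-cocomplete subcategory of $\K$ in order to obtain the required $\alpha$-flatness. Instead every flatness assertion must be extracted from Proposition~\ref{A_alpha}(3) and Lemma~\ref{flat-restriction}, and it is essential here that $\A$ is already known to be accessible --- this is what lets us take $\D=\A_\alpha$ and invoke Proposition~\ref{virtual=accessible}, rather than having to build $\D$ by transfinite recursion as in Proposition~\ref{vr-acc}. The remaining bookkeeping --- matching $\P^\dagger\K(f_X,ZK)$ with the comparison map via codensity of $Z$, and keeping the variances straight around $\P^\dagger$ --- is routine.
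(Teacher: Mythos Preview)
Your proof is correct and is essentially the paper's own argument. Your object $P_X=\tx{Lan}_\iota\K(X,\iota-)$ is precisely $(\P^\dagger J)LZX$ once one uses that $\K(X,J-)$ is the left Kan extension of its restriction to $\A_\alpha$, and your comparison map $f_X\colon ZX\to P_X$ is the unit $\eta_X$ of the adjunction $L\dashv\P^\dagger J$; the verification of $\M^\perp=\A$ then proceeds identically, via Lemma~\ref{flat-restriction} and the density/colimit presentation of Proposition~\ref{A_alpha}(3). The only cosmetic difference is that the paper phrases the forward inclusion $\A\subseteq\M^\perp$ abstractly using the adjunction triangle identities, whereas you verify it by the explicit coend computation.
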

\begin{proof}
	Consider a regular cardinal $\alpha$ for which $\A,\K$, and $J$ are $\alpha$-accessible and $J$ preserves the $\alpha$-presentable objects (Corollary~\ref{preserving-flat-presentables}); this implies in particular that $\A_\alpha=\A\cap\K_\alpha$. Denote the inclusions as below.
	\begin{center}
		\begin{tikzpicture}[baseline=(current  bounding  box.south), scale=2]
			
			\node (a0) at (0,1) {$\A$};
			\node (b0) at (1.1,1) {$\K$};
			\node (c0) at (0,0) {$\A_{\alpha}$};
			\node (d0) at (1.1,0) {$\K_{\alpha}$};
			
			\node (a) at (2.2,1) {$\P^\dagger (\A)$};
			\node (b) at (3.6,1) {$\P^\dagger (\K)$};
			\node (c) at (2.2,0) {$\A$};
			\node (d) at (3.6,0) {$\K$};
			\node (e) at (2.9,1.15) {$\perp$};
			
			\path[font=\scriptsize]
			
			(a0) edge [right hook->] node [below] {$J$} (b0)
			(c0) edge [right hook->] node [left] {$H$} (a0)
			(d0) edge [right hook->] node [right] {$H'$} (b0)
			(c0) edge [right hook->] node [below] {$J_{\alpha}$}(d0)
			
			(a) edge [bend left, <-] node [above] {$L$} (b)
			(a) edge [right hook->] node [below] {$\P^\dagger J$} (b)
			(c) edge [right hook->] node [left] {$W$} (a)
			(d) edge [right hook->] node [right] {$Z$} (b)
			(c) edge [right hook->] node [below] {$J$}(d);
			
		\end{tikzpicture}	
	\end{center}
	We wish to show that $\A$ can be identified with the virtual orthogonality class defined by the set
	$$ \M:=\{\eta_X\colon ZX\to (\P^\dagger J)LZX |\ X\in\K_{\alpha} \} $$
	where each $\eta_X\colon ZX\to (\P^\dagger J)LZX$ is the component at $X$ of the unit of the adjunction.\\
	On one hand, given any $A\in\A$, the object $JA$ is orthogonal with respect to each $\eta_X$ in $\M$; in fact the orthogonality condition holds with respect to $\eta_X$ for any $X\in\K$:
	\begin{equation*}
		\begin{split}
			\P^\dagger \K(ZX,ZJA)&\cong\P^\dagger \K(ZX,(\P^\dagger J)WA)\\
			&\cong \P^\dagger \A(LZX,WA)\\
			&\cong \P^\dagger \K((\P^\dagger J)LZX,(\P^\dagger J)WA)\\
			&\cong \P^\dagger \K((\P^\dagger J)LZX,ZJA).\\
		\end{split}
	\end{equation*}
	
	Conversely suppose that $Y\in\K$ is orthogonal with respect to $\eta_X$ for each $X\in\K_{\alpha}$. Note first that
	\begin{equation*}
		\begin{split}
			(\P^\dagger J)LZX&\cong\tx{Lan}_J\K(X,J-)\\
			&\cong \tx{Lan}_J\tx{Lan}_H\K(X,JH-)\\
			&\cong \tx{Lan}_{JH}\K(X,JH-)\\
			&\cong \{\K(X,JH-),ZJH\},\\
		\end{split}
	\end{equation*}
	where the second isomorphism follows from the fact that $\K(X,J-)$ preserves $\alpha$-flat colimits and $\A$ is $\alpha$-accessible. As a consequence we obtain that for each $X\in\K_{\alpha}$:
	\begin{align}
			\K(X,Y)&\cong\P^\dagger \K((\P^\dagger J)LZX,ZY)\\
			&\cong \P^\dagger \K(\{\K(X,JH-),ZJH\},ZY)\nonumber\\
			&\cong \K(X,JH-)*\K(JH-,Y)\nonumber\\
			&\cong \K(X,H'J_{\alpha}-)*\K(H'J_{\alpha}-,Y)\nonumber\\
			&\cong \K_{\alpha}^{op}(J_{\alpha}-,X)*\K(H'J_{\alpha}-,Y)\nonumber\\
			&\cong \tx{Lan}_{J_{\alpha}^{op}}\K(H'J_{\alpha}-,Y)(X)\nonumber
	\end{align}
	where $(2)$ holds because $Y$ is orthogonal with respect to $\eta_X$. It follows that $$\K(H'-,Y)\cong\tx{Lan}_{J_{\alpha}^{op}}\K(H'J_{\alpha}-,Y),$$ but the weight $\K(H'-,Y)$ is $\alpha$-flat; therefore $\K(JH-,Y)\cong\K(H'J_{\alpha}-,Y)$ is $\alpha$-flat as well by Lemma~\ref{flat-restriction}. As a consequence, for each $X\in\K_{\alpha}$
	\begin{equation*}
		\begin{split}
			\K(X,Y)&\cong\K(JH-,Y)*\K(X,JH-)\\
			&\cong \K(X,\K(JH-,Y)*JH)\\
			&\cong \K(X,J(\K(JH-,Y)*H));\\
		\end{split}
	\end{equation*}
	thus $Y\cong J(\K(JH-,Y)*H)$ lies in $\A$. 
\end{proof}

Conversely, the accessibility of virtual orthogonality classes can be obtained as a consequence of the following. 

\begin{prop}\label{virorth-sketch}
	Each virtual orthogonality class $\A$ of a presheaf $\V$-category $\K=[\C,\V]$ is equivalent to the category of models of a sketch. More precisely: there exists a fully faithful $J\colon \C\hookrightarrow\B$ and a sketch $\S=(\B,\mathbb{L},\mathbb{C})$ on $\B$ such that $\tx{Ran}_J$ induces an equivalence $\A\simeq\tx{Mod}(\S)$.
\end{prop}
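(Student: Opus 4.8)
The plan is to unwind the virtual orthogonality conditions defining $\A=\M^\perp$ into ordinary cylinders and cocylinders on a suitable enlargement $\B$ of $\C$, so that being orthogonal to $\M$ becomes exactly being a model of a sketch $\S$ on $\B$. Recall that each $f\colon ZX\to P$ in $\M$ corresponds, after writing $P\cong\{N,ZH\}$ as a limit of representables (with $N\colon\D^{op}\to\V$ a weight and $H\colon\D\to\K$), to a cylinder $\bar f\colon N\to\K(X,H-)$, and the orthogonality condition on $A\in\K$ says that the induced map $N*\K(H-,A)\to\K(X,A)$ is an isomorphism. Since $\M$ is small, only a small set of representables $X$ and of diagrams $H\colon\D\to\K=[\C,\V]$ are involved; let $\B$ be obtained from $\C$ by freely adjoining, for each $f\in\M$, the limit $\{N,H\}$ appearing in $P$ together with the colimit $N*H$ computed formally, i.e.\ $\B$ is a small full subcategory of $[\C,\V]^{\tx{op}}$-style completion containing $\C$, all the objects $H(d)$, all the limits $P$, and all the objects $X$ from the domains. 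Let $J\colon\C\hookrightarrow\B$ be the inclusion.

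Next I would define the sketch $\S=(\B,\mathbb{L},\mathbb{C})$ on $\B$. The limit part $\mathbb{L}$ records two things: first, the cylinders exhibiting each $P$ as the weighted limit $\{N,H\}$ of objects of $\C$ (so a model must send $P$ to that limit in $\V$); second — and this is the key point making the embedding $\C\hookrightarrow\B$ usable — the limit cylinders needed so that a $\Phi$-flat-type restriction behaves well, but in fact for the statement it suffices to impose that the representables of $\C$ retain their defining (co)limit presentations, which is automatic. The colimit part $\mathbb{C}$ records the cocylinders $N\to\B(H-, \text{(the new object } N*H)\text{)}$; a model sends these to colimit cocylinders. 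Crucially, each $f\in\M$ gives a canonical comparison morphism $\kappa_f\colon N*H\to X$ in $\B$ (from the universal property of the colimit applied to $\bar f$), and we add to $\mathbb{L}$ (or handle via an orthogonality-style cocylinder) the requirement that a model inverts $\B(\kappa_f,-)$; concretely this is encoded by adjoining to $\B$ nothing new but adding the cocylinder/cylinder pair that forces $F(\kappa_f)$ to be invertible, exactly as in the proof of Theorem~\ref{sketches-accessible} where invertibility of a map $\bar c$ was encoded by orthogonality to $\C(\bar c,-)$ and then absorbed into a sketch. Then, for $F\colon\B\to\V$ a model of $\S$, restricting along $J$ gives $F J\colon\C\to\V$, and the $\V$-category $\tx{Mod}(\S)$ embeds into $[\C,\V]$ via $(-)\circ J$; since the data of $\mathbb{L}$ forces $F$ to be the right Kan extension of $FJ$ along $J$ (every new object of $\B$ being a specified limit of objects of $\C$), the functor $(-\circ J)$ is fully faithful with pseudo-inverse $\tx{Ran}_J$.

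Finally I would check the two containments giving the equivalence $\A\simeq\tx{Mod}(\S)$. Given $G\in[\C,\V]=\K$, form $\tx{Ran}_J G\colon\B\to\V$. By construction of $\B$ via limits of representables, $(\tx{Ran}_J G)(P)\cong\{N,\K(H-,G)\}$ and $(\tx{Ran}_J G)(N*H)\cong N*\K(H-,G)$ — here the second isomorphism uses that $N*H$ was adjoined as a formal colimit and that right Kan extension along the fully faithful $J$ computes these colimit-objects correctly, which is the one genuinely fiddly verification. Then $\tx{Ran}_J G$ is a model of the limit part $\mathbb{L}$ automatically, it is a model of the colimit part $\mathbb{C}$ automatically (the cocylinders being sent to the colimit cocones by the same computation), and it is a model of the remaining invertibility conditions precisely when, for each $f\in\M$, the comparison $N*\K(H-,G)\to\K(X,G)$ is an isomorphism — i.e.\ precisely when $G\in\M^\perp=\A$. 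This establishes $\tx{Ran}_J(\A)\subseteq\tx{Mod}(\S)$ and, reading the same chain backwards with $(-\circ J)$, that every model restricts into $\A$; full faithfulness of $(-\circ J)$ then upgrades this to the asserted equivalence. The main obstacle I anticipate is bookkeeping the construction of $\B$ so that \emph{all} the relevant limits and colimits appearing across the small set $\M$ live in a single small $\V$-category and so that $\tx{Ran}_J$ genuinely reproduces the formal colimits $N*H$ on the nose (not merely up to the comparison maps); this is the analogue of the delicate handling of $\tx{Lan}_J$ in the proof of Theorem~\ref{sketches-accessible}, and it is where the fully faithful, limit-dense embedding $\C\hookrightarrow\B$ must be set up with care.
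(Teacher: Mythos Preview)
Your overall strategy --- enlarge $\C$ to some $\B$, encode the orthogonality conditions as a sketch on $\B$, and use $\tx{Ran}_J$ to identify $\A$ with the models --- is correct, but there is a genuine gap and a key simplification you miss. The simplification: the orthogonality condition on $A$, that $N*\K(H-,A)\to\K(X,A)$ be invertible, is \emph{already} the statement that (the image of) $A$ sends a cocylinder with vertex $X$ to a colimit cocylinder. There is no need to adjoin a separate formal object ``$N*H$'' and then encode invertibility of a comparison $\kappa_f\colon N*H\to X$; one simply uses the cocylinder $\bar f\colon N\to\B(H'-,X)$ sitting directly on $X$. Accordingly the paper takes $\B^{op}$ to be the closure of $\C^{op}$ in $\K=[\C,\V]$ under $\alpha$-small colimits, for $\alpha$ large enough that $X$ and the image of $H$ lie there; then $\tx{Ran}_J$ identifies $[\C,\V]$ with $\alpha\tx{-Cont}[\B,\V]$, the limit part $\mathbb L$ of the sketch is ``preserve all $\alpha$-small limits'' (automatic for every $\tx{Ran}_J G$), and the colimit part $\mathbb C$ consists only of the cocylinders $\bar f$, satisfied precisely when $G\in\M^\perp$.

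The gap in your argument is the claim that $\tx{Ran}_J G$ is ``a model of the colimit part $\mathbb C$ automatically'', i.e.\ that $(\tx{Ran}_J G)(N*H)\cong N*\K(H-,G)$. Right Kan extension along a fully faithful $J$ preserves the \emph{limits} used to build $\B$ from $\C$, but it has no reason to send a freely adjoined colimit object to the corresponding colimit in $\V$: the pointwise formula for $(\tx{Ran}_J G)(N*H)$ is itself a limit, not $N*GH$. You flag this as ``the one genuinely fiddly verification'', but it is not merely fiddly --- it fails as stated, and the fix is precisely to drop the auxiliary object $N*H$ and let the orthogonality condition itself be the sole colimit specification in the sketch.
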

\begin{proof}
	Let $J\colon \A\hookrightarrow\K$ be a virtual orthogonality class in $\K$ defined by a set of morphisms $\M$. Without loss of generality we can assume that $\M$ consists of a single arrow $f\colon ZX\to P=\{ M ,ZH\}$ with $X\in\K$, $Z\colon \K\hookrightarrow\P^\dagger \K$ being the inclusion, $ M \colon \D\to\V$ a weight, and $H\colon \D\to\K$ a diagram in $\K$.
	Consider now the closure $\B^{op}$ of $\C^{op}\hookrightarrow\K=[\C,\V]$ under $\alpha$-small colimits, where $\alpha$ is such that $\B$ contains $X$ and the image of $H$; let $H'\colon \D^{op}\to\B$ be the induced map. In particular $\B$ is the free completion of $\C$ under $\alpha$-small limits; so that right Kan extending along the inclusion induces an equivalence
	$$W\colon [\C,\V]\longrightarrow\alpha\tx{-Cont}[\B,\V].$$ 
	Note moreover that $f$ corresponds to a cylinder $\bar{f}\colon  M \to\K(X,H-)\cong\B(H'-,X)$.
	
	Now, a $\V$-functor $A\in\K$ is orthogonal with respect to $f$ if and only if
	$$ \P^\dagger \K(f,A)\colon \P^\dagger \K(P,ZA)\to\P^\dagger \K(ZX,ZA) $$
	is an isomorphism; but on one hand we have
	\begin{equation*}
		\begin{split}
			\P^\dagger \K(ZX,ZA)&\cong\K(X,A)\\
			&\cong [\B,\V](WX,WA)\\
			&\cong [\B,\V](\B(X,-),WA) \\
			&\cong (WA)(X)\\
		\end{split}
	\end{equation*}
	on the other
	\begin{equation*}
		\begin{split}
			\P^\dagger \K(P,ZA)&\cong M *\K(H-,A)\\
			&\cong  M *[\B,\V](WH-,WA)\\
			&\cong  M *[\B,\V](\B(H',-),WA) \\
			&\cong  M *(WA)H'\\
		\end{split}
	\end{equation*}
	It follows then that $A$ is orthogonal with respect to $f$ if and only if $WA$ sends the cylinder $\bar{f}$ to a colimiting cylinder. In conclusion $\A$ is equivalent to the full subcategory of $[\B,\V]$ given by the $\alpha$-continuous functors which send $\bar{f}$ to a colimiting cylinder, and this is the $\V$-category of models of a sketch on $\B$.
\end{proof}

One can also show the converse: every $\V$-category of models of a sketch is a virtual orthogonality class in its ambient category. This can of course be seen as a consequence of the fact that sketchable implies accessible (Theorem~\ref{sketches-accessible}) which in turn implies virtual orthogonality class (Proposition~\ref{acc-virort}), but we can also provide a direct proof:

\begin{prop}
	Let $\S=(\B,\mathbb{L},\mathbb{C})$ be a sketch; then $\tx{Mod}(\S)$ is a virtual orthogonality class in $[\B,\V]$.
\end{prop}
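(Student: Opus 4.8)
The plan is to exhibit $\tx{Mod}(\S)$ as an intersection of virtual orthogonality classes inside $\K:=[\B,\V]$. Since orthogonality with respect to a small family of morphisms of $\P^\dagger\K$ of the required shape is preserved by intersection (one simply unions the families), and since $\tx{Mod}(\S)=\tx{Mod}(\B,\mathbb{L})\cap\tx{Mod}(\B,\mathbb{C})$ as full subcategories of $\K$, it suffices to treat the limit part and the colimit part separately.

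First I would deal with the colimit part. For each cocylinder $d\colon M\to\B(K-,C)$ in $\mathbb{C}$ (with $M\colon\E^{op}\to\V$ a weight, $K\colon\E\to\B$, $C\in\B$), let $G\colon\E^{op}\to\K$ be the diagram of representables $e\mapsto\B(Ke,-)$ and set $P_d:=\{M,ZG\}$, a weighted limit of representables, hence an object of $\P^\dagger\K$. By the discussion preceding the statement together with the enriched Yoneda lemma one gets $\P^\dagger\K(P_d,ZF)\cong M*\K(G-,F)\cong M*FK$ and $\P^\dagger\K(Z\B(C,-),ZF)\cong FC$ for $F\in\K$. Moreover Yoneda gives $\K(\B(C,-),G-)\cong\B(K-,C)$, so $d$ is precisely a cylinder of the kind classifying a morphism $f_d\colon Z\B(C,-)\to P_d$ in $\P^\dagger\K$. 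Unwinding virtual orthogonality exactly as recalled above then shows that $F$ is orthogonal to $f_d$ if and only if the comparison $M*FK\to FC$ induced by $d$ is invertible, i.e. if and only if $F$ sends $d$ to a colimit cocylinder. Hence $\tx{Mod}(\B,\mathbb{C})=\{f_d\mid d\in\mathbb{C}\}^\perp$.

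Next I would deal with the limit part, for which the passage through $\P^\dagger\K$ is less essential. For a cylinder $c\colon N\to\B(B,H-)$ in $\mathbb{L}$ (with $N\colon\D\to\V$, $H\colon\D\to\B$, $B\in\B$), since $\K$ is cocomplete I can form the weighted colimit $W\in\K$ of the $\D^{op}$-indexed diagram of representables $e\mapsto\B(He,-)$ weighted by $N$; via Yoneda and the universal property of $W$ the cylinder $c$ transposes to a morphism $\bar c\colon W\to\B(B,-)$ of $\K$, and for every $F$ the map $\K(\bar c,F)\colon\K(\B(B,-),F)\to\K(W,F)$ is identified, again by Yoneda, with the canonical comparison $FB\to\{N,FH\}$. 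Thus $F$ sends $c$ to a limit cylinder if and only if $F$ is orthogonal to $\bar c$ in the ordinary sense. Since $Z$ is fully faithful, ordinary orthogonality of an object $F$ to a morphism $g$ of $\K$ coincides with orthogonality of $F$ to $Zg$ in the sense of this section, and $Z\bar c\colon ZW\to Z\B(B,-)$ has representable domain; so $\tx{Mod}(\B,\mathbb{L})=\{Z\bar c\mid c\in\mathbb{L}\}^\perp$ is a virtual orthogonality class. Combining the two parts, $\tx{Mod}(\S)=\bigl(\{Z\bar c\mid c\in\mathbb{L}\}\cup\{f_d\mid d\in\mathbb{C}\}\bigr)^\perp$ is a virtual orthogonality class in $[\B,\V]$.

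The argument is thus entirely direct and uses neither Theorem~\ref{sketches-accessible} nor Proposition~\ref{acc-virort}. The only point requiring real care is the identification $\P^\dagger\K(P_d,ZF)\cong M*FK$ in the colimit part, where one must track the variances of $M$, $K$ and $G$ carefully, together with the accompanying check that $f_d$ really induces the canonical comparison $M*FK\to FC$; everything else is bookkeeping with weighted (co)limits and the Yoneda lemma.
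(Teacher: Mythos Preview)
Your proof is correct and follows essentially the same approach as the paper's own proof: both treat the limit and colimit parts separately, using ordinary orthogonality (to the map $N*YH^{op}\to YB$) for the limit cylinders and genuine virtual orthogonality (to the map $ZYC\to\{M,ZYK^{op}\}$) for the colimit cocylinders, with the identifications obtained via Yoneda exactly as you describe. The only cosmetic differences are that the paper handles the limit part first and names the diagrams $YH^{op}$, $YK^{op}$ rather than introducing your $G$.
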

\begin{proof}
	Let $\K:=[\B,\V]$, $Y\colon\B^{op}\to\K$ be the Yoneda embedding, and $Z\colon\K\hookrightarrow\P^\dagger\K$ be the inclusion into the free completion. It's enough to show that each cylinder $c\in\mathbb{L}$ there exists a morphism $f_c\colon ZX\to P$ in $\P^\dagger \K$ for which a functor $F\colon \B\to\V$ sends $c$ to a limiting cylinder if and only if $F$ is orthogonal with respect to $f_c$; plus the colimit version of this for any $d\in\mathbb{C}$.
	
	In the limit case, the virtual orthogonality notion coincides with standard orthogonality. Let $c\colon N\to\B(B,H-)$ be a cylinder in $\mathbb{L}$; then we can consider $X:=N*YH^{op}\in\K$ and the map $f_c\colon X\to YB$ induced by $c$. Now, for each $F\colon\B\to\V$, since by construction $\K(X,F)\cong \{N,FH-\}$, it follows that $F$ sends $c$ to a limiting cylinder if and only if it is orthogonal with respect to $f_c$, and this is the same as virtual orthogonality with respect to $Zf_c$.
	
	For the colimit case, consider $d\colon M\to \B(K-,C)$ in $\mathbb{C}$ and define $P:=\{M,ZYK^{op}\}$ in $\P^\dagger\K$; then $d$ induces a map $f_d\colon ZYC\to P$. Now note that, for each $F\colon\B\to\V$,
	$$ \P^\dagger\K(ZYC,ZF)\cong [\B,\V](YC,F)\cong FC $$
	and on the other hand
	$$ \P^\dagger\K(P,ZF)\cong M*\P^\dagger\K(ZYK^{op}-,ZF)\cong M*[\B,\V](YK^{op}-,F)\cong M*FK. $$
	Thus it follows again that $F$ sends $d$ to a colimiting cylinder if and only if it is orthogonal with respect to $f_d$.
\end{proof}

\subsection{The characterization theorems}\label{virtual-main}

We can now sum up all the results above in the characterization Theorem below.

\begin{teo}\label{accessible-embedded}
	For an accessible $\V$-category $\K$ and a fully faithful inclusion $\A\hookrightarrow\K$, the following are equivalent:\begin{enumerate}\setlength\itemsep{0.25em}
		\item $\A$ is accessible and accessibly embedded;
		\item $\A$ is accessibly embedded and virtually reflective;
		\item $\A$ is a virtual orthogonality class.
	\end{enumerate}
\end{teo}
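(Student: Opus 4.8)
The plan is to prove the two cycles $(1)\Rightarrow(2)\Rightarrow(1)$ and $(1)\Rightarrow(3)\Rightarrow(1)$ by assembling results already established, leaving only one implication to be done by hand. The implication $(1)\Rightarrow(2)$ is immediate from Corollary~\ref{accvirtref}, which says that an accessible and accessibly embedded subcategory of the accessible $\V$-category $\K$ is virtually reflective (the accessibly embedded clause is simply retained). Conversely $(2)\Rightarrow(1)$ is Corollary~\ref{vr-acc2}, which turns a virtually reflective and accessibly embedded subcategory of an accessible $\K$ into an accessible one (again the accessibly embedded clause is retained). Finally $(1)\Rightarrow(3)$ is exactly Proposition~\ref{acc-virort}. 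So it remains to prove $(3)\Rightarrow(1)$: a virtual orthogonality class $\A=\M^\perp$ inside an accessible $\K$ is both accessibly embedded and accessible.

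For accessible embeddedness I would argue directly. Since $\M$ is small, I can pick a regular cardinal $\alpha$ such that $\K$ is $\alpha$-accessible and, for every $f\colon ZX\to P\cong\{M,ZH\}$ in $\M$, the object $X$ and the whole image of $H$ consist of $\alpha$-presentable objects of $\K$. By the discussion following Definition~\ref{virtualorth}, $A$ lies in $\M^\perp$ precisely when for each such $f$ the comparison map $M*\K(H-,A)\to\K(X,A)$ is invertible in $\V$. Both $A\mapsto M*\K(H-,A)$ and $A\mapsto\K(X,A)$ preserve $\alpha$-flat colimits --- the former since $M*-$ is cocontinuous, colimits in $[\D^{op},\V]$ are computed pointwise, and each $\K(Hd,-)$ preserves $\alpha$-flat colimits --- and for each $f$ the class of objects on which the comparison is invertible is closed under $\alpha$-flat colimits, because isomorphisms are and $\alpha$-flat colimits commute in $\V$ with the constructions in play. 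Hence $\M^\perp$ is closed in $\K$ under $\alpha$-flat colimits, so the inclusion is $\alpha$-accessible.

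For accessibility I would transport $\A$ into a presheaf $\V$-category and then use the sketch machinery. As in the proof of Corollary~\ref{vr-acc2}, choose a small $\C$ together with a fully faithful accessible (in particular, accessibly embedded) $\V$-functor $E\colon\K\hookrightarrow[\C,\V]$; one checks that $\P^\dagger E\colon\P^\dagger\K\hookrightarrow\P^\dagger[\C,\V]$ is again fully faithful and that $\P^\dagger E\circ Z_\K\cong Z_{[\C,\V]}\circ E$. Applying $\P^\dagger E$ to $\M$ gives a small family $\M'$ of morphisms of $\P^\dagger[\C,\V]$ with representable domain, and full faithfulness of $\P^\dagger E$ yields, for every $K\in\K$, that $EK$ is orthogonal to $(\P^\dagger E)f$ iff $K$ is orthogonal to $f$. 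Applying Proposition~\ref{acc-virort} to the accessible embedding $E$ yields a further small family $\mathcal N$ of morphisms of $\P^\dagger[\C,\V]$ with representable domain such that $\mathcal N^\perp=E(\K)$. Putting these together, $(\M'\cup\mathcal N)^\perp=(\M')^\perp\cap E(\K)=E(\A)$, so $\A$ is equivalent to a virtual orthogonality class in $[\C,\V]$; by Proposition~\ref{virorth-sketch} it is therefore equivalent to the $\V$-category of models of a sketch, and hence accessible by Theorem~\ref{sketches-accessible}. Combined with the previous paragraph, this gives $(3)\Rightarrow(1)$ and finishes the proof.

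The step I expect to be the main obstacle is this last transport: one has to check that $\P^\dagger$ sends fully faithful $\V$-functors to fully faithful ones, that the image of the defining family $\M$ under $\P^\dagger E$ is still a legitimate family of morphisms with representable domain, and --- the crucial point --- that cutting down by the family $\mathcal N$ defining $E(\K)$ recovers $E(\A)$ exactly rather than something larger. The rest is routine bookkeeping with the machinery of Sections~\ref{sketch}--\ref{vir-orthogonal}.
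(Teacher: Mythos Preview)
Your proposal is correct and follows essentially the same route as the paper. The implications $(1)\Leftrightarrow(2)$ and $(1)\Rightarrow(3)$ are handled identically, and for $(3)\Rightarrow(1)$ the paper likewise invokes Proposition~\ref{virorth-sketch} and Theorem~\ref{sketches-accessible} for accessibility and then argues closure under $\alpha$-flat colimits for the embedding; your version simply spells out the reduction to the presheaf case (via $E\colon\K\hookrightarrow[\C,\V]$, the fully faithful $\P^\dagger E$, and the auxiliary family $\mathcal N$ cutting out $E(\K)$) that the paper leaves implicit when citing Proposition~\ref{virorth-sketch}, which is stated only for $\K=[\C,\V]$.
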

\begin{proof}
	$(1)\Leftrightarrow(2)$ are given by the Corollaries~\ref{accvirtref} and~\ref{vr-acc2}. $(1)\Rightarrow(3)$ is a consequence of Proposition~\ref{acc-virort}. For the implication $(3)\Rightarrow(1)$, it follows from Propositions~\ref{virorth-sketch} and~\ref{sketches-accessible} that $\A$ is accessible; moreover it is easily seen to be closed under $\alpha$-flat colimits in $\K$, where $\alpha$ is such that all the morphisms in the family defining the virtual orthogonality class lie in $\P^\dagger(\K_\alpha)$.
\end{proof}

In general we obtain:

\begin{teo}\label{accessible-char}
	The following are equivalent for a $\V$-category $\A$: \begin{enumerate}\setlength\itemsep{0.25em}
		\item $\A$ is accessible;
		\item $\A\simeq \alpha\tx{-Flat}(\C,\V)$ for some $\alpha$ and some small $\C$;
		\item $\A$ is accessibly embedded and virtually reflective in $[\C,\V]$ for some small $\C$;
		\item $\A$ is a virtual orthogonality class in $[\C,\V]$ for some small $\C$;
		\item $\A$ is equivalent to the $\V$-category of models of a sketch.
	\end{enumerate}
\end{teo}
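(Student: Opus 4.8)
The plan is to obtain this theorem purely by assembling the results of the preceding subsections; no new argument is needed, only the right bookkeeping. I would establish the cycle $(1)\Leftrightarrow(2)$, $(2)\Rightarrow(3)\Rightarrow(1)$ and $(2)\Rightarrow(4)\Rightarrow(5)\Rightarrow(1)$, which suffices for the equivalence of all five conditions. The implication $(1)\Leftrightarrow(2)$ is read off directly from Proposition~\ref{acc=flat}: that proposition says a $\V$-category is $\alpha$-accessible exactly when it is equivalent to $\alpha\tx{-Flat}(\C^{op},\V)$ for some small $\C$ (choosable as $\A_\alpha$); since ``accessible'' means ``$\alpha$-accessible for some $\alpha$'' and $\C^{op}$ ranges over all small $\V$-categories as $\C$ does, this is precisely statement $(2)$.

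Next I would treat $(2)\Rightarrow(3)$ and $(2)\Rightarrow(4)$ together. Assuming $(2)$, the equivalence exhibits $\A$ as a full subcategory of the presheaf $\V$-category $\K:=[\C,\V]$; this $\K$ is locally presentable, hence accessible, and the inclusion $\A\hookrightarrow\K$ preserves $\alpha$-flat colimits because the $\alpha$-flat weights are closed in $[\C,\V]$ under $\alpha$-flat colimits, so $\A$ is accessibly embedded in $\K$. Thus condition $(1)$ of Theorem~\ref{accessible-embedded} holds for the pair $(\A,\K)$, and that theorem immediately yields that $\A$ is virtually reflective in $\K$ (this is $(3)$) and that $\A$ is a virtual orthogonality class in $\K$ (this is $(4)$). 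One could equally cite Corollary~\ref{accvirtref} and Proposition~\ref{acc-virort} directly.

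For the two converses: $(3)\Rightarrow(1)$ is the implication $(2)\Rightarrow(1)$ of Theorem~\ref{accessible-embedded} applied with the accessible $\V$-category $\K=[\C,\V]$. And for $(4)\Rightarrow(1)$ I would go through $(5)$: Proposition~\ref{virorth-sketch} shows that a virtual orthogonality class of a presheaf $\V$-category is equivalent to the $\V$-category of models of a sketch, which is $(5)$; then Theorem~\ref{sketches-accessible} gives that every sketchable $\V$-category is accessible, which is $(5)\Rightarrow(1)$. This closes the cycle, since from each of the five conditions one reaches $(1)$ or $(2)$, and $(1)\Leftrightarrow(2)$ then feeds back to $(3)$ and $(4)$.

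There is essentially no obstacle here, as the theorem is a summary of the section; the one point deserving a moment's care is the observation invoked in $(2)\Rightarrow(3),(4)$ that the canonical presheaf embedding of an accessible $\V$-category is genuinely accessibly embedded (so that Theorem~\ref{accessible-embedded} applies), together with the remark that the existential ``for some small $\C$'' in $(3)$--$(5)$ is witnessed by the choice $\C=\A_\alpha^{op}$ supplied by Proposition~\ref{acc=flat}.
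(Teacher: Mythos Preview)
Your proposal is correct and follows essentially the same approach as the paper: both derive $(1)\Leftrightarrow(2)$ from Proposition~\ref{acc=flat}, both invoke Theorem~\ref{accessible-embedded} for the equivalences with $(3)$ and $(4)$ (using the canonical embedding $\A\hookrightarrow[\A_\alpha^{op},\V]$ to land inside a presheaf category), and both rely on Theorem~\ref{sketches-accessible} for the link with $(5)$. The only cosmetic difference is that the paper cites $(1)\Leftrightarrow(5)$ directly from Theorem~\ref{sketches-accessible}, whereas you route $(4)\Rightarrow(5)$ through Proposition~\ref{virorth-sketch}; since the proof of Theorem~\ref{accessible-embedded} itself passes through these same propositions for $(3)\Rightarrow(1)$, this is the same argument unpacked.
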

\begin{proof}
	The equivalences $(1)\Leftrightarrow(3)\Leftrightarrow(4)$ are a direct consequence of Theorem~\ref{accessible-embedded}.  The implication $(1)\Leftrightarrow (2)$ is given by Proposition~\ref{acc=flat}, while $(1)\Leftrightarrow (5)$ is given by Theorem~\ref{sketches-accessible}.
\end{proof}

A few consequences of these characterization theorems are:

\begin{cor}
	If $\A$ is a $\V$-category with $\alpha$-flat colimits, for some $\alpha$, then $\A$ is accessible if and only if it is virtually cocomplete and has a dense generator made of presentable objects.
\end{cor}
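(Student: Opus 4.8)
The plan is to handle the two implications separately: the forward direction simply assembles results already established, while the converse is reduced to the characterization theorem via the presheaf embedding determined by the dense generator.

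For the ``only if'' direction, suppose $\A$ is accessible, say $\beta$-accessible for some $\beta$. Then $\A$ is virtually cocomplete by Proposition~\ref{accvirtcoco}, and by Proposition~\ref{A_alpha}(3) every object satisfies $A\cong\A(H-,A)*H$, where $H\colon\A_\beta\hookrightarrow\A$ is the inclusion; this says precisely that $\A_\beta$ is a dense subcategory of $\A$, and it is small and consists of $\beta$-presentable objects. Hence $\A$ has a dense generator made of presentable objects.

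For the converse, assume $\A$ has $\alpha$-flat colimits, is virtually cocomplete, and has a dense generator $H\colon\G\hookrightarrow\A$ made of presentable objects. The first step is to fix a single convenient regular cardinal $\gamma\ge\alpha$ with $\G\subseteq\A_\gamma$; this is possible because $\G$ is small and because, for $\gamma\le\gamma'$, the class of $\gamma'$-flat weights is contained in that of $\gamma$-flat weights, so that presentability of an object is inherited as the index grows, and a $\V$-category with $\alpha$-flat colimits also has $\gamma$-flat colimits. The second step is to observe that the induced $\V$-functor $W:=\A(H,1)\colon\A\to[\G^{op},\V]$ is fully faithful (since $\G$ is dense) and $\gamma$-accessible: colimits in $[\G^{op},\V]$ are pointwise and $\tx{ev}_G\circ W\cong\A(HG,-)$ preserves $\gamma$-flat colimits because $HG$ is $\gamma$-presentable; thus $\A$ is accessibly embedded in $[\G^{op},\V]$. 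The third step uses virtual cocompleteness: by Proposition~\ref{subvref} the functor $W$ has a virtual left adjoint, i.e.\ $\A$ is virtually reflective in $[\G^{op},\V]$. Since $[\G^{op},\V]$ is a presheaf $\V$-category, the implication $(3)\Rightarrow(1)$ of Theorem~\ref{accessible-char} (which is Proposition~\ref{vr-acc}) then gives that $\A$ is accessible.

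The only step needing genuine care is the choice of $\gamma$: one must pick it large enough that every object of the small generator becomes $\gamma$-presentable while the $\gamma$-flat colimits remain available in $\A$, and both points rest on the monotonicity fact that $\gamma'$-flat weights form a subclass of $\gamma$-flat weights whenever $\gamma\le\gamma'$. After that, the argument is a straightforward chain of references to Propositions~\ref{accvirtcoco}, \ref{A_alpha}, \ref{subvref} and~\ref{vr-acc}.
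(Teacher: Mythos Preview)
Your proof is correct and follows essentially the same approach as the paper's: embed $\A$ fully faithfully in $[\G^{op},\V]$ via the dense generator, use presentability of the generator to make this an accessible embedding, invoke Proposition~\ref{subvref} for virtual reflectivity, and conclude via Theorem~\ref{accessible-char}. You are simply more explicit about the forward direction and about the choice of the common index $\gamma$, details the paper leaves implicit.
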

\begin{proof}
	By the previous theorem it's enough to prove that $\A$ is accessibly embedded and virtually reflective in some category of presheaves. Let $\C\subseteq\A$ be a dense presentable generator; then the inclusion $\A\hookrightarrow[\C^{op},\V]$ is accessible (since every object of $\C$ is presentable) and virtually reflective by Proposition~\ref{subvref}.
\end{proof}

\begin{cor}
	If $\A$ is a $\V$-category with $\alpha$-flat colimits, for some $\alpha$, then $\A$ is accessible if and only if it has a dense generator and $\P^\dagger \A$ consists exactly of the accessible presheaves out of $\A$.
\end{cor}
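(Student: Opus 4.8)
The plan is to deduce the statement from the Corollary immediately above, which characterises accessibility (for a $\V$-category with some $\alpha$-flat colimits) as virtual cocompleteness together with a dense generator made of presentable objects.

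The forward implication is quick: if $\A$ is accessible then Proposition~\ref{acc=small} identifies $\P^\dagger\A$ with the accessible presheaves out of $\A$, and, picking $\alpha$ with $\A$ $\alpha$-accessible, Proposition~\ref{A_alpha} shows that $\A_\alpha$ is small and exhibits every object through the density presentation $A\cong\A(H-,A)*H$, so it is a dense generator.

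For the converse, assume $\A$ has a dense generator $H\colon\C\hookrightarrow\A$ and that $\P^\dagger\A$ is precisely the family of accessible presheaves out of $\A$. The key observation is that under this hypothesis $\C$ automatically consists of presentable objects: for each $C\in\C$ the functor $\A(HC,-)\colon\A\to\V$ is representable, hence a small presheaf, hence lies in $\P^\dagger\A$, hence is accessible --- and accessibility of $\A(HC,-)$ is exactly presentability of $HC$. It remains to check that $\A$ is virtually cocomplete, that is, that for every small $\D$, weight $M\colon\D^{op}\to\V$ and $G\colon\D\to\A$ the functor $[\D^{op},\V](M,\A(G,-))$ is small. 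Now this functor factors as the composite of $\A(G,1)\colon\A\to[\D^{op},\V]$ with the weighted-limit functor $[\D^{op},\V](M,-)\colon[\D^{op},\V]\to\V$. The second factor is accessible because it is given by a small weighted limit; the first is accessible because each $\A(GD,-)$ with $D\in\D$ is accessible by the previous observation and, $\D$ being small, a single accessibility index serves for all of them. Hence the composite is accessible, so by hypothesis it is small; thus all virtual colimits exist in $\A$. With both conditions verified, the Corollary above yields that $\A$ is accessible.

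The point requiring care is this virtual-cocompleteness step: the representables $\A(GD,-)$ are each accessible only with their own index, so one has to take a regular cardinal above all of them (possible since $\D$ is small) and above the index of the weighted-limit functor $[\D^{op},\V](M,-)$, using that a $\gamma$-flat colimit is $\gamma'$-flat whenever $\gamma'\le\gamma$ (and hence exists, being $\alpha$-flat), so that enlarging the index does no harm. The remaining ingredients --- density giving the fully faithful comparison $\A\hookrightarrow[\C^{op},\V]$, and ``representable implies small'' --- are routine.
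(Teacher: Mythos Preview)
Your proof is correct and follows essentially the same approach as the paper. Both directions proceed as in the paper: forward via Proposition~\ref{acc=small} and density of $\A_\alpha$; backward by observing that representables are small, hence accessible by hypothesis, hence every object is presentable, and that $\A$ is virtually cocomplete, then invoking the preceding corollary.

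Two small remarks. First, your ``key observation'' applies to every object of $\A$, not just those in the dense generator $\C$: the argument ``$\A(A,-)$ is representable $\Rightarrow$ small $\Rightarrow$ accessible'' uses nothing about $A$ lying in $\C$. You implicitly rely on this when you claim each $\A(GD,-)$ is accessible for an arbitrary diagram $G\colon\D\to\A$, so you should state it in that generality. Second, your verification of virtual cocompleteness unfolds what the paper compresses into one line: the paper simply notes that accessible functors are closed under small limits in $[\A,\V]$, whence $\P^\dagger\A$ (being by hypothesis exactly the accessible presheaves) is cocomplete. Your argument that $[\D^{op},\V](M,\A(G,-))$ is accessible as a composite of accessible functors is the same fact spelled out, with the cardinal bookkeeping made explicit.
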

\begin{proof}
	Note that each object of $\A$ is presentable, since representable functors are accessible by hypothesis; moreover $\P^\dagger \A$ is cocomplete, since accessible functors are limit closed in $[\A,\V]$. Thus the result follows from the previous corollary.
\end{proof}

So far we have only given a characterization of the accessible $\V$-categories, but not of the conically accessible ones; this is what we can say in that context:

\begin{teo}\label{con-accessible-embedded}
	For a conically accessible $\V$-category $\K$ and a fully faithful inclusion $\A\hookrightarrow\K$, the following are equivalent:\begin{enumerate}\setlength\itemsep{0.25em}
		\item $\A$ is conically accessible and conically accessibly embedded;
		\item $\A_0$ is accessible and accessibly embedded in $\K_0$;
		\item $\A_0$ is accessibly embedded and virtually reflective in $\K_0$;
		\item $\A_0$ is a virtual orthogonality class in $\K_0$.
	\end{enumerate}
\end{teo}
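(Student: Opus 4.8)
The plan is to separate the group of purely ordinary-categorical conditions $(2)$, $(3)$, $(4)$ from the bridge $(1)\Leftrightarrow(2)$. For the former: since $\K$ is conically accessible, its underlying ordinary category $\K_0$ is accessible by Proposition~\ref{conicallyaccproperties}(4), so the equivalences $(2)\Leftrightarrow(3)\Leftrightarrow(4)$ are exactly the ordinary ($\V=\bo{Set}$) instance of Theorem~\ref{accessible-embedded}, applied to the accessible category $\K_0$ and the fully faithful inclusion $\A_0\hookrightarrow\K_0$. Nothing new is required there.

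For $(1)\Rightarrow(2)$ I would argue directly. If $\A$ is conically $\alpha$-accessible and the inclusion $J\colon\A\hookrightarrow\K$ preserves conical $\alpha$-filtered colimits, then $\A_0$ is $\alpha$-accessible by Proposition~\ref{conicallyaccproperties}(4); moreover $\A_0$ has $\alpha$-filtered colimits (these underlie the conical ones in $\A$) and $J_0\colon\A_0\hookrightarrow\K_0$ preserves them, because a conical colimit, when it exists, is carried by any $\V$-functor preserving it to the ordinary colimit of the transposed diagram in the underlying category. Hence $\A_0$ is accessibly embedded in $\K_0$.

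For $(2)\Rightarrow(1)$ the idea is first to upgrade the ordinary closure to an enriched one, and then invoke Corollary~\ref{conically-acc}. Choose a regular cardinal $\alpha$ such that $\K$ has conical $\alpha$-filtered colimits (possible since $\K$ is conically accessible) and such that $\A_0$ is closed in $\K_0$ under $\alpha$-filtered colimits — this can be arranged because being accessibly embedded makes $\A_0$ closed under $\alpha_1$-filtered colimits for some $\alpha_1$, and hence under $\alpha$-filtered colimits for every regular $\alpha\ge\alpha_1$ (an $\alpha$-filtered category is in particular $\alpha_1$-filtered). Given an $\alpha$-filtered diagram in $\A$, form its conical colimit in $\K$; its underlying object is the ordinary colimit in $\K_0$ of the underlying diagram, which lands in $\A_0$, so the conical colimit itself lands in $\A$. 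Thus $\A$ is closed in $\K$ under conical $\alpha$-filtered colimits, i.e.\ $\A$ is conically accessibly embedded in $\K$. Since $\K$ is conically accessible, $\A\hookrightarrow\K$ is conically accessibly embedded, and $\A_0$ is accessible, Corollary~\ref{conically-acc} gives that $\A$ is conically accessible; these two facts together are exactly $(1)$.

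The only step needing genuine care is the passage from closure of $\A_0$ in $\K_0$ to closure of $\A$ in $\K$ in the $(2)\Rightarrow(1)$ argument, which rests on the fact (recalled in Section~\ref{back}) that conical colimits are computed on underlying objects; but I do not anticipate a real obstacle, the substance of the theorem being that conical accessibility of a subcategory is an underlying-category phenomenon, with all the supporting results — Theorem~\ref{accessible-embedded}, Proposition~\ref{conicallyaccproperties}, Corollary~\ref{conically-acc} — already available.
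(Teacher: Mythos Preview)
Your proposal is correct and takes essentially the same approach as the paper, which proves the theorem in one line by citing Theorem~\ref{accessible-embedded} for $\V=\bo{Set}$ together with Corollary~\ref{conically-acc}. You have simply spelled out the $(1)\Leftrightarrow(2)$ bridge --- that conical colimits are detected on underlying categories, so closure of $\A_0$ in $\K_0$ under $\alpha$-filtered colimits upgrades to closure of $\A$ in $\K$ under conical $\alpha$-filtered colimits --- more carefully than the paper does, but the substance is identical.
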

\begin{proof}
	Follows from Theorem~\ref{accessible-embedded} for $\V=\bo{Set}$ and Corollary~\ref{conically-acc}.
\end{proof}

And as a consequence: 

\begin{teo}\label{con-accessible-char}
	The following are equivalent for a $\V$-category $\A$:\begin{enumerate}\setlength\itemsep{0.25em}
		\item $\A$ is conically accessible;
		\item $\A$ is conically accessibly embedded in $[\C,\V]$, for some small category $\C$, and $\A_0$ is virtually reflective in $[\C,\V]_0$;
		\item $\A$ is a full subcategory of $[\C,\V]$, for some small category $\C$, and $\A_0$ accessible and accessibly embedded in $[\C,\V]_0$;
		\item $\A$ is a full subcategory of $[\C,\V]$, for some small category $\C$, and $\A_0$ is a virtual orthogonality class in $[\C,\V]_0$.
	\end{enumerate}
\end{teo}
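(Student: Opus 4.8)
The plan is to reduce the whole statement to Theorem~\ref{con-accessible-embedded} by realizing $\A$ inside a presheaf $\V$-category $[\C,\V]$, which is locally presentable and hence conically accessible.

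The one genuinely new ingredient I would establish first is the following: \emph{if $\A$ is conically $\alpha$-accessible then $H\colon\A^c_\alpha\hookrightarrow\A$ is dense}, so that $E:=\A(H,1)\colon\A\to[(\A^c_\alpha)^{op},\V]$ is fully faithful. To see this, write an arbitrary $A\in\A$ as a conical $\alpha$-filtered colimit $A\cong\colim_i Hd_i$ with the $d_i$ in $\A^c_\alpha$ (Proposition~\ref{conicallyaccproperties}(3)); since each $Hc$ is conically $\alpha$-presentable, applying $\A(Hc,-)$ yields $\A(H-,A)\cong\colim_i\A^c_\alpha(-,d_i)$ as a conical colimit in $[(\A^c_\alpha)^{op},\V]$, and then $\A(H-,A)*H\cong\colim_i(\A^c_\alpha(-,d_i)*H)\cong\colim_i Hd_i\cong A$, using that $(-)*H$ is cocontinuous in the weight together with the Yoneda isomorphism $\A^c_\alpha(-,d_i)*H\cong Hd_i$; the canonical comparison map is this isomorphism, so $H$ is dense. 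The same computation shows that $E$ preserves conical $\alpha$-filtered colimits, so with $\C:=(\A^c_\alpha)^{op}$ the $\V$-category $\A$ is conically $\alpha$-accessibly embedded in $[\C,\V]$. Since $[\C,\V]$ is a presheaf $\V$-category it is locally presentable, hence conically accessible, and $E$ is fully faithful; thus Theorem~\ref{con-accessible-embedded} applies with $\K=[\C,\V]$. Its condition (1) holds by construction, hence so do its conditions (2), (3) and (4), and these are precisely condition (3), condition (2) (combined with the conical accessible embedding just obtained) and condition (4) of the present theorem. This gives $(1)\Rightarrow(2),(3),(4)$.

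For the converses, in each of (2), (3), (4) the $\V$-category $\A$ is a fully faithful subcategory of $\K:=[\C,\V]$ for some small $\C$, and $\K$ is locally presentable and hence conically accessible, so Theorem~\ref{con-accessible-embedded} applies again. Condition (3) here is literally condition (2) of that theorem and condition (4) here is literally condition (4) there, so each forces $\A$ to be conically accessible. For condition (2), the hypothesis that $\A$ is conically accessibly embedded in $[\C,\V]$ forces $\A_0$ to be closed under $\alpha$-filtered colimits in $[\C,\V]_0$, hence accessibly embedded there, and together with the hypothesis that $\A_0$ is virtually reflective in $[\C,\V]_0$ this is condition (3) of Theorem~\ref{con-accessible-embedded}, which once more yields that $\A$ is conically accessible.

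The hard part is really just the density of $\A^c_\alpha$: in the enriched setting ``generates under conical $\alpha$-filtered colimits'' does not imply ``dense'' in general, and the argument above goes through precisely because $\A^c_\alpha$ is the \emph{full} subcategory of conically $\alpha$-presentable objects, so that the weights $\A(H-,d_i)$ occurring in the colimit presentation are genuine representables on $\A^c_\alpha$ and the Yoneda formula applies. Everything else is bookkeeping on top of Theorem~\ref{con-accessible-embedded} and the standard fact that presheaf $\V$-categories are locally presentable.
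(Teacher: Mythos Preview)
Your proof is correct and follows exactly the strategy the paper intends: the theorem is stated in the paper simply as ``a consequence'' of Theorem~\ref{con-accessible-embedded}, with no proof given, and you have supplied the details. The one step the paper leaves entirely implicit is how to embed a conically $\alpha$-accessible $\A$ into a presheaf $\V$-category; you establish this via density of $\A^c_\alpha$, which is the natural route (it also follows formally from Proposition~3.11 identifying $\A$ as the free cocompletion of $\A^c_\alpha$ under $\alpha$-filtered colimits, together with \cite{KS05:articolo}). Your matching of conditions (2)--(4) here with conditions (2)--(4) of Theorem~\ref{con-accessible-embedded} is accurate, including the small observation that conical accessible embedding of $\A$ forces accessible embedding of $\A_0$.
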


We don't yet know whether a $\V$-category which is conically accessibly embedded  and virtually reflective in some $[\C,\V]$, is also conically accessible. The issue is that the fact that a $\V$-category $\A$ is virtually reflective in $\K$ is not known to imply that $\A_0$ is virtually reflective in $\K_0$.

\vspace{6pt}
\subsection{Cone-reflectivity and cone-injectivity}\label{cre+p}

In this section we go back to the ordinary setting ($\V=\bo{Set}$) and compare the virtual concepts introduced in Section~\ref{virt-reflective} and \ref{vir-orthogonal} with those of cone-reflectivity and cone-injectivity class from \cite{AR94:libro}. For that we first need to recall the notions of petty and lucid functors introduced by Freyd:

\begin{Def}[\cite{Fre69}]
	Let $\A$ be a category; a functor $P\colon \A\to\bo{Set}$ is called {\em petty} if there exists a family $(A_i)_{i\in I}$ in $\A$ and an epimorphism
	$$ \sum_{i\in I}\A(A_i,-)\twoheadrightarrow P. $$
	Denote by $\tx{Pt}(\A)$ the full subcategory of $[\A^{op},\bo{Set}]$ spanned by the petty functors.
\end{Def}

	Clearly every small functor is petty since every small colimit of representables is in particular a coequalizer of coproducts of them. Thus we have a fully faithful inclusion $\P\A\hookrightarrow\tx{Pt}(\A)$ as full subcategories of $[\A^{op},\bo{Set}]$; moreover the category $\tx{Pt}(\A)$ is locally small and, if we allow some colimits to be large, it can be seen as some kind of free cocompletion of $\A$:

\begin{obs}
	Let us say that a category $\L$ is {\em well cocomplete} if it is cocomplete and has all (possibly large) cointersections of regular epimorphisms. A functor $F\colon \L\to\K$ is well cocontinuous if it is cocontinuous and preserves all the cointersections of regular epimorphisms. Then, for any category $\A$, it's easy to see that $\tx{Pt}(\A)$ is well cocomplete and also the free {\em well cocompletion} of $\A$: for any well cocomplete category $\B$, precomposition with the inclusion $V\colon \A\hookrightarrow\tx{Pt}(\A)$ induces an equivalence between $[\A,\B]$ and the category of well cocontinuous functors $\tx{Pt}(\A)\to \B$.
\end{obs}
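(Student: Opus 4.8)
The plan is to verify, in order: that $\tx{Pt}(\A)$ is locally small and well cocomplete with all the relevant colimits computed as in $[\A^{op},\bo{Set}]$; that the corestriction $V\colon\A\hookrightarrow\tx{Pt}(\A)$ of the Yoneda embedding is well defined (representables are petty) and dense; and that for every well cocomplete $\B$ the functor $V^*\colon[\tx{Pt}(\A),\B]_{\tx{wc}}\to[\A,\B]$ given by restriction along $V$ is an equivalence, where $[-,-]_{\tx{wc}}$ denotes the full subcategory of well cocontinuous functors. The overarching idea is that $\tx{Pt}(\A)$ is exactly the closure of the representables inside $[\A^{op},\bo{Set}]$ under small coproducts and regular-epi quotients; since a coequalizer is a regular-epi quotient this closure absorbs all small colimits, and it also absorbs arbitrary cointersections of regular epimorphisms, as explained below.

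First I would record the structural facts. If $P$ is petty, say via an epimorphism $\sum_{i\in I}\A(-,A_i)\twoheadrightarrow P$, then a morphism out of $P$ is determined by an $I$-indexed family of elements, so $\tx{Pt}(\A)$ is locally small; small coproducts and coequalizers of petty functors are again petty (a coproduct of pointwise surjections is a pointwise surjection, and a coequalizer of a petty functor is one of its quotients), so $\tx{Pt}(\A)$ is cocomplete with small colimits created by the inclusion into $[\A^{op},\bo{Set}]$. For a possibly large family of regular epimorphisms $\{p_k\colon P\twoheadrightarrow Q_k\}_{k\in K}$ in $\tx{Pt}(\A)$, the pointwise formula $A\mapsto P(A)/\bigvee_k\ker p_k(A)$ defines a presheaf (the class of equivalence relations on a fixed set is a set, so the join exists), which is a quotient of $P$, hence petty, and is immediately checked to be the cointersection. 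Thus $\tx{Pt}(\A)$ is well cocomplete, and it remains only to establish the universal property.

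Now fix $F\colon\A\to\B$ with $\B$ well cocomplete. The crucial point is that each petty $P$ admits a presentation using only the two permitted kinds of colimit: choose $q\colon S:=\sum_{i\in I}\A(-,A_i)\twoheadrightarrow P$, and for every triple $j=(B,u,v)$ with $u,v\in S(B)$ and $q_B(u)=q_B(v)$ form the coequalizer $S\twoheadrightarrow P_j$ of the pair $\A(-,B)\rightrightarrows S$ determined, via Yoneda, by $u$ and $v$; then $P\cong\bigwedge_j P_j$ as a cointersection of regular epimorphisms out of $S$, since a morphism inverting all these pairs is precisely one coequalizing the congruence they generate, namely $\ker q$. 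Applying $F$ to this diagram --- replacing $S$ by $\sum_{i\in I}FA_i$, each $P_j$ by the analogous coequalizer of $FB\rightrightarrows\sum_i FA_i$ in $\B$ (which exists since $\B$ is cocomplete), and then taking the cointersection in $\B$ (which exists since $\B$ is well cocomplete) --- produces an object $\tilde F(P)$; taking the one-summand presentation of a representable shows $\tilde FV\cong F$. One then checks that $\tilde F$ is functorial, independent of the chosen presentation, well cocontinuous by construction, and that any morphism $F\Rightarrow F'$ extends uniquely to $\tilde F\Rightarrow\tilde F'$. Conversely, any well cocontinuous $G$ with $GV\cong F$ must carry the above presentation of $P$ to the analogous presentation in $\B$, hence $G\cong\tilde F$; and a transformation between well cocontinuous functors is determined by its components at representables. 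These facts together say that $V^*$ is an equivalence.

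The main obstacle is the size issue that runs through the argument. Since $\A$ may be large, $[\A^{op},\bo{Set}]$ need not be locally small and the category of elements $\tx{El}(P)$ of a petty $P$ is genuinely large, so one cannot merely set $\tilde F(P)=\colim_{\tx{El}(P)}F\circ\pi$ and quote cocontinuity; the substance of the proof is that this large colimit can always be rebuilt from small colimits together with a single large cointersection of regular epimorphisms --- precisely the colimits a well cocontinuous functor is required to preserve. The fiddliest point is showing that $\tilde F(P)$ does not depend on the small presentation $\{A_i\}_{i\in I}$, and is therefore functorial; I expect to handle this by using density of $V$ in $\tx{Pt}(\A)$ (a map out of a petty functor is determined by its restrictions along the representables mapping into it), which turns every required identification into a Yoneda computation among representables, where the two allowed classes of colimit behave as in $[\A^{op},\bo{Set}]$.
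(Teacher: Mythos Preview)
The paper records this as a Remark without proof, merely asserting that ``it's easy to see''; so there is no argument in the paper to compare your proposal against, only the claim itself.

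Your outline is correct. The structural facts about $\tx{Pt}(\A)$ --- local smallness, closure under small colimits, and large cointersections of regular epimorphisms computed pointwise via the join of kernel equivalence relations --- are fine, and the key observation that every petty $P$ can be exhibited as a large cointersection of small coequalizers of a small coproduct of representables is exactly what is needed to construct the extension $\tilde F$.

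The two places you flag as delicate (independence of the chosen presentation, and well-cocontinuity of $\tilde F$) are both handled most cleanly by verifying, directly from your construction, the natural isomorphism
\[
\B(\tilde F(P),B)\;\cong\;[\A^{op},\bo{Set}]\bigl(P,\B(F{-},B)\bigr)
\]
for each $B\in\B$. Indeed, a map $\textstyle\sum_i FA_i\to B$ factors through every $\tilde F(P_j)$ precisely when the corresponding map $S\to\B(F{-},B)$ coequalizes every generating pair of $\ker q$, hence factors through $P$. Once this formula is established, independence of presentation is immediate (the right-hand side does not mention one), functoriality and well-cocontinuity follow since the right-hand side carries colimits in the variable $P$ --- small ones and large cointersections alike --- to limits, and uniqueness of the extension follows by Yoneda. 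Your appeal to ``density of $V$'' for the independence step is really a proxy for this representability; making the formula explicit removes the hand-waving. Note in particular that the assertion that $\tilde F$ is ``well cocontinuous by construction'' is otherwise unjustified: you only built $\tilde F$ to preserve one specific presentation of each $P$, not arbitrary small colimits or arbitrary cointersections of regular epimorphisms.
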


\begin{Def}[\cite{Fre69}]
	We say that a functor $L\colon\A\to\bo{Set}$ is {\em lucid} if it is petty and for any other petty $P$ and $f,g\colon P\to L$, the equalizer of $(f,g)$ is still petty. Denote by $\tx{Lcd}(\A)$ the full subcategory of $\tx{Pt}(\A)$ given by the lucid functors.
\end{Def}

Note that, thanks to \cite[Proposition~1.1]{Fre69}, in the definition above we can assume $P$ to be representable. As a consequence $L$ is lucid if and only if it is petty and for any representable $A\in\A$ and $f,g\colon \A(A,-)\to L$, the equalizer of $(f,g)$ is still petty.

\begin{obs}
	To define enriched notions of petty and lucid functors one would need to choose a suitable class of maps in $\V$ to play the role of epimorphisms in $\bo{Set}$.
\end{obs}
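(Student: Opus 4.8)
The plan is to justify the remark by locating precisely where Freyd's definitions use the set-theoretic structure of $\bo{Set}$, and to observe that this is exactly the notion of epimorphism. First I would recall that a functor $P\colon\A^{op}\to\bo{Set}$ is petty precisely when there is an epimorphism $\sum_{i}\A(A_i,-)\twoheadrightarrow P$ out of a set-indexed coproduct of representables; beyond coproducts and representables, the only ingredient is the word \emph{epimorphism}. In $\bo{Set}$, and hence pointwise in $[\A^{op},\bo{Set}]$, epimorphisms are just the surjections, and these are canonical: they form one half of the surjection/injection factorization system, they are stable, and they coincide with the regular and the strong epimorphisms. It is this coincidence that makes the definition of $\tx{Pt}(\A)$ unambiguous.

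Next I would argue that no such canonical choice survives enrichment. Replacing $\bo{Set}$ by a general $\V$, a $\V$-natural transformation $\sum_i\A(A_i,-)\to P$ into a $\V$-functor $P\colon\A^{op}\to\V$ no longer carries a distinguished notion of surjectivity: in $\V$ the epimorphisms, regular epimorphisms and strong epimorphisms may all differ, and none is singled out by the monoidal closed structure alone. Thus to state the petty condition one must first fix a class $\E$ of maps in $\V$ to play the role of the surjections, and then call a $\V$-natural transformation an $\E$-quotient when each component lies in $\E$ (or, more invariantly, when it is an $\E$-epimorphism for a chosen enriched factorization system). Different reasonable choices of $\E$ yield genuinely different classes of ``petty'' $\V$-functors, so the choice is not canonical.

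For lucidity the same phenomenon appears dually: the defining condition asks certain equalizers of maps between petty functors to be petty, and equalizers are controlled by the monomorphisms accompanying the epimorphisms in a factorization system. A coherent enriched definition of $\tx{Lcd}(\A)$ therefore needs $\E$ together with an orthogonal class of ``monomorphisms'' that interact well, for instance through an enriched orthogonal factorization system on $\V$, so that $\E$-quotients and equalizers can be handled simultaneously. This compatible pair is precisely the ``suitable class of maps'' to which the remark refers.

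The main point --- and the only subtle one --- is conceptual rather than technical: once $\E$ is fixed there is no obstruction to running the constructions, so what must be established is that the defining clauses genuinely depend on a chosen notion of epimorphism and are not determined by the closed structure of $\V$. I would make this precise by exhibiting, for a representative base such as a presheaf category $\V=[\D^{op},\bo{Set}]$, two natural choices of $\E$ that produce incomparable classes of petty $\V$-functors, thereby confirming that the choice cannot be dispensed with.
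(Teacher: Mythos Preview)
The statement you are addressing is a \emph{Remark} in the paper, and the paper offers no proof or justification for it whatsoever: it is a single-sentence aside inserted between the definition of lucid functors and the subsequent proposition, simply flagging that an enriched generalization would require additional choices. There is therefore nothing in the paper to compare your proposal against.

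Your elaboration is a reasonable and correct informal justification of why the remark is true: you correctly isolate the role of epimorphisms in Freyd's definition, observe that in $\bo{Set}$ the various flavours of epimorphism coincide while in a general $\V$ they need not, and conclude that a choice must be made. This is exactly the content the remark is gesturing at. Your further suggestions---that one should pair the chosen class $\E$ with a compatible class of monomorphisms via an enriched factorization system, and that one could exhibit an explicit base where different choices of $\E$ yield different petty classes---go well beyond anything the paper attempts. They are plausible directions but are not needed to support a passing remark, and the paper does not pursue them. In short: your proposal is not wrong, but it is an expansion of an unproved observation rather than a proof to be checked against the paper's own argument.
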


Every lucid functor is petty (by definition), however in general lucid and small functors are not comparable. This changes if the following conditions are satisfied: 

\begin{prop}\label{petty-lucis-small}
	The following are equivalent for a category $\A$: \begin{enumerate}\setlength\itemsep{0.25em}
		\item $\tx{Pt}(\A)$ has limits of representables (i.e. $\A$ is pre-complete);
		\item $\tx{Lcd}(\A)$ is complete and contains the representables;
		\item $\P\A$ has limits of representables;
		\item $\P\A$ is complete;
	\end{enumerate}
	and in that case $\P\A=\tx{Lcd}(\A)$.
\end{prop}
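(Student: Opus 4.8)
The plan is to establish the cycle of implications $(1)\Rightarrow(2)\Rightarrow(4)\Rightarrow(3)\Rightarrow(1)$, together with the final identification $\P\A=\tx{Lcd}(\A)$, using the description of $\tx{Pt}(\A)$ as the free well cocompletion of $\A$ recorded in the remark above. The first observation I would isolate is that, since $\tx{Pt}(\A)$ is well cocomplete (in particular cocomplete), a limit of representables computed in $[\A^{op},\bo{Set}]$ lies in $\tx{Pt}(\A)$ precisely when the limit cone exhibits that presheaf as a well-cocontinuous-functor-preserved object; more usefully, I would show directly that a limit $\lim_i \A(A_i,-)$ taken in $[\A^{op},\bo{Set}]$ is lucid whenever it is petty. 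This is the content that makes $(1)\Leftrightarrow(2)$ work: a finite-limit-type argument shows that an equalizer of two maps $\A(A,-)\to P$ with $P$ petty is again petty as soon as $\tx{Pt}(\A)$ has the relevant limits, using the characterization (noted after the definition of lucid) that it suffices to test luciditiy against representable domains.

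For $(1)\Rightarrow(2)$: assuming $\tx{Pt}(\A)$ has limits of representables, representables are trivially lucid (an equalizer into a representable of two maps from a representable is a subfunctor of a representable, hence a limit of representables, hence petty by hypothesis); and the limit in $\tx{Pt}(\A)$ of a diagram of representables is then lucid by the same subfunctor/equalizer argument, so $\tx{Lcd}(\A)$ contains the representables and has limits of representables. Since $\tx{Lcd}(\A)$ is closed under products in $\tx{Pt}(\A)$ (a product of petty functors with petty equalizer property is again such — this is Freyd's observation) and now has equalizers, it is complete, giving $(2)$. The implication $(2)\Rightarrow(4)$ is immediate: a complete category containing the representables has, in particular, limits of representables, and these are computed in $\P\A$ once we know $\P\A$ is closed in $[\A^{op},\bo{Set}]$ under the relevant limits — but rather than route through $\P\A$ directly here I would instead prove $(2)\Rightarrow(3)$ by showing $\tx{Lcd}(\A)=\P\A$ under hypothesis $(2)$, which also delivers the final clause.

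The identification $\P\A=\tx{Lcd}(\A)$ (under any of the equivalent hypotheses) is the technical heart. One inclusion, $\P\A\subseteq\tx{Lcd}(\A)$: a small functor is petty, and the equalizer of two maps $\A(A,-)\to F$ with $F$ small is a small presheaf because $\P\A$ is closed under finite limits in $[\A^{op},\bo{Set}]$ when $\P\A$ has limits of representables — here I would invoke \cite[Theorem~3.8]{DL07} (already cited in the excerpt, just after Proposition~\ref{virtualcol}) to the effect that $\P\A$ is complete as soon as it has limits of representables, and that these limits are computed as in $[\A^{op},\bo{Set}]$. The reverse inclusion $\tx{Lcd}(\A)\subseteq\P\A$ is the main obstacle and the step I expect to require real work: given a lucid $L$, I would use a canonical presentation $\sum_i\A(A_i,-)\twoheadrightarrow L$ together with the kernel pair, whose two projections land in a coproduct of representables; the equalizer/pullback manipulations forced by lucidity produce, degreewise, a small-colimit presentation of $L$ by representables, exhibiting $L$ as the left Kan extension of its restriction to a small subcategory of $\A$ — i.e.\ $L\in\P\A$. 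This is essentially Freyd's argument that lucid functors are the "completions" one wants, transcribed into the present language; care is needed because the coproducts and cointersections involved may be large, so the argument must stay inside $\tx{Pt}(\A)$ and only at the end observe smallness.

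Finally $(3)\Rightarrow(1)$ and $(4)\Rightarrow(1)$: since $\P\A\subseteq\tx{Pt}(\A)$, if $\P\A$ has (or is closed under) limits of representables then so does $\tx{Pt}(\A)$, because the limit computed in $[\A^{op},\bo{Set}]$ already lies in the smaller category $\P\A\subseteq\tx{Pt}(\A)$; and $(4)\Rightarrow(3)$ is trivial. Thus the cycle closes. The only genuinely delicate point, as flagged, is the containment $\tx{Lcd}(\A)\subseteq\P\A$, which amounts to promoting "petty with a petty-equalizer property" to "small"; everything else is formal manipulation of (co)limits of representables together with the cited completeness criterion for $\P\A$ from \cite{DL07}.
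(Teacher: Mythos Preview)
Your approach is essentially the paper's: the proof in the paper is purely by citation --- $(1)\Leftrightarrow(2)$ is Freyd's \cite[Theorem~1.(12)]{Fre69}, the identification $\P\A=\tx{Lcd}(\A)$ is \cite[Lemma~1]{Ros99}, $(3)\Leftrightarrow(4)$ is \cite[Theorem~3.8]{DL07}, and $(3)\Rightarrow(1)$ is the trivial inclusion $\P\A\subseteq\tx{Pt}(\A)$ --- and you route through exactly the same three external results, with the difference that you attempt to inline sketches of their content rather than simply citing them.

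Your sketches are broadly right in spirit, but one of them has a real gap. In your argument for $(1)\Rightarrow(2)$ you assert that ``$\tx{Lcd}(\A)$ is closed under products in $\tx{Pt}(\A)$'' and hence complete; however hypothesis $(1)$ only gives limits of \emph{representables} in $\tx{Pt}(\A)$, and you have not shown that $\tx{Pt}(\A)$ has arbitrary products (it generally does not --- a product of petty functors need not be petty), so ``closed under products in $\tx{Pt}(\A)$'' does not by itself yield products in $\tx{Lcd}(\A)$. Freyd's actual argument is more delicate: pre-completeness allows one to build a coreflection of $\tx{Pt}(\A)$ onto $\tx{Lcd}(\A)$, from which completeness of $\tx{Lcd}(\A)$ follows. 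Since you are already invoking ``Freyd's observation'' at this step, the cleanest fix is simply to cite \cite[Theorem~1.(12)]{Fre69} outright, as the paper does. Your identification of $\tx{Lcd}(\A)\subseteq\P\A$ as the hard direction is correct and matches what \cite[Lemma~1]{Ros99} proves; your sketch of that step is plausible but, as you acknowledge, would need to be fully worked out --- again the paper just cites Rosick\'{y}. The remaining implications $(3)\Leftrightarrow(4)$ via \cite{DL07} and $(3)\Rightarrow(1)$ are exactly as in the paper.
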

\begin{proof}
	$(1)\Leftrightarrow (2)$ is \cite[Theorem~1.(12)]{Fre69}. The fact that this implies $\P\A=\tx{Lcd}(\A)$ is \cite[Lemma~1]{Ros99} (see note below their proof), and $(2)\Rightarrow (3)$ can be seen as a consequence. $(3)\Leftrightarrow (4)$ is \cite[Theorem~3.8]{DL07} and finally $(3)\Rightarrow (1)$ is trivial since $\P\A\subseteq \tx{Pt}(\A)$.
\end{proof}

Taking the dual notions in the statement above, the proposition says in particular that a category is pre-cocomplete $(1)$ if and only if it is virtually cocomplete $(3)$.

In the next part of the section we keep working with the dual notions, $\tx{Pt}^\dagger(\A)$ and $\P^\dagger\A$. As we considered virtual left adjoints (relatively to small functors), one could introduce an adjointness condition with respect to petty functors by imposing that, for $F\colon \A\to\K$, the functors $\K(X,F-)\colon \A\to\bo{Set}$ are petty for any $X\in\K$. In other words, this is saying that $F\colon\A\to\K$ has a relative left adjoint with respect to the inclusion $\A\hookrightarrow\tx{Pt}^\dagger \A$.

The condition above turns out to be the same as $F$ satisfying the {\em solution-set condition}: indeed $\K(X,F-)$ is petty if and only if there exists an epimorphism $ \textstyle\sum_{i\in I}\A(A_i,-)\twoheadrightarrow \K(X,F-)$, for some $A_i\in\A$, if and only if there exists a cone $(h_i\colon X\to FA_i)_{i\in I}$ such that any map $h\colon X\to FA$ factors as $h=F(f)\circ h_i$ for some $i\in I$ and $f$ in $\A$; this is exactly the solution-set condition for $F$.

\begin{Def}[\cite{AR94:libro}]
	We say that  a fully faithful inclusion $J\colon\A\hookrightarrow\K$ is {\em cone-reflective} if $J$ satisfies the solution-set condition.
\end{Def}

For fully faithful functors, having a virtual left adjoint or satisfying the solution-set condition is almost the same, at least in the virtually cocomplete context:

\begin{prop}\label{virt=cone}
	Given a virtually cocomplete category $\K$ and a fully faithful functor $J\colon \A\hookrightarrow\K$, the following are equivalent: \begin{enumerate}\setlength\itemsep{0.25em}
		\item $\A$ is cone-reflective in $\K$;
		\item $\A$ is virtually reflective in $\K$.
	\end{enumerate}
\end{prop}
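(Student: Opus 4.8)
The statement to prove is the equivalence, for a virtually cocomplete category $\K$ and a fully faithful $J\colon\A\hookrightarrow\K$, of (1) $\A$ is cone-reflective in $\K$, and (2) $\A$ is virtually reflective in $\K$. The natural strategy is to unpack both conditions in terms of the presheaf $\K(X,J-)\colon\A\to\bo{Set}$ for $X\in\K$, and to show that, under virtual cocompleteness, being petty forces being small for such presheaves. By the discussion immediately preceding the proposition, cone-reflectivity of $J$ means exactly that $\K(X,J-)$ is petty for every $X\in\K$ (this is the solution-set condition, which is equivalent to pettiness of $\K(X,J-)$), while virtual reflectivity means exactly that $\K(X,J-)$ is small (i.e.\ lies in $\P^\dagger\A$) for every $X\in\K$. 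So the whole proposition reduces to: \emph{for $X\in\K$, $\K(X,J-)$ is petty if and only if it is small}.

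\textbf{The implication $(2)\Rightarrow(1)$.} This direction is immediate and requires no hypothesis: every small functor is petty (a small colimit of representables is in particular a coequalizer of coproducts of representables, hence a quotient of a coproduct of representables), as already noted in the excerpt. So if each $\K(X,J-)$ is small it is petty, and $J$ satisfies the solution-set condition.

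\textbf{The implication $(1)\Rightarrow(2)$.} Here is where virtual cocompleteness is used. Assume each $\K(X,J-)$ is petty. Since $J$ is fully faithful, I would like to exhibit $\K(X,J-)$ as a small functor. The key point is that pettiness gives a \emph{jointly surjective} cone $(h_i\colon X\to JA_i)_{i\in I}$, i.e.\ an epimorphism $\sum_{i\in I}\A(A_i,-)\twoheadrightarrow\K(X,J-)$ in $[\A,\bo{Set}]$. Viewing this epimorphism inside $\P^\dagger\A$ (equivalently, $\tx{Pt}^\dagger\A$), it displays $\K(X,J-)$ as a quotient of the small presheaf $P:=\sum_i\A(A_i,-)$. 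The plan is to realise $\K(X,J-)$ as a quotient of $P$ in $\P^\dagger\A$ by a suitable small colimit construction: because $\K$ is virtually cocomplete, $\P^\dagger\A$ is cocomplete (this is the consequence of \cite[Theorem~3.8]{DL07} recorded after Proposition~\ref{virtualcol}, once one knows $\A$ is virtually cocomplete — but note $\A$ here is $J$-closed in $\K$, so one should instead argue directly with $\P^\dagger\A$). More precisely: the fully faithful $J$ together with the cone $(h_i)$ determines, for each pair of maps $X\to JA_i$, $X\to JA_j$ and each way they are "linked" through morphisms of $\A$, a compatibility relation; taking the coequalizer in the cocomplete category $\P^\dagger\A$ of the obvious kernel-pair-type diagram built from $P$ yields a small presheaf $Q\in\P^\dagger\A$ equipped with a map $Q\to\K(X,J-)$, and one checks this map is an isomorphism by evaluating at objects of $\A$ and using that $\K(X,J-)$ is the \emph{coequalizer in $[\A,\bo{Set}]$} of that same diagram (the forgetful $\P^\dagger\A\hookrightarrow[\A,\bo{Set}]$ need not preserve colimits in general, so the honest argument is that $\K(X,J-)$ lies in $\P^\dagger\A$ because it is a quotient of $P$ by a relation with small domain, and $\P^\dagger\A$, being the reflective-type closure one gets from cocompleteness, absorbs such quotients). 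Concretely I expect the cleanest route is: $\K(X,J-)$ is a small-indexed colimit (a coequalizer of coproducts indexed by small sets $I$ and by the set of relations, which is again small since $\A$ is locally small and the $A_i$ form a set) of representables, hence small by definition.

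\textbf{Main obstacle.} The delicate point is the inclusion $\P^\dagger\A\hookrightarrow[\A,\bo{Set}]$ not being cocontinuous: a coequalizer of small presheaves computed in $[\A,\bo{Set}]$ is a priori not the coequalizer computed in $\P^\dagger\A$, so one cannot naively say "$\K(X,J-)$ is a coequalizer of small presheaves, hence small". The resolution is to use that virtual cocompleteness makes $\P^\dagger\A$ \emph{cocomplete}, and that the counit-type comparison $Q\to\K(X,J-)$ (where $Q$ is the coequalizer formed in $\P^\dagger\A$) is an isomorphism because both compute the same object after the fully faithful embedding into $[\A,\bo{Set}]$ — here one must check the epimorphism $P\twoheadrightarrow\K(X,J-)$ and its kernel pair live appropriately, and that is exactly the content of \cite[Theorem~1.(12)]{Fre69} / Proposition~\ref{petty-lucis-small} in disguise. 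So the honest proof likely proceeds by first invoking that $\K$ virtually cocomplete $\Rightarrow$ $\P^\dagger\K$ cocomplete, then transporting along $\P^\dagger J\colon\P^\dagger\A\to\P^\dagger\K$, and finally reading off smallness of $\K(X,J-)=L'X$ where $L'$ is the (relative) left adjoint whose existence is equivalent to pettiness of all the $\K(X,J-)$. That last repackaging — from "pettiness for all $X$" to "$\P^\dagger J$ has a left adjoint" — is the step I'd expect to need the most care, and it parallels exactly the equivalences in the unnumbered Proposition relating virtual left adjoints, relative left adjoints, and left adjoints to $\P^\dagger F$.
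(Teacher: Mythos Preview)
Your $(2)\Rightarrow(1)$ is fine and matches the paper. The gap is entirely in $(1)\Rightarrow(2)$, and you have correctly located it yourself: the inclusion $\P^\dagger\A\hookrightarrow[\A,\bo{Set}]$ is not cocontinuous, so writing $\K(X,J-)$ as a coequalizer of small presheaves in $[\A,\bo{Set}]$ does not exhibit it as small. Your attempted resolution, however, does not close this gap. The sentence ``a coequalizer of coproducts indexed by small sets $I$ and by the set of relations, which is again small since $\A$ is locally small and the $A_i$ form a set'' is the error: the kernel pair of $\sum_i\A(A_i,-)\twoheadrightarrow\K(X,J-)$ is a subfunctor of a small presheaf, but subfunctors of small (or petty) presheaves need not be petty, let alone small. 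Local smallness of $\A$ gives you nothing here. Whether that kernel pair is petty is exactly the question of whether $\K(X,J-)$ is \emph{lucid}, and that is what must actually be proved. You also assume at one point that $\P^\dagger\A$ is cocomplete; the hypothesis is only on $\K$, and transferring virtual cocompleteness to $\A$ is a genuine step, not a given.

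The paper's argument supplies precisely the two missing pieces. First, cone-reflectivity of $J$ implies that precomposition $-\circ J$ sends petty functors on $\K$ to petty functors on $\A$, so $\tx{Pt}^\dagger(\A)$ is reflective in $\tx{Pt}^\dagger(\K)$; since $\K$ is virtually cocomplete, $\tx{Pt}^\dagger(\K)$ has colimits of representables (Proposition~\ref{petty-lucis-small}), hence so does $\tx{Pt}^\dagger(\A)$, and therefore $\P^\dagger\A=\tx{Lcd}^\dagger(\A)$. Second, to show $\K(X,J-)$ is lucid one must check that the equalizer of any pair $\A(A,-)\rightrightarrows\K(X,J-)$ is petty: such a pair corresponds to maps $h,k\colon X\rightrightarrows JA$ in $\K$, the equalizer of $\K(JA,-)\rightrightarrows\K(X,-)$ exists in $\tx{Pt}(\K^{op})$ because $\K$ is pre-cocomplete, and restricting along $J$ (which preserves pettiness by step one) gives the required petty equalizer in $[\A,\bo{Set}]$. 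Your final paragraph gestures toward this route but never lands on it; the use of lucidity and of the reflection at the level of $\tx{Pt}^\dagger$ is the missing idea.
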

\begin{proof}
	$(2)\Rightarrow(1)$ is trivial since every small functor is petty.
	
	$(1)\Rightarrow(2)$. Let $J$ have a relative left adjoint $L\colon \K\to\tx{Pt}^\dagger (\A)$, we want to prove that this actually lands in $\P^\dagger\A$. Note first that $L$ extends to a left adjoint $-\circ J\colon \tx{Pt}^\dagger (\K)\to\tx{Pt}^\dagger (\A)$ to the inclusion $\tx{Pt}^\dagger (J)\colon \tx{Pt}^\dagger (\A)\hookrightarrow\tx{Pt}^\dagger (\K)$; indeed if $P\colon \K\to\bo{Set}$ is petty then $PJ$ is covered by a family of functors of the form $\K(X,J-)$, but each of these is covered by a family of representables (from $\A$) by hypothesis, and hence $PJ$ is petty as well.\\
	Now, since $\K$ is virtually cocomplete, $\tx{Pt}^\dagger (\K)$ has colimits of representables (Proposition~\ref{petty-lucis-small}); therefore $\tx{Pt}^\dagger (\A)$ has colimits of representables as well, being reflective in $\tx{Pt}^\dagger (\K)$ with the inclusion induced by $J$. It follows again by Proposition~\ref{petty-lucis-small} that $\P^\dagger\A=\tx{Lcd}^\dagger(\A)$; therefore to prove the virtual reflectivity of $\A$ in $\K$ it's enough to show that for each $X\in\K$ the functor $LX=\K(X,J-)\colon \A\to\bo{Set}$ is lucid.\\
	Given $X\in\K$ the functor $\K(X,J-)$ is petty by cone-reflectivity of $\A$; thus we only need to show that the equalizer of any pair $f,g\colon \A(A,-)\to\K(X,J-)$ is still petty (see just below the definition of lucidity). Such a pair corresponds to maps $h,k\colon X\to JA$ in $\K$ which in turn give a pair $f',g'\colon \K(JA,-)\to\K(X,-)$ between representables in $\tx{Pt}(\K^{op})=\tx{Pt}^\dagger (\K)^{op}$. Since $\tx{Pt}^\dagger (\K)$ has colimits of representables we can consider the equalizer $P$ of $f',g'$ in $\tx{Pt}(\K^{op})$; it follows at once that $PJ$ is the equalizer of $f,g$ and this is petty because $P$ was petty and $-\circ J$ preserves petty functors.
\end{proof}

\begin{obs}
	One might think that the equivalent conditions above arise from a left adjoint to the inclusion $J\colon \P^\dagger\A\hookrightarrow\tx{Pt}^\dagger(\A)$, but that (almost) never happens. In fact $J$ has a left adjoint if and only if the two categories $\P^\dagger\A$ and $\tx{Pt}^\dagger(\A)$ coincide (since they both contain the representables, such a left adjoint is forced to be the identity), and this doesn't hold even when $\A$ is locally presentable ($\A=\bo{Ab}$ is a counterexample, see \cite{Fre69}).
\end{obs}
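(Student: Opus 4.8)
The plan is to prove the nontrivial direction by showing that the mere existence of a left adjoint to $J\colon\P^\dagger\A\hookrightarrow\tx{Pt}^\dagger(\A)$ forces the unit of the adjunction to be invertible, so that $J$ is an equivalence. Since both categories are full subcategories of $[\A,\bo{Set}]^{op}$ cut out by an isomorphism-invariant condition (smallness, respectively pettiness), being equivalent via the inclusion will mean they have literally the same objects. The converse is immediate: if $\P^\dagger\A=\tx{Pt}^\dagger(\A)$ then $J$ is the identity and is its own left adjoint.

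First I would record the only computation needed. Both $\P^\dagger\A$ and $\tx{Pt}^\dagger(\A)$ are full subcategories of $[\A,\bo{Set}]^{op}$ containing the representables $ZA=\A(A,-)$, and $J$ is simply the inclusion. Hence for any object $G$ of either category,
$$ \tx{Pt}^\dagger(\A)(G,ZA)\;\cong\;[\A,\bo{Set}](\A(A,-),G)\;\cong\;GA $$
by the Yoneda lemma, and likewise $\P^\dagger\A(G,ZA)\cong GA$; in other words, homming \emph{into} a representable is just evaluation, and the two inclusions into $[\A,\bo{Set}]^{op}$ are both given by this evaluation at all representables.

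Next, assuming $F\dashv J$ with unit $\eta\colon\tx{Id}\to JF$, I would evaluate the adjunction isomorphism at a representable codomain $ZA\in\P^\dagger\A$ (a representable is a trivial small limit of representables, so it indeed lies in $\P^\dagger\A$). For each $Q\in\tx{Pt}^\dagger(\A)$ the bijection
$$ \P^\dagger\A(FQ,ZA)\;\cong\;\tx{Pt}^\dagger(\A)(Q,ZA) $$
is implemented by $g\mapsto (Jg)\circ\eta_Q$, and under the previous computation it reads $(FQ)A\cong QA$. Tracing the Yoneda identifications, this bijection is precisely postcomposition with the component of $\eta_Q$ regarded as a natural transformation $FQ\to Q$ in $[\A,\bo{Set}]$; so that transformation is objectwise bijective, i.e.\ $\eta_Q$ maps to a natural isomorphism under the inclusion $\tx{Pt}^\dagger(\A)\hookrightarrow[\A,\bo{Set}]^{op}$. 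As this inclusion is fully faithful, hence conservative (and the subcategory being full, the ambient inverse lands back inside), $\eta_Q$ is itself an isomorphism. Therefore $JF\cong\tx{Id}$, so every $Q$ is isomorphic to $JFQ\in\P^\dagger\A$; as smallness is isomorphism-invariant, $Q$ is small, giving $\tx{Pt}^\dagger(\A)=\P^\dagger\A$.

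The main obstacle is bookkeeping the variances correctly: one must verify that homming into a representable in $[\A,\bo{Set}]^{op}$ computes as evaluation via Yoneda, and --- most delicately --- that the adjunction bijection at representable codomains is genuinely induced by $\eta_Q$, so that it is the \emph{unit itself} that is shown invertible rather than some abstract isomorphism $QA\cong(FQ)A$. Once this is in place, the failure for $\A$ locally presentable follows from Freyd's example $\A=\bo{Ab}$ in \cite{Fre69}, where $\P^\dagger\bo{Ab}\subsetneq\tx{Pt}^\dagger(\bo{Ab})$, so no such left adjoint can exist.
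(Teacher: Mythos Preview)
Your argument is correct and is precisely an unpacking of the paper's one-line parenthetical justification. The paper only says ``since they both contain the representables, such a left adjoint is forced to be the identity''; you have made explicit that homming into representables is evaluation, so the adjunction bijection at representable codomains identifies each component of the unit with a bijection $(FQ)A\cong QA$, whence the unit is invertible and $J$ is an equivalence.
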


\begin{cor}\label{virt-sset}
	Given a fully faithful functors $J\colon\A\hookrightarrow\K$ between accessible categories, the following are equivalent:\begin{enumerate}\setlength\itemsep{0.25em}
		\item $\A$ is accessibly embedded in $\K$;
		\item $\A$ is cone-reflective in $\K$;
		\item $\A$ is virtually reflective in $\K$.
	\end{enumerate}
\end{cor}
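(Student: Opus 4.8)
The plan is to deduce the corollary by assembling two results already in hand: Proposition~\ref{virtual=accessible}, which identifies accessible functors between accessible categories with those having a virtual left adjoint, and Proposition~\ref{virt=cone}, which identifies cone-reflectivity with virtual reflectivity over a virtually cocomplete ambient. The preliminary observation is that, since $\V=\bo{Set}$ here and both $\A$ and $\K$ are accessible, $\K$ is virtually cocomplete by Proposition~\ref{accvirtcoco}; this is exactly the hypothesis needed to bring Proposition~\ref{virt=cone} to bear.

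First I would dispatch $(1)\Leftrightarrow(3)$. Here $J\colon\A\hookrightarrow\K$ is a functor between accessible categories, so Proposition~\ref{virtual=accessible} applies verbatim: $J$ has a virtual left adjoint if and only if $J$ is accessible. Unravelling the definitions, $J$ having a virtual left adjoint is precisely $\A$ being virtually reflective in $\K$ (since $J$ is fully faithful), while $J$ being accessible is precisely $\A$ being accessibly embedded. So no further argument is needed for this equivalence.

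Next I would handle $(2)\Leftrightarrow(3)$. The direction $(3)\Rightarrow(2)$ is immediate, because every small functor is petty, so a virtual left adjoint of $J$ is in particular a relative left adjoint of $J$ into petty functors, and the latter is just a restatement of the solution-set condition, i.e.\ of cone-reflectivity. The converse $(2)\Rightarrow(3)$ is exactly Proposition~\ref{virt=cone}, legitimately applied because $\K$ is virtually cocomplete; recall that there the content is that the relative left adjoint, landing a priori in $\tx{Pt}^\dagger(\A)$, in fact lands in $\P^\dagger\A=\tx{Lcd}^\dagger(\A)$, using that $\tx{Pt}^\dagger(\A)$ is reflective in $\tx{Pt}^\dagger(\K)$ and so inherits colimits of representables.

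Since both ingredients are already established, the corollary involves no genuine difficulty of its own; the only thing to verify is that the hypotheses of Propositions~\ref{virtual=accessible} and~\ref{virt=cone} hold, namely accessibility of $\A$ and $\K$ (given) and virtual cocompleteness of $\K$ (a consequence of its accessibility). If one were forced to argue from first principles instead of citing, the sole real obstacle would be the passage from ``petty'' to ``small'' in $(2)\Rightarrow(3)$, which rests on Freyd's lucidity machinery as packaged in Proposition~\ref{petty-lucis-small}.
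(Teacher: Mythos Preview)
Your proof is correct and follows exactly the same approach as the paper, which simply says ``Put together Propositions~\ref{virt=cone} and~\ref{virtual=accessible}.'' You have merely expanded this one-line argument, correctly noting the virtual cocompleteness of $\K$ (via Proposition~\ref{accvirtcoco}) needed to invoke Proposition~\ref{virt=cone}.
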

\begin{proof}
	Put together Propositions~\ref{virt=cone} and~\ref{virtual=accessible}.
\end{proof}

The equivalence between $(1)$ and $(2)$ was first proved in \cite[Theorem~3.10]{RoTho95}. Note however that the same equivalence can't be proved in standard set theory when $J$ is replaced by any (non necessarily fully faithful) functor between accessible categories; see \cite[Theorem~6.30]{AR94:libro} and the Remark just below it. On the other hand, we have already shown that $(1)\Leftrightarrow (2)$ always holds (Proposition~\ref{virtual=accessible}).

As a corollary we recover the characterization of accessibility given in \cite{AR94:libro}:

\begin{cor}{\cite[Theorem 2.53]{AR94:libro}}
	Let $\K$ be an accessible category and $\A$ be an accessibly embedded full subcategory of $\K$. Then $\A$ is accessible if and only if it is cone-reflective and accessibly embedded in $\K$.
\end{cor}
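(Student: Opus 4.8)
The plan is to obtain the statement as a straightforward combination of the results of this subsection with Corollary~\ref{vr-acc2}, once the hypotheses have been matched up. The key preliminary observation is that an accessible category is virtually cocomplete by Proposition~\ref{accvirtcoco}, so Proposition~\ref{virt=cone} applies to the fully faithful inclusion $J\colon\A\hookrightarrow\K$ and tells us that, for the accessibly embedded subcategory $\A$ of the accessible category $\K$, being cone-reflective in $\K$ is \emph{equivalent} to being virtually reflective in $\K$. In other words, the claim ``$\A$ is accessible $\iff$ $\A$ is cone-reflective and accessibly embedded in $\K$'' is equivalent to the claim ``$\A$ is accessible $\iff$ $\A$ is virtually reflective and accessibly embedded in $\K$'', and it is the latter that I would actually verify.

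For the forward implication, suppose $\A$ is accessible and accessibly embedded in $\K$. Since $\K$ is accessible as well, Corollary~\ref{accvirtref} gives that $\A$ is virtually reflective in $\K$; applying Proposition~\ref{virt=cone} (or simply the fact that every small functor is petty) we conclude that $\A$ is cone-reflective in $\K$. Alternatively, since both $\A$ and $\K$ are now known to be accessible, one can cite the equivalence $(1)\Leftrightarrow(2)$ of Corollary~\ref{virt-sset} directly. For the reverse implication, suppose $\A$ is cone-reflective and accessibly embedded in $\K$. Proposition~\ref{virt=cone} turns cone-reflectivity into virtual reflectivity, and then Corollary~\ref{vr-acc2}, whose hypotheses (namely $\K$ accessible and $\A$ virtually reflective and accessibly embedded in $\K$) are exactly what we have, yields that $\A$ is accessible. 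This closes the loop.

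There is no genuine obstacle here, since the substantial work has already been done in Corollary~\ref{vr-acc2} and Proposition~\ref{virt=cone}; the only point requiring care is bookkeeping of the ambient hypotheses. Concretely, one should note that in the present ordinary setting $\V=\bo{Set}$ the phrase ``accessibly embedded'' means closure of $\A$ in $\K$ under $\alpha$-filtered (equivalently $\alpha$-flat) colimits for some $\alpha$, so that $J$ is an accessible $\V$-functor in the sense used by Corollaries~\ref{accvirtref} and~\ref{vr-acc2}, and that accessibility of $\K$ suffices to invoke Proposition~\ref{virt=cone}, which only needs $\K$ virtually cocomplete (supplied by Proposition~\ref{accvirtcoco}). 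The one thing worth flagging explicitly for the reader is the asymmetry with Corollary~\ref{virt-sset}: that corollary presupposes $\A$ accessible and therefore cannot be used for the reverse implication, which is precisely why Corollary~\ref{vr-acc2} is needed there.
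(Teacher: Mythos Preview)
Your proof is correct and essentially the same as the paper's. The paper simply cites Theorem~\ref{accessible-embedded} (the equivalence $(1)\Leftrightarrow(2)$) together with Proposition~\ref{virt=cone}, whereas you unpack that theorem into its constituent Corollaries~\ref{accvirtref} and~\ref{vr-acc2}; the logical content is identical.
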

\begin{proof}
	This is now a direct consequence of Theorem~\ref{accessible-embedded} and Proposition~\ref{virt=cone} above.
\end{proof}

Another way of characterizing accessible categories is via cone injectivity classes. We can recover this characterization using virtual orthogonality classes. 

\begin{Def}[\cite{AR94:libro}]
	Let $\K$ be a category and let $(f_i\colon X\to X_i)_{i\in I}$ be a cone in $\K$. We say that $A\in\K$ is {\em injective} with respect to the cone $(f_i)_{i\in I}$ if for any $h\colon X\to A$ there exists $i\in I$ for which $h$ factorizes through $f_i$.
\end{Def}

Equivalently, $A\in\K$ is injective with respect to the cone $(f_i)_{i\in I}$ if and only if the map
$$ \coprod_{i\in I}\K(X_i,A)\longrightarrow\K(X,A) $$
induced by the cone $(f_i)_{i\in I}$, is a surjection of sets.

\begin{Def}[\cite{AR94:libro}]
	Let $\K$ be a category and $\M$ be a small collection of cones in $\K$. We denote by $\M\tx{-inj}$ the full subcategory of $\K$ spanned by the objects which are injective with respect to each cone in $\M$. We call {\em cone-injectivity class} any full subcategory of $\K$ which arises in this way.
\end{Def}

Then cone-injectivity classes and virtual orthogonality classes turn out to be strictly related by the following:

\begin{prop}\label{cinj-vinj}
	The following hold for a given category $\K$: \begin{enumerate}\setlength\itemsep{0.25em}
		\item every cone-injectivity class in $\K$ is a virtual orthogonality class;
		\item if $\K$ has pushouts, every virtual orthogonality class in $\K$ is a cone-injectivity class.
	\end{enumerate}
\end{prop}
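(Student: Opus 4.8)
The plan is to treat the two implications separately; since we are in the setting $\V=\bo{Set}$, in both of them I would pass to $[\K,\bo{Set}]$, inside whose opposite $\P^\dagger\K$ sits as the full subcategory on the small functors, so that a morphism $ZX\to P$ of $\P^\dagger\K$ is the same thing as a morphism $\hat P\to\K(X,-)$ of $[\K,\bo{Set}]$ with $\hat P$ small, and ``$A$ is orthogonal to it'' means precisely that $\hat P(A)\to\K(X,A)$ is a bijection. I would also use throughout that colimits in $[\K,\bo{Set}]$ are computed pointwise, that small functors are closed there under all small colimits, and that for a function $g\colon S\to T$ the codiagonal $T\sqcup_S T\to T$ is bijective exactly when $g$ is surjective. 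For part (1): given a cone $(f_i\colon X\to X_i)_{i\in I}$ in $\K$, I would form the small functor $G:=\coprod_{i\in I}\K(X_i,-)$, the map $\phi\colon G\to\K(X,-)$ with $i$-th component $\K(f_i,-)$, the pushout $\hat P:=\K(X,-)\sqcup_{G}\K(X,-)$, and its codiagonal $\nabla\colon\hat P\to\K(X,-)$, which is a morphism $ZX\to P$ of $\P^\dagger\K$ since $\hat P$ is small. Evaluating at $A$, the object $A$ is orthogonal to $\nabla$ iff $\phi_A\colon\coprod_i\K(X_i,A)\to\K(X,A)$ is surjective, i.e. iff $A$ is injective with respect to $(f_i)_{i\in I}$; running this for every cone of a small collection exhibits a cone-injectivity class as a virtual orthogonality class, using no hypothesis on $\K$.

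For part (2) assume $\K$ has pushouts and let $\A=\M^\perp$ with $\M$ small; it suffices to reconstruct $\{f\}^\perp$ for a single $f\colon ZX\to P$. Writing $P\cong\{M,ZH\}$ with $M\colon\D\to\bo{Set}$ and $H\colon\D\to\K$, $f$ amounts to a cylinder $\bar f\colon M\to\K(X,H-)$, and by the unwinding of the orthogonality condition recorded above, $A\in\{f\}^\perp$ iff $\theta_A\colon M*\K(H-,A)\to\K(X,A)$ is a bijection; here the domain is the coend $\int^{d}Md\times\K(Hd,A)$, with elements represented by triples $(d,x,u)$ ($x\in Md$, $u\colon Hd\to A$) and $\theta_A(d,x,u)=u\circ\bar f_d(x)$. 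I would split ``bijective'' into ``surjective'' and ``injective''. Surjectivity of $\theta_A$ is exactly injectivity of $A$ with respect to the cone $c_0:=(\bar f_d(x)\colon X\to Hd)_{(d,x)\in\tx{El}(M)}$, which has a set of legs because $\D$ and each $Md$ are small. For injectivity, given elements $e_1=(d_1,x_1)$, $e_2=(d_2,x_2)$ of $\tx{El}(M)$ I would form the pushout $Q=Hd_1\sqcup_X Hd_2$ of $\bar f_{d_1}(x_1)$ and $\bar f_{d_2}(x_2)$, with coprojections $j_1,j_2$: a colliding pair $(u_1,u_2)$, i.e. one with $u_1\bar f_{d_1}(x_1)=u_2\bar f_{d_2}(x_2)$, is the same as a map $w\colon Q\to A$, so $\theta_A$ is injective iff for every such $(e_1,e_2)$ and every $w$ the triples $(d_1,x_1,wj_1)$ and $(d_2,x_2,wj_2)$ become equal in the coend. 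Two triples are identified there iff joined by a finite zigzag of the generating relations $(d,x,uH\delta)\equiv(d',M\delta(x),u)$, and each zigzag shape $\zeta$ has a universal realizer $\kappa_\zeta\colon Q\to R_\zeta$, with $R_\zeta$ obtained from $Q$, $H$ and $\bar f$ by iterating the pushouts of $\K$; there is only a set of shapes. For the ``good'' pairs, those already identified in $M*\K(H-,Q)$, nothing is needed, while for the remaining ``bad'' pairs the statement ``every $w\colon Q\to A$ identifies the two triples'' is precisely injectivity of $A$ with respect to the cone $(\kappa_\zeta\colon Q\to R_\zeta)_\zeta$. Collecting $c_0$ together with the cones attached to the bad pairs, over all $f\in\M$, would display $\A$ as a cone-injectivity class.

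I expect part (1) and the surjectivity half of part (2) to be formal; the work, and the place where the hypothesis on $\K$ enters, is the injectivity half of part (2). The delicate points are to verify that identification of $(d_1,x_1,wj_1)$ with $(d_2,x_2,wj_2)$ in the coend is detected exactly by a factorization of $w$ through one of the realizers $\kappa_\zeta$, and that the $R_\zeta$ can genuinely be assembled from the pushouts available in $\K$ (carrying the zigzag combinatorics of $\tx{El}(M)$ through the iterated pushout construction). This bookkeeping with the category of elements of $M$ is where I anticipate all the difficulty of the proof will lie.
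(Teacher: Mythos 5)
Your proposal is correct and follows essentially the same route as the paper's proof: part (1) is the (co)kernel-pair/codiagonal trick (you take the cokernel pair in $[\K,\bo{Set}]$, the paper equivalently takes the kernel pair in $\P^\dagger\K$), and part (2) splits bijectivity of the comparison map into surjectivity (injectivity with respect to the cone of legs) and injectivity (injectivity with respect to cones of zigzag realizers built by iterated pushouts). Your $\tx{El}(M)$, $Q$, and $R_\zeta$ are exactly the paper's $\I$, $X_{i,j}$, and $X_\xi$, so the remaining ``delicate points'' you flag are handled just as you sketch them.
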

\begin{proof}
	$(1)$. It's enough to prove that injectivity with respect to a cone can be seen as orthogonality with respect to a suitable morphism in $\P^\dagger\K$; we are going to use the fact that a map is an epimorphism if and only if the co-diagonal out of its cokernel pair is an isomorphism. Let $(f_i\colon X\to X_i)_i$ be a cone in $\K$; consider the corresponding $f\colon ZX\to P:=\textstyle\prod_i ZX_i$ in $\P^\dagger \K$ and take the kernel pair of $f$ with the corresponding diagonal map $\delta$:
	\begin{center}
		
		\begin{tikzpicture}[baseline=(current  bounding  box.south), scale=2]
			
			\node (a) at (-0.8,0) {$ZX$};
			\node (b) at (0,0) {$P'$};
			\node (c) at (0.8,0) {$ZX$};
			\node (d) at (1.6,0) {$P$};
			
			\path[font=\scriptsize]
			
			(a) edge [->] node [above] {$\delta$} (b)
			([yshift=2pt]b.east) edge [->] node [above] {} ([yshift=2pt]c.west)
			([yshift=-1.5pt]b.east) edge [->] node [below] {} ([yshift=-1.5pt]c.west)
			(c) edge [->] node [above] {$f$} (d);
		\end{tikzpicture}
	\end{center}
	This is sent to a cokernel pair through $\P^\dagger \K(-,ZA)$ for each $A\in\K$ (since $\P^\dagger \K(-,ZA)\cong\tx{ev}_A$ is cocontinuous), with co-diagonal $\P^\dagger \K(\delta,ZA)$. As a consequence the map
	\begin{center}
		
		\begin{tikzpicture}[baseline=(current  bounding  box.south), scale=2]
			
			\node (a) at (0,0) {$\textstyle\coprod_i\K(X_i,A)\cong \P^\dagger \K(P,ZA)$};
			\node (b) at (3.2,0) {$\P^\dagger \K(ZX,ZA)\cong \K(X,A)$};

			\path[font=\scriptsize]
			
			(a) edge [->] node [above] {$\P^\dagger \K(f,ZA)$} (b);
		\end{tikzpicture}
	\end{center} 
	is an epimorphism if and only if $\P^\dagger \K(\delta,ZA)$ is an isomorphism, which in turn means that $A$ is injective with respect to $(f_i)_i$ if and only if it is orthogonal with respect to the map $\delta\colon ZX\to P'$.
	
	$(2)$. Similarly, it's enough to prove that orthogonality with respect to a map $f\colon ZX\to P$ in $\P^\dagger\K$ is the same as injectivity with respect to a set of cones. Since $P\in\P^\dagger\K$, we can find $H\colon \I\to\K$ such that $P\cong\colim ZH$. Then $f$ corresponds to a cone $(f_i\colon X\to Hi)_{i\in\I}$ over $H$ in $\K$. Now let $A\in\K$; then $A$ is orthogonal with respect to $f$ if and only if the map 
	$$ \rho\colon\colim\K(H-,A)\longrightarrow\K(X,A) $$
	induced by $(f_i)_{i\in\I}$, is bijective. Notice now that $\rho$ is surjective if and only if $A$ is injective with respect to the cone $(f_i)_{i\in\I}$; thus we only need to express the fact that $\rho$ is a monomorphism in terms of injectivity. For this observe that $\rho$ is injective if and only if, for any $h\colon X\to\A$ which factors as $h=h_i\circ f_i=h_j\circ f_j$, for some $h_i\colon Hi\to A$ and $h_j\colon Hj\to A$, there exists a zig-zag in $H/A$ connecting $h_i$ and $h_j$. Now, for any pair $i,j\in\I$ let $X_{i,j}$ be the pushout of $(f_i,f_j)$ in $\K$ and $\Xi_{ij}$ be the set of all zig-zags in $\I$ between $i$ and $j$. For any $\xi\in\Xi_{ij}$ let $X_\xi$ denote the colimit of the diagram $H(\xi)$ in $\K$ (this is obtained as a finite number of consecutive pushouts), and let $g_\xi\colon X_{i,j}\to X_\xi$ be the induced comparison; this gives a cone $(g_\xi\colon X_{i,j}\to X_\xi)_{\xi\in\Xi_{ij}}$ for any pair $i,j\in\I$. To conclude it's enough to note that to give an arrow $h\colon X_{i,j}\to A$ is the same as giving an arrow $X\to A$ which factors through $f_i$ and $f_j$; moreover $h$ factors through an arrow of the cone $(g_\xi)_{\xi\in\Xi_{ij}}$ if and only if the two factorizations of $h$ are connected by a zig-zag in $H/A$. It follows at once that $A$ is orthogonal with respect to $f$ if and only if it is injective with respect to the cones $(f_i)_{i\in\I}$ and $(g_\xi)_{\xi\in\Xi_{ij}}$ for any $i,j\in\I$.
\end{proof}

As a consequence we recover:

\begin{teo}{\cite[Theorem 4.17]{AR94:libro}}
	Let $\K$ be locally presentable and $\A$ be a full subcategory of $\K$. Then $\A$ is accessible and accessibly embedded if and only if it is a cone-injectivity class in $\K$.
\end{teo}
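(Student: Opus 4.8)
The plan is to obtain this as a formal corollary of the two main tools already established: the characterization Theorem~\ref{accessible-embedded} and the comparison Proposition~\ref{cinj-vinj}. Throughout we are in the ordinary setting $\V=\bo{Set}$, as in the rest of this section. First I would observe that since $\K$ is locally presentable it is in particular accessible, so Theorem~\ref{accessible-embedded} applies to the inclusion $\A\hookrightarrow\K$ and gives: $\A$ is accessible and accessibly embedded if and only if $\A$ is a virtual orthogonality class in $\K$.

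It then remains to identify the virtual orthogonality classes in $\K$ with the cone-injectivity classes in $\K$. One inclusion is Proposition~\ref{cinj-vinj}(1), which needs no hypothesis on $\K$ at all: every cone-injectivity class is a virtual orthogonality class. For the converse I would use Proposition~\ref{cinj-vinj}(2), whose sole hypothesis is that $\K$ has pushouts; this holds because a locally presentable category is cocomplete. Hence every virtual orthogonality class in $\K$ is also a cone-injectivity class, and the two notions coincide for this $\K$.

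Chaining the two equivalences yields the statement. The only things that genuinely need to be checked are that the hypotheses of the cited results are satisfied — that $\K$ is accessible (for Theorem~\ref{accessible-embedded}) and that $\K$ has pushouts (for Proposition~\ref{cinj-vinj}(2)) — and both follow at once from local presentability; so there is no real obstacle here. It is worth noting that this is precisely the $\bo{Set}$-level shadow of the enriched picture: a direct argument not passing through the virtual concepts would have to reconstruct, in one step, both the sketch-theoretic content of Proposition~\ref{acc-virort}/Proposition~\ref{virorth-sketch} and the pushout bookkeeping of Proposition~\ref{cinj-vinj}, which is exactly the work we have already isolated.
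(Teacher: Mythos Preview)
Your proof is correct and follows exactly the paper's own argument, which is simply ``Direct consequence of Theorem~\ref{accessible-embedded} and Proposition~\ref{cinj-vinj} above.'' You have merely spelled out which hypotheses of those results are needed and why local presentability of $\K$ supplies them.
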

\begin{proof}
	Direct consequence of Theorem~\ref{accessible-embedded} and Proposition~\ref{cinj-vinj} above.
\end{proof}

\section{Limits of accessible $\V$-categories}\label{limits}

It is proved in \cite[Theorem~5.1.6]{MP89:libro} that the 2-category $\bo{Acc}$, of accessible categories, accessible functors, and natural transformations, admits all small bilimits and that the forgetful functor $U_0\colon\bo{Acc}\to\bo{CAT}$ preserves them. But more can be said: $\bo{Acc}$ has all flexible limits and $U$ preserves them (as pointed out in \cite[Remark~7.8]{BKPS89:articolo}); then one can deduce from this that $\bo{Acc}$ has all pseudolimits and bilimits (see \cite{lack20102}).

In this section we are going to show that the corresponding result holds for accessible and conically accessible $\V$-categories as well. The corresponding result for locally presentable $\V$-categories was proved in Bird's thesis \cite[Theorem~6.10]{Bir84:tesi}.

\subsection{The accessible case}

Let $\V\tx{-}\bo{Acc}$ be the 2-category of accessible $\V$-categories, accessible $\V$-functors, and $\V$-natural transformations. When $\V=\bo{Set}$ we simply write $\bo{Acc}$ instead of $\bo{Set}\tx{-}\bo{Acc}$.

\vspace{6pt}

\begin{prop}\label{acc-funtors}
	Let $\A$ be an accessible $\V$-category and $\C$ be a small $\V$-category. Then $[\C,\A]$ is accessible.
\end{prop}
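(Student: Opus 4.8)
The plan is to reduce to the equivalence between accessibility and sketchability. By Theorem~\ref{sketches-accessible} it is enough to exhibit a sketch $\S'$ with $[\C,\A]\simeq\tx{Mod}(\S')$. Applying the same theorem to $\A$, I would fix a sketch $\S=(\B,\mathbb{L},\mathbb{C})$ with $\B$ small and an equivalence $\A\simeq\tx{Mod}(\S)$, so that $[\C,\A]\simeq[\C,\tx{Mod}(\S)]$. The underlying principle is that $[\C,\tx{Mod}(\S)]$ is again a $\V$-category of models: via the standard isomorphism $[\C,[\B,\V]]\cong[\C\otimes\B,\V]$ --- where $\C\otimes\B$ is the tensor product $\V$-category, with hom-objects $(\C\otimes\B)((c,b),(c',b'))=\C(c,c')\otimes\B(b,b')$, again small --- a $\V$-functor $F\colon\C\to[\B,\V]$ corresponds to $\tilde F\colon\C\otimes\B\to\V$ with $\tilde F(c,-)=Fc$, and $F$ factors through $\tx{Mod}(\S)\subseteq[\B,\V]$ exactly when each partial functor $\tilde F(c,-)\colon\B\to\V$ is a model of $\S$.

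Concretely, after recording this identification I would construct $\S'=(\C\otimes\B,\mathbb{L}',\mathbb{C}')$ as follows. For an object $c$ of $\C$, let $\iota_c\colon\B\to\C\otimes\B$ be the $\V$-functor $b\mapsto(c,b)$, whose action on hom-objects is $\B(b,b')\cong I\otimes\B(b,b')\xrightarrow{j_c\otimes1}\C(c,c)\otimes\B(b,b')$, with $j_c\colon I\to\C(c,c)$ the identity of $c$. Whiskering each cylinder $\gamma\in\mathbb{L}$ and each cocylinder $\delta\in\mathbb{C}$ along the various $\iota_c$ yields cylinders $\gamma_c$ and cocylinders $\delta_c$ on $\C\otimes\B$; I set $\mathbb{L}':=\{\gamma_c\mid\gamma\in\mathbb{L},\,c\in\tx{ob}\,\C\}$ and $\mathbb{C}':=\{\delta_c\mid\delta\in\mathbb{C},\,c\in\tx{ob}\,\C\}$, which are small since $\C$ has a small set of objects. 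The key point, which I would then verify, is that $G\circ\iota_c=G(c,-)$ for every $G\colon\C\otimes\B\to\V$; consequently $G$ sends $\gamma_c$ to a limit cylinder if and only if $G(c,-)$ sends $\gamma$ to a limit cylinder, and dually for cocylinders. Therefore $G\in\tx{Mod}(\S')$ if and only if $G(c,-)\in\tx{Mod}(\S)$ for all $c$, that is, if and only if the $\V$-functor $\C\to[\B,\V]$ corresponding to $G$ factors through $\tx{Mod}(\S)$. Hence $[\C,\A]\simeq[\C,\tx{Mod}(\S)]\simeq\tx{Mod}(\S')$, and the latter is accessible by Theorem~\ref{sketches-accessible}.

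The only bookkeeping I would still need to spell out is that whiskering along $\iota_c$ really does produce a genuine cylinder (resp.\ cocylinder), i.e.\ a $\V$-natural family of the correct shape, and that under $G\circ\iota_c=G(c,-)$ the comparison map induced by $\gamma_c$ coincides with the one induced by $\gamma$ for $G(c,-)$; both are routine unwindings of $\V$-naturality together with the unit axioms, and I would include only as much detail as necessary. I expect no deeper difficulty: once $\S'$ is in hand, accessibility of $[\C,\A]$ follows at once from Theorem~\ref{sketches-accessible}.
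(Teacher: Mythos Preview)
Your proposal is correct and matches the paper's own proof essentially line for line: the paper also writes $\A\simeq\tx{Mod}(\S)$, passes to $[\C\otimes\B,\V]$, and for each object $c$ of $\C$ pushes the (co)cylinders of $\S$ along the inclusion $\B\to\C\otimes\B$ at $c$ to obtain a sketch $\S'=(\C\otimes\B,\mathbb{L}_\C,\mathbb{C}_\C)$ whose models are exactly $[\C,\A]$. Your description of this operation as whiskering along $\iota_c$ is precisely what the paper means by the cylinder ``acting constantly on $c$ in the first component'', so there is nothing to add.
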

\begin{proof}
	By Theorem~\ref{sketches-accessible} we can write $\A\simeq\tx{Mod}(\S)$ for a sketch $\S=(\B,\mathbb{L},\mathbb{C})$ and hence we can see $\A$ as a full subcategory of $[\B,\V]$. It follows at once that $[\C,\A]$ has a fully faithful embedding
	$$ J\colon [\C,\A]\hookrightarrow[\C,[\B,\V]]\simeq[\C\otimes\B,\V] $$
	obtained by post-composition with the inclusion of $\A$ in $[\B,\V]$. Moreover $[\C,\A]$ can be described as the full subcategory of $[\C\otimes\B,\V]$ whose objects $F$ are such that $F(c,-)\in\A$ for each $c\in\C$.
	
	Now, for each $c\in\C$ and $\lambda\colon  M \to \B(b,H-)$ in $\mathbb{L}$ consider the induced cylinder
	$$ \lambda_c\colon  M \to (\C\otimes\B)((c,b),(c,H-)) $$
	which acts constantly on $c$ in the first component; let $\mathbb{L}_\C$ be the collection of all such cylinders. Similarly, for each $c\in\C$ and $\mu\in\mathbb{C}$ define the co-cylinder $\mu_c$ accordingly, so that we can a new family $\mathbb{C}_\C$. \\
	It's easy to see that given $F\colon\C\otimes\B\to\V$, the functor $F(c,-)$ sends $\lambda$ (respectively $\mu$) to a (co)limiting cylinder if and only if $F$ sends $\lambda_c$ (respectively $\mu_c$) to a (co)limiting cylinder. As a consequence $[\C,\A]\simeq \tx{Mod}(\S')$ for the sketch
	$ \S'=(\C\otimes\B,\mathbb{L}_\C,\mathbb{C}_\C) $, and thus it is accessible by Theorem~\ref{sketches-accessible}.
\end{proof}

\begin{obs}
	In the proof above as well as in the forthcoming ones we make use of the theory of sketches; it would be interesting instead to have a proof which doesn't rely on these but we don't have one. For this reason we don't have any result on stability of $\alpha$-accessible $\V$-categories under limits in $\V\tx{-}\bo{CAT}$, for a fixed regular cardinal $\alpha$.
	Moreover we don't know whether the proposition above is still true when accessibility is replaced by conical accessibility, nonetheless we are still able to show that conically accessible $\V$-categories are stable in $\V\tx{-}\bo{CAT}$ under flexible colimits. 
\end{obs}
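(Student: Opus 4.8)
The statement is a \emph{remark}: its opening clauses---that the arguments pass through the theory of sketches, that no sketch-free argument is presently available, and that the sketch-based proof of Proposition~\ref{acc-funtors} is not known to survive the passage to conical accessibility---are methodological comments and flagged open problems, not claims admitting a proof. The one mathematically substantive assertion is the closing clause, that the conically accessible $\V$-categories are stable in $\V\tx{-}\bo{CAT}$ under flexible limits (I read ``colimits'' as a slip for ``limits'', this being the content announced in the introduction and the subject of the present section, and the only reading under which the cited ordinary result below applies). The plan is to prove that clause while avoiding sketches altogether, by descending to underlying ordinary categories---where the corresponding closure result is already known---and then re-ascending by means of Proposition~\ref{conical-flat}.

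The reduction rests on the fact that the underlying-category 2-functor $(-)_0\colon\V\tx{-}\bo{CAT}\to\bo{CAT}$ is right 2-adjoint to the free $\V$-category 2-functor $(-)_{\V}$, and hence preserves all weighted 2-limits, flexible ones in particular. Thus, if $\L$ is the flexible limit in $\V\tx{-}\bo{CAT}$ of a diagram of conically accessible $\V$-categories and conically accessible $\V$-functors, then $\L_0$ is the corresponding flexible limit in $\bo{CAT}$ of the underlying diagram. Each vertex $(\A_i)_0$ is accessible by Proposition~\ref{conicallyaccproperties}(4), and each underlying $\V$-functor is accessible: a conically $\alpha$-accessible $F$ preserves conical $\alpha$-filtered colimits, whose underlying colimits are ordinary $\alpha$-filtered colimits, so $F_0$ is $\alpha$-accessible. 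Since $\bo{Acc}$ is closed under flexible limits in $\bo{CAT}$, with the forgetful 2-functor preserving them \cite{BKPS89:articolo}, it follows that $\L_0$ is accessible and that the underlying projections are accessible.

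It then remains to verify the two enriched hypotheses of Proposition~\ref{conical-flat}(1): that $\L$ has conical $\alpha$-filtered colimits for some $\alpha$, and that every object of $\L$ is conically presentable. For this I would decompose the flexible limit into its standard generators---products, inserters, equifiers, and splittings of idempotents---and check both conditions for each. In every case the hom-object $\L(X,Y)$ is an $\alpha$-small limit in $\V$ (indeed a finite limit, apart from the product, whose index set is small) of the hom-objects of the inputs evaluated at the images of $X,Y$ under the projections. Choosing, by the existence of arbitrarily large cardinals sharply above a given small set \cite[Example~2.13(6)]{AR94:libro}, a single $\alpha$ dominating all accessibility indices of the diagram and all the relevant index sets, one sees that conical $\alpha$-filtered colimits are created by the projections from those of the inputs, and that each $\L(X,-)$, being such an $\alpha$-small limit of functors preserving $\alpha$-filtered colimits, preserves them as well. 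Hence every object is conically $\alpha$-presentable, and Proposition~\ref{conical-flat}(1) yields that $\L$ is conically accessible; since the projections preserve conical $\alpha$-filtered colimits they are conically accessible, so $\L$ is genuinely the flexible limit in the 2-category of conically accessible $\V$-categories.

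I expect the main obstacle to lie precisely in this per-generator verification, rather than in the reduction: one must confirm that the enriched conical filtered colimits in each of the inserter, equifier, and idempotent-splitting constructions are genuinely \emph{created} along the projection---so that the comparison $\V$-natural structure assembles correctly from the inputs---and that the enriched representables are computed as the claimed $\alpha$-small limits, uniformly in a single $\alpha$. It is exactly the availability of the underlying-category criterion of Proposition~\ref{conical-flat} that allows the argument to proceed without the sketch-theoretic machinery which, as the remark observes, is not known to be available in the conical setting.
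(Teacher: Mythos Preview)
Your reading is correct: the remark itself carries no proof, and the substantive closing clause is established later in the paper as a separate theorem on $\V\tx{-}\bo{cAcc}$. Your proposed argument is essentially the same as the paper's: both descend to $\bo{Acc}$ via $(-)_0$ (using that flexible limits of accessible categories are accessible), verify directly that the limit $\V$-category has conical $\alpha$-filtered colimits and that every object is conically presentable, and then invoke Proposition~\ref{conical-flat}(1).

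The only notable differences are organisational. The paper decomposes flexible limits into \emph{powers}, products, and \emph{pullbacks along isofibrations} (deriving inserters and equifiers from the latter two, as in the proof of Theorem~\ref{limitacc}), whereas you go straight for products, inserters, and equifiers. The paper is also slightly more careful about the presentability index: for a product $\prod_i\A_i$, an object $(A_i)_i$ is conically $\beta$-presentable only once $\beta$ dominates both $|I|$ and the individual ranks of the $A_i$, so one does \emph{not} get a single $\alpha$ making every object conically $\alpha$-presentable---only that each object is conically presentable for \emph{some} rank, which is all Proposition~\ref{conical-flat}(1) requires. Your phrase ``every object is conically $\alpha$-presentable'' overstates this, but the argument survives once you weaken it to per-object presentability.
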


\begin{cor}
	Let $\A$ be an accessible $\V$-category and $\C$ be an ordinary small category. Then the power $\A^\C:=\C\pitchfork\A$ exists in $\V\tx{-}\bo{Acc}$ and is preserved by the forgetful functor $U\colon \V\tx{-}\bo{Acc}\to \V\tx{-}\bo{CAT}$.
\end{cor}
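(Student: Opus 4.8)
The plan is to exhibit the power $\C\pitchfork\A$ as a $\V$-functor $\V$-category and then apply Proposition~\ref{acc-funtors}. In the 2-category $\V\tx{-}\bo{CAT}$ the power of a $\V$-category $\A$ by an ordinary small category $\C$ is $[\C_\V,\A]$, where $\C_\V$ is the free $\V$-category on $\C$, with universal cone the family of evaluation $\V$-functors $(\tx{ev}_c\colon[\C_\V,\A]\to\A)_{c\in\C}$; the defining 2-natural isomorphism
$$\V\tx{-}\bo{CAT}(\X,[\C_\V,\A])\cong\bo{Cat}(\C,\V\tx{-}\bo{CAT}(\X,\A))$$
follows from the adjunction $(-)\otimes\C_\V\dashv[\C_\V,-]$ on $\V\tx{-}\bo{CAT}$ together with the fact that a $\V$-functor out of a free $\V$-category $\C_\V$ amounts to an ordinary functor out of $\C$. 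Since $\C$ is small, $\C_\V$ is a small $\V$-category, so Proposition~\ref{acc-funtors} gives that $[\C_\V,\A]$ is accessible; moreover each $\tx{ev}_c$ is cocontinuous, colimits in $[\C_\V,\A]$ being computed pointwise, hence accessible, so the evaluation cone lies in $\V\tx{-}\bo{Acc}$.

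It then remains to show that this cone exhibits $[\C_\V,\A]$ as the power of $\A$ by $\C$ in $\V\tx{-}\bo{Acc}$ and that $U$ carries it to the power just described. Because $\V\tx{-}\bo{Acc}$ is a locally full sub-2-category of $\V\tx{-}\bo{CAT}$ (all $\V$-natural transformations being 2-cells), for any accessible $\V$-category $\X$ the hom-category $\V\tx{-}\bo{Acc}(\X,[\C_\V,\A])$ is the full subcategory of $\bo{Cat}(\C,\V\tx{-}\bo{CAT}(\X,\A))$ on those functors $\C\to\V\tx{-}\bo{CAT}(\X,\A)$ that correspond to an \emph{accessible} $\V$-functor $\X\to[\C_\V,\A]$, whereas $\bo{Cat}(\C,\V\tx{-}\bo{Acc}(\X,\A))$ is the full subcategory on those functors that land in the accessible $\V$-functors. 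So the whole statement reduces to the claim that a $\V$-functor $G\colon\X\to[\C_\V,\A]$ is accessible if and only if $\tx{ev}_c\circ G\colon\X\to\A$ is accessible for every $c\in\C$.

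The forward direction is immediate since a composite of accessible $\V$-functors is accessible. For the converse, suppose $\tx{ev}_c\circ G$ preserves $\alpha_c$-flat colimits for each $c$; as $\C$ is small, one can choose a regular cardinal $\alpha$ with $\alpha>\alpha_c$ for all $c\in\C$ and large enough that $\A$ (hence $[\C_\V,\A]$) has $\alpha$-flat colimits. Since every $\alpha$-flat weight is $\alpha_c$-flat when $\alpha>\alpha_c$, each $\tx{ev}_c\circ G$ then preserves $\alpha$-flat colimits, and since $\alpha$-flat colimits in $[\C_\V,\A]$ are computed pointwise the evaluations $(\tx{ev}_c)_{c\in\C}$ jointly preserve and jointly reflect them; therefore $G$ preserves $\alpha$-flat colimits, i.e.\ is accessible. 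I expect the only point needing care to be this passage from the possibly distinct indices $\alpha_c$ to a single $\alpha$ --- harmless here because $\C$ is small and because preservation of $\alpha$-flat colimits only gets easier as $\alpha$ increases --- with everything else being formal.
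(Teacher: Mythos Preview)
Your proof is correct and follows essentially the same approach as the paper: identify the power in $\V\tx{-}\bo{CAT}$ with $[\C_\V,\A]$, invoke Proposition~\ref{acc-funtors} for its accessibility, and then verify that the universal property lifts to $\V\tx{-}\bo{Acc}$ via the equivalence ``$G\colon\X\to[\C_\V,\A]$ is accessible iff each $\tx{ev}_c\circ G$ is accessible''. The paper simply asserts this last equivalence, whereas you spell out the argument (including the passage to a uniform $\alpha$ using smallness of $\C$); this extra detail is sound and is exactly what is implicit in the paper's one-line claim.
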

\begin{proof}
	Let $\A^\C$ be the power of $\A$ by $\C$ in $\V\tx{-}\bo{CAT}$. Since a $\V$-functor $\B\to\A^\C$ is accessible if and only if its transpose ordinary functor $\C\to\V\tx{-}\bo{CAT}(\B,\A)$ lands in $\V\tx{-}\bo{Acc}(\B,\A)$, it's enough to show that $\A^\C$ is an accessible $\V$-category. This follows at once from the previous proposition since $\A^\C\cong [\C_\V,\A]$, where $\C_\V$ is the free $\V$-category on $\C$. 
\end{proof}

The following will be needed to prove the main result of this section.

\begin{lema}\label{fullness-of-sketches}
	Let $F\colon \A_1\to\A_2$ be an accessible $\V$-functor between accessible $\V$-categories; then there exist sketches $\S_1=(\B_1,\mathbb{L}_1,\mathbb{C}_1)$ and $\S_2=(\B_2,\mathbb{L}_2,\mathbb{C}_2)$ and a $\V$-functor $K\colon \B_2\to\B_1$ for which:
	\begin{enumerate}\setlength\itemsep{0.25em}
		\item $\A_1\simeq\tx{Mod}(\S_1)$ and $\A_2\simeq\tx{Mod}(\S_2)$:
		\item The induced square
		\begin{center}
			\begin{tikzpicture}[baseline=(current  bounding  box.south), scale=2]
				
				\node (a0) at (0,0.8) {$[\B_1,\V]$};
				\node (b0) at (1.1,0.8) {$[\B_2,\V]$};
				\node (c0) at (0,0) {$\A_1$};
				\node (d0) at (1.1,0) {$\A_2$};
				
				\path[font=\scriptsize]
				
				(a0) edge [->] node [above] {$-\circ K$} (b0)
				(c0) edge [right hook->] node [left] {} (a0)
				(d0) edge [right hook->] node [right] {} (b0)
				(c0) edge [->] node [below] {$F$} (d0);
			\end{tikzpicture}	
		\end{center}
		commutes up to isomorphism.
	\end{enumerate}
	Moreover $\B_2$ can be chosen to be $(\A_2)_{\alpha}^{op}$ for arbitrarily large cardinals $\alpha$ as in~\ref{flat-to-conical}.
\end{lema}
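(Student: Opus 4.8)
The plan is to build the two sketches from the canonical presentations of $\A_1$ and $\A_2$ as free cocompletions under $\alpha$-flat colimits, and to manufacture the comparison $\V$-functor $K$ out of the \emph{virtual left adjoint} of $F$. First I would fix, using Corollary~\ref{preserving-flat-presentables} together with Theorem~\ref{flat-to-conical}, a regular cardinal $\alpha$ — which may be taken arbitrarily large and of the shape required by~\ref{flat-to-conical} — for which $\A_1$, $\A_2$ and $F$ are all $\alpha$-accessible and $F$ sends $\alpha$-presentable objects to $\alpha$-presentable objects. Writing $H_i\colon(\A_i)_\alpha\hookrightarrow\A_i$ for the inclusions, $F$ then restricts to $G\colon(\A_1)_\alpha\to(\A_2)_\alpha$ with $FH_1\cong H_2G$, and $F\cong\tx{Lan}_{H_1}(FH_1)$ by Remark~\ref{phi-acc-funct}. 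By Proposition~\ref{virtual=accessible}, $F$ has a virtual left adjoint whose restriction to the $\alpha$-presentables exists; concretely, for every $c\in(\A_2)_\alpha$ the $\V$-functor $\A_2(H_2c,F-)\colon\A_1\to\V$ is small and is the left Kan extension along $H_1$ of the $\V$-functor $P_c:=\A_2(H_2c,FH_1-)\colon(\A_1)_\alpha\to\V$, so that $c\mapsto P_c$ defines a $\V$-functor $(\A_2)_\alpha^{op}\to[(\A_1)_\alpha,\V]$.

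Next I would choose the bases. Let $\B_2$ be the free completion of $(\A_2)_\alpha^{op}$ under $\alpha$-small limits (it has $(\A_2)_\alpha^{op}$ as a dense full subcategory, and it is in this sense that $\B_2$ ``is'' $(\A_2)_\alpha^{op}$), and let $\B_1$ be the closure, inside $[(\A_1)_\alpha,\V]$ under $\alpha$-small limits, of the representable copresheaves together with the objects $P_c$ $(c\in(\A_2)_\alpha)$; both are small. Since $\A_2\simeq\alpha\tx{-Flat}((\A_2)_\alpha^{op},\V)$, Proposition~\ref{phi-limitcolimit} produces a sketch $\S_2=(\B_2,\mathbb{L}_2,\mathbb{C}_2)$ with $\A_2\simeq\tx{Mod}(\S_2)$, the model-embedding $\tx{Mod}(\S_2)\hookrightarrow[\B_2,\V]$ being the $\alpha$-continuous extension $\widetilde J_2$ of $A\mapsto\A_2(H_2-,A)$. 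For $\B_1$ I would take the Proposition~\ref{phi-limitcolimit}-sketch presenting $\A_1\simeq\alpha\tx{-Flat}((\A_1)_\alpha^{op},\V)$, which lives on the $\alpha$-small-limit closure $\D_1\subseteq\B_1$ of the representables, and enlarge it to a sketch $\S_1$ on $\B_1$ by adjoining limit-cylinders for the remaining $\alpha$-small limit-cones defining $\B_1$ and colimit-cocylinders exhibiting each $P_c$ as a (small, conical) $\alpha$-filtered colimit of representable copresheaves — this last presentation existing because $P_c$ is $\alpha$-flat (Proposition~\ref{flat-char}). One checks $\tx{Mod}(\S_1)\simeq\A_1$ via $\widetilde J_1$ sending $A$ to $\bigl(Q\mapsto\A_1(H_1-,A)*Q\bigr)$: this preserves all colimits, and all $\alpha$-small limits since $\A_1(H_1-,A)$ is $\alpha$-flat, hence is a model; conversely a model of $\S_1$ restricts along the representables to an $\alpha$-flat weight, so comes from $\A_1$, and its remaining values are pinned down by the cocylinders.

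Finally, let $K\colon\B_2\to\B_1$ be the unique $\alpha$-continuous extension of $c\mapsto P_c$. Then for $c\in(\A_2)_\alpha$,
$$\widetilde J_1(A)(Kc)=\A_1(H_1-,A)*P_c\cong\A_2\!\bigl(H_2c,\;\A_1(H_1-,A)*FH_1\bigr)\cong\A_2(H_2c,FA)=\widetilde J_2(FA)(c),$$
using that $\A_1(H_1-,A)*FH_1\cong FA$ is an $\alpha$-flat colimit preserved by $\A_2(H_2c,-)$; all isomorphisms are natural in $A$ and $c$. Since $\widetilde J_1(A)\circ K$ and $\widetilde J_2(FA)$ are both $\alpha$-continuous on $\B_2$ and agree on the dense subcategory $(\A_2)_\alpha^{op}$, they are isomorphic; hence $(-\circ K)\circ\widetilde J_1\cong\widetilde J_2\circ F$, which is exactly the commutativity of the square (with $\B_2$ chosen, up to the completion above, to be $(\A_2)_\alpha^{op}$).

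The main obstacle is that the comparison on presheaf categories \emph{naively} induced by $F$ is the left Kan extension $\tx{Lan}_{G^{op}}$, which is not a restriction along any $\V$-functor; the device that turns it into an honest precomposition $-\circ K$ is to enlarge $\B_1$ so that it already contains the ``values of the virtual left adjoint'' $P_c$, and then to re-present $\A_1$ as the models of a sketch on this enlarged base. Accordingly, the one step that needs genuine care (beyond routine coend bookkeeping and variance-chasing) is checking that the enlargement does not alter $\tx{Mod}$, i.e.\ that the extra colimit-cocylinders forcing the $P_c$ to be $\alpha$-filtered colimits of representables are both satisfied by every $\widetilde J_1(A)$ and strong enough to determine a model on all of $\B_1$ from its restriction to the representables.
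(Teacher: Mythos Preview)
Your overall architecture --- enlarge the base for $\A_1$ so that it contains the values $P_c=\A_2(c,FH_1-)$ of the virtual left adjoint, and then realise the comparison as precomposition with a $\V$-functor $K$ rather than as a left Kan extension --- is exactly the paper's idea. The final verification of the square is also essentially the paper's triangle calculation.

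There is, however, a genuine gap. You assert that each $P_c$ is $\alpha$-flat and invoke Proposition~\ref{flat-char} to present it as a conical $\alpha$-filtered colimit of representables. This is false in general. Take $\V=\bo{Set}$, $\A_1=\A_2=\bo{Set}$, $\alpha=\omega$, and $F(X)=X+X$; then $F$ is finitary and preserves finite sets, and for $c=1$ one has $P_1(d)\cong d+d\cong Y(1)(d)+Y(1)(d)$, whose category of elements is not even connected (the two copies of $Y(1)$ cannot be bounded), so $P_1$ is not flat. Even if $P_c$ were $\alpha$-flat, the hypothesis of Proposition~\ref{flat-char}(4) requires the domain $(\A_1)_\alpha^{op}$ to be $\alpha$-complete, which fails for merely accessible $\A_1$. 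So the specific cocylinders you propose for $\S_1$ need not exist.

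The paper avoids this by taking the simpler $\B_1$ spanned just by the representables and the $P_c$'s (no $\alpha$-small-limit closure) and by using as cocylinders the \emph{density} presentations of each object of $\B_1$ as a weighted colimit of representables --- these always exist, since the Yoneda image $\C\hookrightarrow\B_1$ is dense. The image of $\tx{Lan}_H\colon[\C,\V]\hookrightarrow[\B_1,\V]$ is then exactly the functors preserving these cocylinders, and one composes with the sketch cutting $[\C,\V]$ down to $\A_1$. Correspondingly the paper takes $\B_2=(\A_2)_\alpha^{op}$ on the nose, whereas your free $\alpha$-small-limit completion does not literally satisfy the ``Moreover'' clause (which is used in Theorem~\ref{limitacc} to make the codomain sketch depend only on $\A_2$ and $\alpha$). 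Your argument is repairable along these lines: drop the $\alpha$-flatness claim and the limit closures, and use density cocylinders instead.
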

\begin{proof}
	Given $F\colon \A_1\to\A_2$ as above, let $\alpha$ be such that $\A_1,\A_2,$ and $F$ are $\alpha$-accessible and $F((\A_1)_{\alpha})\subseteq (\A_2)_{\alpha}$ (see~\ref{preserving-presentables} and~\ref{flat-to-conical}). Then $F$ is the left Kan extension of its restriction to $(\A_1)_\alpha$ and we can consider the commutative (up to isomorphism) diagram below
	\begin{center}
		\begin{tikzpicture}[baseline=(current  bounding  box.south), scale=2]
			
			\node (a0) at (0,0.7) {$[\C,\V]$};
			\node (b0) at (1.1,0.7) {$[\B_2,\V]$};
			\node (c0) at (0,0) {$\A_1$};
			\node (d0) at (1.1,0) {$\A_2$};
			\node (e0) at (0,-0.7) {$\C^{op}$};
			\node (f0) at (1.1,-0.7) {$\B_2^{op}$};
			
			\path[font=\scriptsize]
			
			(a0) edge [->] node [above] {$\tx{Lan}_G$} (b0)
			(c0) edge [right hook->] node [left] {} (a0)
			(d0) edge [right hook->] node [right] {} (b0)
			(c0) edge [->] node [below] {$F$} (d0)
			(e0) edge [right hook->] node [left] {} (c0)
			(f0) edge [right hook->] node [right] {} (d0)
			(e0) edge [->] node [below] {$G^{op}$} (f0);
		\end{tikzpicture}	
	\end{center}
	where $\C=(\A_1)_{\alpha}^{op}$, $\B_2=(\A_2)_\alpha^{op}$, and $G$ is the restriction of $F^{op}$ to $\C$.
	
	Now consider the $\V$-functor $L\colon \B_2\to [\C^{op},\V]$ sending $B$ to $LB=\B_2(G-,B)$ (note that $L^{op}$ is the virtual left adjoint to $G^{op}$). Let $\B_1$ be the full subcategory of $[\C^{op},\V]$ spanned by the representables and the essential image of $L$; then we have a fully faithful inclusion $H\colon \C\hookrightarrow\B_1$ and a $\V$-functor $K\colon \B_2\to\B_1$ induced by $L$. \\
	The next step is to check that the triangle below commutes up to isomorphism.
	\begin{center}
		\begin{tikzpicture}[baseline=(current  bounding  box.south), scale=2]
			
			\node (a) at (0.6,0.5) {$[\B_1,\V]$};
			\node (c) at (0, 0) {$[\C,\V]$};
			\node (d) at (1.2, 0) {$[\B_2,\V]$};
			
			\path[font=\scriptsize]
			
			(c) edge [right hook->] node [above] {$\tx{Lan}_H\ \ \ \ \ \ \ \ \ $} (a)
			(a) edge [->] node [above] {$\ \ \ \ \ \ \ \ \ -\circ K$} (d)
			(c) edge [->] node [below] {$\tx{Lan}_G$} (d);
			
		\end{tikzpicture}	
	\end{center} 
	Since all the $\V$-functors involved are cocontinuous it's enough to prove that the isomorphism holds for all representables in $[\C,\V]$. Consider $C\in\C$: on one hand $\tx{Lan}_G(\C(C,-))\cong \B_2(GC,-)$; on the other $\tx{Lan}_H(\C(C,-))\cong \B_1(HC,-)$ which by precomposition with $K$ gives 
	$$\B_1(HC,K-)\cong [\C^{op},\V](YC,L-)\cong L(-)(C)\cong \B_2(GC,-) $$
	as desired. 
	
	Gluing this triangle with the diagram above we find the desired square in $(2)$. To conclude it's then enough to note that $\A_2$ is identified with the full subcategory of $[\B_2,\V]$ spanned by the $\alpha$-flat functors out of $\B_2$, and hence is the category of models of a sketch. The same holds for the inclusion of $\A_1$ in $[\C,\V]$; moreover $[\C,\V]$ itself is the category of models for a sketch on $\B_1$ (because the inclusion $H\colon \C\hookrightarrow\B_1$ is dense and therefore the essential image of $\tx{Lan}_H$ is given by a full subcategory of functors preserving specified colimits). It follows then that $\A_1$ is the category of models for a sketch in $\B_1$. 
\end{proof}

In the theorem below we say that a $\V$-functor is an isofibration if its underlying ordinary functor is one.

\begin{teo}\label{limitacc}
	The 2-category $\V\tx{-}\bo{Acc}$ has all flexible (and hence all pseudo- and bi-) limits, as well as all pullbacks along isofibrations, and the forgetful functors $U\colon \V\tx{-}\bo{Acc}\to \V\tx{-}\bo{CAT}$ and $(-)_0\colon  \V\tx{-}\bo{Acc}\to \bo{Acc}$ preserve them.
\end{teo}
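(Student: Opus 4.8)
The plan is to realise each limit appearing in the statement as the corresponding strict weighted limit computed in $\V\tx{-}\bo{CAT}$, and to check case by case that its vertex is an accessible $\V$-category and its projections are accessible $\V$-functors. Since $\V\tx{-}\bo{Acc}$ is a locally full sub-$2$-category of $\V\tx{-}\bo{CAT}$ (it has all $\V$-natural transformations as $2$-cells between accessible $\V$-categories), once vertex and projections lie in $\V\tx{-}\bo{Acc}$ the full $2$-dimensional universal property is inherited --- accessibility of a $\V$-functor into the vertex being detected through the projections, as $\alpha$-flat colimits in the vertex are computed componentwise in a limit of presheaf $\V$-categories. Preservation by $U$ is then automatic, and preservation by $(-)_0$ follows because $(-)_0\colon\V\tx{-}\bo{CAT}\to\bo{CAT}$ is a right adjoint, hence preserves strict $2$-limits, while the underlying category of an accessible $\V$-category is accessible by Theorem~\ref{flat-to-conical} and Proposition~\ref{conicallyaccproperties}(4) --- the same reasoning at $\V=\bo{Set}$ giving closure of $\bo{Acc}$. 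By \cite{BKPS89:articolo} (see also \cite{lack20102}) flexible limits are generated by products, inserters, equifiers and splittings of idempotents, so it suffices to produce these together with pullbacks along isofibrations, paralleling \cite{Bir84:tesi} in the locally presentable case.

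Products are easy: writing $\A_i\simeq\tx{Mod}(\S_i)$ for sketches $\S_i=(\B_i,\mathbb L_i,\mathbb C_i)$ via Theorem~\ref{sketches-accessible}, the equivalence $[\coprod_i\B_i,\V]\simeq\prod_i[\B_i,\V]$ identifies $\prod_i\A_i$ with the models of the sketch $(\coprod_i\B_i,\bigcup_i\mathbb L_i,\bigcup_i\mathbb C_i)$, again accessible by Theorem~\ref{sketches-accessible}; the projections are restrictions of cocontinuous, hence accessible, $\V$-functors. For splittings of idempotents, a retract of an accessible $\V$-category is accessible: identifying the retract with the essential image $\A''$ of the section inside the accessible $\A$, and choosing $\alpha$ large enough that the idempotent $e$ preserves $\alpha$-flat colimits (so $\A''$ is closed in $\A$ under them) and $\alpha$-presentable objects (Corollary~\ref{preserving-flat-presentables}), the small full subcategory $e(\A_\alpha)\subseteq\A''$ consists of $\alpha$-presentable objects and generates $\A''$ under $\alpha$-flat colimits, since any $X\cong eX$ in $\A''$ is obtained by applying $e$ to a presentation of $X$ in $\A$; the projections are then accessible. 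Finally, inserters and equifiers reduce to pullbacks along isofibrations via Proposition~\ref{acc-funtors}: the inserter of $F,G\colon\A\rightrightarrows\B$ is the strict pullback of $(F,G)\colon\A\to\B\times\B$ along $(\tx{dom},\tx{cod})\colon\B^{\mathbbm 2}\to\B\times\B$, an accessible isofibration since $\B^{\mathbbm 2}$ and $\B\times\B$ are powers of $\B$ by small categories; and the equifier of a parallel pair $\phi,\psi\colon F\Rightarrow G$ is the strict pullback of the $\V$-functor $\A\to\B^{\mathbb J}$ classifying $(\phi_A,\psi_A)$, with $\mathbb J=(0\rightrightarrows 1)$, along the accessible isofibration $\B^{\mathbbm 2}\to\B^{\mathbb J}$ induced by identifying the two arrows of $\mathbb J$.

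The heart of the argument is therefore pullbacks along isofibrations. Given accessible $\V$-functors $F\colon\A_1\to\A_3$ and $G\colon\A_2\to\A_3$ with $G$ an isofibration, apply Lemma~\ref{fullness-of-sketches} to $F$ and then to $G$ with one regular cardinal $\alpha$ large enough for both; since the two applications share the codomain $\A_3$, they present it by the \emph{same} standard sketch $\S_3$ on $\B_3:=(\A_3)_\alpha^{op}$, and we obtain sketches $\S_1$ on $\B_1$ and $\S_2$ on $\B_2$ presenting $\A_1$ and $\A_2$, together with $\V$-functors $K_F\colon\B_3\to\B_1$, $K_G\colon\B_3\to\B_2$ and squares exhibiting $F$ and $G$ as the restrictions to the model categories of $-\circ K_F$ and $-\circ K_G$. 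Form the small $\V$-category $\B:=\B_1+_{\B_3}\B_2$, the pushout in $\V\tx{-}\bo{Cat}$; since $[-,\V]$ carries colimits of small $\V$-categories to limits, $[\B,\V]\cong[\B_1,\V]\times_{[\B_3,\V]}[\B_2,\V]$, and the strict pullback $\A_1\times_{\A_3}\A_2$ sits inside $[\B,\V]$ as a full sub-$\V$-category. One then identifies it with the $\V$-category of models of the sketch $\S$ on $\B$ whose cylinders and cocylinders are those of $\S_1$ and $\S_2$ transported along the canonical $\V$-functors $\B_1\to\B$ and $\B_2\to\B$; Theorem~\ref{sketches-accessible} gives that $\A_1\times_{\A_3}\A_2$ is accessible, and the two projections are restrictions of the cocontinuous projections $[\B,\V]\to[\B_i,\V]$, hence accessible.

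The \emph{main obstacle} is this last identification: one must show that a $\V$-functor $M\colon\B\to\V$ sending the transported (co)cylinders of $\S_1$ and $\S_2$ to (co)limits is the same datum as a pair $(M_1,M_2)$ of models with $M_1\circ K_F=M_2\circ K_G$, and it is precisely here that the hypothesis that $G$ is an isofibration is used, ensuring that the strict pullback is well behaved --- it agrees with the pseudo-pullback up to isomorphism of $\V$-categories, not merely up to equivalence --- so that the comparison with $\tx{Mod}(\S)$ is genuine. A further delicate point is that the pushout $\B_1+_{\B_3}\B_2$ need not be taken along fully faithful $\V$-functors, so the hom-objects of $\B$, and with them the precise form of the transported (co)cylinders, have to be described explicitly before the identification can be carried out.
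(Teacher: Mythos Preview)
Your proposal is correct and follows essentially the same route as the paper: reduce to products, inserters, equifiers and splittings; build products via the coproduct sketch; reduce inserters and equifiers to pullbacks along isofibrations using Proposition~\ref{acc-funtors}; and handle the pullback case by applying Lemma~\ref{fullness-of-sketches} twice with a common $\alpha$, pushing out the base $\V$-categories, and recognising the result as a sketch.

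Two small remarks on where you diverge. First, the paper does not argue splittings directly: it needs only splittings of idempotent \emph{equivalences}, and these come for free in any locally full sub-$2$-category of $\V\tx{-}\bo{CAT}$ by \cite[Remark~7.6]{BKPS89:articolo}. Your direct argument that a retract of an accessible $\V$-category is accessible is valid (and proves a little more), but is not needed. Second, your ``further delicate point'' about describing the hom-objects of the pushout $\B=\B_1+_{\B_3}\B_2$ is a red herring: the identification $\tx{Mod}(\S)=\{M:MJ_i\in\tx{Mod}(\S_i)\}$ is purely formal, since $M$ sends the transported cylinder $J_i\eta$ to a (co)limit if and only if $MJ_i$ sends $\eta$ to one, and this needs no knowledge of the homs in $\B$. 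The genuine subtlety---which you do locate correctly---is that the squares of Lemma~\ref{fullness-of-sketches} commute only up to isomorphism, so the passage from the strict pullback $\A_1\times_{\A_3}\A_2$ to the full subcategory of $[\B,\V]$ goes via the bipullback; the isofibration hypothesis on $G$ is exactly what makes the strict pullback compute the bipullback, and this is precisely how the paper uses it.
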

\begin{proof}
	We prove this using the fact that a 2-category has all flexible limits if and only if it has products, inserters, equifiers, and splitting of idempotent equivalences (see \cite[Theorem~4.9]{BKPS89:articolo}); moreover the latter comes for free in $\V\tx{-}\bo{Acc}$ thanks to \cite[Remark~7.6]{BKPS89:articolo}.
	
	{\em $(a)$ Pullbacks along isofibrations}. Let $F_1\colon \A_1\to\K$ and $F_2\colon \A_2\to\K$ be accessible $\V$-functors between accessible $\V$-categories, and assume that $F_1$ is an isofibration. Consider the pullback $\A_{12}$ of this pair in $\V\tx{-}\bo{CAT}$, with projections $P_1\colon \A_{12}\to\A_1$ and $P_2\colon \A_{12}\to\A_2$, and note that, since $F_1$ is an isofibration, this can be seen as a bipullback in $\V\tx{-}\bo{CAT}$. By Lemma~\ref{fullness-of-sketches} we can find small $\V$-categories $\B_1,\B_2$ and $\C$ together with $\V$-functors $K_i\colon \C\to\B_1$ for which: $\A_i=\tx{Mod}(\S_i)$, $\K=\tx{Mod}(\T)$ for some sketches $\S_i=(\B_i,\mathbb{L}_i,\mathbb{C}_i)$ and $\T=(\B,\mathbb{L},\mathbb{C})$, and such that $-\circ K_i$ restricts to $F_i$ (note that $\C$ can be chosen to be the same for both $\A_1$ and $\A_2$ thanks to the final assertion in the Lemma).
	Let $\B_{12}$ be the pushout of $K_1$ and $K_2$ in $\V\tx{-}\bo{CAT}$, with maps $J_i\colon \B_i\to\B_{12}$. Then $\B_{12}$ is sent to a pullback through $[-,\V]$ providing the commutative cube below
	\begin{center}
		\begin{tikzpicture}[baseline=(current  bounding  box.south), scale=2, on top/.style={preaction={draw=white,-,line width=#1}}, on top/.default=6pt]
			
			\node (a0) at (0,1.1) {$\A_{12}$};
			\node (b0) at (1.3,1.1) {$\A_2$};
			\node (c0) at (0,0) {$\A_1$};
			\node (d0) at (1.3,0) {$\K$};
			\node (e0) at (0.15,0.85) {$\lrcorner$};
			
			\node (a0') at (0.7,0.6) {$[\B_{12},\V]$};
			\node (b0') at (2,0.6) {$[\B_2,\V]$};
			\node (c0') at (0.7,-0.5) {$[\B_1,\V]$};
			\node (d0') at (2,-0.5) {$[\C,\V]$};
			\node (e0') at (0.9,0.35) {$\lrcorner$};
			
			\path[font=\scriptsize]

			(a0') edge [->] node [above] {} (b0')
			(a0') edge [->] node [left] {} (c0')
			(b0') edge [->] node [right] {$-\circ K_2$} (d0')
			(c0') edge [->] node [below] {$-\circ K_1$} (d0')
			
			(a0) edge [->] node [above] {$P_2$} (b0)
			(a0) edge [->] node [left] {$P_1$} (c0)
			(b0) edge [->, on top] node [right] {} (d0)
			(c0) edge [->, on top] node [below] {} (d0)
			
			(a0) edge [right hook->] node [above] {} (a0')
			(b0) edge [right hook->] node [above] {} (b0')
			(c0) edge [right hook->] node [above] {} (c0')
			(d0) edge [right hook->] node [above] {} (d0');
		\end{tikzpicture}	
	\end{center}
	where the $\V$-functors not labelled are $F_1,F_2$, the precomposition functors $-\circ J_1,-\circ J_2$, and the inclusions. It follows that $\A_{12}$ can be identified with the full subcategory fo $[\B_{12},\V]$ whose projections to $[\B_i,\V]$ land in $\A_i$.\\
	Consider now the sketch $\S_{12}=(\B_{12},\mathbb{L}_{12},\mathbb{C}_{12})$ defined by 
	$$ \mathbb{L}_{12}:=\{ J_i\eta\ |\ i=1,2, \eta\in\mathbb{L}_i \},\ \  \mathbb{C}_{12}:=\{ J_i\mu\ |\ i=1,2, \eta\in\mathbb{C}_i \}.$$ 
	It follows at once that $F\colon \B_{12}\to\V$ is a model of $\S_{12}$ if and only if $F\circ J_i$ is a model of $\S_i$, for $i=1,2$; therefore $\A\simeq\tx{Mod}(\S_{12})$ is accessible.\\
	To conclude, consider $\alpha$ such that $\A_1$, $\A_2$, and $\K$ have, and $F_1,F_2$ preserve, all $\alpha$-flat colimits; then it's a standard argument to check that $\A_{12}$ has $\alpha$-flat colimits as well and $P_1$ and $P_2$ preserve them (this follows at once from the fact we are dealing with a bipullback, plus that the homs in $\A_{12}$ come as pullbacks of homs in $\A_1$,$\A_2$, and $\K$, and that $\alpha$-flat colimits commute with them in $\V$). It's now routine to check that $\A_{12}$ is actually a pullback in $\V\tx{-}\bo{Acc}$.
	
	{\em $(b)$ Products}. Let $(\A_i)_{i\in I}$ be a small family of accessible $\V$-categories and denote by $\A=\textstyle\prod_i\A_i$ the product in $\V\tx{-}\bo{CAT}$. For each $i\in I$ consider a sketch $\S_i=(\B_i,\mathbb{L}_i,\mathbb{C}_i)$ for which $\A_i\simeq\tx{Mod}(\S_i)$. Then we can see $\A$ as a full subcategory of $$\prod_{i\in I}[\B_i,\V]\cong [\B,\V]$$
	with $\B:=\textstyle\coprod_{i\in I}\B_i$ in $\V\tx{-}\bo{CAT}$. Now, for each $i\in I$, we can consider the coproduct inclusion $J_i\colon \B_i\to\B$ and define the set of cylinders
	$$ \mathbb{L}:=\{ J_i\eta\ |\ i\in I, \eta\in\mathbb{L}_i \} $$ 
	and of cocylinders
	$$ \mathbb{C}:=\{ J_i\mu\ |\ i\in I, \eta\in\mathbb{C}_i \}, $$ 
	which identify a sketch $\S=(\B,\mathbb{L},\mathbb{C})$. It follows then by construction that $F\colon \B\to\V$ lies in $\A$ if and only if its components $F_i$ are model of $\S_i$ for each $i\in I$, if and only if $F$ is a models of $\S$; thus $\A\simeq\tx{Mod}(\S)$ is accessible. Moreover considering $\alpha$ such that each $\A_i$ is $\alpha$-accessible, an easy calculation shows then that $\A$ has all $\alpha$-flat colimits and these are preserved by the projections; it now easily follows that $\B$ is the product of $(\A_i)_{i\in I}$ in $\V\tx{-}\bo{Acc}$.
	
	{\em $(c)$ Inserters}. Let $F,G\colon \A\to\K$ be a parallel pair in $\V\tx{-}\bo{Acc}$; then their equifier $\B$ can be seen as the pullback 
	\begin{center}
		\begin{tikzpicture}[baseline=(current  bounding  box.south), scale=2]
			
			\node (a0) at (0,0.8) {$\B$};
			\node (b0) at (0.9,0.8) {$\A$};
			\node (c0) at (0,0) {$\K^\mathbbm{2}$};
			\node (d0) at (0.9,0) {$\K\times\K$};
			\node (e0) at (0.15,0.65) {$\lrcorner$};
			
			\path[font=\scriptsize]
			
			(a0) edge [->] node [above] {} (b0)
			(a0) edge [->] node [left] {} (c0)
			(b0) edge [->] node [right] {$(F,G)$} (d0)
			(c0) edge [->] node [below] {$\pi$} (d0);
		\end{tikzpicture}	
	\end{center}
	where $\pi$ is the projection induced by the inclusion $2\to\mathbbm{2}$. All the $\V$-categories involved are accessible (by the corollary above), as well as the $\V$-functors, and $\pi$ is an isofibration. So the result follows from point $(a)$. 
	
	{\em $(d)$ Equifiers}. Let $\mu,\eta\colon F\Rightarrow G\colon \A\to\K$ be a parallel pair of 2-cells in $\V\tx{-}\bo{Acc}$. Arguing as above we can write their equifier $\B$ as the pullback
	\begin{center}
		\begin{tikzpicture}[baseline=(current  bounding  box.south), scale=2]
			
			\node (a0) at (0,0.8) {$\B$};
			\node (b0) at (0.9,0.8) {$\A$};
			\node (c0) at (0,0) {$\K^\mathbbm{2}$};
			\node (d0) at (0.9,0) {$\K^\P$};
			\node (e0) at (0.15,0.65) {$\lrcorner$};
			
			\path[font=\scriptsize]
			
			(a0) edge [->] node [above] {} (b0)
			(a0) edge [->] node [left] {} (c0)
			(b0) edge [->] node [right] {$(\bar{\mu},\bar{\eta})$} (d0)
			(c0) edge [->] node [below] {$\rho$} (d0);
		\end{tikzpicture}	
	\end{center}
	where $\P$ is the free living parallel pair and $\rho$ is the diagonal induced by the projection $\P\to\mathbbm{2}$. Again this exists in $\V\tx{-}\bo{Acc}$ by point $(a)$ since $\rho$ is an isofibration and all the involved $\V$-categories and $\V$-functors are accessible.
	
	The fact that $U\colon \V\tx{-}\bo{Acc}\to \V\tx{-}\bo{CAT}$ preserves the these limits is a direct consequence of the proof (since we took the limits in $\V\tx{-}\bo{CAT}$ and then proved that they are still limits in $\V\tx{-}\bo{Acc})$. Regarding the underlying functor $(-)_0\colon  \V\tx{-}\bo{Acc}\to \bo{Acc}$, this preserves all the limits in question since the forgetful functors $\V\tx{-}\bo{CAT}\to \bo{CAT}$ and $\bo{Acc}\to \bo{CAT}$ do.
\end{proof}

\begin{obs}
	In the proof we show that, in addition to flexible limits, $\V\tx{-}\bo{Acc}$ has pullbacks along isofibrations; this shouldn't be too surprising in light of \cite[Proposition~A.1]{Bou2021:articolo}.
\end{obs}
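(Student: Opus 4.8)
The plan is to invoke the standard generators-and-relations description of flexible limits from \cite[Theorem~4.9]{BKPS89:articolo}: a 2-category has all flexible limits as soon as it has products, inserters, equifiers, and splittings of idempotent equivalences. The last of these holds automatically in $\V\tx{-}\bo{Acc}$ by \cite[Remark~7.6]{BKPS89:articolo}, so the work is to produce the first three, together with pullbacks along isofibrations. Since powers $\K^\C$ by a small ordinary category are accessible by Proposition~\ref{acc-funtors} (via $\K^\C\cong[\C_\V,\K]$), and since an inserter of $F,G\colon\A\to\K$ is the pullback of $(F,G)\colon\A\to\K\times\K$ along the isofibration $\K^{\mathbbm 2}\to\K\times\K$, while an equifier of $\mu,\eta\colon F\Rightarrow G$ is the pullback of $(\bar\mu,\bar\eta)\colon\A\to\K^{\P}$ along the isofibration $\rho\colon\K^{\mathbbm 2}\to\K^{\P}$ ($\P$ the free-living parallel pair), the whole theorem reduces to just two constructions: small products, and pullbacks along isofibrations. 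In each case I would form the relevant limit in $\V\tx{-}\bo{CAT}$, check it is accessible with accessible projections, and conclude it is the limit in $\V\tx{-}\bo{Acc}$, which also gives preservation by $U$.

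For a small product $\prod_{i\in I}\A_i$, I would use Theorem~\ref{sketches-accessible} to write $\A_i\simeq\tx{Mod}(\S_i)$ for sketches $\S_i=(\B_i,\mathbb{L}_i,\mathbb{C}_i)$. Then $\prod_i\A_i$ sits fully faithfully in $\prod_i[\B_i,\V]\cong[\B,\V]$ with $\B=\coprod_i\B_i$, and transporting the families of (co)cylinders along the coproduct inclusions $J_i\colon\B_i\hookrightarrow\B$ gives a sketch $\S=(\B,\mathbb{L},\mathbb{C})$ whose models are exactly the objects of the product, so $\prod_i\A_i\simeq\tx{Mod}(\S)$ is accessible. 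Picking $\alpha$ with each $\A_i$ being $\alpha$-accessible, a pointwise computation shows $\prod_i\A_i$ has $\alpha$-flat colimits, computed coordinatewise, preserved by the projections, which identifies it as the product in $\V\tx{-}\bo{Acc}$.

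For the pullback of an isofibration $F_1\colon\A_1\to\K$ and an arbitrary accessible $F_2\colon\A_2\to\K$, the essential input is Lemma~\ref{fullness-of-sketches}. Applying it to $F_1$ and to $F_2$, and exploiting that the sketch-base for $\K$ may be taken to be $\K_\alpha^{op}$ for arbitrarily large $\alpha$, I can arrange a single small $\C$ with $\V$-functors $K_i\colon\C\to\B_i$ that realize $F_i$ as a restriction of $-\circ K_i$, where $\A_i\simeq\tx{Mod}(\S_i)\subseteq[\B_i,\V]$ and $\K\simeq\tx{Mod}(\T)\subseteq[\C,\V]$. Forming the pushout $\B_{12}$ of $K_1,K_2$ in $\V\tx{-}\bo{CAT}$ with legs $J_i\colon\B_i\to\B_{12}$ and applying $[-,\V]$ (which turns this pushout into a pullback) produces a commuting cube in which the $\V\tx{-}\bo{CAT}$-pullback $\A_{12}$ is identified with the full subcategory of $[\B_{12},\V]$ of functors whose restrictions along $J_1$ and $J_2$ land in $\A_1$ and $\A_2$. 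Gluing the transported (co)cylinders into a sketch $\S_{12}$ on $\B_{12}$ yields $\A_{12}\simeq\tx{Mod}(\S_{12})$, hence accessible. Since $F_1$ is an isofibration the strict pullback is also a bipullback, and since the hom-objects of $\A_{12}$ are pullbacks in $\V$ of those of $\A_1,\A_2,\K$ — with which $\alpha$-flat colimits commute — one checks that $\A_{12}$ has $\alpha$-flat colimits preserved by both projections for any $\alpha$ for which $\A_1,\A_2,\K$ do and $F_1,F_2$ preserve them, making $\A_{12}$ the pullback in $\V\tx{-}\bo{Acc}$.

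Putting the pieces together, products, inserters, equifiers and splittings of idempotent equivalences give all flexible limits by \cite[Theorem~4.9]{BKPS89:articolo}, hence all pseudo- and bilimits by the usual argument (cf.\ \cite{lack20102}), while the pullback case handles pullbacks along isofibrations. Preservation by $U\colon\V\tx{-}\bo{Acc}\to\V\tx{-}\bo{CAT}$ is immediate since every limit above was computed in $\V\tx{-}\bo{CAT}$, and preservation by $(-)_0\colon\V\tx{-}\bo{Acc}\to\bo{Acc}$ follows because the forgetful functors into $\bo{CAT}$ on both sides preserve these limits. I expect the main obstacle to be the pullback step: coordinating a common sketch-base for $\K$ across the two applications of Lemma~\ref{fullness-of-sketches}, and then checking both that $\tx{Mod}(\S_{12})$ carries the correct $\alpha$-flat colimits and that it is a genuine pullback in $\V\tx{-}\bo{Acc}$ rather than only a bipullback.
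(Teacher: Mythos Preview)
Your proposal is correct and follows essentially the same approach as the paper's proof of Theorem~\ref{limitacc}: the same reduction via \cite[Theorem~4.9]{BKPS89:articolo} to products, inserters, equifiers, and splittings of idempotent equivalences, with inserters and equifiers obtained as pullbacks along isofibrations of suitable power categories, products handled by taking coproducts of sketch-bases, and pullbacks along isofibrations handled via Lemma~\ref{fullness-of-sketches} and a pushout of sketch-bases. The caveats you flag in your last sentence are exactly the points the paper addresses in its part~(a).
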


Similarly one obtains the same result for accessible $\V$-categories with limits of some class $\Psi$. Let $\V\tx{-}\bo{Acc}_\Psi$ be the 2-category of the accessible $\V$-categories with $\Psi$-limits, $\Psi$-continuous and accessible $\V$-functors, and $\V$-natural transformations. 

\begin{cor}\label{limitsofpsiacc}
	The 2-category $\V\tx{-}\bo{Acc}_\Psi$ has all flexible (and hence all pseudo- and bi-) limits and the forgetful functor $\V\tx{-}\bo{Acc}_\Psi\to \V\tx{-}\bo{Acc}$ preserves them.
\end{cor}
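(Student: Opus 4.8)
The plan is to re-run the proof of Theorem~\ref{limitacc} almost verbatim, checking at each stage that the limit $\V$-category constructed there inside $\V\tx{-}\bo{CAT}$ (and shown to live in $\V\tx{-}\bo{Acc}$) carries $\Psi$-limits computed levelwise, and that the projections and the canonical cone $\V$-functors are $\Psi$-continuous. As in that proof we use that a $2$-category has all flexible limits once it has products, inserters, equifiers, and splittings of idempotent equivalences \cite[Theorem~4.9]{BKPS89:articolo}. The last of these is automatic for $\V\tx{-}\bo{Acc}_\Psi$: a splitting of an idempotent equivalence on $\A$ is equivalent to $\A$, hence is accessible and $\Psi$-complete whenever $\A$ is, and the splitting in $\V\tx{-}\bo{Acc}$ (which exists by \cite[Remark~7.6]{BKPS89:articolo}) is also one in $\V\tx{-}\bo{Acc}_\Psi$. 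So it remains to handle products, inserters, and equifiers.

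For a small family $(\A_i)_i$ in $\V\tx{-}\bo{Acc}_\Psi$, the product $\prod_i\A_i$ is accessible by Theorem~\ref{limitacc}, and it evidently has $\Psi$-limits computed componentwise, with the projections $\Psi$-continuous. For inserters and equifiers we reuse the descriptions given in the proof of Theorem~\ref{limitacc} as pullbacks along an isofibration of the form $\pi\colon\K^{\mathbbm 2}\to\K\times\K$ (for inserters) and $\rho\colon\K^{\mathbbm 2}\to\K^{\P}$ (for equifiers). The $\V$-categories $\K^{\mathbbm 2}$, $\K^{\P}$ and $\K\times\K$ are accessible by Proposition~\ref{acc-funtors} and the corollary following it, and they are also $\Psi$-complete, since limits in a $\V$-functor category into a $\Psi$-complete $\V$-category are computed pointwise; the $\V$-functors $\pi$, $\rho$ are $\Psi$-continuous, being evaluation-type functors; and the legs $(F,G)$ and $(\bar\mu,\bar\eta)$ are $\Psi$-continuous because $F$ and $G$ are. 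Thus in each case the limit $\V$-category $\B$ is a pullback, along an isofibration, of $\Psi$-continuous $\V$-functors between $\Psi$-complete accessible $\V$-categories. Since such a pullback is a bipullback, its two projections jointly create $\Psi$-limits: given a $\Psi$-weighted diagram in $\B$, form the corresponding limits in the two factors $\A$ and $\K^{\mathbbm 2}$, note that their images in $\K\times\K$ (resp.\ $\K^{\P}$) agree by $\Psi$-continuity of the legs, use the isofibration to align them on the nose, and obtain a limit in $\B$ preserved by both projections. Hence $\B\in\V\tx{-}\bo{Acc}_\Psi$ with $\Psi$-continuous projections.

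It remains to verify the universal properties in $\V\tx{-}\bo{Acc}_\Psi$. In each of the three cases $\B$ is already the corresponding limit in $\V\tx{-}\bo{Acc}$ by Theorem~\ref{limitacc}, so any cone over the given diagram formed by accessible $\V$-functors out of some $\D\in\V\tx{-}\bo{Acc}_\Psi$ factors uniquely through $\B$ via an accessible $\V$-functor $G\colon\D\to\B$, and $2$-cells match since these are just $\V$-natural transformations. What must be added is that when the legs of the cone are $\Psi$-continuous, so is $G$; but the projections out of $\B$ jointly create, hence jointly reflect, $\Psi$-limits, so $G$ preserves $\Psi$-limits as soon as all its composites with the projections do. This gives the one-dimensional universal property in $\V\tx{-}\bo{Acc}_\Psi$, the two-dimensional part being inherited from $\V\tx{-}\bo{Acc}$. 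Finally, the forgetful $\V\tx{-}\bo{Acc}_\Psi\to\V\tx{-}\bo{Acc}$ preserves these limits by construction, since we formed them as the corresponding limits in $\V\tx{-}\bo{Acc}$. The only point needing genuine care is the claim that a pullback along an isofibration jointly creates (and reflects) $\Psi$-limits through its two legs, i.e.\ the interplay between the bipullback property and the pointwise computation of limits in the functor categories $\K^{\mathbbm 2}$, $\K^{\P}$; everything else is a routine transcription of the proof of Theorem~\ref{limitacc}.
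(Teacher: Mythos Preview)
Your argument is correct, but it takes a more hands-on route than the paper's. The paper simply observes that $\V\tx{-}\bo{Acc}_\Psi$ is the pullback of $\V\tx{-}\bo{Acc}\to\V\tx{-}\bo{CAT}$ along $\V\tx{-}\bo{CAT}_\Psi\to\V\tx{-}\bo{CAT}$, and then invokes Bird's thesis \cite{Bir84:tesi} for the fact that $\V\tx{-}\bo{CAT}_\Psi$ has flexible limits preserved by the forgetful functor to $\V\tx{-}\bo{CAT}$. Since both legs of the cospan create flexible limits (one by Theorem~\ref{limitacc}, the other by Bird), so does their pullback $\V\tx{-}\bo{Acc}_\Psi$. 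Your proof, by contrast, re-runs Theorem~\ref{limitacc} and checks at each step that the limit $\V$-category inherits $\Psi$-limits with $\Psi$-continuous projections; this amounts to re-proving the relevant instance of Bird's result inside the construction. The paper's approach is shorter and cleanly separates the two independent ingredients (accessibility from Theorem~\ref{limitacc}, $\Psi$-completeness from Bird); yours is more self-contained and avoids the external citation, at the cost of repeating work that is already packaged elsewhere.
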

\begin{proof}
	This is a direct consequence of the results above since the following 
	\begin{center}
		\begin{tikzpicture}[baseline=(current  bounding  box.south), scale=2]
			
			\node (a0) at (0,0.8) {$\V\tx{-}\bo{Acc}_\Psi$};
			\node (b0) at (1.2,0.8) {$\V\tx{-}\bo{CAT}_\Psi$};
			\node (c0) at (0,0) {$\V\tx{-}\bo{Acc}$};
			\node (d0) at (1.2,0) {$\V\tx{-}\bo{CAT}$};
			\node (e0) at (0.25,0.6) {$\lrcorner$};
			
			\path[font=\scriptsize]
			
			(a0) edge [->] node [above] {} (b0)
			(a0) edge [->] node [left] {} (c0)
			(b0) edge [->] node [right] {} (d0)
			(c0) edge [->] node [below] {} (d0);
		\end{tikzpicture}	
	\end{center}
	is a pullback, where $\V\tx{-}\bo{CAT}_\Psi$ is the 2-category of $\Psi$-complete $\V$-categories, $\Psi$-continuous $\V$-functors, and $\V$-natural transformations; this has all flexible limits and the forgetful functor $\V\tx{-}\bo{CAT}_\Psi\to \V\tx{-}\bo{CAT}$ preserves them (see \cite{Bir84:tesi}).
\end{proof}

In particular, taking $\Psi$ to be the class of all small limits, $\V\tx{-}\bo{Acc}_\Psi$ can be identified with the 2-category $\V\tx{-}\bo{Lp}$ of locally presentable $\V$-categories, accessible $\V$-functors with a left adjoint, and $\V$-natural transformations. Thus we recover part of \cite[Theorem~6.10]{Bir84:tesi}:

\begin{cor}
	The 2-category $\V\tx{-}\bo{Lp}$ has all flexible (and hence all pseudo- and bi-) limits and the forgetful functor $\V\tx{-}\bo{Lp}\to \V\tx{-}\bo{CAT}$ preserves them.
\end{cor}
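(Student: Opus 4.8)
The plan is to recognize $\V\tx{-}\bo{Lp}$ as an instance of the 2-category $\V\tx{-}\bo{Acc}_\Psi$ from Corollary~\ref{limitsofpsiacc}, for $\Psi$ the class of \emph{all} small weights, and then to read off the statement from that corollary together with Theorem~\ref{limitacc}. So the first step is to check the identification $\V\tx{-}\bo{Lp}=\V\tx{-}\bo{Acc}_\Psi$ at the level of objects, 1-cells and 2-cells.

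For objects: a $\V$-category is locally presentable exactly when it is complete and accessible. Indeed, every locally presentable $\V$-category is complete and $\alpha$-accessible for some $\alpha$, while a complete $\alpha$-accessible $\V$-category is locally $\alpha$-presentable by the Corollary following Proposition~\ref{cocompleteacc}. Since $\Psi$ is the class of all small weights, $\Psi$-limits are precisely small limits, so the objects of $\V\tx{-}\bo{Acc}_\Psi$ (accessible $\V$-categories with all $\Psi$-limits) are exactly the locally presentable $\V$-categories. For 1-cells: between locally presentable $\V$-categories, a $\V$-functor is $\Psi$-continuous and accessible if and only if it is continuous and accessible, and by the enriched adjoint functor theorem (\cite{Kel82:libro}) this happens if and only if it has a left adjoint; conversely, a right adjoint $\V$-functor between locally presentable $\V$-categories is automatically accessible. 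Hence the 1-cells of $\V\tx{-}\bo{Acc}_\Psi$ coincide with the accessible $\V$-functors admitting a left adjoint, i.e. the 1-cells of $\V\tx{-}\bo{Lp}$; the 2-cells in both cases are the $\V$-natural transformations. Thus $\V\tx{-}\bo{Lp}=\V\tx{-}\bo{Acc}_\Psi$ for this choice of $\Psi$.

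With the identification in hand, Corollary~\ref{limitsofpsiacc} gives that $\V\tx{-}\bo{Lp}=\V\tx{-}\bo{Acc}_\Psi$ has all flexible limits and that the forgetful 2-functor $\V\tx{-}\bo{Acc}_\Psi\to\V\tx{-}\bo{Acc}$ preserves them; composing with the forgetful 2-functor $\V\tx{-}\bo{Acc}\to\V\tx{-}\bo{CAT}$, which preserves flexible limits by Theorem~\ref{limitacc}, one obtains that the composite $\V\tx{-}\bo{Lp}\to\V\tx{-}\bo{CAT}$ --- which is exactly the forgetful 2-functor in the statement --- preserves flexible limits. Since flexible limits subsume products, inserters, equifiers and splittings of idempotent equivalences, this yields in particular all pseudo- and bi-limits, as recalled earlier.

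The routine parts are the bookkeeping identifications above; the only ingredient not proved in this paper is the enriched special adjoint functor theorem invoked for the 1-cells (a continuous accessible $\V$-functor out of a locally presentable $\V$-category has a left adjoint), which is standard. I do not expect any genuine obstacle here: once $\V\tx{-}\bo{Lp}$ is displayed as $\V\tx{-}\bo{Acc}_\Psi$, the conclusion is immediate from Corollary~\ref{limitsofpsiacc} and Theorem~\ref{limitacc}.
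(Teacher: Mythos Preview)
Your proposal is correct and follows exactly the paper's approach: the paper simply observes, in the sentence preceding the corollary, that taking $\Psi$ to be the class of all small weights identifies $\V\tx{-}\bo{Acc}_\Psi$ with $\V\tx{-}\bo{Lp}$, and then the corollary is immediate from Corollary~\ref{limitsofpsiacc} (and Theorem~\ref{limitacc} for the forgetful functor to $\V\tx{-}\bo{CAT}$). You have merely spelled out the object- and 1-cell-level identification that the paper leaves implicit.
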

 
\vspace{6pt}

\subsection{The conically accessible case}

Analogous results hold for conical accessibility. Denote by $\V\tx{-}\bo{cAcc}$ the 2-category of conically accessible $\V$-categories, conically accessible $\V$-functors, and $\V$-natural transformations. Note that $\V\tx{-}\bo{Acc}$ sits in $\V\tx{-}\bo{cAcc}$ as a full subcategory by Corollary~\ref{flat-to-conical} and Proposition~\ref{accessible-functors}.

In the proof of the following theorem we rely on the facts that $\bo{Acc}$ has all flexible limits (which can be seen of course as a consequence of the previous theorem for $\V=\bo{Set}$) and that $(-)_0\colon \V\tx{-}\bo{Acc}\to \bo{Acc}$ preserves them.

\begin{teo}
	The 2-category $\V\tx{-}\bo{cAcc}$ has all flexible (and hence all pseudo- and bi-) limits and the forgetful functors $U_c\colon \V\tx{-}\bo{cAcc}\to \V\tx{-}\bo{CAT}$, $(-)_0\colon  \V\tx{-}\bo{cAcc}\to \bo{Acc}$, and $J\colon \V\tx{-}\bo{Acc}\hookrightarrow \V\tx{-}\bo{cAcc} $ preserve them.
\end{teo}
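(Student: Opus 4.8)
The plan is to follow the reduction used in the proof of Theorem~\ref{limitacc}, but — since the sketch-theoretic input is not available here (as the Remark after Proposition~\ref{acc-funtors} points out, $[\C,\A]$ need not be conically accessible) — to replace it by Proposition~\ref{conical-flat}, which recognises conical accessibility of a $\V$-category from accessibility of its underlying ordinary category together with the mere existence of conical filtered colimits. By \cite[Theorem~4.9]{BKPS89:articolo} it suffices to build products, inserters and equifiers in $\V\tx{-}\bo{cAcc}$; splitting of idempotent equivalences is automatic by \cite[Remark~7.6]{BKPS89:articolo}. In each case I would form the candidate limit $\L$ in $\V\tx{-}\bo{CAT}$ and establish: (a) $\L_0$ is accessible; (b) $\L$ has conical $\alpha$-filtered colimits for some regular $\alpha$, computed exactly as in the $\V$-categories of the diagram and preserved by the cone projections; (c) every object of $\L$ is conically presentable. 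Then Proposition~\ref{conical-flat}(1) gives that $\L$ is conically accessible, and since the projections then lie in $\V\tx{-}\bo{cAcc}$ and jointly reflect conical accessibility of $\V$-functors into $\L$, the cone is the limit in $\V\tx{-}\bo{cAcc}$.

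Point (a) is formal. The functor $(-)_0\colon\V\tx{-}\bo{CAT}\to\bo{CAT}$ is a right $2$-adjoint (to $(-)_\V$) and hence preserves $\L$, so $\L_0$ is the corresponding product/inserter/equifier in $\bo{CAT}$ of the underlying ordinary diagram. That diagram lives in $\bo{Acc}$: the underlying category of a conically $\alpha$-accessible $\V$-category is $\alpha$-accessible by Proposition~\ref{conicallyaccproperties}(4), and the underlying functor of a conically $\alpha$-accessible $\V$-functor preserves $\alpha$-filtered colimits in the underlying category, these being computed by conical colimits. As $\bo{Acc}$ has all flexible limits and the inclusion $\bo{Acc}\hookrightarrow\bo{CAT}$ preserves them, $\L_0$ is accessible.

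For (b) and (c): for an inserter of a pair $F,G\colon\A\to\K$ and an equifier of a parallel pair of $2$-cells between such a pair, choose $\alpha$ large enough that $\A$, $\K$ and $F$, $G$ are conically $\alpha$-accessible; then $\L$ is obtained from the single conically accessible $\V$-category $\A$ by adjoining structure maps (resp.\ by passing to a full subcategory) in a way that creates conical $\alpha$-filtered colimits from $\A$, and conical presentability of an object of $\L$ follows from that of its image in $\A$, using that the hom-objects of $\L$ are $\alpha_0$-small limits of hom-objects of $\A$ and $\K$ and so commute in $\V$ with filtered colimits. For a product $\L=\prod_{i\in I}\A_i$, where $\A_i$ is conically $\alpha_i$-accessible, choose a regular $\alpha$ above all the $\alpha_i$; then $\L$ has componentwise conical $\alpha$-filtered colimits, preserved by the projections. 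Finally, for a fixed object $(X_i)_{i\in I}$, with $X_i$ conically $\beta_i$-presentable in $\A_i$, the remark on arbitrarily large common upper bounds of small families of regular cardinals lets us pick $\beta$ with $\beta>|I|$, $\beta\geq\alpha_0$ and $\beta>\beta_i$ for all $i$; since $\A((X_i)_i,-)\cong\prod_{i\in I}\A_i(X_i,-)$, each factor preserves conical $\beta$-filtered colimits and the $\beta$-small product commutes in $\V$ with them, so $(X_i)_i$ is conically $\beta$-presentable. This yields (b), (c), and hence conical accessibility of $\L$ in all three cases.

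It remains to see that the cone is the limit in $\V\tx{-}\bo{cAcc}$ — which follows since a $\V$-functor $\B\to\L$ is conically accessible precisely when its composites with the projections are (componentwise for a product; controlled by the single leg to $\A$ for an inserter or equifier) — and that the forgetful functors preserve these limits. The last point is immediate: $U_c$ preserves them because $\L$ is constructed in $\V\tx{-}\bo{CAT}$; $(-)_0\colon\V\tx{-}\bo{cAcc}\to\bo{Acc}$ preserves them because $(-)_0\colon\V\tx{-}\bo{CAT}\to\bo{CAT}$ does and the resulting $\bo{CAT}$-limit is accessible; and $J\colon\V\tx{-}\bo{Acc}\hookrightarrow\V\tx{-}\bo{cAcc}$ preserves them because, $\V\tx{-}\bo{Acc}$ being a full sub-$2$-category of $\V\tx{-}\bo{cAcc}$, the flexible limit of a diagram of accessible $\V$-categories computed in $\V\tx{-}\bo{Acc}$ (Theorem~\ref{limitacc}) is also computed in $\V\tx{-}\bo{CAT}$ and so coincides with the $\L$ built here. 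The main obstacle I expect is the cardinal bookkeeping in the product case, together with checking carefully that conical filtered colimits in an inserter or equifier are genuinely created from the constituent $\V$-categories (so that the projections are conically accessible on the nose).
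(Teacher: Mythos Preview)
Your proposal is correct and follows essentially the same approach as the paper: both rely on Proposition~\ref{conical-flat}(1) to recognise conical accessibility of the limit $\L$ formed in $\V\tx{-}\bo{CAT}$ by checking that $\L_0$ is accessible, that $\L$ has conical $\alpha$-filtered colimits, and that every object is conically presentable. The only organisational difference is that the paper first treats powers $\A^\C$ and pullbacks along isofibrations and then derives inserters and equifiers from these (via the pullback squares used in the proof of Theorem~\ref{limitacc}), whereas you handle inserters and equifiers directly; your remark that presentability in $\L$ follows ``from that of its image in $\A$'' should be read as also requiring $FA$ to be conically presentable in $\K$, which is harmless since every object of $\K$ is.
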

\begin{proof}
	(Powers) Let $\A$ be a conically accessible $\V$-category and $\C$ be an ordinary small category. Consider the power $\A^\C=[\C_\V,\A]$ in $\V\tx{-}\bo{CAT}$. Let $\alpha$ be such that $\A$ has conical $\alpha$-filtered colimits; then $\A^\C$ has them as well since they can be computed pointwise in $\A$. Moreover, given $X\in\A^\C$, we can consider $\beta$ such that each $X(c)$ is conically $\beta$-presentable in $\A$ and the end 
	$$ \A^\C(X,-)\cong\int_{c\in\C}\A(X(c),\tx{ev}_c-) $$
	is $\beta$-small, so that $\A^\C(X,-)$ preserves all $\beta$-filtered colimits. It follows then that each object of $\A^\C$ is conically presentable. Now $(\A^\C)_0\cong[\C,\A_0]$ is accessible since $\A_0$ is; thus the conical accessibility of $\A^\C$ follows from Proposition~\ref{conical-flat}. The fact that $\A^\C$ is the desired power in $\V\tx{-}\bo{cAcc}$ follows from the fact that a $\V$-functor $\B\to\A^\C$ is conically accessible if and only if its transpose ordinary functor $\C\to\V\tx{-}\bo{CAT}(\B,\A)$ lands in $\V\tx{-}\bo{cAcc}(\B,\A)$.
	
	(Pullbacks along isofibrations) Let $F\colon \A\to\K$ and $G\colon \B\to\K$ be conically accessible $\V$-functors between conically accessible $\V$-categories, and assume that $F$ is an isofibration. Consider the pullback $\C$ of this in $\V\tx{-}\bo{CAT}$, with projections $P\colon \C\to\A$ and $Q\colon \C\to\B$, and note that, since $F$ is an isofibration, this can be seen as a pseudo-pullback. Let $\alpha$ be such that $\A$, $\B$, and $\K$ have, and $F$,$G$ preserve, all conical $\alpha$-filtered colimits; then it's a standard argument to check that $\C$ has conical $\alpha$-filtered colimits as well and $P$ and $Q$ preserve them. Similarly each object of $\C$ is conically presentable (here we use that each object of $\A$,$\B$, and $\K$ is). Moreover $\C_0$ is accessible being a pseudo-pullback of ordinary accessible categories. In conclusion $\C$ is conically accessible by Proposition~\ref{conical-flat}, and it's now routine to check that it is actually a pseudo-pullback in $\V\tx{-}\bo{Acc}$.
	
	The proof for products is very similar. Let $(\A_i)_{i\in I}$ be a small family of accessible $\V$-categories and denote by $\A=\textstyle\prod_i\A_i$ their product in $\V\tx{-}\bo{CAT}$. Consider $\alpha$ such that each $\A_i$ has all conical $\alpha$-filtered colimits, an easy calculation shows then that $\A$ has all conical $\alpha$-filtered colimits as well and these are preserved by the projections. Moreover, given any $A=(A_i)_i\in\A$, consider $\beta$ such that $I$ is $\beta$-small and each $A_i$ is conically $\beta$-presentable in $\A_i$; then $A$ is conically $\beta$-presentable in $\A$. Finally $\A$ is conically accessible thanks to Proposition~\ref{conical-flat} since $\A_0\cong\textstyle\prod_i(\A_i)_0$ is an ordinary accessible category. It now easily follows that $\A$ is the product of $(\A_i)_{i\in I}$ in $\V\tx{-}\bo{cAcc}$.
	
	Inserters and equifiers can be obtained from powers and pullbacks along isofibrations (see the proof of the previous theorem). Finally, splittings of idempotent equivalences come again for free thanks to \cite[Remark~7.6]{BKPS89:articolo}.
\end{proof}

We can consider also the conical version of Corollary~\ref{limitsofpsiacc}. Let $\V\tx{-}\bo{cAcc}_\Psi$ be the 2-category of the conically accessible $\V$-categories with $\Psi$-limits, $\Psi$-continuous and conically accessible $\V$-functors, and $\V$-natural transformations. The same argument then gives:

\begin{cor}
	The 2-category $\V\tx{-}\bo{cAcc}_\Psi$ has all flexible (and hence all pseudo- and bi-) limits and the forgetful functor $\V\tx{-}\bo{cAcc}_\Psi\to \V\tx{-}\bo{cAcc}$ preserves them.
\end{cor}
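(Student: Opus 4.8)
The plan is to argue exactly as for Corollary~\ref{limitsofpsiacc}, replacing ``accessible'' by ``conically accessible'' throughout. First I would observe that the square whose corners are $\V\tx{-}\bo{cAcc}_\Psi$, $\V\tx{-}\bo{CAT}_\Psi$, $\V\tx{-}\bo{cAcc}$, and $\V\tx{-}\bo{CAT}$, equipped with the evident forgetful $2$-functors, is a \emph{strict} pullback of $2$-categories; here $\V\tx{-}\bo{CAT}_\Psi$ is the $2$-category of $\Psi$-complete $\V$-categories, $\Psi$-continuous $\V$-functors, and $\V$-natural transformations. This is immediate from the definitions: an object (resp.\ $1$-cell) of $\V\tx{-}\bo{cAcc}_\Psi$ is precisely a $\V$-category (resp.\ $\V$-functor) which is at once conically accessible and $\Psi$-complete (resp.\ at once conically accessible and $\Psi$-continuous), and on both sides the $2$-cells are arbitrary $\V$-natural transformations; moreover the forgetful $2$-functors into $\V\tx{-}\bo{CAT}$ are literally the identity on underlying $\V$-categories, $\V$-functors, and $\V$-natural transformations.

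Next I would recall the two inputs. By the theorem just proved, $\V\tx{-}\bo{cAcc}$ has all flexible limits and the forgetful $2$-functor $U_c\colon\V\tx{-}\bo{cAcc}\to\V\tx{-}\bo{CAT}$ preserves them; and by \cite{Bir84:tesi}, $\V\tx{-}\bo{CAT}_\Psi$ has all flexible limits with the forgetful $2$-functor $\V\tx{-}\bo{CAT}_\Psi\to\V\tx{-}\bo{CAT}$ preserving them. Now, given a flexible diagram $D$ in $\V\tx{-}\bo{cAcc}_\Psi$, I would form its flexible limit $\mathcal L$ in $\V\tx{-}\bo{cAcc}$, with projection $\V$-functors; since $U_c$ preserves flexible limits, the underlying $\V$-category of $\mathcal L$ (with its projections) is the flexible limit of the underlying diagram in $\V\tx{-}\bo{CAT}$. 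Computing the flexible limit of the same diagram instead in $\V\tx{-}\bo{CAT}_\Psi$ and pushing down, one obtains the very same underlying $\V$-category. Hence this common $\V$-category $\mathcal L$ is simultaneously conically accessible and $\Psi$-complete, and each of its projection $\V$-functors is simultaneously conically accessible and $\Psi$-continuous; that is, $\mathcal L$ lies in $\V\tx{-}\bo{cAcc}_\Psi$ together with $\V\tx{-}\bo{cAcc}_\Psi$-morphisms as projections. Its universal property as the flexible limit of $D$ in $\V\tx{-}\bo{cAcc}_\Psi$ then follows formally: the limit cone and all the $1$- and $2$-cell data needed to test it are detected in $\V\tx{-}\bo{CAT}$, and every $\V$-natural transformation is admissible in $\V\tx{-}\bo{cAcc}_\Psi$, so the universal property in $\V\tx{-}\bo{CAT}$ restricts. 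By construction the forgetful $2$-functor $\V\tx{-}\bo{cAcc}_\Psi\to\V\tx{-}\bo{cAcc}$ preserves $\mathcal L$.

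The only point that is not purely formal is the claim that the flexible limit already formed in $\V\tx{-}\bo{cAcc}$ carries $\Psi$-limits with $\Psi$-continuous projections (and, symmetrically, that the flexible limit formed in $\V\tx{-}\bo{CAT}_\Psi$ is conically accessible with conically accessible projections): this is exactly where one uses that \emph{both} forgetful $2$-functors into $\V\tx{-}\bo{CAT}$ preserve flexible limits, combined with the strict pullback description. Everything else — the passage from products, inserters, equifiers and splittings of idempotent equivalences to all flexible limits, and the verification that the projections and mediating maps are the right kind of morphisms — is the same bookkeeping as in the accessible case.
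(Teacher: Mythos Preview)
Your proposal is correct and follows exactly the same approach as the paper: the paper simply says ``The same argument then gives'' the corollary, referring to the proof of Corollary~\ref{limitsofpsiacc}, which is precisely the strict-pullback argument you spell out. You have merely unpacked in detail what the paper leaves implicit.
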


\vspace{6pt}

\subsection{Sketches over a base}

\begin{Def}
	Let $\S=(\B,\mathbb{L},\mathbb{C})$ be a sketch and $\A$ be a $\V$-category. A {\em model of a $\S$ in $\A$} is a $\V$-functor $F\colon \B\to\A$ which transforms each cylinder of $\mathbb{L}$ into a limit cylinder in $\A$, and each cocylinder of $\mathbb{C}$ into a colimit cocylinder in $\A$. We denote by $\tx{Mod}(\S,\A)$ the full subcategory of $[\B,\A]$ spanned by the models of $\S$ in $\A$.
\end{Def}

The accessibility of $\tx{Mod}(\S,\A)$ when $\A$ is an accessible category depends on set theory in general (even in the ordinary case, see \cite[Example~A.19]{AR94:libro}), but when $\A$ has enough colimits something can be said, generalizing \cite[Theorem~2.60]{AR94:libro}.

\begin{prop}\label{sketchoveracc}
	Let $\S=(\B,\mathbb{L},\mathbb{C})$ be a sketch and $\A$ be an accessible $\V$-category with $M$-colimits for any weight $M$ appearing in $\mathbb{C}$. Then $\tx{Mod}(\S,\A)$ is an accessible $\V$-category.
\end{prop}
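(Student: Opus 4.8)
The plan is to reduce the claim to the characterization of accessibility via sketches (Theorem~\ref{sketches-accessible}) combined with the stability results of the present section, in particular Proposition~\ref{acc-funtors} and Theorem~\ref{limitacc}. First I would observe that $\tx{Mod}(\S,\A)$ can be realized as an intersection inside $[\B,\A]$ of the ``limit part'' $\tx{Mod}((\B,\mathbb{L}),\A)$ and the ``colimit part'' $\tx{Mod}((\B,\mathbb{C}),\A)$, exactly as in the proof of Theorem~\ref{sketches-accessible}. By Proposition~\ref{acc-funtors} the ambient $\V$-category $[\B,\A]$ is accessible (since $\A$ is accessible and $\B$ is small), so it suffices to exhibit each of the two parts as an accessible, accessibly embedded subcategory of $[\B,\A]$ and then invoke Lemma~\ref{intersection-acc} together with Corollary~\ref{preserving-flat-presentables} to conclude that the intersection is accessible.

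For the limit part, I would argue that $\tx{Mod}((\B,\mathbb{L}),\A)$ is an accessibly embedded and accessible subcategory of $[\B,\A]$: each cylinder $c\colon N\to\B(B,H-)$ in $\mathbb{L}$ imposes, on $F\colon\B\to\A$, the condition that a canonical comparison map $FB\to\{N,FH-\}$ be invertible; since these weighted limits are computed pointwise in $[\B,\A]$ (more precisely, after composing with the evaluations $\tx{ev}_b\colon[\B,\A]\to\A$), the condition cuts out a subcategory that is reflective and accessibly embedded in $[\B,\A]$ — this is the enriched analogue of the fact that models of a limit sketch form a reflective subcategory, as in \cite[Theorem~6.5]{Kel82:libro} applied pointwise. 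The key input here is that $[\B,\A]$ has enough limits to form the relevant reflector, which it does because $\A$ is accessible (hence has $\alpha$-flat colimits and, for the purposes of the limit conditions, one only needs the relevant small limits, which can be arranged by raising the index and appealing to the structure carried by a presheaf presentation of $\A$).

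For the colimit part I would proceed exactly as in the proof of Theorem~\ref{sketches-accessible}: freely adjoin to $\B$ the colimits of the diagrams appearing in $\mathbb{C}$, obtaining $J\colon\B\hookrightarrow\C$ with $\tx{Lan}_J\colon[\B,\A]\to[\C,\A]$ fully faithful, and express $\tx{Mod}((\B,\mathbb{C}),\A)$ as an orthogonality class $\M^\perp$ inside $[\C,\A]$ for a suitable small set $\M$ of maps $\C(\bar c,-)$. This is precisely the point where the hypothesis that $\A$ has $M$-colimits for each weight $M$ appearing in $\mathbb{C}$ is used: it guarantees that the left Kan extension $\tx{Lan}_J$ and the comparison maps make sense and that ``$F$ sends $c$ to a colimit cocylinder'' is equivalent to ``$\tx{Lan}_JF$ is orthogonal to $\C(\bar c,-)$''. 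Since $[\C,\A]$ is again accessible by Proposition~\ref{acc-funtors}, and an orthogonality class with respect to a small set of maps in an accessible $\V$-category with enough colimits is accessible and accessibly embedded (this is \cite[Theorem~6.5]{Kel82:libro}, or alternatively follows from Theorem~\ref{accessible-embedded} once one checks the inclusion is virtually reflective — indeed a small-orthogonality subcategory of a locally presentable category is reflective, hence virtually reflective), we conclude the colimit part is accessible and accessibly embedded.

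Finally, pulling these together: $\tx{Mod}(\S,\A)$ is the intersection inside the accessible $\V$-category $[\C,\A]$ of the two accessibly embedded accessible subcategories $\tx{Lan}_J(\tx{Mod}((\B,\mathbb{L}),\A))$ and $\M^\perp$; by Corollary~\ref{preserving-flat-presentables} we may pick a single regular cardinal $\alpha$ for which both inclusions preserve $\alpha$-presentable objects and the ambient category is locally (or at least $\alpha$-)accessible, and then Lemma~\ref{intersection-acc} yields that the intersection is $\beta$-accessible for any $\beta\rhd\alpha$. The main obstacle I anticipate is technical rather than conceptual: making precise the ``limit part is reflective/accessibly embedded'' step in the enriched, merely-accessible (not locally presentable) setting — one cannot simply quote the locally presentable statement for $[\B,\A]$, and must instead either first embed $\A$ accessibly into a presheaf $\V$-category and work there, or carefully track that the weighted-limit conditions in $\mathbb{L}$ are $\alpha$-small for suitable $\alpha$ so that the reflector can be built. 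Handling the $\mathbb{C}$-colimit hypothesis cleanly so that $\tx{Lan}_J$ and all comparison maps are well-defined is the other point requiring care, but it is exactly parallel to the corresponding step in Theorem~\ref{sketches-accessible}.
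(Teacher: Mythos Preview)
Your overall decomposition---split into the limit part $\tx{Mod}((\B,\mathbb{L}),\A)$ and the colimit part $\tx{Mod}((\B,\mathbb{C}),\A)$, then intersect---matches the paper, and for the limit part your fallback (embed $\A$ accessibly into a presheaf $\V$-category and work there) is essentially what the paper does: it takes $\C=\A_\alpha^{op}$, pushes the problem into $[\C\otimes\B,\V]$, and expresses $\tx{Mod}((\B,\mathbb{L}),\A)$ as the pullback of $[\B,\A]$ against $\tx{Mod}(\S')$ for a limit sketch $\S'$ on $\C\otimes\B$.

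There is, however, a genuine gap in your treatment of the colimit part. You propose to form $\tx{Lan}_J\colon[\B,\A]\to[\C,\A]$ and then cut out an orthogonality class $\M^\perp$ with respect to the maps $\C(\bar c,-)$. But those maps live in $[\C,\V]$, not $[\C,\A]$, so it is not literally an orthogonality condition in the ambient category you are working in; what you really want is the condition ``$G$ inverts each $\bar c$'', which is not an instance of enriched orthogonality unless $\A=\V$. Moreover, neither \cite[Theorem~6.5]{Kel82:libro} nor the virtual-orthogonality machinery applies as stated, since $[\C,\A]$ is merely accessible, not locally presentable, and the maps in question are not maps in $[\C,\A]$. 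The paper sidesteps all of this: for each cocylinder $\eta\colon M\to\B(H-,B)$ in $\mathbb{C}$ it uses the hypothesis that $\A$ has $M$-colimits to define a $\V$-functor $G_\eta\colon[\B,\A]\to\A^{\mathbbm 2}$ sending $F$ to the canonical comparison $M*FH\to FB$, and then realizes $\tx{Mod}((\B,\mathbb{C}),\A)$ as the pullback of $\textstyle\prod_\eta\A^{\cong}\hookrightarrow\prod_\eta\A^{\mathbbm 2}$ (an isofibration) along $(G_\eta)_\eta$. This is a pullback along an isofibration of accessible $\V$-functors between accessible $\V$-categories, so Theorem~\ref{limitacc} applies directly. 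The advantage of this route is that it converts the ``inverts a map'' condition into a limit computation in $\V\tx{-}\bo{Acc}$, avoiding any appeal to reflectivity or orthogonality in a non-cocomplete ambient category.
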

\begin{proof}
	Let $\S_\mathbb{L}=(\B,\mathbb{L})$ and $\S_\mathbb{C}=(\B,\mathbb{C})$ be the limit and colimit parts of $\S$. Then $\tx{Mod}(\S,\A)=\tx{Mod}(\S_\mathbb{L},\A)\cap\tx{Mod}(\S_\mathbb{C},\A)$ and it's enough to prove that these two $\V$-categories are accessible.
	
	Regarding the limit case, let $\C=\A_\alpha^{op}$, with inclusion $J\colon\C\to\A$, and consider the sketch $\S'=(\C\otimes\B,\mathbb{L}_\C)$ defined as in the proof of Proposition~\ref{acc-funtors} starting from $\S_\mathbb{L}$. Then $\tx{Mod}(\S_\mathbb{L},\A)$ can be seen as the intersection
	\begin{center}
		\begin{tikzpicture}[baseline=(current  bounding  box.south), scale=2]
			
			\node (a0) at (0,0.8) {$\tx{Mod}(\S_\mathbb{L},\A)$};
			\node (b0) at (1.3,0.8) {$\tx{Mod}(\S')$};
			\node (c0) at (0,0) {$[\B,\A]$};
			\node (d0) at (1.3,0) {$[\C\otimes\B,\A]$};
			\node (e0) at (0.3,0.55) {$\lrcorner$};
			
			\path[font=\scriptsize]
			
			(a0) edge [right hook->] node [above] {} (b0)
			(a0) edge [right hook->] node [left] {} (c0)
			(b0) edge [right hook->] node [right] {} (d0)
			(c0) edge [right hook->] node [below] {$K$} (d0);
		\end{tikzpicture}	
	\end{center}
	where $K$ is the composite of the inclusion $[\B,\A(J,1)]\colon[\B,\A]\to[\B,[\C,\A]]$ with the isomorphism $[\B,[\C,\A]]\cong [\C\otimes\B,\A]$. Since $[\B,\A]$ and $\tx{Mod}(\S')$ are accessible and accessibly embedded in $[\C\otimes\B,\A]$ it follows that $\tx{Mod}(\S_\mathbb{L},\A)$ is accessible as well.
	
	About the colimit case, for each cocylinder $(\eta:M\Rightarrow\B(H-,B))\in\mathbb{C}$ consider the $\V$-functor $G_\eta\colon[\B,\A]\to\A^{\mathbbm{2}}$ such that $G_\eta(F)\colon M*FH\to FB$ is the unique morphism induced by the cocylinder $F\eta$, and acts on hom-objects accordingly (to define $G_\eta$ we are using that $\A$ has $M$-colimits). Now consider the full subcategory $\A^{\cong}$ of $\A^{\mathbbm{2}}$ spanned by the isomorphisms of $\A$, this is accessible since it is equivalent to $\A$. Then we can see $\tx{Mod}(\S_\mathbb{C},\A)$ as the pullback below
	\begin{center}
		\begin{tikzpicture}[baseline=(current  bounding  box.south), scale=2]
			
			\node (a0) at (0,0.9) {$\tx{Mod}(\S_\mathbb{C},\A)$};
			\node (b0) at (1.4,0.83) {$\prod_{\eta\in\mathbb{C}}\A^{\cong}$};
			\node (c0) at (0,0) {$[\B,\A]$};
			\node (d0) at (1.4,-0.07) {$\prod_{\eta\in\mathbb{C}}\A^{\mathbbm{2}}$};
			\node (e0) at (0.3,0.65) {$\lrcorner$};
			
			\path[font=\scriptsize]
			
			(a0) edge [->] node [above] {} ([yshift=2]b0.west)
			(a0) edge [right hook->] node [left] {} (c0)
			([yshift=2]b0.south) edge [right hook->] node [right] {} ([yshift=-1.3]d0.north)
			(c0) edge [->] node [below] {$(G_\eta)_{\eta\in\mathbb{C}}$} ([yshift=2]d0.west);
		\end{tikzpicture}	
	\end{center}
	where the right vertical arrow is an isofibration. The three $\V$-categories involved in the limit are accessible by Theorem~\ref{limitacc} and the $\V$-functors are easily seen to be accessible; thus $\tx{Mod}(\S_\mathbb{C},\A)$ is an accessible $\V$-category as well again by Theorem~\ref{limitacc}.
\end{proof}

Immediate consequences are:

\begin{cor}
	For any accessible $\V$-category $\A$ and any limit sketch $\S$ the $\V$-category $\tx{Mod}(\S,\A)$ is accessible.	
\end{cor}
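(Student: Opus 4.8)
The plan is to deduce this immediately from Proposition~\ref{sketchoveracc}. Recall that, by definition, a limit sketch is a sketch $\S=(\B,\mathbb{L},\mathbb{C})$ for which the set $\mathbb{C}$ of cocylinders is empty. Consequently there are no weights $M$ occurring in $\mathbb{C}$ at all, and the hypothesis of Proposition~\ref{sketchoveracc} — that $\A$ have $M$-colimits for every weight $M$ appearing in $\mathbb{C}$ — is satisfied vacuously, no matter which accessible $\V$-category $\A$ we start with.

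I would therefore simply invoke Proposition~\ref{sketchoveracc} for this $\S$ and this $\A$ to conclude that $\tx{Mod}(\S,\A)$ is accessible. Tracing through the argument of that proof in the present case makes the point transparent: the colimit part $\S_\mathbb{C}=(\B,\mathbb{C})$ is empty, so $\tx{Mod}(\S_\mathbb{C},\A)$ is the whole functor $\V$-category $[\B,\A]$, which is accessible by Proposition~\ref{acc-funtors}; and $\tx{Mod}(\S,\A)=\tx{Mod}(\S_\mathbb{L},\A)$ is exactly the case treated by the limit-case portion of that proof, which exhibits it as the intersection inside $[\C\otimes\B,\A]$ of the two accessibly embedded accessible subcategories $[\B,\A]$ and $\tx{Mod}(\S')$, and then applies Lemma~\ref{intersection-acc}. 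Since none of these steps uses any colimit-closure assumption on $\A$, nothing beyond Proposition~\ref{sketchoveracc} is required.

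There is essentially no obstacle here: the corollary is immediate once one observes that the colimit part of a limit sketch is empty. The only thing worth making explicit in the write-up is precisely that vacuous-hypothesis observation, so that the reader sees why the $M$-colimit assumption of Proposition~\ref{sketchoveracc} disappears entirely in this situation.
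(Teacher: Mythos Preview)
Your proposal is correct and matches the paper's own treatment: the corollary is listed there as an ``immediate consequence'' of Proposition~\ref{sketchoveracc}, and your observation that $\mathbb{C}=\emptyset$ makes the $M$-colimit hypothesis vacuous is exactly the intended argument. The additional trace through the limit-case portion of that proof is accurate and harmless, though not needed.
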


\begin{cor}
	For any locally presentable $\V$-category $\K$ and any sketch $\S$ the $\V$-category $\tx{Mod}(\S,\K)$ is accessible.	
\end{cor}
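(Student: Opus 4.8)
The plan is to obtain this as an immediate special case of Proposition~\ref{sketchoveracc}. First I would recall the standard fact that every locally presentable $\V$-category $\K$ is in particular accessible and cocomplete; in the language of Section~\ref{limits} this is just the identification of $\V\tx{-}\bo{Lp}$ with $\V\tx{-}\bo{Acc}_\Psi$ for $\Psi$ the class of all small weights. Consequently, for any sketch $\S=(\B,\mathbb{L},\mathbb{C})$, the $\V$-category $\K$ admits the weighted colimit $M*(-)$ for every weight $M$ occurring among the cocylinders in $\mathbb{C}$, simply because it has all small weighted colimits.

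With this observation the statement follows directly: applying Proposition~\ref{sketchoveracc} with $\A=\K$ shows that $\tx{Mod}(\S,\K)$ is an accessible $\V$-category. I do not expect any genuine obstacle here; the only point worth checking is that the hypothesis on the existence of the colimits indexed by the weights appearing in $\mathbb{C}$ is actually met, and this is guaranteed since cocompleteness is part of local presentability. (Note that the earlier corollary, which covers limit sketches only, does not suffice, so one really does need the colimit-handling half of Proposition~\ref{sketchoveracc}.) One could in principle rerun the two-step argument from the proof of Proposition~\ref{sketchoveracc} by hand in this case — treating the limit part $\S_\mathbb{L}$ as an intersection of accessibly embedded subcategories of a functor $\V$-category and the colimit part $\S_\mathbb{C}$ as a pullback along an isofibration, invoking Theorem~\ref{limitacc} — but since all weighted colimits are available in $\K$ there is nothing to be gained from this, and the one-line deduction above is the cleanest route.
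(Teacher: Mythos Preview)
Your proposal is correct and matches the paper's approach: the corollary is stated without proof, as an immediate consequence of Proposition~\ref{sketchoveracc}, using exactly the observation you make that a locally presentable $\V$-category is accessible and cocomplete and hence has all the $M$-colimits required by the cocylinders in $\mathbb{C}$.
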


As in the accessible case we can consider models of sketches over a conically accessible $\V$-category:

\begin{prop}
	Let $\S=(\B,\mathbb{L},\mathbb{C})$ be a sketch and $\A$ be a conically accessible $\V$-category with $M$-colimits for any weight $M$ appearing in $\mathbb{C}$. Then $\tx{Mod}(\S,\A)$ is a conically accessible $\V$-category.
\end{prop}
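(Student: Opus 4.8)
The plan is to adapt the proof of Proposition~\ref{sketchoveracc} to the conical setting. The step that does not transcribe verbatim is the appeal to Proposition~\ref{acc-funtors}: the accessible proof runs inside functor $\V$-categories such as $[\C\otimes\B,\A]$, and we do not know whether these are conically accessible when $\A$ merely is (this is the content of the Remark after Proposition~\ref{acc-funtors}). So rather than working in an ambient conically accessible category, I would verify directly that $\tx{Mod}(\S,\A)$ meets the criterion of Proposition~\ref{conical-flat}(1): it has conical $\alpha$-filtered colimits for some $\alpha$, every object is conically presentable, and its underlying ordinary category is accessible. (One may, as in Proposition~\ref{sketchoveracc}, first split $\S$ into its limit part $\S_\mathbb{L}$ and colimit part $\S_\mathbb{C}$ and check the criterion for each of $\tx{Mod}(\S_\mathbb{L},\A)$ and $\tx{Mod}(\S_\mathbb{C},\A)$; the arguments are the same.)

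The first two conditions are routine. Choose $\alpha$ large enough that every weight $N$ occurring in $\mathbb{L}$, together with the hom-objects of its domain, is $\alpha$-small. Then conical $\alpha$-filtered colimits commute in $\V$ with the $N$-limits for $N\in\mathbb{L}$ and, being conical colimits, with all the colimits named in $\mathbb{C}$; hence $\tx{Mod}(\S,\A)$ is closed in $[\B,\A]$ under conical $\alpha$-filtered colimits, which exist there and are computed pointwise from those of $\A$ (present since $\A$ is conically accessible). For conical presentability, note that for models $F,G$ one has $\tx{Mod}(\S,\A)(F,G)\cong\int_{B\in\B}\A(FB,GB)$, a $\beta$-small weighted limit of the $\V$-functors $\A(FB,-)$; since $\A$ is conically accessible every object of it, in particular each $FB$, is conically $\gamma_B$-presentable for some $\gamma_B$ (Proposition~\ref{conical-flat}(1) applied to $\A$), so for $\beta$ above $\alpha$, the size of $\B$ and all the $\gamma_B$, the functor $\tx{Mod}(\S,\A)(F,-)$ preserves conical $\beta$-filtered colimits. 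Thus every object of $\tx{Mod}(\S,\A)$ is conically presentable.

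The crux is that $\tx{Mod}(\S,\A)_0$ is accessible. Here I would use that $\A$, being conically $\alpha$-accessible, is conically accessibly embedded in the locally presentable $\V$-category $\L:=[\D,\V]$ for a small $\V$-category $\D$ (one may take $\D=(\A_\alpha^c)^{op}$); in particular the fully faithful inclusion $\A\hookrightarrow\L$ preserves conical $\alpha$-filtered colimits, so $\A$ is closed under them in $\L$. Post-composing, $\tx{Mod}(\S,\A)$ embeds fully faithfully into $[\B,\L]\cong[\B\otimes\D,\V]$ with image closed under conical $\alpha$-filtered colimits: such a colimit is computed pointwise, pointwise it stays in $\A$, and the resulting $\V$-functor is again a model, being the image of the corresponding colimit taken in $\tx{Mod}(\S,\A)$. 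Consequently $\tx{Mod}(\S,\A)_0$ is a full subcategory of the locally presentable ordinary category $[\B\otimes\D,\V]_0$, closed under $\alpha$-filtered colimits, and by Theorem~\ref{accessible-embedded} together with Proposition~\ref{virt=cone} (both for $\V=\bo{Set}$) it is accessible exactly when this inclusion satisfies the solution-set condition. I would establish the latter by assembling a solution set from that of the accessible, accessibly embedded inclusion $\A_0\hookrightarrow\L_0$ (available by Corollary~\ref{virt-sset}), used componentwise over the small object-set of $\B$ and then saturated under the small family of limit/colimit conditions imposed by $\mathbb{L}$ and $\mathbb{C}$ by a transfinite construction of the kind in the proof of Proposition~\ref{vr-acc}. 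With the three conditions in hand, Proposition~\ref{conical-flat}(1) yields that $\tx{Mod}(\S,\A)$ is conically accessible.

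I expect this last step to be the main difficulty. Without the functor-category input one cannot simply write $\tx{Mod}(\S,\A)_0$ as the intersection over $b\in\B$ of the preimages of $\A_0$ under evaluation-at-$b$ and invoke the finite case, since infinite intersections of accessibly embedded accessible subcategories need not be accessible; it is the uniformity of the single reflection-type datum $\A_0\hookrightarrow\L_0$, combined with the fact that $\S$ carries only a \emph{set} of cylinders and cocylinders, that should make the construction go through.
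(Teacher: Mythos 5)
Your route is genuinely different from the paper's: the paper's proof of this proposition is literally ``same as Proposition~\ref{sketchoveracc}'', i.e.\ split $\S$ into its limit and colimit parts, present $\tx{Mod}(\S_\mathbb{L},\A)$ as an intersection of two suitably embedded subcategories of a presheaf $\V$-category (so that Lemma~\ref{intersection-acc} applies), and present $\tx{Mod}(\S_\mathbb{C},\A)$ as a pullback along an isofibration of conically accessible $\V$-categories. You instead verify the three-part criterion of Proposition~\ref{conical-flat}(1) directly. Two of the three parts are handled adequately: the closure under conical $\alpha$-filtered colimits (for the $\mathbb{L}$-conditions the commutation argument should be routed through the fully faithful, limit-preserving, $\alpha$-filtered-colimit-closed embedding $\A\hookrightarrow[\D,\V]$, which you do set up in your third paragraph), and the conical presentability of every model via the end formula, exactly as in the paper's treatment of powers. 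The reduction of the remaining condition --- accessibility of $\tx{Mod}(\S,\A)_0$ --- to a solution-set condition for the inclusion into $[\B\otimes\D,\V]_0$, via Theorem~\ref{accessible-embedded} and Proposition~\ref{virt=cone}, is also correct.

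The gap is that this solution-set condition is then only asserted. The phrase ``saturated under the small family of limit/colimit conditions \dots by a transfinite construction of the kind in the proof of Proposition~\ref{vr-acc}'' does not describe a proof: the construction in Proposition~\ref{vr-acc} builds nested small full subcategories stable under an \emph{already given} virtual reflection, whereas here one would have to manufacture, for each $X\in[\B\otimes\D,\V]_0$, a small cone of maps into models, which is a L\"owenheim--Skolem-type uniformization argument relative to $\A$ and is where essentially all of the difficulty of the theorem lives. A concrete test shows the sketch cannot be completed as written: take $\mathbb{L}=\mathbb{C}=\emptyset$, so that $\tx{Mod}(\S,\A)=[\B,\A]$. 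Your steps (a) and (b) go through trivially in this case, so your step (c) would yield that $[\B,\A]_0$ is accessible and hence, by Proposition~\ref{conical-flat}(1), that $[\B,\A]$ is conically accessible for every conically accessible $\A$ and small $\V$-category $\B$ --- precisely the question the paper declares open in the Remark following Proposition~\ref{acc-funtors}, and the very obstacle you correctly identified at the outset. Since your saturation argument makes no essential use of $\mathbb{L}$ or $\mathbb{C}$ being nonempty, it cannot work in the generality claimed without resolving that open question; so the crucial step is missing, not merely deferred.
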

\begin{proof}
	Same as that of Proposition~\ref{sketchoveracc}.
\end{proof}

\begin{cor}
	For any conically accessible $\V$-category $\A$ and any limit sketch $\S$ the $\V$-category $\tx{Mod}(\S,\A)$ is conically accessible.	
\end{cor}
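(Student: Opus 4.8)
The plan is to deduce this immediately from the preceding proposition. By definition a \emph{limit sketch} is a sketch $\S=(\B,\mathbb{L},\mathbb{C})$ for which the set $\mathbb{C}$ of cocylinders is empty. Consequently the hypothesis of that proposition --- that $\A$ admit $M$-colimits for every weight $M$ appearing in $\mathbb{C}$ --- holds vacuously, there being no such weight $M$. Hence the proposition applies to an arbitrary conically accessible $\V$-category $\A$, with no further completeness or cocompleteness assumption, and gives that $\tx{Mod}(\S,\A)$ is conically accessible, as claimed. So the single step is: observe that a limit sketch has trivial colimit part, then invoke the previous proposition.

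It is worth isolating this special case because it is the limit-sketch hypothesis, rather than any assumption on $\A$, that does the work: for a general sketch one really does need $\A$ to have the relevant weighted colimits, and in the ordinary setting accessibility of $\tx{Mod}(\S,\A)$ can depend on set theory (see \cite[Example~A.19]{AR94:libro}), whereas for limit sketches the conclusion is unconditional on $\A$ beyond conical accessibility. There is accordingly no genuine obstacle in the proof; the only thing to check is that the conical analogues of the ingredients behind Proposition~\ref{sketchoveracc} --- which are exactly the conically accessible counterparts developed earlier in this section, together with the preceding proposition itself --- are available, and they are. The cleanest presentation is therefore simply to record that $\mathbb{C}=\emptyset$ and cite the proposition above.
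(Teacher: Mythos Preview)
Your proposal is correct and is precisely the approach the paper takes: the corollary is recorded without proof as an immediate consequence of the preceding proposition, the point being exactly that for a limit sketch the set $\mathbb{C}$ is empty and so the colimit hypothesis on $\A$ is vacuous.
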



\end{document}